\documentclass[a4paper,11pt,reqno,oneside]{amsart}
\usepackage{amsmath,amssymb,amsthm,mathtools}
\usepackage{mathrsfs,enumitem,xcolor,booktabs}
\usepackage[scale=0.75,hcentering]{geometry}
\usepackage{microtype,aliascnt,hyperref}
\usepackage[capitalize,nameinlink,noabbrev]{cleveref}
\makeatletter
\@ifundefined{subjclassname@2020}{\@namedef{subjclassname@2020}{%
\textup{2020} Mathematics Subject Classification}}{}
\makeatother
\definecolor{cobalt}{rgb}{0.0,0.28,0.67}
\hypersetup{bookmarksopen=true,
pdftitle={The Comon Rank Gap of a Third-Order Symmetric Tensor Can Exceed One},
pdfauthor={Jianze Li},
pdfsubject={The Comon Rank Gap of a Third-Order Symmetric Tensor Can Exceed One},
pdfkeywords={symmetric tensor, CP rank, Waring rank, Comon rank gap, direct sum},
colorlinks=true,linkcolor=cobalt,citecolor=cobalt,urlcolor=cobalt}
\theoremstyle{plain}
\newtheorem{theorem}{Theorem}[section]
\newaliascnt{corollary}{theorem}
\newtheorem{corollary}[corollary]{Corollary}
\aliascntresetthe{corollary}
\newaliascnt{lemma}{theorem}
\newtheorem{lemma}[lemma]{Lemma}
\newtheorem*{lemma*}{Lemma}
\aliascntresetthe{lemma}
\newaliascnt{proposition}{theorem}
\newtheorem{proposition}[proposition]{Proposition}
\aliascntresetthe{proposition}
\theoremstyle{definition}
\newaliascnt{definition}{theorem}
\newtheorem{definition}[definition]{Definition}
\aliascntresetthe{definition}
\newaliascnt{problem}{theorem}
\newtheorem{problem}[problem]{Open Problem}
\aliascntresetthe{problem}
\theoremstyle{remark}
\newaliascnt{remark}{theorem}
\newtheorem{remark}[remark]{Remark}
\newtheorem*{remark*}{Remark}
\aliascntresetthe{remark}
\newaliascnt{example}{theorem}

\aliascntresetthe{example}
\crefname{theorem}{Theorem}{Theorems}
\Crefname{theorem}{Theorem}{Theorems}
\crefname{corollary}{Corollary}{Corollaries}
\Crefname{corollary}{Corollary}{Corollaries}
\crefname{lemma}{Lemma}{Lemmas}
\Crefname{lemma}{Lemma}{Lemmas}
\crefname{proposition}{Proposition}{Propositions}
\Crefname{proposition}{Proposition}{Propositions}
\crefname{definition}{Definition}{Definitions}
\Crefname{definition}{Definition}{Definitions}
\crefname{problem}{Open Problem}{Open Problems}
\Crefname{problem}{Open Problem}{Open Problems}
\crefname{remark}{Remark}{Remarks}
\Crefname{remark}{Remark}{Remarks}
\crefname{example}{Example}{Examples}
\Crefname{example}{Example}{Examples}
\newcommand{\tens}[1]{\boldsymbol{#1}}

\newcommand{\matr}[1]{\mathbf{#1}}
\newcommand{\vect}[1]{\boldsymbol{#1}}
\newcommand{\set}[1]{\mathcal{#1}}
\newcommand{\eqdef}{\mathrel{\triangleq}}
\newcommand{\RR}{\mathbb{R}}
\newcommand{\CC}{\mathbb{C}}
\newcommand{\QQ}{\mathbb{Q}}
\newcommand{\FF}{\mathbb{F}}
\newcommand{\Sym}{\mathbf{Sym}}
\newcommand{\sym}{\operatorname{sym}}
\newcommand{\Span}{\operatorname{span}}
\newcommand{\rank}{\operatorname{rank}}
\newcommand{\Ess}{\operatorname{Ess}}
\newcommand{\mrank}{\operatorname{mrank}}
\newcommand{\diag}{\operatorname{diag}}
\newcommand{\rkone}{\mathcal{R}_1}
\newcommand{\cE}{\mathcal{E}}

\newcommand{\cB}{\mathcal{B}}

\newcommand{\im}{\operatorname{im}}
\newcommand{\supp}{\operatorname{supp}}
\newcommand{\id}{\operatorname{id}}
\newcommand{\DeltaT}{\Delta}
\newcommand{\J}{\mathcal{J}}
\newcommand{\cA}{\mathcal{A}}
\newcommand{\cK}{\mathcal{K}}
\newcommand{\cV}{\mathcal{V}}
\newcommand{\cW}{\mathcal{W}}
\newcommand{\cZ}{\mathcal{Z}}
\newcommand{\va}{\vect{a}}
\newcommand{\vb}{\vect{b}}
\newcommand{\vc}{\vect{c}}
\newcommand{\vd}{\vect{d}}
\newcommand{\vh}{\vect{h}}
\newcommand{\vu}{\vect{u}}
\newcommand{\vv}{\vect{v}}
\newcommand{\vw}{\vect{w}}
\newcommand{\vx}{\vect{x}}
\newcommand{\vy}{\vect{y}}
\newcommand{\vz}{\vect{z}}

\numberwithin{equation}{section}
\allowdisplaybreaks[2]
\setlist[enumerate]{label=\textup{(\roman*)},leftmargin=*}
\title[The Comon Rank Gap of a Third-Order Symmetric Tensor Can Exceed One]
{The Comon Rank Gap of a Third-Order Symmetric Tensor Can Exceed One}
\author{Jianze Li}
\address{School of Science, Sun Yat-sen University, Shenzhen, Guangdong, China}
\email{lijianze@mail.sysu.edu.cn}
\date{September 28, 2026}
\subjclass[2020]{15A69, 14N07, 15A03}
\keywords{symmetric tensor, CP rank, symmetric rank, Comon rank gap,
Waring rank, direct sum, substitution}
\begin{document}
\begin{abstract}
For a third-order symmetric tensor, the Comon rank gap is the difference
between its CP rank and its symmetric rank. We study direct sums of
Lovitz's rational $27$-dimensional tensor, whose two ranks are $55$
and $56$. Two copies have ranks $110$ and $112$, and three copies
have ranks $165$ and $168$, over both the real and the complex fields.
Thus direct sums yield concise cubic tensors on spaces of dimensions
$54$ and $81$ with Comon rank gaps two and three. 
\end{abstract}
\maketitle

\section{Introduction}\label{sec:introduction}
For $\FF\in\{\RR,\CC\}$ and $\tens{T}\in\Sym^3(\set{U})$, let
$r_{\FF}(\tens{T})$ and $s_{\FF}(\tens{T})$ denote its CP rank \cite{kolda2009tensor,landsberg2012tensors,comon2014tensors}
and symmetric rank \cite{comon2008symmetric} over $\FF$, respectively. The \emph{Comon rank gap} is defined as
\[
\DeltaT_{\FF}(\tens{T})\eqdef s_{\FF}(\tens{T})-r_{\FF}(\tens{T})\ge0.
\]
A central problem in this area is \emph{Comon's conjecture}
\cite{comon2004canonical,comon2008symmetric,comon2014tensors}, which
asks whether
$\DeltaT_{\FF}(\tens{T})=0$ 
holds for every real or complex symmetric tensor $\tens{T}$. 
The conjecture has nevertheless been established under several additional
hypotheses. Over $\CC$, the foundational work
\cite{comon2008symmetric} proves equality in basic low-symmetric-rank cases
and under suitable decomposition conditions. Zhang, Huang, and Qi establish
equality when the CP rank does not exceed the tensor order
\cite[Corollary~4.3]{zhang2016comon}, while Friedland's criterion covers
CP rank at most the flattening rank plus one
\cite[Theorem~1.1]{friedland2016remarks}. Further positive results include
tensors of sufficiently low symmetric rank \cite{ballico2011lowrank},
tensors on tangential varieties of Veronese varieties
\cite[Corollary~2]{ballico2013tensor}, and complex cubic surfaces
\cite[Theorem~1.3]{seigal2020cubic}; Seigal also proves equality for complex
cubic tensors of symmetric rank at most seven
\cite[Corollary~1.4]{seigal2020cubic}.
We refer to
\cite{casarotti2020comon} for a broader survey.

The search for counterexamples took a different direction when Shitov
introduced the symmetric-adjoin framework in
\cite{shitov2018counterexample}. Although the specific example in that paper
was subsequently shown to contain an error \cite{draisma2024erratum}, new
counterexamples over $\CC$ have recently been reported in
\cite{shitovhigher,Lovitz26:arxiv}.
Over $\RR$, counterexamples are known in orders four and six
\cite{shitov2020comon,wang2023lower}, while order-three counterexamples have
also been reported in \cite{shitovhigher,Lovitz26:arxiv}. 
In particular, in low dimension $27$, Lovitz
\cite[Theorem~1.1]{Lovitz26:arxiv} constructed a tensor
$\tens{T}_0\in\Sym^3(\QQ^{27})$ whose scalar extensions satisfy
\begin{equation}\label{eq:T0_Lovitz}
r_{\FF}(\tens{T}_0)=55,
\ \ 
s_{\FF}(\tens{T}_0)=56,
\ \ 
\FF\in\{\RR,\CC\}.
\end{equation}
These counterexamples bring into focus a natural question posed over $\CC$ in \cite[Section~6, Problem~23]{shitov2018counterexample} and reiterated in \cite{Lovitz26:arxiv}:
\begin{equation}\label{eq-question-comon}
\textit{Is it possible that }\DeltaT_{\FF}(\tens{T})>1\textit{?}
\end{equation}

In \cite[Section~6]{shitov2018counterexample}, a direct-sum strategy was proposed as a possible approach to resolving the question above. In this paper, we pursue this strategy and answer the question affirmatively by studying direct sums of two and three independent copies of the tensor $\tens{T}_0$ introduced in \cite{Lovitz26:arxiv}.
Establishing this affirmative answer requires lower bounds tailored to these
direct sums. Indeed, CP rank is not additive in general
\cite{shitov2019directsum}, although positive additivity results are known for
small three-way tensors
\cite{buczynski2020additivity,rupniewski2024additivity}. Symmetric-rank
additivity poses a separate challenge, with results currently available for
specific classes \cite{carlini2015symmetricstrassen,carlini2018ecomputability}.
Consequently, the evident upper bounds
\[
r_{\FF}(\tens{T}_0^{\oplus k})\le55k,
\ \  s_{\FF}(\tens{T}_0^{\oplus k})\le56k
\]
do not by themselves determine the gap. The following theorem supplies the
matching lower bounds for two and three copies.

\begin{theorem}[Cubic tensors with gaps two and three]\label{thm:main}
Let $\tens{T}_0$ be the rational tensor defined in
\cite{Lovitz26:arxiv}, and set
$\tens{V}_k\eqdef\tens{T}_0^{\oplus k}$, where the direct sum is
taken over $k$ independent copies of the ambient space. For
$k\in\{2,3\}$ and $\FF\in\{\RR,\CC\}$, we have
\begin{equation}\label{eq:main}
r_{\FF}(\tens{V}_k)=55k,\ \ 
s_{\FF}(\tens{V}_k)=56k,\ \ 
\DeltaT_{\FF}(\tens{V}_k)=k.
\end{equation}
The tensor $\tens{V}_k$ has rational entries and is concise on a
$27k$-dimensional space.
\end{theorem}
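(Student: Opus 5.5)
The upper bounds $r_{\FF}(\tens{V}_k)\le 55k$ and $s_{\FF}(\tens{V}_k)\le 56k$ follow by concatenating optimal decompositions of the $k$ copies of $\tens{T}_0$ from \eqref{eq:T0_Lovitz}. The gap identity $\DeltaT_{\FF}(\tens{V}_k)=k$ then follows once the two lower bounds are established. Rationality is immediate. Conciseness follows because each block is concise on $\FF^{27}$, so the flattening of $\tens{V}_k$ is block diagonal of full rank $27k$. Since a real decomposition is also a complex one, $r_{\RR}\ge r_{\CC}$ and $s_{\RR}\ge s_{\CC}$. It therefore suffices to prove $r_{\CC}(\tens{V}_k)\ge 55k$ and $s_{\CC}(\tens{V}_k)\ge 56k$.

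For both lower bounds I would use the substitution method adapted to direct sums. Write $\set{U}=\set{U}_1\oplus\cdots\oplus\set{U}_k$ and take a decomposition $\tens{V}_k=\sum_{i=1}^{R}\va_i\otimes\vb_i\otimes\vc_i$. Project the third factor onto $\set{U}_j$. The slices of $\tens{V}_k$ in the third mode span a $27k$-dimensional space that is block diagonal, and the rank-one terms whose third factor has nonzero $\set{U}_j$-component must generate the $\set{U}_j$-block. I would use the standard Ja'Ja'--Takche / Alexeev--Forbes--Tsimerman style argument for this. Choose a copy $j$ and a set $S$ of terms whose $\set{U}_j$-components of $\vc_i$ span $\set{U}_j^*$-dual directions. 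Kill the other copies by a generic linear map on the third factor that vanishes on $\set{U}_j$. This yields a decomposition of $\tens{T}_0$ plus a "correction" tensor supported on the other blocks. The cleanest route is Lovitz's own lower-bound certificate. I expect his proof of $r\ge 55$ and $s\ge 56$ to be a substitution argument that reduces to a concise core (e.g. a structured tensor with a known rank bound via flattening plus $\operatorname{rank}$ of an associated pencil). I would re-run that certificate blockwise: the argument that removes rank-one terms one at a time along hyperplane substitutions can be performed in one copy while the other copies are preserved as direct summands. Each elimination step removes at least one term, and after processing copy $1$ we are left with a decomposition of $\tens{T}_0^{\oplus(k-1)}$ having at most $R-55$ (resp. $R-56$) terms. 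Induction on $k$ then gives the bound.

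The symmetric case is easier to phrase through apolarity. If $\tens{V}_k=\sum_{i=1}^{s}\lambda_i\vv_i^{\otimes3}$, then the ideal of the points $[\vv_i]$ lies in $\tens{V}_k^{\perp}$. For a direct sum $F=F_1(x)+F_2(y)$, the apolar ideal contains all mixed products $x_ay_b$. Points with both components nonzero can be handled by the Carlini--Catalisano--Oneto trick of projecting to each summand, which would cost extra points. Rather than prove full Strassen additivity for Waring rank, I would combine a weaker additivity bound with Lovitz's specific certificate for $s(\tens{T}_0)\ge 56$.

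The main obstacle is the nonsymmetric lower bound $r_{\CC}(\tens{V}_k)\ge 55k$. CP rank is not additive in general \cite{shitov2019directsum}, so everything hinges on whether Lovitz's lower-bound mechanism is "substitution-stable". By that I mean the mechanism only uses linear-algebraic rank conditions on slices, which behave additively under block-diagonal structure. I would first try to isolate an invariant $\mu(\tens{T})$ satisfying three properties: $\mu$ is a lower bound for rank, $\mu$ is superadditive on direct sums, and $\mu(\tens{T}_0)=55$ (resp. the symmetric analogue equals $56$). Natural candidates are a flattening rank plus a defect of a restricted pencil, or a Koszul-type flattening. If no single invariant works, the fallback is an explicit computer-assisted verification for $k=2,3$. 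Here the substitution steps would be organized so that at each stage one records which copy each eliminated term touches, and a pigeonhole argument over copies yields the bound. The restriction to $k\le3$ in the statement suggests the argument is of this finite, case-based kind rather than a general additivity theorem.
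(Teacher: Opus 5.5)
Your routine reductions (concatenated upper bounds, passage from $\RR$ to $\CC$, rationality, concision) are fine. However, you have placed the difficulty on the wrong side, and neither lower-bound plan works as stated.

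\textbf{The CP bound is the easy half.} $\tens V_k$ is itself a symmetric adjoin with $m=18k$ square generators, so \eqref{eq:adjoin-rank} gives $r(\tens V_k)=54k+\min_{\tens P\in\cA^{[k]}}r(\tens P)$. The blockwise selector $\matr L_1\otimes\matr L_2\otimes\matr L_3$ sends every $\tens P\in\cA^{[k]}$ to $-2\sum_j\vect e_j^{3}$, whose flattening rank is $k$. That is exactly the additive invariant you were looking for.

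Your proposed candidates (flattenings, Koszul flattenings) cannot work. They bound border rank, and each adjoin term $3\vect e_\nu\matr M_\nu$ is the $t$-derivative at $t=0$ of $\alpha_p\alpha_q(\vw_\nu+t\vect e_\nu)^{\otimes3}$, so it has border rank at most two. Hence $\tens V_k$ has border rank at most $39k<55k$.

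Your copy-by-copy induction is also unjustified. Substituting away only the $\cZ_1$ directions does not leave a decomposition of $\tens T_0^{\oplus(k-1)}$. The compensation vectors $\vect e_\nu-\matr\Pi_C\vect e_\nu$ and the surviving terms may have components in every copy, which is why the paper substitutes all $18k$ directions at once.

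\textbf{The genuine gap is the symmetric bound $s(\tens V_k)\ge 56k$, for which the proposal has no mechanism.} The minimum of $r$ over $\cA^{[k]}$ is $k$ and is attained. So any argument using only membership of the substitution residual in $\cA^{[k]}$ stops at $55k$. The extra $k$ terms must come from the fact that the residual and the substituted terms are built from the same vectors $\vh_i$. A single cube $\vh_i^3$ may touch every copy, so the one extra term that Lovitz's argument extracts for a single copy could a priori be shared among copies.

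None of your fallbacks supplies this. A ``weaker additivity bound'' cannot help, since losing even one term destroys the count of $56k$. The existing symmetric Strassen criteria and the apolarity trick you cite do not apply to $\tens T_0$. A computer verification in dimension $54$ or $81$ is out of reach. The restriction to $k\le 3$ reflects the size of a hand case analysis, not computation.

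\textbf{The missing idea is the paper's fixed-source residual principle.} For a putative decomposition with $N<56k$ terms, the paper keeps the original source matrices $\matr Q_i=(\matr\Pi_X\vh_i)\otimes(\matr\Pi_Y\vh_i)$. It shows that the complete contraction image $\cK+\set L(\tens P)$ is spanned, and often saturated, by them (\cref{lem:image-criterion,lem:triple-saturation}). From this it derives the good-quotient and source-counting obstructions (\cref{lem:triple-good,lem:triple-source-counting}), which forbid pure blocks whose quadratic quotients look like rank-one affine representatives.

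These obstructions are combined with an affine classification of low-CP-rank residuals: rank-two block separation, mode-type restrictions, and support patterns in type $(3,3,3)$. For $k=2$ this excludes $N=110$ and $N=111$. For $k=3$ a long mode-type analysis excludes $N\le 167$; one branch splits a $167$-term decomposition as $111+56$ and invokes the case $k=2$.
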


Since $(r_{\FF}(\tens{T}_0),s_{\FF}(\tens{T}_0))=(55,56)$,
\cref{thm:main} shows that both ranks are additive for these specified double
and triple copies over either field. The double-copy proof is developed
directly in terms of $\tens{V}_2$. For the triple copy,
\cref{sec:triple} develops the fixed-source reductions and proves the
$167$-term lower bound, while \cref{sec:triple-167} classifies and excludes
the remaining $167$-term case.

\section{The single-copy construction and local rigidity}
\label{sec:construction}
\subsection{Notation and elementary rank facts}
All spaces are finite-dimensional over $\FF\in\{\RR,\CC\}$.
Tensors are denoted by bold symbols such as $\tens{P}$, matrices and
linear maps by $\matr{M}$ and $\matr{L}$, vectors by $\vect{a}$,
and spaces and sets by calligraphic symbols.
We suppress the field subscript on ranks when the field is fixed.
Every scalar has a cube root in either field, so a symmetric
decomposition may be written as $\sum_i\vh_i^{\otimes3}$. 
For $\tens R\in\cV^{\otimes3}$, set
$\sym(\tens R)\eqdef\frac{1}{3!}\sum_{\sigma\in S_3}\sigma\cdot\tens R$.
We write $\vu^k$ for $\vu^{\otimes k}$. For products of vectors,
juxtaposition denotes symmetrization; more generally, for
$\matr M\in\Sym^2(\cV)$, we set
$\vu\matr M\eqdef\sym(\vu\otimes\matr M)$ and extend this notation
linearly and setwise; for a subspace $\set W\subseteq\cV$, examples are
$\set W\matr M$ and $\vu\set W$. Thus
$\vu\vv\vw\eqdef\sym(\vu\otimes\vv\otimes\vw)$,
whereas all three-factor residual expressions below use $\otimes$.
For $\tens{P}\in\set{U}_A\otimes\set{U}_B\otimes\set{U}_C$, set
\[
\begin{aligned}
\Ess_A(\tens{P})\eqdef\im\bigl((\set{U}_B\otimes\set{U}_C)^*
\xrightarrow{\ \lrcorner\,\tens{P}\ }\set{U}_A\bigr),\ \
\mrank(\tens{P})\eqdef(\dim\Ess_A\tens{P},\dim\Ess_B\tens{P},
\dim\Ess_C\tens{P}).
\end{aligned}
\]
The other mode spaces are defined cyclically. The tuple
$\mrank(\tens P)$ is the \emph{multilinear rank}; when considered up
to mode permutation, it is also called the \emph{mode type}.
A symmetric tensor on $\set{U}$ is \emph{concise} when
$\Ess_A(\tens{P})=\set{U}$.
We write $\vx\parallel\vy$ only for proportional nonzero vectors;
$\rkone(\set{E})\eqdef\{\matr{Q}\in\set{E}:\rank\matr{Q}=1\}$.

Rank does not change on embedding a tensor into larger factor spaces;
see \cite[Lemma~2.8]{buczynski2020additivity} for CP rank and
\cite[Remark~2.3]{carlini2018ecomputability} for symmetric rank.
The projection arguments establishing these statements are
field-independent and apply over both $\RR$ and $\CC$.
The same CP-rank lemma puts all factors of a minimal CP decomposition in the
mode spaces. Its two-factor products are linearly independent
(otherwise one term is redundant), so the factor spans equal those spaces. In particular, a tensor of CP rank three and
multilinear rank $(3,3,3)$ has three independent factors in each mode in
every three-term CP expression.
\begin{lemma}[Rank and elimination with independent factors]
\label{lem:factor-calculus}
Let $\matr B\eqdef[\vb_1\ \cdots\ \vb_n]$ and
$\matr C\eqdef[\vc_1\ \cdots\ \vc_n]$ have ranks $b,c$.
For $\set I\subseteq[n]$ and $t_i\ne0$ $(i\in\set I)$, we have
\begin{equation}\label{eq:factor-support-rank}
\rank\!\left(\sum_{i\in\set I}t_i\vb_i\otimes\vc_i\right)
\ge |\set I|-(n-b)-(n-c).
\end{equation}
If both factor lists are independent, this rank is exactly $|\set I|$,
and its column and row spaces are the spans of their active factors.
If the $\va$-factor list, the $\vb$-factor list, and the $\vc$-factor
list are all independent, put
\[
\tens P\eqdef\sum_{i=1}^n\va_i\otimes\vb_i\otimes\vc_i,
\ \  \matr N\eqdef\sum_{i=1}^nt_i\vb_i\otimes\vc_i\ne0.
\]
After relabelling so that $t_1\ne0$, we have
\begin{equation}\label{eq:factor-elimination}
\tens P-(\va_1/t_1)\otimes\matr N
=\sum_{i=2}^n(\va_i-t_i\va_1/t_1)\otimes\vb_i\otimes\vc_i
\end{equation}
has CP rank $n-1$ and all three mode dimensions equal to $n-1$.
\end{lemma}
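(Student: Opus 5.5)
The plan is to reduce everything to elementary matrix factorizations. For the rank inequality \eqref{eq:factor-support-rank}, write $\sum_{i\in\set I}t_i\vb_i\otimes\vc_i=\matr B_{\set I}\matr D\matr C_{\set I}^{\top}$, where $\matr B_{\set I},\matr C_{\set I}$ are the column submatrices indexed by $\set I$ and $\matr D=\diag(t_i)_{i\in\set I}$ is invertible.

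First, the submatrix $\matr B_{\set I}$ has rank at least $|\set I|-(n-b)$. Deleting the $n-|\set I|$ columns outside $\set I$ lowers the rank by at most that number, so $\rank\matr B_{\set I}\ge b-(n-|\set I|)$. Equivalently, its kernel has dimension at most $n-b$, since that kernel embeds into $\ker\matr B$ by padding with zeros. The same bound holds for $\matr C_{\set I}$.

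Next apply Sylvester's inequality twice. It gives
\[
\rank(\matr B_{\set I}\matr D\matr C_{\set I}^{\top})\ge\rank\matr B_{\set I}+\rank(\matr D\matr C_{\set I}^{\top})-|\set I|,
\]
and $\rank(\matr D\matr C_{\set I}^{\top})=\rank\matr C_{\set I}$ because $\matr D$ is invertible. Substituting the two column bounds yields $|\set I|-(n-b)-(n-c)$. If both factor lists are independent, then $\matr B_{\set I}$ has full column rank and $\matr C_{\set I}^{\top}$ has full row rank, so the product has rank exactly $|\set I|$. Its column space is $\im\matr B_{\set I}=\Span\{\vb_i:i\in\set I\}$, and by transposition its row space is $\Span\{\vc_i:i\in\set I\}$.

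For the elimination identity \eqref{eq:factor-elimination}, expand
\[
(\va_1/t_1)\otimes\matr N=\va_1\otimes\vb_1\otimes\vc_1+\sum_{i\ge2}(t_i/t_1)\va_1\otimes\vb_i\otimes\vc_i.
\]
Subtracting this from $\tens P$ cancels the first term and leaves exactly the stated sum of $n-1$ terms. It remains to identify the ranks. The new $\va$-factors $\va_i-(t_i/t_1)\va_1$ for $i\ge2$ are linearly independent, because they are the images of $\va_2,\dots,\va_n$ under an invertible change of basis of $\Span\{\va_1,\dots,\va_n\}$ that fixes $\va_1$; more concretely, modulo $\va_1$ they reduce to $\va_2,\dots,\va_n$. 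So the residual is a sum of $n-1$ terms whose factor lists in all three modes are independent.

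For such a tensor, the flattening to mode $A$ is $\sum_i\va_i'\otimes(\vb_i\otimes\vc_i)$. Its rank is $n-1$ because the $\va_i'$ are independent and the $\vb_i\otimes\vc_i$ are independent, the latter because the $\vb_i$ are. So $\dim\Ess_A=n-1$, and the same argument gives modes $B$ and $C$. The CP rank is at most $n-1$ by the displayed expression and at least $\dim\Ess_A=n-1$, since the number of terms in any CP expression bounds every flattening rank.

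I expect no step to be a real obstacle. The only point needing care is the independence of the modified $\va$-factors, and that is handled by the change-of-basis argument above. Nothing here depends on whether $\FF$ is $\RR$ or $\CC$.
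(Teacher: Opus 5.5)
Your proof is correct and follows the paper's argument essentially step for step. For the bound you use column deletion plus Sylvester's inequality applied to $\matr B_{\set I}\diag(t_i)\matr C_{\set I}^{\top}$; for the equality case, full column and row rank (the paper's left inverses); and for the elimination, independence of the modified $\va$-factors modulo $\va_1$ followed by flattening ranks — you simply spell out the details the paper compresses.
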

\begin{proof}
Deleting $n-|\set I|$ columns gives
$\rank\matr B_{\set I}\ge b-(n-|\set I|)$ and the analogous bound for
$\matr C_{\set I}$. Sylvester's inequality
\cite[Theorem~3.6]{wang2023lower}, applied to
$\matr B_{\set I}\diag(t_i)\matr C_{\set I}^\top$ proves
\eqref{eq:factor-support-rank}. With independent lists, left
inverses give the rank equality and the stated supports.
In \eqref{eq:factor-elimination}, all three remaining lists are
independent; for the first, compare coefficients on
$\va_2,\ldots,\va_n$. Their flattening ranks give the conclusion.
\end{proof}

\subsection{Symmetric adjoining and its affine space}
For $\cK\subseteq\Sym^2(\cV)$, let
\begin{equation}\label{eq:J}
\J_{\cV}(\cK)\eqdef
\cV\otimes\cK+\iota_2(\cV\otimes\cK)+\cK\otimes\cV,
\end{equation}
where $\iota_2$ interchanges the first two positions.
Given $\tens{F}\in\Sym^3(\cV)$ and an independent family of symmetric
rank-one matrices $\matr{M}_1,\ldots,\matr{M}_m$ spanning $\cK$,
take a new space $\cZ$ with basis $\vect{e}_1,\ldots,\vect{e}_m$.
Its symmetric adjoin is
\begin{equation}\label{eq:adjoin}
\tens{T}\eqdef\tens{F}+3\sum_{\nu=1}^m\vect{e}_\nu\matr{M}_\nu
\in\Sym^3(\cV\oplus\cZ),\ \ 
\cA\eqdef\tens{F}+\J_{\cV}(\cK).
\end{equation}
The affine space $\cA$ consists of tensors that are not necessarily
symmetric. The substitution method goes back to
\cite{hopcroft1971minimizing}; see also the layer-reduction statements
\cite[Lemma~B.1 and Corollary~B.2]{alexeev2011tensor} and
\cite{wang2023lower}.
For \eqref{eq:adjoin}, we use
\begin{equation}\label{eq:adjoin-rank}
r(\tens{T})=3m+\min_{\tens{P}\in\cA}r(\tens{P})
\end{equation}
from \cite[Lemma~3.4(iii)]{Lovitz26:arxiv}, together with its refinement
\cite[Lemma~3.4(i)--(ii)]{Lovitz26:arxiv}.
\subsection{The rational nine-dimensional base}
Put $\vect{\alpha}\eqdef(1,1,-2)$ and
\begin{equation}\label{eq:base-data}
\begin{split}
\set{G}&\eqdef\{(\vu,\vv)\in\FF^3\oplus\FF^3:
2\textstyle\sum u_i+\sum v_i=0\},\\
\va&\eqdef(\vect{\alpha},-\vect{\alpha}),\ \ 
\vc\eqdef(\vect{\alpha},\vect{\alpha}),\ \ 
\vw_{pq}\eqdef(\vect{e}_p,-2\vect{e}_q)\ (1\le p,q\le3).
\end{split}
\end{equation}
Choose a rational complement $\set{G}'$ to $\FF\vc$ in $\set{G}$,
and glue two copies of $\set{G}$ along that line:
$\cV_0\eqdef\FF\vc\oplus(\set{G}')^{(1)}\oplus(\set{G}')^{(2)}$.
The superscript here denotes the two internal sectors of a
\emph{single} copy, and we put
$\set G^{(i)}\eqdef\FF\vc\oplus(\set G')^{(i)}$.
Write $\va^{(i)}$ and $\vw_{i,p,q}$ for the corresponding vectors,
and set
\begin{equation}\label{eq:single-FK}
\tens{F}_0\eqdef(\va^{(1)})^3+(\va^{(2)})^3+\vc^3,\ \ 
\matr{M}_{i,p,q}\eqdef\alpha_p\alpha_q\vw_{i,p,q}^{\,2},\ \ 
\cK_0\eqdef\Span\{\matr{M}_{i,p,q}\}.
\end{equation}
Let $\cZ_0$ have basis
$\{\vect e_{i,p,q}:i\in\{1,2\},\ 1\le p,q\le3\}$, and, when
convenient, enumerate $\vw_{i,p,q},\matr M_{i,p,q},\vect e_{i,p,q}$
as $\vw_\nu,\matr M_\nu,\vect e_\nu$ for $1\le\nu\le18$.
The tensor $\tens{T}_0$ in \cref{thm:main} is the adjoin
\eqref{eq:adjoin} of these data, with $m=18$. 
The following facts are \cite[Lemma~4.1, Theorem~4.2,
equation~(4.2), and Theorem~1.1]{Lovitz26:arxiv}, respectively.
The real affine-space exclusion follows from the complex statement
by complexification.
\begin{proposition}[Imported single-copy facts]\label{prop:single}
The eighteen matrices in \eqref{eq:single-FK} are independent.
The affine space $\cA_0\eqdef\tens{F}_0+\J_{\cV_0}(\cK_0)$ contains
neither zero nor a tensor of symmetric rank one, and it contains
\begin{equation}\label{eq:single-representatives}
\tens{P}_0\eqdef(\va^{(1)}+\va^{(2)}+\vc)\otimes\vc\otimes\vc.
\end{equation}
Moreover, we have $r(\tens{T}_0)=55$ and $s(\tens{T}_0)=56$ over both fields.
\end{proposition}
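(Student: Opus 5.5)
The statement collects facts established in \cite{Lovitz26:arxiv}, so the plan is to match each clause to its source and to check that the hypotheses there agree with the data \eqref{eq:base-data}--\eqref{eq:single-FK}. The only genuinely new point is the transfer of the affine-space exclusion from $\CC$ to $\RR$.

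\textbf{Independence of the eighteen matrices.} I will cite \cite[Lemma~4.1]{Lovitz26:arxiv}, after checking that the vectors $\vw_{i,p,q}$, the weights $\alpha_p\alpha_q$ and the gluing along $\FF\vc$ are those used there. As a sanity check one can argue directly. Since $\alpha_p\alpha_q\neq0$, it suffices to show that the squares $\vw_{i,p,q}^{2}$ are independent. Each of the two sectors uses its own complement $(\set G')^{(i)}$, and the two sectors share only the line $\FF\vc$. So a linear relation among the $\vw_{i,p,q}^{2}$ splits, modulo terms supported on $\FF\vc$, into one relation per sector. Inside one copy of $\set G$, the nine vectors $(\vect e_p,-2\vect e_q)$ have pairwise distinct supports. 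Reading off the coefficients of $\vect e_p\otimes\vect e_p$, $\vect e_p\otimes\vect e_q'$, and so on, then forces all coefficients to vanish.

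\textbf{The representative $\tens P_0$.} I will cite \cite[equation~(4.2)]{Lovitz26:arxiv}, which writes $\tens F_0-\tens P_0$ explicitly as an element of $\J_{\cV_0}(\cK_0)$. To verify it, I expand
\[
\tens F_0-\tens P_0=\textstyle\sum_{i}\bigl((\va^{(i)})^3-\va^{(i)}\otimes\vc\otimes\vc\bigr)+\bigl(\vc^3-\vc\otimes\vc\otimes\vc\bigr).
\]
The last bracket vanishes. Each remaining summand is $\va^{(i)}\otimes\bigl((\va^{(i)})^{\otimes2}\bigr)-\va^{(i)}\otimes\vc\otimes\vc$ after symmetrization corrections. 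I then express the resulting differences through $\vect\alpha$-weighted combinations of the $\vw_{i,p,q}^{2}$, using $\sum_p\alpha_p=0$ and the identities $\va\pm\vc\in\{(0,-2\vect\alpha),(2\vect\alpha,0)\}$. Placing each factor from $\cK_0$ in one of the three slots of \eqref{eq:J} gives the required decomposition. I expect this bookkeeping to be the only computational step, and I would simply follow the cited equation.

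\textbf{Exclusion of $0$ and of symmetric rank one, and the ranks.} Over $\CC$, the exclusion of $0$ and of symmetric-rank-one tensors from $\cA_0$ is \cite[Theorem~4.2]{Lovitz26:arxiv}. For $\RR$, all data are rational, so the real space $\J_{\cV_0}(\cK_0)$ tensored with $\CC$ equals the complex one. Hence the real affine space embeds in the complex affine space. A real tensor equal to $0$, or of the form $\lambda\vh^{3}$ with real $\vh$, is still such a tensor over $\CC$, which the complex statement excludes. Finally, $r(\tens T_0)=55$ and $s(\tens T_0)=56$ over both fields is \cite[Theorem~1.1]{Lovitz26:arxiv}. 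Consistently with this, \eqref{eq:adjoin-rank} with $m=18$ gives $r=54+\min_{\cA_0}r$. The value $\min_{\cA_0}r=1$ is attained by $\tens P_0$, and any $\tens P\in\cA_0$ of rank $1$ is nonzero, so the minimum is exactly $1$. The symmetric-rank statement rests on the refinement \cite[Lemma~3.4(i)--(ii)]{Lovitz26:arxiv}: since no element of $\cA_0$ has symmetric rank one, a symmetric decomposition with $55$ terms is impossible.
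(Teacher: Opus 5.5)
Your proposal is correct and takes essentially the paper's route: the paper likewise imports the four facts from \cite[Lemma~4.1, Theorem~4.2, equation~(4.2), Theorem~1.1]{Lovitz26:arxiv} and transfers the affine-space exclusion to $\RR$ by complexification. Your closing gloss on $s(\tens T_0)\ge56$ is wrong, though, and should be dropped: a $55$-term decomposition leaves the one-term residual $\matr\Pi_A\vh\otimes\matr\Pi_B\vh\otimes\matr\Pi_C\vh$, which need not be symmetric (indeed $\cA_0$ contains the non-symmetric rank-one tensor $\tens P_0$), so excluding symmetric-rank-one elements alone does not rule out $55$ terms, and Lovitz's argument also needs a factor-plane rigidity step (compare \cref{lem:local,lem:matrix-rigidity}); your proof survives only because you cite Theorem~1.1 for the ranks.
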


Since $0\notin\cA_0$, we have
$\cA_0\cap\J_{\cV_0}(\cK_0)=\varnothing$.
The additional linear map used below can be described without
choosing coordinates for the complement $\set{G}'$.
Let $\matr{L}:\cV_0\to\FF^3$ restrict to the second coordinate
projection on the first internal sector and to the first coordinate
projection on the second sector. These restrictions agree on $\vc$.
Writing its rows as $\varphi_1,\varphi_2,\varphi_3$, we have
\begin{equation}\label{eq:selector}
\matr{L}\vw_{1,p,q}=-2\vect{e}_q,\ 
\matr{L}\vw_{2,p,q}=\vect{e}_p,\ 
\matr{L}\va^{(1)}=-\vect{\alpha},\ 
\matr{L}\va^{(2)}=\matr{L}\vc=\vect{\alpha}.
\end{equation}
A vector is called \emph{selector-dense} when all three coordinates
of its image under $\matr L$ are nonzero; the same term is used
blockwise in a direct sum.
Thus $\matr{L}\matr{Q}\matr{L}^\top$ is diagonal for
$\matr{Q}\in\cK_0$, and, for pairwise distinct $r,s,t$, we have
\begin{equation}\label{eq:six}
(\varphi_r\otimes\varphi_s)(\cK_0)=0,\ \ 
(\varphi_r\otimes\varphi_s\otimes\varphi_t)(\tens{P})=-2
\ (\tens{P}\in\cA_0).
\end{equation}
Indeed, the value on $\tens{F}_0$ is
$(-\alpha_r)(-\alpha_s)(-\alpha_t)+2\alpha_r\alpha_s\alpha_t=-2$,
and every affine correction has two distinct row functionals on
its $\cK_0$ positions.

\begin{lemma}[Faithful one-sided contraction]\label{lem:faithful}
The map $\matr{Q}\mapsto\matr{Q}\matr{L}^\top$ is injective
on $\cK_0$.
\end{lemma}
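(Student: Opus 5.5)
The plan is to reduce injectivity to a linear-independence statement about the vectors $\vw_{i,p,q}$, column by column in the image of $\matr L^\top$. Write a general element of $\cK_0$ as $\matr Q=\sum_{i,p,q}c_{i,p,q}\matr M_{i,p,q}$. By \cref{prop:single} the $\matr M_{i,p,q}$ are independent, so it suffices to show that $\matr Q\matr L^\top=0$ forces all $c_{i,p,q}=0$.

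Since $\matr M_{i,p,q}=\alpha_p\alpha_q\vw_{i,p,q}\otimes\vw_{i,p,q}$, the selector values \eqref{eq:selector} give
\[
\matr Q\matr L^\top=\sum_{p,q}c_{1,p,q}\alpha_p\alpha_q\,\vw_{1,p,q}\otimes(-2\vect e_q)
+\sum_{p,q}c_{2,p,q}\alpha_p\alpha_q\,\vw_{2,p,q}\otimes\vect e_p .
\]
Reading off the coefficient of $\vect e_r$ for each $r\in\{1,2,3\}$, the condition $\matr Q\matr L^\top=0$ splits into three separate vector equations in $\cV_0$:
\[
-2\sum_{p}c_{1,p,r}\alpha_p\alpha_r\,\vw_{1,p,r}+\sum_{q}c_{2,r,q}\alpha_r\alpha_q\,\vw_{2,r,q}=0 .
\]
All $\alpha_j$ are nonzero. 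Hence it is enough to prove that, for each fixed $r$, the six vectors $\vw_{1,1,r},\vw_{1,2,r},\vw_{1,3,r},\vw_{2,r,1},\vw_{2,r,2},\vw_{2,r,3}$ are linearly independent in $\cV_0$.

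For this step I would first check that each $\vw_{p,q}=(\vect e_p,-2\vect e_q)$ lies in $\set G$, since $2\cdot1+(-2)=0$. Next I would use the gluing structure: $\set G^{(1)}\cap\set G^{(2)}=\FF\vc$ and $\cV_0=\set G^{(1)}+\set G^{(2)}$. A relation $\vx+\vy=0$ with $\vx\in\set G^{(1)}$ and $\vy\in\set G^{(2)}$ then forces $\vx=-\vy\in\FF\vc$, because the decomposition $\FF\vc\oplus(\set G')^{(1)}\oplus(\set G')^{(2)}$ is direct.

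It then remains to show two things. First, the span of $\{\vw_{p,r}\}_{p}$ meets $\FF\vc$ trivially: its elements have second component a multiple of $\vect e_r$, while $\vc=(\vect\alpha,\vect\alpha)$ has second component $\vect\alpha$, which is not proportional to any $\vect e_r$. Symmetrically, the span of $\{\vw_{r,q}\}_{q}$ has first component a multiple of $\vect e_r$, so it also meets $\FF\vc$ trivially. Second, each triple is independent in $\set G$; this is clear from the distinct standard vectors $\vect e_p$ in the first component, respectively $\vect e_q$ in the second.

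Combining these facts, both partial sums in each vector equation vanish, so every coefficient is zero. The only point requiring care, and the step I expect to be the main obstacle, is the bookkeeping of the gluing along $\vc$. One must confirm that the sector vectors $\vw_{i,p,q}$ are images of $\vw_{p,q}\in\set G$ under the identification $\set G\cong\set G^{(i)}$ that fixes $\vc$. This must hold regardless of the choice of complement $\set G'$, so that the intersection argument above is legitimate.
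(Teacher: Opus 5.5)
Your proof is correct and is essentially the paper's argument. You split $\matr Q\matr L^\top=0$ into the three columns $\matr Q\varphi_r$ and reduce to independence of the six vectors spanning $\set U_{1r}+\set U_{2r}$, using $\set G^{(1)}\cap\set G^{(2)}=\FF\vc$; your first/second-coordinate check on $\vc$ is exactly the paper's observation $\matr L\vc=\vect\alpha\notin\FF\vect e_r$, because $\matr L$ restricts to $\operatorname{pr}_2$ and $\operatorname{pr}_1$ on the two sectors. The bookkeeping you flag is immediate from the definition $\set G^{(i)}=\FF\vc\oplus(\set G')^{(i)}$, with the $\vw_{i,p,q}$ taken as the corresponding copies of $\vw_{pq}$, so there is no gap.
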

\begin{proof}
Write $\matr{Q}=\sum_{i,p,q}t_{i,p,q}\matr{M}_{i,p,q}$.
For fixed $r$, only
\[
\set{U}_{1r}\eqdef\Span\{\vw_{1,p,r}:1\le p\le3\},\ \ 
\set{U}_{2r}\eqdef\Span\{\vw_{2,r,q}:1\le q\le3\}
\]
contribute to $\matr{Q}\varphi_r$. Each has dimension three, and we have
$\set{U}_{1r}\cap\set{U}_{2r}\subseteq\FF\vc$ as well as
$\matr{L}(\set{U}_{1r}+\set{U}_{2r})\subseteq\FF\vect{e}_r$, whereas
$\matr{L}\vc=\vect{\alpha}\notin\FF\vect{e}_r.$ 
Thus the intersection is zero. Since all $\alpha_p\ne0$, it follows that
\[
\matr{Q}\varphi_r=0
\ \Longrightarrow\
t_{1,p,r}=t_{2,r,q}=0\ \text{for all }p,q.
\]
Applying this for every $r$ gives $\matr{Q}=0$.
\end{proof}

\begin{lemma}[Local factor rigidity]\label{lem:local}
If $\tens{P}=\vu\otimes\vv\otimes\vw\in\cA_0$ has rank one, then
each of $\matr{L}\vu,\matr{L}\vv,\matr{L}\vw$ has three nonzero
coordinates, none of $\vu^2,\vv^2,\vw^2$ belongs to $\cK_0$, and
\begin{equation}\label{eq:zero-intersection}
\dim\Span\{\vu,\vv,\vw\}=2,\ \ 
\cK_0\cap\Sym^2(\Span\{\vu,\vv,\vw\})=0.
\end{equation}
More generally, if
$\tens P=\vu\otimes\matr C\in\cA_0$ in some mode ordering, then $\vu$ has three nonzero
$\matr L$-coordinates and $\vu^2\notin\cK_0$.
\end{lemma}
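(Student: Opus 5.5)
The whole proof rests on the identity \eqref{eq:six}: for pairwise distinct $r,s,t$, every $\tens P\in\cA_0$ satisfies $(\varphi_r\otimes\varphi_s\otimes\varphi_t)(\tens P)=-2$. I would apply it to $\tens P=\vu\otimes\vv\otimes\vw$, where the left side factors as $(\matr L\vu)_r(\matr L\vv)_s(\matr L\vw)_t$. Ranging over all six permutations $(r,s,t)$ of $(1,2,3)$, each coordinate of $\matr L\vu$ occurs as a factor of some nonzero product, and likewise for $\vv$ and $\vw$. So all nine coordinates are nonzero, and the three vectors are selector-dense. For the general case $\tens P=\vu\otimes\matr C$, with $\vu$ in some mode position, the same identity gives $(\matr L\vu)_r\cdot(\varphi_s\otimes\varphi_t)(\matr C)=-2$ for the appropriate ordering. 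Hence $(\matr L\vu)_r\neq0$ for each $r$.

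Next I would rule out squares in $\cK_0$. Since $\matr L\matr Q\matr L^\top$ is diagonal for every $\matr Q\in\cK_0$, the condition $\vu^2\in\cK_0$ would force $(\matr L\vu)(\matr L\vu)^\top$ to be diagonal. A rank-one diagonal matrix has at most one nonzero entry, so $\matr L\vu$ would have at most one nonzero coordinate, contradicting selector-density. The same argument handles $\vv$ and $\vw$, and it also covers $\vu$ in the $\vu\otimes\matr C$ case.

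For the dimension statement, suppose the span is one-dimensional, so that $\vu,\vv,\vw$ are all proportional. Then $\tens P=\lambda\vu^{3}$ is a symmetric tensor of symmetric rank one lying in $\cA_0$, which \cref{prop:single} excludes. Hence $\dim\ge2$. The upper bound $\dim\le2$ is the step I expect to be the main obstacle. I would argue by contradiction: assume $\vu,\vv,\vw$ are independent and write $\tens P=\tens F_0+\tens J$ with $\tens J\in\J_{\cV_0}(\cK_0)$. Let $\cW=\Span\{\vu,\vv,\vw\}$ and use the difference $\tens P-\tens F_0$. Contracting mode $A$ against functionals that vanish on $\vu$ kills $\tens P$. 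This shows that $\tens F_0$, contracted by such functionals, must lie in the image of $\J$ under those contractions, namely in $\cV_0\otimes\cK_0$-type slices plus $\cK_0$-terms. Since $\tens F_0$ is built from the three independent vectors $\va^{(1)},\va^{(2)},\vc$, one gets strong constraints. I would try first to show, via \cref{lem:faithful}, that the slices $\matr Q\matr L^\top$ determine the $\cK_0$ components. Then I would compare the coefficients of $(\matr L\vu)\otimes(\matr L\vv)\otimes(\matr L\vw)$ on the diagonal positions $(\varphi_r\otimes\varphi_r\otimes\varphi_t)$, using the explicit values \eqref{eq:selector}, to force a linear dependence among the three vectors. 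If this becomes unwieldy, the fallback is the imported classification of rank-one elements of $\cA_0$ from \cite{Lovitz26:arxiv}. That classification presumably yields $\tens P_0$-like forms such as \eqref{eq:single-representatives}, which visibly span two dimensions.

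Finally, for $\cK_0\cap\Sym^2(\cW)=0$ with $\dim\cW=2$, I would take a nonzero $\matr Q$ in the intersection. Then $\matr L\matr Q\matr L^\top$ is diagonal and lives on the two-dimensional space $\matr L\cW$, which contains the selector-dense vector $\matr L\vu$. By \cref{lem:faithful}, $\matr Q\matr L^\top\ne0$, so $\matr L\matr Q\matr L^\top$ is a nonzero form on $\matr L\cW$. Such a form is either rank one, hence $\matr x^2$ for some $\matr x\in\matr L\cW$ that must be a coordinate axis, or rank two, which forces $\matr L\cW$ to be a coordinate plane. The coordinate-plane case contradicts the fact that $\matr L\vu$ has three nonzero coordinates. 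The remaining degenerate case is that $\matr L\matr Q\matr L^\top$ vanishes on $\matr L\cW$; I would handle it by writing $\matr Q=\sum\lambda\vx\vy$ with $\vx,\vy\in\cW$ and using faithfulness once more.
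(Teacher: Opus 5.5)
Your selector-density step and your square exclusion are the paper's argument, and your lower bound $\dim\Span\{\vu,\vv,\vw\}\ge2$ from the cube exclusion in \cref{prop:single} is correct. The upper bound $\dim\le2$, however, is only a plan. The slice and coefficient comparison is never carried out, and the fallback ``classification'' is left unspecified. It closes in one line: $\tens F_0$ is symmetric, and every element of $\J_{\cV_0}(\cK_0)$ is symmetric in some pair of positions. The alternating projection therefore annihilates all of $\cA_0$, which gives $\vu\wedge\vv\wedge\vw=0$. The paper instead imports $\dim\cW=2$ from \cite[Lemma~3.7]{Lovitz26:arxiv}.

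The genuine gap is the zero intersection $\cK_0\cap\Sym^2\cW=0$, and selector density together with \cref{lem:faithful} cannot prove it. Take a selector-dense $\vd$. The plane $\Span\{\vd,\vw_{1,p,q}\}$ contains the pairwise independent vectors $\vd+t\vw_{1,p,q}$, and these are dense for all but one $t$. Yet $\vw_{1,p,q}^2\in\cK_0$, and $\matr L\,\vw_{1,p,q}^2\,\matr L^\top=4\vect{e}_q\vect{e}_q^\top$; this is exactly your ``coordinate-axis'' rank-one case, which you leave without a contradiction. Likewise $\matr L|_{\cW}$ may have rank one. Then $\matr Q=v(\vb\otimes\vz+\vz\otimes\vb)+t\vz\otimes\vz$ with $\matr L\vz=0$ satisfies $\matr L\matr Q\matr L^\top=0$ and $\matr Q\matr L^\top=v\,\vz\otimes\matr L\vb\ne0$, so neither diagonality nor faithfulness rules it out. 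This is why \cref{lem:H345-four-support} only reaches $\dim\le1$ with these tools.

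The missing idea is to use the membership $\tens P\in\cA_0$ a second time, through symmetrization and binary reduction. By \cref{rem:symmetrization}, $\sym\tens P\in\cA_0\cap\Sym^3\cW$.
\begin{enumerate}
\item If the intersection contains a rank-two matrix, \cref{lem:binary} gives a contradiction. This covers every intersection of dimension at least two, since a two-dimensional subspace of $\Sym^2\cW$ always contains a rank-two matrix.
\item Otherwise the intersection is $\FF\vx^2$. If no factor is parallel to $\vx$, choose $\vy$ with $\cW=\Span\{\vx,\vy\}$ and write the factors as nonzero multiples $a,b,c$ of $\vy+u'\vx$, $\vy+v'\vx$, $\vy+w'\vx$. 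With $s=(u'+v'+w')/3$ this gives $\sym\tens P\equiv abc\,(\vy+s\vx)^3 \pmod{\cW\vx^2}$. Since $\cW\vx^2\subseteq\J_{\cV_0}(\cK_0)$, this puts a cube in $\cA_0$, which is impossible.
\end{enumerate}
Hence some factor is parallel to $\vx$, and your square exclusion then eliminates $\vx^2$. This dichotomy, that the intersection is $0$ or $\FF\vx^2$ with a factor parallel to $\vx$, is exactly what the paper takes from \cite[Lemma~3.7]{Lovitz26:arxiv} before applying the square exclusion.
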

\begin{proof}
For $\tens P=\vu\otimes\matr C$, put
$c_{st}\eqdef(\varphi_s\otimes\varphi_t)(\matr C)$. Then, for each
$r$, \eqref{eq:six} gives
\[
\varphi_r(\vu)\,c_{st}=-2\ (\{r,s,t\}=\{1,2,3\})
\ \Longrightarrow\  \varphi_r(\vu)\ne0.
\]
Hence $(\matr{L}\vu)(\matr{L}\vu)^\top$ has nonzero
off-diagonal entries, whereas $\matr{L}\cK_0\matr{L}^\top$
is diagonal; thus $\vu^2\notin\cK_0$.
For a rank-one representative, put
$\cW\eqdef\Span\{\vu,\vv,\vw\}$. By
\cite[Lemma~3.7]{Lovitz26:arxiv}, we have
\[
\dim\cW=2,\ \ 
\cK_0\cap\Sym^2\cW\in\{0,\FF\vx^2\},
\]
and the second alternative requires one factor parallel to $\vx$.
The square exclusion eliminates that alternative.
\end{proof}

\begin{remark}[Relation to single-copy rigidity]\label{rem:local-literature}
The selector is the first map in
\cite[proof of Theorem~4.2]{Lovitz26:arxiv}.
It strengthens the factor-plane dichotomy of
\cite[Lemma~3.7]{Lovitz26:arxiv} by excluding every factor square
from $\cK_0$. Its one-sided injectivity in \cref{lem:faithful}
is a separate property, proved above from the explicit generators.
\end{remark}

\begin{definition}[Good and bad mode pairs]\label{def:good}
A mode pair of a rank-one representative in $\cA_0$, or in any
independent copy of it, is good if its two factors are independent
and bad if they are proportional.
\end{definition}
Every such representative has at least two good pairs, by
\cref{lem:local}. Consequently two representatives have a common
good pair. This elementary observation permits a permutation of the
original substitution maps; it does not require new maps. 
By \cite[proof of Theorem~3.2, Case~1]{Lovitz26:arxiv}, the
zero intersection in \cref{lem:local} gives the following input.
\begin{lemma}[Rank-one matrices in a local extension]
\label{lem:matrix-rigidity}
If $\vu\otimes\vv\otimes\vw\in\cA_0$ and $\vu,\vv$ are independent,
then
\[
\rkone\bigl(\cK_0+\FF(\vu\otimes\vv)\bigr)\setminus\cK_0
=\{\lambda\vu\otimes\vv:\lambda\in\FF\setminus\{0\}\}.
\]
\end{lemma}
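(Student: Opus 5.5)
The plan is a direct computation that reduces everything to the zero intersection \eqref{eq:zero-intersection} of \cref{lem:local}; no further structure of $\cK_0$ should be needed.

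\textbf{Setup and easy inclusion.} Take $\matr X\in\rkone\bigl(\cK_0+\FF(\vu\otimes\vv)\bigr)\setminus\cK_0$ and write $\matr X=\matr Q+\lambda\,\vu\otimes\vv$ with $\matr Q\in\cK_0$. Since $\matr X\notin\cK_0$, we have $\lambda\ne0$. Conversely, every $\lambda\vu\otimes\vv$ with $\lambda\ne0$ has rank one, since $\vu,\vv\ne0$. It is not in $\cK_0$: elements of $\cK_0$ are symmetric, whereas $\vu\otimes\vv$ is not, because $\vu,\vv$ are independent. This gives the inclusion $\supseteq$. Rescaling, assume $\lambda=1$ and write $\matr X=\vx\otimes\vy$.

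\textbf{Compare skew parts.} The relation $\matr Q=\vx\otimes\vy-\vu\otimes\vv$ holds with $\matr Q$ symmetric. Taking antisymmetric parts gives
\[
\vx\otimes\vy-\vy\otimes\vx=\vu\otimes\vv-\vv\otimes\vu .
\]
Because $\vu,\vv$ are independent, the right-hand side is a nonzero skew matrix of rank two with column space $\Span\{\vu,\vv\}$. Hence $\vx,\vy$ are independent and $\Span\{\vx,\vy\}=\Span\{\vu,\vv\}$.

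\textbf{Conclude via the zero intersection.} It follows that
\[
\matr Q\in\Sym^2(\Span\{\vu,\vv\})\subseteq\Sym^2(\Span\{\vu,\vv,\vw\}).
\]
Since also $\matr Q\in\cK_0$, the zero intersection \eqref{eq:zero-intersection} of \cref{lem:local} forces $\matr Q=0$. This is where the hypothesis $\vu\otimes\vv\otimes\vw\in\cA_0$ enters. Therefore $\matr X=\vu\otimes\vv$ after rescaling, i.e.\ $\matr X=\lambda\vu\otimes\vv$ in general.

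\textbf{Where the difficulty lies.} The only delicate step is ensuring that $\matr Q$ lies in $\Sym^2$ of the factor plane, rather than merely having its column and row spaces in $\Span\{\vx,\vu\}$ and $\Span\{\vy,\vv\}$ separately. The skew-part comparison settles this cleanly, so I expect no real obstacle beyond invoking \cref{lem:local}.
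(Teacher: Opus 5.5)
Your proof is correct and follows the paper's route. The paper obtains this lemma from the zero intersection \eqref{eq:zero-intersection} of \cref{lem:local} by citing Lovitz's Case~1 argument. Your skew-part comparison, which forces $\Span\{\vx,\vy\}=\Span\{\vu,\vv\}$ and hence $\matr Q\in\cK_0\cap\Sym^2(\Span\{\vu,\vv\})=0$, is a clean self-contained version of that step.
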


\begin{lemma}[Binary reduction modulo a quadratic]\label{lem:binary}
If $\dim\cW=2$ and $\cK_0\cap\Sym^2(\cW)$ contains a rank-two
matrix, then $\cA_0\cap\Sym^3(\cW)=\varnothing$.
\end{lemma}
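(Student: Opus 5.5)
The plan is to combine the translation invariance of $\cA_0$ under $\J_{\cV_0}(\cK_0)$ with the selector identities \eqref{eq:six}. Suppose, for contradiction, that $\tens P\in\cA_0\cap\Sym^3(\cW)$, and let $\matr Q\in\cK_0\cap\Sym^2(\cW)$ have rank two. For any $\vu\in\cW$, the symmetrization $\vu\matr Q$ is a combination of $\vu\otimes\matr Q$, $\iota_2(\vu\otimes\matr Q)$ and $\matr Q\otimes\vu$, so $\cW\matr Q\subseteq\J_{\cV_0}(\cK_0)$. Hence $\tens P+\cW\matr Q\subseteq\cA_0\cap\Sym^3(\cW)$.

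\emph{Step 1: normal form.} First work over $\CC$; a real counterexample complexifies to a complex one, because the defining data are rational. A rank-two symmetric $\matr Q$ on the plane $\cW$ factors as $\matr Q=\vx\vy$ with $\vx,\vy$ a basis of $\cW$, up to scaling. Then $\cW\matr Q=\Span\{\vx^2\vy,\vx\vy^2\}$, and $\Sym^3(\cW)=\Span\{\vx^3,\vy^3\}\oplus\cW\matr Q$. Subtracting a suitable element of $\cW\matr Q$, we may therefore assume
\[
\tens P=a\,\vx^3+b\,\vy^3\in\cA_0 .
\]
Here $a=b=0$ is impossible because $0\notin\cA_0$. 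Likewise $ab=0$ is impossible, because $\cA_0$ contains no tensor of symmetric rank one by \cref{prop:single}. So $ab\ne0$.

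\emph{Step 2: selector equations.} Put $X_r\eqdef\varphi_r(\vx)$ and $Y_r\eqdef\varphi_r(\vy)$. Since $\matr L\matr Q\matr L^\top$ is diagonal for $\matr Q\in\cK_0$, the off-diagonal entries of $\matr L(\vx\vy)\matr L^\top$ vanish, which gives
\[
X_rY_s+X_sY_r=0\qquad(r\ne s).
\]
By \eqref{eq:six}, for $\{r,s,t\}=\{1,2,3\}$ we also have
\[
a\,X_rX_sX_t+b\,Y_rY_sY_t=-2 .
\]
In particular $X_1X_2X_3$ and $Y_1Y_2Y_3$ are not both zero.

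\emph{Step 3: elimination.} This finite case analysis is where I expect the only real work, though it is elementary.
\begin{enumerate}
\item Suppose all $X_r,Y_r$ are nonzero. Setting $\rho_r\eqdef Y_r/X_r$, the first relation becomes $\rho_s=-\rho_r$ for all $r\ne s$. This forces $\rho_1=-\rho_2=\rho_3=-\rho_1$, so $\rho_1=0$, a contradiction.
\item Suppose some $X_r=0$. Then $X_sY_r=0$ for $s\ne r$, and $X_1X_2X_3=0$, so $Y_1Y_2Y_3\ne0$. Since $Y_r\ne0$, this gives $X_s=0$ for every $s$, so $\vx\in\ker\matr L$.
\item Symmetrically, if some $Y_r=0$, then all $Y_s=0$.
\end{enumerate}
So in the surviving case, one of $\vx,\vy$ (say $\vx$) lies in $\ker\matr L$ while $\matr L\vy$ is selector-dense.

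This last case must be excluded using the structure of $\cK_0$. Since $\vx\vy\in\cK_0$, I would show that $\matr L$ annihilating $\vx$ contradicts \cref{lem:faithful}. The key computation is
\[
(\vx\vy)\matr L^\top=\tfrac12\,\vx\otimes\matr L\vy ,
\]
which is nonzero; this alone does not give a contradiction. To finish, I would pair this with the explicit generators. Expanding $\vx\vy=\sum t_{i,p,q}\matr M_{i,p,q}$, apply $\varphi_r\otimes\varphi_r$ and the one-sided contractions used in the proof of \cref{lem:faithful}. The aim is to show that $\matr Q\varphi_r$ is supported in $\set U_{1r}+\set U_{2r}$, whose image under $\matr L$ lies in $\FF\vect e_r$, while here $\matr Q\varphi_r=\tfrac12 Y_r\vx$ with $\matr L\vx=0$. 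Then $\vx\in(\set U_{1r}+\set U_{2r})\cap\ker\matr L$ for all three $r$.

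Intersecting these spaces over $r=1,2,3$, using $\set U_{1r}\cap\set U_{2r}\subseteq\FF\vc$, should force $\vx=0$, which is the contradiction. If this intersection argument turns out not to close, my fallback is to invoke \cite[Lemma~3.7]{Lovitz26:arxiv} directly, applied to the plane $\cW$ containing the rank-two element $\vx\vy$ of $\cK_0$.
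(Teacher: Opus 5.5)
Your Step~1 is sound, but the argument does not close. The surviving case of Step~3 ($\matr L\vx=0$ with $\matr L\vy$ selector-dense) cannot be excluded using $\cK_0$ and $\matr L$ alone, and the proposed intersection claim is false. The vector $\vw_{1,1,r}-\vw_{1,2,r}=(\vect e_1-\vect e_2,\vect 0)$ in the first internal sector does not depend on $r$, lies in every $\set U_{1r}$, and is annihilated by $\matr L$. Hence $\bigcap_r(\set U_{1r}+\set U_{2r})\cap\ker\matr L\ne0$.

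Worse, the configuration actually occurs. In the first sector take $\vx=(\vect c,\vect 0)$ with $\vect c=\vect e_1-\vect e_2$, and $\vy=(\vect 0,\vect\alpha)$; both lie in $\set G^{(1)}$. In the $(\vu,\vv)$ coordinates, the matrix $\sum_{p,q}c_p\alpha_q\vw_{1,p,q}^{\,2}$ has diagonal blocks $(\sum_q\alpha_q)\diag(\vect c)=0$ and $4(\sum_pc_p)\diag(\vect\alpha)=0$, and off-diagonal block $-2\vect c\vect\alpha^\top$. Therefore
\[
\vx\vy=-\tfrac14\sum_{p,q}c_p\alpha_q\,\vw_{1,p,q}^{\,2}\in\cK_0 .
\]
This is a rank-two element with $\matr L\vx=0$ and $\matr L\vy=\vect\alpha$, and both of your selector systems hold with $b=1$ and arbitrary $a$. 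So no argument that uses only $\cK_0$ and the single selector $\matr L$ can rule this case out. Your fallback citation of \cite[Lemma~3.7]{Lovitz26:arxiv} is not an argument either. As that lemma is used in this paper, it concerns rank-one representatives, not arbitrary elements of $\Sym^3\cW$.

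The missing idea is already within reach at the end of your Step~1. Normalize $\matr Q$ to $\vx\vy$ and choose $s,t\in\FF$ with $s^3=a$ and $t^3=b$. Then
\[
(s\vx+t\vy)^3-(a\vx^3+b\vy^3)=3s^2t\,\vx\matr Q+3st^2\,\vy\matr Q\in\cW\matr Q\subseteq\J_{\cV_0}(\cK_0).
\]
So the cube $(s\vx+t\vy)^3$, which is either zero or of symmetric rank one, lies in $\cA_0$, contradicting \cref{prop:single}. This is exactly the paper's binary reduction: every element of $\Sym^3\cW$ is a cube modulo $\cW\matr Q$. It makes your case split on $a,b$ and all of Steps~2--3 unnecessary.
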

\begin{proof}
This is the binary reduction in \cite[Lemma~3.6(i) and proof of
Lemma~3.7]{Lovitz26:arxiv}: for the stated rank-two $\matr Q$,
each $\tens H\in\Sym^3\cW$ satisfies
$\tens H-\vh^3\in\cW\matr Q\subseteq\J_{\cV_0}(\cK_0)$
for some $\vh\in\cW$. Membership of $\tens H$ in $\cA_0$
would contradict the cube exclusion in \cref{prop:single}.
\end{proof}

\begin{remark}\label{rem:symmetrization}
The affine space is invariant under symmetrization. Thus the last
contradiction also applies to a representative, not necessarily symmetric, supported
on $\cW^{\otimes3}$. It does not apply if one of its factors lies
outside $\cW$.
This is the same symmetrization step used in
\cite[proof of Lemma~3.7]{Lovitz26:arxiv}, after the binary
reduction of \cite[Lemma~3.6(i)]{Lovitz26:arxiv}.
\end{remark}

\section{Direct sums and the fixed-source residual principle}
\label{sec:source}
\subsection{Block notation and the rank of direct sums}
Use subscripts $j=1,2$ for two independent copies of the entire
single-copy construction. These are different from its internal
sectors. Set $\cV\eqdef\cV_1\oplus\cV_2$ and
$\cZ\eqdef\cZ_1\oplus\cZ_2$, and set $\cK\eqdef\cK_1\oplus\cK_2$ and
$\cA_j\eqdef\tens F_j+\J_{\cV_j}(\cK_j)$ and
$\cA\eqdef\tens{F}_1+\tens{F}_2+\J_{\cV}(\cK)$.
When the ambient space is clear, we abbreviate
$\J_{\cV_j}(\cK_j)$ and $\J_{\cV}(\cK)$ by $\J(\cK_j)$ and
$\J(\cK)$, respectively. Let $\matr{p}_j$ denote block projection,
and write $\vx^{[j]}\eqdef\matr p_j\vx$ for the block-$j$ component
of a vector. For $\tens{P}\in\cV^{\otimes3}$ write
$\tens{P}_{ijk}$ for its component in
$\cV_i\otimes\cV_j\otimes\cV_k$.
We call $\cV_j$ a physical block and $\tens P_{jjj}$ the
\emph{pure-block component}, or simply the \emph{pure block}, at $j$;
its CP rank is the
\emph{pure-block CP rank} at $j$. A tensor or vector is \emph{pure}
when it is supported on one physical block. The pure-block components
of a tensor in $\cA$ belong to $\cA_j$.
Its $(2,1,1)$ component belongs to $\cV_2\otimes\cK_1$, with the
corresponding statement in the other five mixed positions.
In particular the affine space includes cross-block corrections.
For general $k$, write
$\cA^{[k]}\eqdef\sum_{j=1}^k\tens F_j+
\J_{\bigoplus_j\cV_j}(\bigoplus_j\cK_j)$.

\begin{proposition}[Affine minimum, CP rank, and concision of all copies]
\label{prop:ordinary}
For every integer $k\ge1$, we have
$\min_{\tens P\in\cA^{[k]}}r(\tens P)=k$. Moreover,
$\tens{V}_k$ is concise in dimension $27k$ and satisfies
$r(\tens{V}_k)=55k$ over either field.
\end{proposition}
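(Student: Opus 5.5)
The plan is to prove the three assertions in turn: the affine minimum first, then concision, then the CP rank via the adjoin formula \eqref{eq:adjoin-rank}.

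\textbf{Upper bound for the affine minimum.} First I observe that $\tens V_k$ is itself a symmetric adjoin in the sense of \eqref{eq:adjoin}. Its base tensor is $\tens F\eqdef\sum_j\tens F_j$ on $\bigoplus_j\cV_j$. Its rank-one matrices are the $18k$ matrices $\matr M_{j,\nu}$, which span $\bigoplus_j\cK_j$. These matrices are independent because each block family is independent (\cref{prop:single}) and the blocks are supported on distinct summands of $\Sym^2$. Hence \eqref{eq:adjoin-rank} applies with $m=18k$, and gives $r(\tens V_k)=54k+\min_{\cA^{[k]}}r$. For the upper bound on the minimum, place the representative $\tens P_0$ of \eqref{eq:single-representatives} in each block and take $\sum_j\tens P_{0,j}$. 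This lies in $\cA^{[k]}$, because each summand lies in $\tens F_j+\J(\cK_j)\subseteq\tens F_j+\J(\bigoplus\cK)$, and it has CP rank at most $k$.

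\textbf{Lower bound for the affine minimum.} This is the main step, and I would prove it by induction on $k$ using a flattening argument together with the selector. For each block $j$, let $\matr L_j$ be the selector on $\cV_j$ with rows $\varphi^{(j)}_r$. Fix distinct $r,s,t$. By \eqref{eq:six}, the functional $\varphi^{(j)}_r\otimes\varphi^{(j)}_s\otimes\varphi^{(j)}_t$ takes the value $-2$ on every $\tens P\in\cA^{[k]}$. The reason is that the functional kills all cross-block components, and it also kills every $\J(\cK_j)$ correction, since each such correction places two distinct row functionals on a $\cK_j$ slot.

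Now take a decomposition $\tens P=\sum_{i=1}^{n}\vu_i\otimes\vv_i\otimes\vw_i$ and apply the linear map $\matr L_1\oplus\cdots\oplus\matr L_k$ in each mode. This yields a tensor in $(\FF^{3k})^{\otimes3}$ whose restriction to the off-diagonal triples inside block $j$ is prescribed: the entry is $-2$ on every triple $(r,s,t)$ with $\{r,s,t\}=\{1,2,3\}$. I would then show that any tensor with this property has at least $k$ terms with selector-dense block-$j$ components, one for each $j$. The key claim is that a term whose block-$j$ image is not selector-dense in all three modes contributes zero to every one of the six permutation entries of block $j$. Consequently, for each $j$ there is some term that is selector-dense at block $j$.

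The difficulty is that a single term could be dense in several blocks at once, so this count alone does not give $k$ terms. For this reason I would argue more carefully. Restricting the tensor to the sub-array indexed by the $k$ "permutation" positions produces, after contraction, a diagonal-type tensor $-2\sum_j\vect e_j^{\otimes3}$. Concretely, contract mode one with $\varphi^{(j)}_1$, mode two with $\varphi^{(j)}_2$, and mode three with $\varphi^{(j)}_3$, and sum over a suitable diagonal of the $j$ choices. I expect this step to be the main obstacle, because the mixed-block cross terms of $\J$ must vanish under the chosen functionals. They do vanish: any $\cK_j$ slot sees $\varphi^{(j)}_r\otimes\varphi^{(j)}_s$ with $r\ne s$, which is zero by \eqref{eq:six}, and slots in different blocks are killed by the block projections.

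Having established that, the resulting $k\times k\times k$ tensor is diagonal with nonzero diagonal, so it has CP rank exactly $k$. A linear image of $\tens P$ cannot have larger rank than $\tens P$, so $r(\tens P)\ge k$. The same argument over $\RR$ or $\CC$ gives the minimum equal to $k$, and therefore $r(\tens V_k)=54k+k=55k$.

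\textbf{Concision.} This follows directly from the structure of the direct sum. The first flattening of $\tens V_k$ is the direct sum of the first flattenings of the copies of $\tens T_0$. Each copy is concise on its $27$-dimensional space, since $\tens T_0$ is concise: otherwise its rank would agree with that of a tensor on a smaller space, and the standard fact that the $\vect e_\nu$ together with $\Ess$ of $\tens F_0+\cK_0$ span everything gives conciseness. So the image of the flattening is the full $27k$-dimensional space.
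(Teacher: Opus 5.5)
Your rank computation is the paper's argument once the detour is removed. The ``suitable diagonal'' contraction is the blockwise selector $\matr L_q:\cV\to\FF^k$, $\matr L_q(\sum_j\vv_j)=\sum_j\varphi_q(\vv_j)\vect e_j$. Every correction in $\J_{\cV}(\cK)$ is killed, because its two $\cK_j$ positions receive distinct rows, and $(\matr L_1\otimes\matr L_2\otimes\matr L_3)\tens P=-2\sum_j\vect e_j^3$. Flattening rank $k$ together with \eqref{eq:adjoin-rank} for $m=18k$ then gives $55k$. Your remark that the $18k$ matrices are independent is a detail the paper leaves implicit. The induction and the density count can simply be deleted. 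Their ``key claim'' is in fact false: a term with $\varphi_1(\vu^{[j]})=0$ and selector-dense $\vv^{[j]},\vw^{[j]}$ still contributes to the $(2,1,3)$ entry. Nothing later uses it, so this does no damage.

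The genuine gap is concision. Reducing to $\tens T_0$ through the direct sum of flattening images is fine. Your argument for $\tens T_0$ itself, however, is circular (``concise since $\tens T_0$ is concise''). The clause that its rank would agree with that of a tensor on a smaller space gives no contradiction, since rank is invariant under such embeddings. The ``standard fact'' you invoke is exactly the data-dependent statement that must be checked.

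Two steps are missing.
\begin{enumerate}
\item The eighteen vectors $\vw_\nu$ span $\cV_0$. The annihilator of $\{\vw_{pq}\}$ in $(\FF^3\oplus\FF^3)^*$ is $\FF(2,2,2,1,1,1)$, so the $\vw_{pq}$ span the five-dimensional space $\set G$. Since $\cV_0=\set G^{(1)}+\set G^{(2)}$, the claim follows.
\item Suppose $(\xi+\zeta)\lrcorner\tens T_0=0$ with $\xi\in\cV_0^*$ and $\zeta\in\cZ_0^*$. Its $\cV_0\otimes\cZ_0$ component is $\sum_\nu c_\nu\,\xi(\vw_\nu)\,\vw_\nu\otimes\vect e_\nu$, with every $c_\nu\ne0$. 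Independence of the $\vect e_\nu$, combined with step (i), gives $\xi=0$. The $\cV_0\otimes\cV_0$ component then reduces to $\sum_\nu\zeta(\vect e_\nu)\matr M_\nu=0$, and independence of the $\matr M_\nu$ gives $\zeta=0$.
\end{enumerate}
Only with both steps do you get $\dim\Ess_A(\tens T_0)=27$, and hence $27k$ for $\tens V_k$.
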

\begin{proof}
Temporarily set $\cV\eqdef\bigoplus_{j=1}^k\cV_j$ and
$\cK\eqdef\bigoplus_{j=1}^k\cK_j$.
For $q=1,2,3$, let $\matr{L}_q:\cV\to\FF^k$ apply row
$\varphi_q$ of \eqref{eq:selector} separately on each copy:
\[
\matr{L}_q\Bigl(\sum_{j=1}^k\vv_j\Bigr)
\eqdef\sum_{j=1}^k\varphi_q(\vv_j)\vect{e}_j.
\]
Every correction in $\J_{\cV}(\cK)$ is annihilated by
$\matr{L}_1\otimes\matr{L}_2\otimes\matr{L}_3$, since its
two $\cK$ positions receive distinct selector rows. Hence
\begin{equation}\label{eq:all-copy-selector}
(\matr{L}_1\otimes\matr{L}_2\otimes\matr{L}_3)\tens{P}
=-2\sum_{j=1}^k\vect{e}_j^3
\ \ (\tens{P}\in\cA^{[k]}).
\end{equation}
The right-hand tensor has flattening rank $k$, so every such
$\tens{P}$ has CP rank at least $k$. The sum of the $k$
representatives in \eqref{eq:single-representatives} attains this
bound, proving the affine minimum. Equation~\eqref{eq:adjoin-rank},
with $m=18k$, gives
$r(\tens{V}_k)=54k+k=55k$. 
For concision, the annihilator of $\{\vw_{pq}\}$ in
$(\FF^3\oplus\FF^3)^*$ is
$\{(\xi,\eta):\xi_p-2\eta_q=0\ \forall p,q\}
=\FF(2,2,2,1,1,1).$ 
Hence $\Span\{\vw_{pq}\}=\set{G}$ and the eighteen adjoin
directions span $\cV_0$. For
$\xi+\zeta\in\cV_0^*\oplus\cZ_0^*$, we have
\[
(\xi+\zeta)\lrcorner\tens{T}_0=0
\ \Longrightarrow\
\xi(\vw_\nu)\vw_\nu=0\ (1\le\nu\le18)
\ \Longrightarrow\ \xi=0.
\]
Then $\sum_\nu\zeta(\vect{e}_\nu)\matr{M}_\nu=0$ gives
$\zeta=0$. Thus $\tens{T}_0$ is concise. The flattening images
of its independent copies form a direct sum, so
$\dim\Ess_A(\tens{V}_k)=27k$.
\end{proof}

We next strengthen the affine minimum to a structural statement.
\begin{lemma}[Rank-two block separation]\label{lem:rank-two}
If $\tens{P}\in\cA$ and $r(\tens{P})=2$, then
$\tens{P}=\tens{P}_1+\tens{P}_2$, where
$\tens{P}_j\in\cA_j$ has rank one and is supported entirely on
$\cV_j^{\otimes3}$. In every minimal two-term CP decomposition, the two
terms are these pure-block terms, up to order.
\end{lemma}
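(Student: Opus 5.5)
The plan is to run everything through the all-copy selector of \cref{prop:ordinary} and then read off the mixed block components. Fix an arbitrary minimal decomposition
\[
\tens P=\vx_1\otimes\vy_1\otimes\vz_1+\vx_2\otimes\vy_2\otimes\vz_2 .
\]
Proving that each of its two terms is pure, and lies in a different block, gives both assertions of the lemma at once: the decomposition of $\tens P$ into pure-block terms, and the uniqueness statement for all minimal decompositions.

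\emph{Step 1: selector normalization.} Apply $\matr L_1\otimes\matr L_2\otimes\matr L_3$ with $k=2$. By \eqref{eq:all-copy-selector},
\[
\sum_{i=1}^2\matr L_1\vx_i\otimes\matr L_2\vy_i\otimes\matr L_3\vz_i=-2(\vect e_1^3+\vect e_2^3).
\]
The right-hand side has multilinear rank $(2,2,2)$. So the pairs $\{\matr L_1\vx_i\}$, $\{\matr L_2\vy_i\}$, $\{\matr L_3\vz_i\}$ are bases of $\FF^2$. Uniqueness of the rank-two decomposition of a diagonal $2\times2\times2$ tensor (or a direct use of \cref{lem:factor-calculus} on the slices) then gives, after relabelling, $\matr L_1\vx_i\parallel\vect e_i$, and likewise for $\vy_i,\vz_i$. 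Concretely,
\[
\varphi_q\bigl(\text{block-}j'\text{ part of the mode-}q\text{ factor of term }i\bigr)
\begin{cases}
=0, & j'\ne i,\\
\ne0, & j'=i.
\end{cases}
\]

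\emph{Step 2: killing cross-block parts via mixed components.} Take the component $\tens P_{112}\in\cK_1\otimes\cV_2$, which equals $\sum_i\vx_i^{[1]}\otimes\vy_i^{[1]}\otimes\vz_i^{[2]}$. By Step 1, $\varphi_3(\vz_1^{[2]})=0\ne\varphi_3(\vz_2^{[2]})$.

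Suppose $\vz_1^{[2]}\ne0$. Then $\vz_1^{[2]}$ and $\vz_2^{[2]}$ are independent, so some functional on $\cV_2$ kills $\vz_2^{[2]}$ but not $\vz_1^{[2]}$. Contracting with it gives $\vx_1^{[1]}\otimes\vy_1^{[1]}\in\cK_1$. This is a symmetric rank-one matrix, so $\vx_1^{[1]}\parallel\vy_1^{[1]}$ and $(\vx_1^{[1]})^2\in\cK_1$; both are nonzero because $\varphi$-values are nonzero by Step 1.

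To contradict this, I first show that term $1$ restricted to block $1$ is a rank-one element of $\cA_1$, and then invoke \cref{lem:local}. Contracting $\tens P_{211}\in\cV_2\otimes\cK_1$ and its analogues with $\varphi_q$ in block $2$ kills term $1$ and shows that each mode pair of $\vx_2^{[1]}\otimes\vy_2^{[1]}\otimes\vz_2^{[1]}$ lies in $\cK_1$. So this cube-type term is either zero or of the form $c\,\vv^{\otimes3}$ with $\vv^2\in\cK_1$. In both cases it lies in $\J(\cK_1)$. Hence $\vx_1^{[1]}\otimes\vy_1^{[1]}\otimes\vz_1^{[1]}\in\cA_1$, and it is nonzero since $0\notin\cA_1$.

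\cref{lem:local} now forbids $(\vx_1^{[1]})^2\in\cK_1$, which is the desired contradiction. Therefore $\vz_1^{[2]}=0$. Running the same argument over all mode orderings and both block indices (the $(1,1,2)$, $(1,2,1)$, $(2,1,1)$ components and their block-$2$ mirrors) forces every cross-block factor part to vanish. Hence term $i$ is supported on $\cV_i^{\otimes3}$.

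\emph{Step 3: conclusion.} Pure-block projection now gives $\tens P_{jjj}=\vx_j\otimes\vy_j\otimes\vz_j\in\cA_j$, which is the claimed decomposition. Since the starting decomposition was arbitrary, every minimal two-term decomposition consists of these two pure-block terms, up to order.

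The main obstacle is Step 2. It needs both the existence of a separating functional, which rests on the selector zero/nonzero pattern from Step 1, and the preliminary fact that the block-$1$ restriction of term $1$ is already a rank-one representative in $\cA_1$, so that \cref{lem:local} applies. I would verify the absorption of the other term's block-$1$ part into $\J(\cK_1)$ carefully before relying on it.
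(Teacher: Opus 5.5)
Your proof is correct, but it takes a different route from the paper's.

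\textbf{The paper's route.} The paper does not use the selector here. From the $(2,1,1)$-type equations and $\cA_j\cap\J(\cK_j)=\varnothing$, it shows that every block projection of every mode space is a line: a two-dimensional block-$2$ projection in mode $A$ would put both block-$1$ pairs in $\cK_1$, and hence $\tens P_{111}$ in $\J(\cK_1)$. It then expands $\tens P$ in the six resulting pure lines. Each mixed coefficient $t_{jk\ell}$ vanishes because same-block pairs lie outside $\cK_j$. Finally, Kruskal's rank-two uniqueness gives the statement about all minimal decompositions.

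\textbf{Your route.} You fix an arbitrary minimal decomposition and read off the zero/nonzero pattern of selector values from \eqref{eq:all-copy-selector}. This needs only the elementary uniqueness for $\vect e_1^3+\vect e_2^3$ in $\FF^2\otimes\FF^2\otimes\FF^2$. You then remove each cross-block factor part directly. Because you start from an arbitrary decomposition and end with its terms equal to $\tens P_{111}$ and $\tens P_{222}$, the uniqueness assertion comes for free, with no appeal to Kruskal.

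\textbf{What each buys.} Your argument depends on the specific selector structure of $\cK_0$, through \eqref{eq:six} and \cref{lem:local}. The paper's argument uses nothing beyond the mixed-block equations and $0\notin\cA_j$, so it is less tied to this particular construction.

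\textbf{A simplification.} The detour in Step~2 through the block-$1$ representative and \cref{lem:local} is unnecessary. Since $\varphi_1(\vx_2^{[1]})=0$, apply $\varphi_1\otimes\varphi_2$ on block $1$ to $\tens P_{112}\in\cK_1\otimes\cV_2$. Using $(\varphi_1\otimes\varphi_2)(\cK_1)=0$ from \eqref{eq:six}, this gives $\varphi_1(\vx_1^{[1]})\varphi_2(\vy_1^{[1]})\,\vz_1^{[2]}=0$, so $\vz_1^{[2]}=0$ at once. The other five cross-block parts vanish the same way, using the appropriate pair of distinct selector rows on the repeated block.
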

\begin{proof}
Write $\tens{P}=\sum_{i=1}^2\va_i\otimes\vb_i\otimes\vc_i$.
For example, we have
\[
\dim\Span\{\matr{p}_2\va_i:i=1,2\}=2
\ \Longrightarrow\
\matr{p}_1\vb_i\otimes\matr{p}_1\vc_i\in\cK_1\ (i=1,2)
\]
by the $(2,1,1)$ equation, contradicting
$\tens{P}_{111}\in\cA_1$. Permuting modes and blocks gives
\[
\dim\Span\{\matr{p}_j\va_i\}_i
=\dim\Span\{\matr{p}_j\vb_i\}_i
=\dim\Span\{\matr{p}_j\vc_i\}_i=1\ (j=1,2).
\]
Use pure-block bases $\va_j',\vb_j',\vc_j'$ in these six lines:
\[
\tens{P}=\sum_{j,k,\ell=1}^2
t_{jk\ell}\va_j'\otimes\vb_k'\otimes\vc_\ell',
\ \  t_{111}t_{222}\ne0.
\]
Every same-block pair lies outside $\cK_j$, since otherwise the
corresponding pure block lies in $\J(\cK_j)$. Each mixed equation
therefore gives $t_{jk\ell}=0$ unless $j=k=\ell$.
Their sum has multilinear rank $(2,2,2)$, so Kruskal's
rank-two uniqueness criterion
\cite[Corollary~2]{rhodes2010concise} gives the last assertion.
\end{proof}

\subsection{Compatible residuals and the original substitution maps}
Suppose $\tens{V}_2=\sum_{i=1}^N\vh_i^3$ is a minimal symmetric
decomposition with $N\le111$. By \cref{prop:ordinary}, we have $N\ge110$.
The decomposition-level substitution lemma
\cite[Lemma~3.4(i)--(ii)]{Lovitz26:arxiv} supplies maps
$\matr{\Pi}_A,\matr{\Pi}_B,\matr{\Pi}_C:\cV\oplus\cZ\to\cV$
fixing $\cV$, and pairwise disjoint index sets
$\set{I}_A,\set{I}_B,\set{I}_C$ of size $36$, such that
$\matr{\Pi}_m\vh_i=0$ for $i\in\set{I}_m$.
Consequently
\begin{equation}\label{eq:residual}
\tens{P}\eqdef(\matr{\Pi}_A\otimes\matr{\Pi}_B\otimes\matr{\Pi}_C)
\tens{V}_2\in\cA,\ \  2\le r(\tens{P})\le N-108\le3.
\end{equation}
All subsequent mode permutations permute these original maps and
their original index sets.

For any selected pair, written as $A,B$, put
\begin{equation}\label{eq:G-original}
\tens{G}\eqdef(\matr{\Pi}_A\otimes\matr{\Pi}_B\otimes\id)\tens{V}_2
=\sum_{i\in\set{S}}\matr{Q}_i\otimes\vh_i,\ \ 
\matr{Q}_i\eqdef(\matr{\Pi}_A\vh_i)\otimes(\matr{\Pi}_B\vh_i).
\end{equation}
Here $\set S\eqdef\{i:\matr Q_i\ne0\}$, so zero matrices have
been discarded and $|\set{S}|\le N-72\le39$.
Writing the square bases of both copies as
$\matr{M}_1,\ldots,\matr{M}_{36}$, direct expansion gives
\begin{equation}\label{eq:G-compensation}
\tens{G}=\tens{P}
+\sum_{\nu=1}^{36}\matr{M}_\nu\otimes
(\vect{e}_\nu-\matr{\Pi}_C\vect{e}_\nu).
\end{equation}
The $36$ compensation vectors have independent projections on
$\cZ$, whereas all residual coefficients belong to $\cV$.
We call a residual together with these maps and the expression
\eqref{eq:G-original} \emph{compatible with the original
$N$-term symmetric decomposition}. This is an additional condition, not merely
membership in $\cA$.

For the selected pair $A,B$, write
\[
\cE(\tens{G})\eqdef\im\bigl((\cV\oplus\cZ)^*
\xrightarrow{\ \lrcorner\,\tens{G}\ }\cV\otimes\cV\bigr),
\ \  \set L(\tens P)\eqdef\im\bigl(\cV^*
\xrightarrow{\ \lrcorner\,\tens P\ }\cV\otimes\cV\bigr).
\]
Here and below, the notation $\cE(\tens G)$ denotes the complete matrix
contraction image for the selected pair, while $\set L(\tens P)$
denotes the residual slice space. The notations $\cE_{XY}$ and
$\set L_{XY}(\tens P)$ abbreviate these spaces when the pair $XY$
is displayed. For a physical block $j$, the space
$(\matr p_j\otimes\matr p_j)\cE(\tens G)$ is the local contraction
image at $j$. We call
$\vh_i^3$ a decomposition term and $\matr Q_i$ its
matrix source for the selected pair, and
$(\matr p_j\otimes\matr p_j)\matr Q_i$ its local source matrix at
block $j$; below, ``source'' refers only to such a matrix source or
its index.

\begin{lemma}[The contraction-image criterion]\label{lem:image-criterion}
For any residual and compensation as in \eqref{eq:G-compensation}, we have
\begin{equation}\label{eq:image-exact}
\cE(\tens{G})=\cK+\set L(\tens{P}),
\end{equation}
where $\set L(\tens{P})$ is its residual slice space for the selected pair.
In particular, if $\tens{P}=\sum_{t=1}^q\matr{R}_t\otimes\vc_t$
with independent $\vc_t$ and independent classes
$\overline{\matr{R}}_t$ modulo $\cK$, then
\begin{equation}\label{eq:image-criterion}
\cE(\tens{G})=\cK\oplus\Span\{\matr{R}_1,\ldots,\matr{R}_q\},
\ \  \dim\cE(\tens{G})=36+q\le|\set{S}|.
\end{equation}
Equality in the last inequality makes the original matrices
$\{\matr{Q}_i:i\in\set{S}\}$ a basis of this image.
\end{lemma}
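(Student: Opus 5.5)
We compute $\cE(\tens G)$ directly from the compensation formula \eqref{eq:G-compensation}, contracting the third mode of $\tens G$ against functionals $\xi+\zeta\in\cV^*\oplus\cZ^*$. Since all residual coefficients lie in $\cV$, the residual contributes $\xi\lrcorner\tens P$. The compensation term contributes
\[
\sum_{\nu=1}^{36}\bigl(\zeta(\vect e_\nu)-(\xi+\zeta)(\matr\Pi_C\vect e_\nu)\bigr)\matr M_\nu .
\]
Because $\matr\Pi_C$ maps into $\cV$, the term $\zeta(\matr\Pi_C\vect e_\nu)$ vanishes, and the $\nu$-th coefficient is $\zeta(\vect e_\nu)-\xi(\matr\Pi_C\vect e_\nu)$. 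Thus every contraction lies in $\cK+\set L(\tens P)$, which gives the inclusion $\subseteq$.

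For the reverse inclusion, I would first take $\xi=0$. Since $\vect e_1,\ldots,\vect e_{36}$ form a basis of $\cZ$, the values $\zeta(\vect e_\nu)$ can be prescribed arbitrarily, so every element of $\cK$ is attained. Next, for arbitrary $\xi$, extend $\xi$ to $\cV\oplus\cZ$ and choose $\zeta$ with $\zeta(\vect e_\nu)=\xi(\matr\Pi_C\vect e_\nu)$ for each $\nu$. Then every $\cK$-coefficient cancels, and the contraction is exactly $\xi\lrcorner\tens P$. Hence $\set L(\tens P)\subseteq\cE(\tens G)$, and since $\cE(\tens G)$ is a subspace, \eqref{eq:image-exact} follows. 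The only point needing care is that $\cZ$ is a genuine complement of $\cV$ on which the $\vect e_\nu$ form a basis; this is the independence of the compensation projections noted after \eqref{eq:G-compensation}.

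For \eqref{eq:image-criterion}, independence of the $\vc_t$ lets $\xi$ run over a dual family. Therefore $\set L(\tens P)=\Span\{\matr R_t\}$ with all $q$ slices realized. The hypothesis that the classes $\overline{\matr R}_t$ are independent modulo $\cK$ makes the sum $\cK+\Span\{\matr R_t\}$ direct of dimension $36+q$, using that the $36$ matrices $\matr M_\nu$ are independent, which is \cref{prop:single} applied blockwise.

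Finally, the expression \eqref{eq:G-original} shows that every contraction of $\tens G$ equals $\sum_{i\in\set S}(\xi+\zeta)(\vh_i)\matr Q_i$. So $\cE(\tens G)\subseteq\Span\{\matr Q_i:i\in\set S\}$, and the dimension is at most $|\set S|$. If equality holds, the $|\set S|$ matrices span a space of dimension $|\set S|$ that contains $\cE(\tens G)$ of the same dimension. Hence they are independent, the two spaces coincide, and the $\matr Q_i$ form a basis of the image. Everything here is linear algebra; the only real obstacle is the bookkeeping that $\matr\Pi_C$ fixes $\cV$ and maps $\cZ$ into $\cV$.
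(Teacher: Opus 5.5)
Your proof is correct and follows the paper's argument. Functionals vanishing on $\cV$ recover all of $\cK$, and each $\xi\in\cV^*$ extends (your choice $\zeta(\vect e_\nu)=\xi(\matr\Pi_C\vect e_\nu)$) to a functional that kills every compensation coefficient and so recovers $\xi\lrcorner\tens P$; the inclusion $\cE(\tens G)\subseteq\Span\{\matr Q_i:i\in\set S\}$ then gives the dimension and basis claims. The paper states the first step for a general compensation $\sum_\nu\matr M_\nu\otimes\vect f_\nu$ whose $\vect f_\nu$ have independent $\cZ$-projections, which is the form used later in \cref{lem:block38}; your argument carries over unchanged by prescribing $\zeta$ on those $\cZ$-projections.
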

\begin{proof}
Write the compensation as $\sum_\nu\matr{M}_\nu\otimes\vect{f}_\nu$.
Its coefficient vectors have independent $\cZ$-projections.
Consequently functionals vanishing on $\cV$ recover every
$\matr{M}_\nu$, while each $\eta\in\cV^*$ extends uniquely to
annihilate all $\vect{f}_\nu$ and hence recover
$\eta\lrcorner\tens{P}$. This proves \eqref{eq:image-exact},
the reverse inclusion being immediate.
The independence assumptions then give the direct sum in
\eqref{eq:image-criterion}. Finally, the inclusion
$\cE(\tens{G})\subseteq\Span\{\matr{Q}_i:i\in\set{S}\}$
gives the dimension and basis assertions.
\end{proof}
The next lemma gives a second way to put every original matrix
in the contraction image, without requiring a zero intersection
with $\cK$.

\begin{lemma}[Saturation without a zero-intersection assumption]
\label{lem:triple-saturation}
For a symmetric adjoin as in \eqref{eq:adjoin}, let
$\tens{T}=\sum_{i=1}^N\vh_i^3$ and retain the substitution maps
and disjoint sets $\set{I}_A,\set{I}_B,\set{I}_C$ constructed in
\cite[Appendix~A, proof of Lemma~3.4]{Lovitz26:arxiv}.
Put $\ell\eqdef N-3m$ and let $\set{J}$ consist of the $\ell$ remaining indices.
For a selected pair $A,B$, set
\[
\begin{aligned}
\tens{P}&\eqdef(\matr{\Pi}_A\otimes\matr{\Pi}_B\otimes\matr{\Pi}_C)\tens{T},\ \ \ 
d_C\eqdef\dim\Ess_C(\tens{P}),\ \
\tens{G}_{AB}\eqdef(\matr{\Pi}_A\otimes\matr{\Pi}_B\otimes\id)\tens{T},\\
\matr{Q}_i&\eqdef(\matr{\Pi}_A\vh_i)\otimes(\matr{\Pi}_B\vh_i),\ \ \ 
\cW_{AB}\eqdef\Span\{\matr{Q}_i:i\in\set{I}_C\sqcup\set{J}\},
\cE_{AB}\eqdef\cE(\tens{G}_{AB}).
\end{aligned}
\]
Put $\rho_C\eqdef\dim\Span\{\matr\Pi_C\vh_i:i\in\set J\}$.
Then
\begin{equation}\label{eq:H345-source-span}
\begin{split}
\cW_{AB}=\cE_{AB}+\Span\{\matr Q_i:i\in\set J\},\ \
\dim(\cW_{AB}/\cE_{AB})\le \ell-\rho_C\le \ell-d_C.
\end{split}
\end{equation}
In particular $d_C=\ell$ implies $\cW_{AB}=\cE_{AB}$.
If, in addition, the residual matrix slice space of $\tens{P}$ for
this pair has zero intersection with $\cK$, the $m+\ell$ original
matrices form a basis of $\cW_{AB}=\cE_{AB}$. The assertions hold after any permutation
of the original modes.
\end{lemma}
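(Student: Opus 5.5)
The plan is to work directly from the expansion
\[
\tens G_{AB}=\sum_{i\in\set I_C\sqcup\set J}\matr Q_i\otimes\vh_i .
\]
This holds because $\matr\Pi_A\vh_i=0$ for $i\in\set I_A$ and $\matr\Pi_B\vh_i=0$ for $i\in\set I_B$. I would then compare the contraction image with the span of the sources by choosing two families of functionals on $\cV\oplus\cZ$. The first inclusion $\cE_{AB}\subseteq\cW_{AB}$ is immediate, since every contraction $\xi\lrcorner\tens G_{AB}$ equals $\sum\xi(\vh_i)\matr Q_i$.

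\emph{Step 1: $\cW_{AB}=\cE_{AB}+\Span\{\matr Q_i:i\in\set J\}$.} From the construction in \cite[Appendix~A]{Lovitz26:arxiv}, the $m$ vectors $\vh_i$ with $i\in\set I_C$ lie in $\ker\matr\Pi_C$. This kernel is an $m$-dimensional complement of $\cV$, because $\matr\Pi_C$ fixes $\cV$. These vectors have independent $\cZ$-projections, which is exactly what makes the substitution count $3m$ work, so they form a basis of $\ker\matr\Pi_C$. Working modulo $\Span\{\matr Q_i:i\in\set J\}$, the contraction image of $\tens G_{AB}$ coincides with that of $\sum_{i\in\set I_C}\matr Q_i\otimes\vh_i$. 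Taking functionals dual to the independent family $\{\vh_i\}_{i\in\set I_C}$ recovers each $\matr Q_i$ with $i\in\set I_C$ individually. This gives the displayed equality. The main obstacle is here: I must verify, from the appendix construction rather than from the statement alone, that the $\set I_C$ vectors really are independent. I would try first to read this off from the fact that their $\cZ$-components form an invertible $m\times m$ system, which is the defining property of the substitution step.

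\emph{Step 2: the dimension bound.} For $\eta\in\cV^*$, the functional $\eta\circ\matr\Pi_C$ annihilates $\vh_i$ for $i\in\set I_C$. Hence
\[
(\eta\circ\matr\Pi_C)\lrcorner\tens G_{AB}=\sum_{i\in\set J}\eta(\matr\Pi_C\vh_i)\matr Q_i\in\cE_{AB}.
\]
Consider the coefficient vectors $(\eta(\matr\Pi_C\vh_i))_{i\in\set J}\in\FF^{\set J}$ as $\eta$ ranges over $\cV^*$. They form a subspace of dimension $\rho_C$, namely the rank of the list $\{\matr\Pi_C\vh_i\}_{i\in\set J}$. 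The map $\FF^{\set J}\to\Span\{\matr Q_i:i\in\set J\}/(\cE_{AB}\cap\cdots)$ is surjective and kills this subspace. Therefore $\dim(\cW_{AB}/\cE_{AB})\le\ell-\rho_C$. Next, $\tens P=\sum_{i\in\set J}\matr\Pi_A\vh_i\otimes\matr\Pi_B\vh_i\otimes\matr\Pi_C\vh_i$, so $\Ess_C(\tens P)\subseteq\Span\{\matr\Pi_C\vh_i:i\in\set J\}$. This gives $d_C\le\rho_C$, and the case $d_C=\ell$ yields $\cW_{AB}=\cE_{AB}$.

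\emph{Step 3: the basis statement.} This is read under $d_C=\ell$. The argument of \cref{lem:image-criterion} is field- and copy-independent: it used only the compensation form and the independence of the compensation $\cZ$-projections. Applying it to this adjoin gives $\cE_{AB}=\cK+\set L(\tens P)$. Under the zero-intersection hypothesis this sum is direct. Moreover $\dim\set L(\tens P)=d_C$, since the slice space is the image of the mode-$C$ flattening. So $\dim\cE_{AB}=m+\ell$. On the other hand, $\cW_{AB}=\cE_{AB}$ is spanned by the $|\set I_C\sqcup\set J|=m+\ell$ matrices $\matr Q_i$, so these form a basis. Permuting modes only permutes the roles of the original maps and index sets, so the same argument applies verbatim.
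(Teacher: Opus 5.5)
Your proposal is correct and follows essentially the paper's route: Step~1 is the paper's annihilator argument in dual form (both rely on the cited construction for the independence of the $\set I_C$ vectors), and Step~3 is the same appeal to \eqref{eq:image-exact} with $m$ in place of $36$. The only variation is in Step~2, where the paper applies Sylvester's inequality to $\matr Q\matr H^\top$ with $\rank\matr H=m+\rho_C$; you instead exhibit the $\rho_C$-dimensional family of functionals $\eta\circ\matr\Pi_C$ directly, which is the same count and needs only $\matr\Pi_C\vh_i=0$ on $\set I_C$, not that these vectors span $\ker\matr\Pi_C$.
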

\begin{proof}
By the cited construction, the $m$ vectors indexed by $\set{I}_C$
have independent $\cZ$-projections and span $\ker\matr{\Pi}_C$.
The residual expression gives $d_C\le\rho_C\le \ell$.
To prove the source-space equality, let a matrix functional annihilate
both $\cE_{AB}$ and the residual matrices $\matr Q_i$ with $i\in\set J$.
Applying it to the complete coefficient identity leaves a relation
among the vectors $\vh_i$ with $i\in\set I_C$. Their independent $\cZ$-projections
force all coefficients to vanish. The functional therefore annihilates
$\cW_{AB}$, proving the equality.
Let $\matr{H}$ have as columns the $m+\ell$ complete vectors $\vh_i$
indexed by $i\in\set{I}_C\sqcup\set{J}$, and let $\matr{Q}$
have the matching vectorized matrices $\matr{Q}_i$ as columns.
Zero matrix columns are retained. Then
$\rank\matr{H}=m+\rho_C,\  
\cE_{AB}=\im(\matr{Q}\matr{H}^\top)$ and $\cW_{AB}=\im\matr{Q}.$ 
Sylvester's rank inequality
\cite[Theorem~3.6]{wang2023lower} gives
\[
\dim\cW_{AB}-\dim\cE_{AB}
=\rank\matr{Q}-\rank(\matr{Q}\matr{H}^\top)
\le \ell-\rho_C\le \ell-d_C.
\]
If $d_C=\ell$, this proves saturation. If the stated intersection
also vanishes, \eqref{eq:image-exact}, with $m$ in place of $36$,
gives $\dim\cE_{AB}=m+\ell$, so the original generators form a basis.
\end{proof}

We call a selected pair $XY$ \emph{saturated} when
$\cW_{XY}=\cE_{XY}$. For the double copy, a compatible $(3,3,3)$
residual has
$3\le N-108\le3$ by \eqref{eq:residual}. Hence $N=111$ and $\ell=3$,
so \cref{lem:triple-saturation} gives saturation for every selected
pair, without any assumption on intersection with $\cK$.

\subsection{Contraction-image obstructions}

\begin{lemma}[A local good-quotient obstruction]
\label{lem:triple-good}
Consider an original-source expression of the form
\eqref{eq:G-original}, for any number of copies, with substitution
maps fixing $\cV$. For a local matrix $\matr M$, let
$[\matr M]_j\eqdef\matr M+\cK_j$, and let
$\overline{\matr p}_j$ denote restriction to the $jj$ block followed
by passage to this quotient.
Suppose
\[
(\overline{\matr p}_j\otimes\id)\tens{G}
=[\vu\otimes\vv]_j\otimes\vw,\ \
\vu\otimes\vv\otimes\vw\in\cA_j,
\]
and every original local matrix
$\matr{M}_i\eqdef(\matr{p}_j\matr{\Pi}_A\vh_i)
\otimes(\matr{p}_j\matr{\Pi}_B\vh_i)$ 
belongs to $\cK_j+\FF(\vu\otimes\vv)$. Then $\vu,\vv$ are
proportional. The local-matrix hypothesis holds, in particular,
if the original matrices span exactly $\cE(\tens{G})$.
\end{lemma}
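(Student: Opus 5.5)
The plan is to argue by contradiction: assume $\vu$ and $\vv$ are independent, and use \cref{lem:matrix-rigidity} to force every original local matrix into one of two rigid shapes.

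\emph{Step 1 (dichotomy for local sources).} With $\vu,\vv$ independent and $\vu\otimes\vv\otimes\vw\in\cA_j$, \cref{lem:matrix-rigidity} (applied in copy $j$) gives
\[
\rkone\bigl(\cK_j+\FF(\vu\otimes\vv)\bigr)\setminus\cK_j=\{\lambda\vu\otimes\vv:\lambda\ne0\}.
\]
Each original local matrix $\matr M_i=(\matr p_j\matr\Pi_A\vh_i)\otimes(\matr p_j\matr\Pi_B\vh_i)$ has rank at most one and lies in $\cK_j+\FF(\vu\otimes\vv)$ by hypothesis. Hence every index $i$ falls into one of two classes:
\begin{itemize}
\item[(a)] $\matr M_i\in\cK_j$. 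Since $\cK_j\subseteq\Sym^2(\cV_j)$ and $\matr M_i$ has rank $\le1$, this forces $\matr M_i=c_i\vx_i\otimes\vx_i$ with $\vx_i\parallel\matr p_j\matr\Pi_A\vh_i\parallel\matr p_j\matr\Pi_B\vh_i$, or $\matr M_i=0$.
\item[(b)] $\matr M_i=\lambda_i\vu\otimes\vv$ with $\lambda_i\ne0$. Then $\matr p_j\matr\Pi_A\vh_i\parallel\vu$ and $\matr p_j\matr\Pi_B\vh_i\parallel\vv$.
\end{itemize}
Passing to the quotient by $\cK_j$ kills class~(a). Comparing with the hypothesis gives
\[
\vw=\sum_{i\in(b)}\lambda_i\vh_i,
\]
so in particular class~(b) is nonempty.

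\emph{Step 2 (use the symmetry of the original decomposition).} Consider the tensor
\[
\tens X\eqdef\bigl(\matr p_j\matr\Pi_A\otimes\matr p_j\matr\Pi_B\otimes\matr p_j\matr\Pi_A\bigr)\tens V_k=\sum_i\matr M_i\otimes\matr p_j\matr\Pi_A\vh_i.
\]
It is invariant under interchanging modes $1$ and $3$, because $\tens V_k$ is symmetric. By Step~1, the class~(a) terms are multiples of $\vx_i\otimes\vx_i\otimes\vx_i$, which are individually $1\leftrightarrow3$ symmetric. The class~(b) terms add up to $\mu\,\vu\otimes\vv\otimes\vu$, where $\mu$ collects the products $\lambda_i a_i$ with $\matr p_j\matr\Pi_A\vh_i=a_i\vu$. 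Symmetry of $\tens X$ then forces $\mu\,\vu\otimes(\vv\otimes\vu-\vu\otimes\vv)$ to be absorbed by the symmetric class~(a) part, which is impossible unless $\mu=0$, since $\vu,\vv$ are independent.

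\emph{Step 3 (contradiction with the selector).} It remains to show $\mu\ne0$, giving the contradiction. The natural tool is the six-selector identity \eqref{eq:six}. Applying $\varphi_r\otimes\varphi_s\otimes\varphi_t$ with distinct $r,s,t$ kills the $\cK_j$ contributions in the first two modes. It reduces the jj-block of $\tens G$, followed by $\matr p_j\matr\Pi_C$ in the third mode, to $\varphi_r(\vu)\varphi_s(\vv)\varphi_t(\matr p_j\matr\Pi_C\vw)=-2$. By \cref{lem:local} all $\matr L$-coordinates of $\vu$ and $\vv$ are nonzero. I would relate $\matr p_j\matr\Pi_A\vh_i$ and $\matr p_j\matr\Pi_C\vh_i$ on class~(b) indices, using that the maps fix $\cV$ and differ only on $\cZ$-components, whose images are controlled by the compensation structure \eqref{eq:G-compensation}. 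The aim is to show that the class~(b) contribution cannot vanish after replacing $\matr\Pi_C$ by $\matr\Pi_A$.

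\emph{Main obstacle.} I expect Step~3 to be the hard part. The danger is that, after the change of projection, cancellation among class~(b) terms makes $\mu=0$ while $\vw\ne0$. If the selector route fails, I would instead compare the pair $AB$ with the reversed pair $BA$. Symmetry of $\tens V_k$ makes the sources for $BA$ the transposes $\vv\otimes\vu$ on class~(b). I would then try to show that both $\vu\otimes\vv$ and $\vv\otimes\vu$ must lie in the local contraction image modulo $\cK_j$, contradicting the one-dimensional quotient.

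Finally, the last sentence of the lemma is immediate from \eqref{eq:image-exact}: if the original matrices span exactly $\cE(\tens G)=\cK+\set L(\tens P)$, then after restriction to the jj block and passage to the quotient they lie in $\cK_j+\FF(\vu\otimes\vv)$.
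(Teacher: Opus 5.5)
\textbf{There is a genuine gap: Steps~2--3 do not finish the argument.} Your Step~1 matches the paper's opening. By \cref{lem:matrix-rigidity}, every source with nonzero local class is $\lambda_i\vu\otimes\vv$, and the quotient identity gives $\vw=\sum_i\lambda_i\vh_i$.

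\emph{Step~2 is incorrect.} Your tensor $\tens X$ carries the same map $\matr p_j\matr\Pi_A$ in modes $1$ and $3$, so its only available symmetry is the transposition of those two modes. That transposition fixes $\vu\otimes\vv\otimes\vu$. The antisymmetric expression $\vu\otimes(\vv\otimes\vu-\vu\otimes\vv)$ you write comes from swapping modes $2$ and $3$, which is not a symmetry of $\tens X$, since those modes carry different maps. Nothing therefore forces $\mu=0$.

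\emph{Step~3 is left as an unfinished plan.} Its selector computation is also beside the point, because $\matr\Pi_C$ fixes $\vw\in\cV$.

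\textbf{The missing step is a direct projection of the identity you already have.} Since $\vw\in\cV_j$ and the substitution maps fix $\cV$, applying $\matr p_j\matr\Pi_A$ to $\vw=\sum_i\lambda_i\vh_i$ gives $\vw=\sum_i\lambda_i\matr p_j\matr\Pi_A\vh_i\in\FF\vu$. This holds because every contributing source has its first local factor on $\FF\vu$. In your notation this says $\vw=\mu\vu$, so $\mu\ne0$ is automatic rather than something to prove. Applying $\matr p_j\matr\Pi_B$ in the same way gives $\vw\in\FF\vv$. Independence of $\vu,\vv$ then yields $\vw=0$, which would put $0=\vu\otimes\vv\otimes\vw$ in $\cA_j$, contrary to \cref{prop:single}.

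That is the paper's entire remaining argument. No symmetry of $\tens V_k$ and no selector identity is needed. Your treatment of the final sentence is fine: the quotient identity, with compensation vanishing modulo $\cK_j$, places the local quotient of $\cE(\tens G)$ on $\FF[\vu\otimes\vv]_j$.
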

\begin{proof}
Suppose $\vu,\vv$ are independent. By \cref{lem:matrix-rigidity},
each $\matr{M}_i$ with nonzero quotient class is exactly
$\lambda_i\vu\otimes\vv$, with $\lambda_i\ne0$.
Set $\lambda_i\eqdef 0$ for the other sources.
The complete quotient identity, in which the coefficients are
the original vectors in $\cV\oplus\cZ$, gives
$\vw=\sum_i\lambda_i\vh_i.$
Applying $\matr{p}_j\matr{\Pi}_A$ and
$\matr{p}_j\matr{\Pi}_B$ gives
$\vw\in\FF\vu$ and $\vw\in\FF\vv,$ 
because both maps fix $\vw\in\cV_j$ and each contributing source
has its two local factors on the indicated lines.
Thus $\vw=0$, contrary to membership in $\cA_j$.
Finally, the quotient identity implies that the local projection
of $\cE(\tens{G})$ is contained in
$\cK_j+\FF(\vu\otimes\vv)$. This proves the last assertion.
\end{proof}

\begin{lemma}[Saturated pairs and local rank-one quotients]
\label{lem:triple-saturated-pairs}
For any number of physical blocks, suppose at least two original
mode pairs satisfy $\cW_{XY}=\cE_{XY}$.
No pure block can have the same three two-mode quadratic quotients
as a rank-one representative $\tens R_j\in\cA_j$.
In particular, no pure block has CP rank one.
\end{lemma}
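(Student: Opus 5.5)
The plan is to argue by contradiction and reduce the statement to \cref{lem:triple-good}. Suppose the pure block $\tens P_{jjj}$ at block $j$ has the same three two-mode quadratic quotients as a rank-one representative $\tens R_j=\vu\otimes\vv\otimes\vw\in\cA_j$. Concretely, for each mode pair $XY$, the contraction of $\tens P_{jjj}$ on the complementary mode, read modulo $\cK_j$ on the $XY$ positions, equals $[\vu\otimes\vv]_j\otimes\vw$ (or the analogous product for the other pairs). By \cref{lem:local}, $\tens R_j$ has at least two good pairs among its three mode pairs. By hypothesis, at least two original mode pairs are saturated. Since there are only three pairs, pigeonhole gives a pair $XY$ that is both good for $\tens R_j$ and saturated. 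After permuting the original maps and index sets, as the paper allows, we may call this pair $A,B$.

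For this pair we form $\tens G=(\matr\Pi_A\otimes\matr\Pi_B\otimes\id)\tens T$ as in \eqref{eq:G-original}, and compute $(\overline{\matr p}_j\otimes\id)\tens G$ using \eqref{eq:G-compensation}. The compensation term $\sum_\nu\matr M_\nu\otimes(\vect e_\nu-\matr\Pi_C\vect e_\nu)$ has matrix parts in $\cK$, so its $jj$ block lies in $\cK_j$ and vanishes in the quotient. The mixed components of $\tens P$ whose first two indices are $jj$ contribute $\cV_j\otimes\cV_j$ slices tensored with vectors from other blocks. Here I expect the one real subtlety. One must check that the quotient identity for the full coefficient (third) slot equals $[\vu\otimes\vv]_j\otimes\vw$. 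Components $\tens P_{jjk}$ with $k\ne j$ lie in $\cK_j\otimes\cV_k$ by the affine structure of $\cA$, since the $(j,j,k)$ position is a correction whose $\cK$ part sits on the same-block pair. So they die in the quotient, and only $\tens P_{jjj}$ survives. Its $AB$ quotient is $[\vu\otimes\vv]_j\otimes\vw$ by the hypothesis of equal quadratic quotients. This gives exactly the displayed hypothesis of \cref{lem:triple-good}.

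The remaining hypothesis of \cref{lem:triple-good} is that the original local matrices lie in $\cK_j+\FF(\vu\otimes\vv)$. Its last sentence reduces this to the original matrices spanning exactly $\cE(\tens G)$, that is, $\cW_{AB}=\cE_{AB}$. This holds by saturation, provided the sources outside $\set I_C\sqcup\set J$ contribute zero matrices, which they do because they lie in $\set I_A\cup\set I_B$ and are killed by $\matr\Pi_A$ or $\matr\Pi_B$. \Cref{lem:triple-good} then yields $\vu\parallel\vv$, which contradicts the choice of $AB$ as a good pair.

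For the final assertion, suppose $\tens P_{jjj}$ has CP rank one. Then it is itself a rank-one element of $\cA_j$. Trivially it has the same quadratic quotients as $\tens R_j\eqdef\tens P_{jjj}$, so the first part excludes it.
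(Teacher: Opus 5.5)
Your proposal is correct and follows essentially the same route as the paper: you pick a mode pair that is both good for $\tens R_j$ and saturated, note that the compensation and the mixed $(j,j,k)$ components vanish in the local quotient so that the pure-block identity becomes the complete quotient identity, and then invoke \cref{lem:triple-good}, with saturation supplying its local-matrix hypothesis. The only cosmetic difference is that you obtain the two good pairs from the plane condition in \cref{lem:local}, whereas the paper uses the fact that $\cA_j$ contains no cube; both yield the same conclusion.
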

\begin{proof}
The factors of $\tens R_j$ are not all proportional, since
$\cA_j$ contains no cube. Hence at least two of its three pairs
are good, and one of these pairs is saturated. Mixed affine
components and compensation vanish in its local quadratic quotient,
so the pure-block identity is also the complete quotient identity.
Now \cref{lem:triple-good} gives a contradiction.
\end{proof}

\begin{lemma}[Fixed-source image obstructions]
\label{lem:triple-source-counting}
For a two-mode tensor obtained from a fixed minimal symmetric decomposition
of any number of copies by the original substitution maps, let
$\cE$ be its complete contraction image, let $\cW$ be its original source
span, set $e\eqdef\dim\cE$, and let $m$ be the number
of nonzero source matrices. For a selected pair $XY$ and block $j$,
the \emph{local quotient} is the image of
$(\matr p_j\otimes\matr p_j)\cE$ in
$(\cV_j\otimes\cV_j)/\cK_j$. If it is a nonzero line generated by
the class of a rank-one affine representative, call block $j$
$XY$-good or $XY$-bad according as that representative's $XY$
mode pair is good or bad in \cref{def:good}.
\begin{enumerate}
\item If two $XY$-good block rows have no off-diagonal output and their
local quotients are represented by $\cK_j+\FF(\vu_j\otimes\vv_j)$, then $m\ge e+2$.
\item If $\cE$ is block diagonal, two blocks are $XY$-good, and
$\dim(\cW/\cE)\le1$, then $m\ge e+3$.
\end{enumerate}
\end{lemma}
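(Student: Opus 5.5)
Write $\tens G=\sum_{i\in\set S}\matr Q_i\otimes\vh_i$ as in \eqref{eq:G-original}, with $m=|\set S|$ nonzero source matrices and the same contraction-image notation as in \cref{lem:image-criterion}. The idea is to count the sources against $e$ via a matrix-times-coefficient rank argument. Let $\matr Q$ be the matrix whose columns are the vectorized $\matr Q_i$, and let $\matr H$ be the matrix whose columns are the $\vh_i$. Then $\cE=\im(\matr Q\matr H^\top)$, so $e=\rank(\matr Q\matr H^\top)$. Each of the two extra counts will come from exhibiting independent relations among the $\vh_i$, or sources that are lost in the product. Concretely, we aim to show that $\rank\matr H\le m-2$ in case (i), and $\rank\matr Q-\rank(\matr Q\matr H^\top)+(m-\rank\matr H)\ge 3$ in case (ii). Since $e\le\min(\rank\matr Q,\rank\matr H)$, these bounds give the inequalities.

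\textbf{Case (i).} Fix an $XY$-good block $j$. The local quotient there is $\cK_j+\FF(\vu_j\otimes\vv_j)$ with $\vu_j,\vv_j$ independent, so \cref{lem:matrix-rigidity} shows that every local source matrix with nonzero class equals $\lambda_i\vu_j\otimes\vv_j$ with $\lambda_i\ne0$. Argue as in \cref{lem:triple-good}. The row of block $j$ has no off-diagonal output, so the quotient identity at block $j$ reads $\sum_i\lambda_i\vh_i=\vw_j$ for the residual coefficient $\vw_j$ of that block. Apply $\matr p_j\matr\Pi_X$ and $\matr p_j\matr\Pi_Y$. Each contributing source has its local factors on $\FF\vu_j$ and $\FF\vv_j$, so we obtain $\vw_j\in\FF\vu_j\cap\FF\vv_j=0$. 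In \cref{lem:triple-good} this was a contradiction; here we read it instead as a genuine linear relation $\sum_i\lambda_i\vh_i=0$ among the source vectors. The relations coming from the two good blocks $j\ne j'$ are independent, because their supports are the sets of sources whose local class at $j$ (respectively $j'$) is nonzero. I would check independence by noting that a proportionality of the two relations would force the relevant sources to have nonzero classes in both blocks at once. That in turn contradicts the absence of off-diagonal output, via the $\cV_j\otimes\cV_{j'}$ component of $\cE$. With two independent relations we get $\rank\matr H\le m-2$, hence $e\le m-2$.

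\textbf{Case (ii).} Block diagonality of $\cE$ gives the hypothesis of (i) for both good blocks, so $\matr H$ already has a two-dimensional space of relations. The third unit comes from $\dim(\cW/\cE)\le1$ combined with Sylvester's inequality, in the form $\rank\matr Q-e\le\dim\ker\matr H^\top\cap\ldots$. More directly, one of the two relations found above must be invisible in $\cW$. The combination $\sum_i\lambda_i\matr Q_i$ restricted to block $j$ is a nonzero multiple of $\vu_j\otimes\vv_j$, which lies in $\cW$. However, its image is not forced into $\cE$ beyond what the quotient already provides. Counting $m=\rank\matr Q+\dim\ker\matr Q$ and $e\le\rank\matr Q-(\text{relations of }\matr H\text{ not killed by }\matr Q)$, one should get $e\le m-3$, since the two $\matr H$-relations plus the kernel of $\matr Q$ contribute at least three.

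\textbf{Main obstacle.} I expect the hardest step to be this bookkeeping in (ii): showing that the $\matr H$-relations and $\matr Q$-kernel vectors are jointly independent, using only $\dim(\cW/\cE)\le1$. The first thing I would try is to compare $\dim\cW-e\le1$ with the two relations, and to argue by cases on whether the relation vectors $\lambda^{(j)}$ lie in $\ker\matr Q$.
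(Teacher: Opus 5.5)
There is a genuine gap in your case~(i). Applying \cref{lem:matrix-rigidity} to the local source matrices $(\matr p_j\otimes\matr p_j)\matr Q_i$ requires them to lie in $\cK_j+\FF(\vu_j\otimes\vv_j)$. The hypothesis only places the local projection of $\cE$ there, and the source span $\cW$ may be larger. For an $XY$-good block, \cref{lem:triple-good} says exactly that some local source must escape that space. Moreover, under your assumption the complete quotient identity reads $\sum_i\lambda_i\vh_i=\vw_j$ with $\vw_j\ne0$, since it is a factor of an element of $\cA_j$. So reaching $\vw_j\in\FF\vu_j\cap\FF\vv_j$ is a contradiction, not a relation $\sum_i\lambda_i\vh_i=0$, and this route produces no relations among the $\vh_i$ at all. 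Your independence check has the same flaw. Absence of off-diagonal output constrains $\cE$, not individual sources, and in the relevant configuration the escaping sources do have nonzero off-diagonal blocks. Case~(ii) is, as you acknowledge, not yet an argument.

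The missing idea is that a genuine relation among the $\vh_i$ comes from $\cW\ne\cE$; in your notation, Sylvester gives $\dim(\cW/\cE)\le m-\rank\matr H$. The paper argues by contradiction. If the bound fails, \cref{lem:triple-good} excludes $\cW=\cE$, so $\cW=\cE\oplus\FF\matr\Omega$. Writing $\matr Q_i=\matr E_i+t_i\matr\Omega$ with $\matr E_i\in\cE$, a functional killing $\cE$ gives $\sum_{t_i\ne0}t_i\vh_i=0$.

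Goodness of blocks $q,r$ puts $\matr\Omega_{qq},\matr\Omega_{rr}$ outside the local images. Hence every $\matr D_i=\matr Q_i/t_i$ with $t_i\ne0$ has nonzero $qq$ and $rr$ blocks, and, being rank one, nonzero $qr$ and $rq$ blocks. These equal the corresponding blocks of $\matr\Omega$, because those rows of $\cE$ have no off-diagonal output. This forces a common factor line, say $\matr\Pi_Y\vh_i=\beta_i\vv$. Then $\sum_{t_i\ne0}(t_i/\beta_i)\matr Q_i=\matr\Pi_X(\sum t_i\vh_i)\otimes\vv=0$, contradicting the basis property that $m=e+1$ forces in~(i).

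In~(ii), block diagonality pins every $\matr D_i$ to a single matrix $\matr D$. So the normal sources span $\cE$ and at most two sources are exceptional. One exceptional source gives $\vh_i=0$. Two give $\vh_2=\lambda\vh_1$ with $\lambda^3=-1$, so their cubes cancel, contradicting minimality. Your rank bookkeeping never reaches this structural picture of the exceptional sources. It also never uses minimality of the decomposition, which is what finally excludes the two-exceptional configuration.
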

\begin{proof}
For (i)--(ii), suppose the asserted bound fails. Equality $\cW=\cE$
is excluded by \cref{lem:triple-good}. The dimension bounds give
\[
\cW=\cE\oplus\FF\matr\Omega,\ \ 
\matr Q_i=\matr E_i+t_i\matr\Omega,
\ \ \sum_{t_i\ne0}t_i\vh_i=0.
\]
Here $\matr E_i\in\cE$. Call $i$ exceptional if $t_i\ne0$ and normal
otherwise. For exceptional $i$, set $\matr D_i\eqdef\matr Q_i/t_i$. If blocks
$q,r$ are $XY$-good, both
$\matr\Omega_{qq},\matr\Omega_{rr}$ lie outside their local images:
otherwise every local source matrix on that block would remain in its
local contraction image, contrary to \cref{lem:triple-good}.
Thus every $\matr D_i$ has nonzero $qq,rr$ blocks. Rank one forces
nonzero $qr,rq$ blocks, fixed by $\matr\Omega$. These fix the local
factor lines. The $qq$ blocks are proportional, and their difference
lies in the local image; since neither belongs to that image, the
proportionality scalar is one. Consequently all exceptional
$\matr D_i$ have the same nonzero $q$ row and a common complete
second-factor line, say $\matr\Pi_Y\vh_i=\beta_i\vv$.

In (i), we have $m=e+1$, and the source matrices form a basis. But
\begin{equation}\label{eq:triple-exceptional-dependence}
\sum_{t_i\ne0}\frac{t_i}{\beta_i}\matr Q_i
=\left(\sum_{t_i\ne0}t_i\matr\Pi_X\vh_i\right)\otimes\vv=0,
\ \  t_i/\beta_i\ne0,
\end{equation}
a contradiction. The transposed argument applies to a common first
factor.

In (ii), block diagonality fixes the entire $q$ column as well.
A rank-one matrix with nonzero $qq$ block is determined by that row
and column, so every $\matr D_i$ equals one matrix $\matr D$.
The normal sources therefore span $\cE$: at least $e$ are normal
and at most two are exceptional. One exceptional source would give
$\vh_i=0$. For two, their complete relation gives
\[
\vh_2=\lambda\vh_1,\ 
\matr Q_2=\lambda^2\matr Q_1,\ 
t_2=\lambda^2t_1,\  1+\lambda^3=0.
\]
Hence $\vh_1^3+\vh_2^3=0$, contrary to minimality.
\end{proof}

\section{The double-copy argument}
\label{sec:small}
\subsection{Low-rank reduction}
\begin{corollary}[A block-diagonal image with one extra term]
\label{lem:block38}
Suppose the same $\tens G$ can be written as two pure rank-one
representatives $\tens P_j\in\cA_j$ plus compensation in $\cK$
with independent $\cZ$-projections. If the selected pair $XY$ is good
for both representatives, its original-source expression cannot have at most
$39$ terms from a minimal symmetric decomposition.
\end{corollary}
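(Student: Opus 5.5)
We will prove the corollary by computing the contraction image $\cE(\tens G)$ exactly and then invoking \cref{lem:triple-source-counting}(ii). Write $\tens P_j=\vu_j\otimes\vv_j\otimes\vw_j$, with the selected pair $XY$ occupying the first two modes. Since the two representatives live on different physical blocks, the third-mode coefficients $\vw_1\in\cV_1$ and $\vw_2\in\cV_2$ are independent. Each $\vu_j\otimes\vv_j$ lies outside $\cK_j$, because otherwise $\tens P_j\in\J(\cK_j)$ and hence $0\in\cA_j$, contradicting \cref{prop:single}. The two matrices are supported on different diagonal blocks, so their classes modulo $\cK=\cK_1\oplus\cK_2$ are independent. \Cref{lem:image-criterion}, applied with $q=2$, then gives
\[
\cE(\tens G)=\cK\oplus\FF(\vu_1\otimes\vv_1)\oplus\FF(\vu_2\otimes\vv_2),\ \ e=38 .
\]
This space is block diagonal. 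Its local quotient at block $j$ is the nonzero line spanned by the class of $\vu_j\otimes\vv_j$, which is the quadratic quotient of the rank-one representative $\tens P_j$. Because $XY$ is good for both representatives by hypothesis, both blocks are $XY$-good in the sense of \cref{lem:triple-source-counting}.

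Suppose, for contradiction, that the original-source expression has $m\le39$ nonzero source matrices. The source span $\cW$ contains $\cE$, and $\dim\cW\le m\le39=e+1$, so $\dim(\cW/\cE)\le1$. All hypotheses of \cref{lem:triple-source-counting}(ii) now hold: $\cE$ is block diagonal, two blocks are $XY$-good, and the quotient has dimension at most one. That lemma gives $m\ge e+3=41$, which contradicts $m\le39$. The terms come from a fixed minimal symmetric decomposition through the original substitution maps, which is exactly the setting that lemma requires.

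The main point to check is that the lemma's notion of ``local quotient generated by the class of a rank-one affine representative'' applies verbatim here. In particular, the compensation term and any mixed-block parts must contribute nothing to the $jj$ quotient beyond $\cK_j$. This holds because the compensation lies in $\cK\otimes(\cV\oplus\cZ)$ and the residual is purely block diagonal, so the projection of $\cE$ to $jj$ is exactly $\cK_j+\FF(\vu_j\otimes\vv_j)$. With that in place, the rest is dimension bookkeeping.
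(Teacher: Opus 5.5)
Your proof is correct and follows the paper's route: the image criterion gives a block-diagonal contraction image of dimension $38$ whose two local quotients come from the good $XY$ pairs of the rank-one representatives, and the fixed-source counting lemma then contradicts $m\le 39$. The only difference is that the paper applies \cref{lem:triple-source-counting}(i), which gives $m\ge 38+2=40$ directly with no quotient bound needed. You instead first derive $\dim(\cW/\cE)\le1$ from $m\le39$ and then use part (ii) to get $m\ge41$, which is equally valid.
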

\begin{proof}
By \cref{lem:image-criterion}, the image is block diagonal of
dimension $38$. Both blocks are $XY$-good, so the two block rows require at least $38+2=40$
sources by \cref{lem:triple-source-counting}(i).
\end{proof}

\begin{proposition}[Low-rank reduction]
\label{prop:first-lower}
If a minimal symmetric decomposition of $\tens{V}_2$ has $N\le111$ terms,
then $N=111$ and every residual obtained as above has CP rank three.
\end{proposition}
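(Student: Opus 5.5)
By \eqref{eq:residual} and \cref{prop:ordinary} we already have $110\le N\le111$ and $2\le r(\tens P)\le N-108$. It therefore suffices to exclude $r(\tens P)=2$ for every residual obtained from a minimal decomposition with $N\le111$. Once that is done, $r(\tens P)\ge3$ forces $N-108\ge3$, so $N=111$ and $r(\tens P)=3$ for every such residual.

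So suppose $r(\tens P)=2$. First I apply \cref{lem:rank-two}, which gives
\[
\tens P=\tens P_1+\tens P_2,\qquad \tens P_j=\vu_j\otimes\vv_j\otimes\vw_j\in\cA_j,
\]
each term pure and of rank one. By \cref{lem:local} each $\tens P_j$ has at least two good mode pairs, in the sense of \cref{def:good}. Each representative has three mode pairs, and two of them are good. Two subsets of size two in a set of size three must meet, so some mode pair is good for both $\tens P_1$ and $\tens P_2$. Permuting the original maps and index sets, as the text allows, I call this common good pair $A,B$.

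Next I form $\tens G$ as in \eqref{eq:G-original}. Its source set satisfies $|\set S|\le N-72\le39$, and \eqref{eq:G-compensation} writes $\tens G$ as $\tens P_1+\tens P_2$ plus compensation in $\cK$. The compensation coefficient vectors $\vect e_\nu-\matr\Pi_C\vect e_\nu$ have independent $\cZ$-projections. These are exactly the hypotheses of \cref{lem:block38}, which forbids an original-source expression with at most $39$ terms. This contradicts $|\set S|\le39$.

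The step I expect to need the most care is checking the block-diagonal, two-good-block hypothesis of \cref{lem:triple-source-counting}(i) inside \cref{lem:block38}. Concretely, I must confirm three things:
\begin{enumerate}
\item the residual slices $\vu_j\otimes\vv_j$ are independent modulo $\cK$, which holds because they are pure in distinct blocks and lie outside $\cK_j$ by \cref{lem:matrix-rigidity} and \cref{lem:local};
\item the image therefore has dimension $36+2=38$ by \cref{lem:image-criterion};
\item each block's local quotient is the line spanned by $[\vu_j\otimes\vv_j]_j$ with $\vu_j,\vv_j$ independent, so both blocks are $AB$-good.
\end{enumerate}
Given these, the required count of $40$ sources exceeds $39$, and the contradiction stands.
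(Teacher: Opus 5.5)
Your proposal is correct and follows the paper's proof essentially verbatim: you exclude $r(\tens P)=2$ via \cref{lem:rank-two}, choose a mode pair that is good for both pure representatives, and contradict \cref{lem:block38} using $|\set S|\le N-72\le39$. Your three checks repeat the verification already carried out inside the proof of \cref{lem:block38}. For item (i), the simplest justification is that $\vu_j\otimes\vv_j\in\cK_j$ would put $\tens P_j$ in $\J(\cK_j)$, contradicting $0\notin\cA_j$.
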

\begin{proof}
By \eqref{eq:residual}, we have $2\le r(\tens{P})\le N-108\le3$.
If $r(\tens{P})=2$, \cref{lem:rank-two} gives two pure
rank-one representatives. Choose a pair that is good for both representatives.
By \eqref{eq:G-compensation}, the same $\tens{G}$ is their sum
plus compensation with independent $\cZ$-projections.
Its original-source expression has $|\set{S}|\le N-72\le39$ terms,
contradicting \cref{lem:block38}. Hence $r(\tens{P})=3$,
and $3\le N-108$ forces $N=111$.
\end{proof}

\begin{remark}[The additional low-rank reduction]
\label{rem:reduction-literature}
The general bound \cite[Theorem~3.2]{Lovitz26:arxiv} gives only
$s(\tens{V}_2)\ge110$. The preceding proposition excludes $110$
and leaves compatible CP-rank-three residuals at $N=111$.
These residuals need not be symmetric, so symmetric rank equality
at CP rank three \cite[Corollary~4.2]{friedland2016remarks}
does not settle the remaining case.
\end{remark}

\subsection{Mode restrictions and the \texorpdfstring{$(3,3,2)$}{(3,3,2)} case}
We first study tensors of CP rank three in $\cA$. The mode and support
restrictions below use only the affine block equations; the original
$111$-term decomposition hypothesis is imposed only where compatibility is specified.

\begin{lemma}[Mode-space restrictions]\label{lem:modes}
Every mode space of a tensor $\tens{P}\in\cA$ of CP rank three contains
a nonzero pure direction in each block. Each of its block
projections has dimension at most two. Its multilinear rank is
either a permutation of $(3,3,2)$ or $(3,3,3)$.
\end{lemma}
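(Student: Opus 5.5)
The plan is to argue directly from the affine block equations: the pure blocks satisfy $\tens P_{jjj}\in\cA_j$, and each mixed component lies in the corresponding $\cV_i\otimes\cK_j$-type space. Fix a three-term expression $\tens P=\sum_{i=1}^3\va_i\otimes\vb_i\otimes\vc_i$. The basic tool, already used in \cref{lem:rank-two}, is a single observation. Suppose a mixed component, say $\tens P_{211}=\sum_i\matr p_2\va_i\otimes\matr p_1\vb_i\otimes\matr p_1\vc_i\in\cV_2\otimes\cK_1$, can be contracted in its first mode against a functional $\eta$ on $\cV_2$ that isolates some term. Then the corresponding block-$1$ matrix lies in $\cK_1$. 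If that makes $\tens P_{111}$ land in $\J(\cK_1)$, this contradicts $0\notin\cA_1$ from \cref{prop:single}.

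\emph{Block projections.} I will first show that each block projection has dimension at most two. If $\dim\Span\{\matr p_2\va_i\}=3$, then the $\matr p_2\va_i$ are independent, and the $(2,1,1)$ equation forces every $\matr p_1\vb_i\otimes\matr p_1\vc_i\in\cK_1$. Hence $\tens P_{111}\in\cV_1\otimes\cK_1\subseteq\J(\cK_1)$, which is a contradiction. Permuting modes and blocks gives the bound in every mode.

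\emph{Lower bound on mode dimensions.} Next I will show that every mode space has dimension at least two. If $\Ess_A(\tens P)=\FF\vx$, write $\tens P=\vx\otimes\matr M$. Both $\matr p_1\vx$ and $\matr p_2\vx$ are nonzero, since both pure blocks are nonzero. The $(2,1,1)$ equation then gives $\matr M_{11}\in\cK_1$, so $\tens P_{111}\in\J(\cK_1)$, a contradiction.

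\emph{Pure directions.} Now I produce the pure directions. If $\dim\Ess_A=3$, then each $\matr p_j$ has image of dimension at most two, so its kernel in $\Ess_A$ is nonzero, and this kernel is a pure direction in the other block. If $\dim\Ess_A=2$, write $\tens P=\vx\otimes\matr M+\vy\otimes\matr N$ and split into cases by the ranks of $\matr p_1,\matr p_2$ on $\Ess_A$. Both projections are nonzero.
\begin{enumerate}
\item If both are injective, the $(2,1,1)$ equation puts $\matr M_{11},\matr N_{11}$ in $\cK_1$, and we get the contradiction above.
\item If, say, $\matr p_2$ has rank one, choose $\vx$ pure in $\cV_1$ and $\vy=\vy_1+\vy_2$ with $\vy_2\ne0$. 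There are two subcases. If $\vy_1\parallel\vx$, we obtain both pure directions $\vx$ and $\vy_2$. Otherwise, $\vy_2\otimes\matr N_{22}\in\cA_2$ forces $\matr N_{22}\notin\cK_2$. But the $(1,2,2)$ component $\vx\otimes\matr M_{22}+\vy_1\otimes\matr N_{22}\in\cV_1\otimes\cK_2$, contracted with a functional killing $\vx$ and not $\vy_1$, gives $\matr N_{22}\in\cK_2$, a contradiction.
\end{enumerate}
Hence every mode space contains a nonzero pure direction in each block.

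\emph{Multilinear rank.} It remains to exclude two modes of dimension two; I expect this to be the main obstacle. If $\Ess_A=\Span\{\vx_1,\vx_2\}$ and $\Ess_B=\Span\{\vy_1,\vy_2\}$ with $\vx_j,\vy_j\in\cV_j$ (by the previous step), then $\tens P=\sum_{j,k}\vx_j\otimes\vy_k\otimes\vc_{jk}$. The pure blocks are $\vx_j\otimes\vy_j\otimes\matr p_j\vc_{jj}\in\cA_j$, which are rank one, so \cref{lem:local} applies to them. I would then use the mixed equations $(1,1,2),(1,2,1),(2,1,1)$, and their block-swapped versions, to show that all cross terms vanish, that is, $\vc_{12}=\vc_{21}=0$ and $\matr p_k\vc_{jj}=0$ for $k\ne j$. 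The $(1,1,2)$ equation puts $\vx_1\otimes\vy_1$ in $\cK_1$ unless $\matr p_2\vc_{11}=0$, and $\vx_1\otimes\vy_1\in\cK_1$ is impossible because it would put the pure block in $\J(\cK_1)$. The equations in which a factor of $\vc_{12}$ pairs with $\vx_1$ or $\vy_2$ in $\cK_j$ need \cref{lem:local}: the factor $\vx_1$ has $\vx_1^2\notin\cK_1$ and is selector-dense, which I expect rules out rank-one matrices of the form $\vx_1\otimes\vz$ in $\cK_1$ (using $\matr L\cK_1\matr L^\top$ diagonal as in the proof of \cref{lem:local}). Once the cross terms vanish, $\tens P$ is a sum of two pure rank-one tensors. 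That gives $r(\tens P)\le2$, contradicting rank three. Since all modes have dimension between two and three and at most one equals two, the multilinear rank is $(3,3,3)$ or a permutation of $(3,3,2)$.
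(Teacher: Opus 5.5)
Your proof is correct. Two of its three parts coincide with the paper's proof. The projection bound uses the same mixed-equation argument. The exclusion of two two-dimensional modes is the same coefficient elimination; the paper keeps the free vector in mode $A$ with pure lines in $B,C$, where you keep it in $C$.

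The genuine difference is in the first claim.

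\emph{The paper's route.} It contracts two modes by distinct selector rows supported on block $j$. Since these rows annihilate every $\cK$ factor, the other block's component vanishes, so the resulting vector lies in $\Ess_m(\tens P)\cap\cV_j$. Its value $-2$ under the third row shows that it is nonzero.

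\emph{Your route.} Your Step~1, case~(i), and the ``otherwise'' subcase together amount to one observation: neither $\matr p_1$ nor $\matr p_2$ can be injective on a mode space, since otherwise a mixed equation pushes an entire pure block into $\J(\cK_j)$. With two blocks, the kernel of $\matr p_{3-j}$ on the mode space is then a nonzero subspace of $\cV_j$.

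\emph{Comparison.} Phrased this way, your argument needs neither the selector nor the rank hypothesis, and it makes Step~2 redundant, so it is more elementary. However, it is tied to two blocks: with three blocks such a kernel need not be pure. The selector contraction works for any number of copies and produces a pure vector with a prescribed nonzero selector coordinate. That is why the paper reuses it in \eqref{eq:triple-pure-intersections}.

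One small repair in your last step. A nonzero $\vx_1\otimes\vz\in\cK_1$ is excluded by symmetry of $\cK_1$: symmetry forces $\vz\parallel\vx_1$, so the matrix is a multiple of $\vx_1^2$, contrary to \cref{lem:local}. Diagonality of $\matr L\cK_1\matr L^\top$ by itself only yields $\matr L\vz=0$.
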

\begin{proof}
Let $\matr{L}_r:\cV_1\oplus\cV_2\to\FF^2$ apply row $r$
of the selector separately on the two blocks. Equation \eqref{eq:six}
and the mixed-block conditions give
$(\matr{L}_1\otimes\matr{L}_2\otimes\matr{L}_3)\tens{P}
=-2(\vect{e}_1^3+\vect{e}_2^3).$ 
In fact, contracting two modes by distinct rows supported on block
$j$ gives a vector $\vx_j\in\cV_j$ with
$\varphi_t(\vx_j)=-2$ for the remaining row $t$. Therefore
\[
\Ess_m(\tens{P})\cap\cV_j\ne0\ (m=A,B,C;\ j=1,2).
\]
If any projected three-column factor matrix had rank three,
the corresponding mixed equation would put all opposite-block
factor pairs in $\cK_{3-j}$, making
$\tens{P}_{3-j,3-j,3-j}\in\J(\cK_{3-j})$. Thus
\[
\dim\matr{p}_j\Ess_m(\tens{P})\le2,
\ \  2\le\dim\Ess_m(\tens{P})\le3.
\]
Suppose two modes have dimension two. Their spaces are sums of pure
lines, so write
\[
\tens{P}=\sum_{j,k=1}^2\va_{jk}\otimes\vb_j\otimes\vc_k,
\ \  \vb_j,\vc_j\in\cV_j.
\]
The pure-block components are rank-one affine representatives. The $(2,1,1)$
and $(1,2,2)$ equations, and their same-block pair exclusions, imply
$\va_{jj}\in\cV_j$. A nonzero block component of $\va_{12}$ would
give either $\vb_1^2\in\cK_1$ or $\vc_2^2\in\cK_2$, by symmetry
of the appropriate mixed matrix. Both are forbidden by \cref{lem:local}.
Similarly $\va_{21}=0$. Hence $r(\tens{P})=2$, a contradiction.
Only $(3,3,2)$ and $(3,3,3)$, up to permutation, remain.
\end{proof}

\begin{lemma}[Exclusion of \((3,3,2)\)]\label{lem:332}
No residual compatible with the original $111$-term symmetric decomposition
has multilinear rank $(3,3,2)$, in any order.
\end{lemma}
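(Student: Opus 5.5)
We may assume, after permuting the original modes (and with them the original maps and index sets), that $\dim\Ess_C(\tens P)=2$. By \cref{lem:modes}, $\Ess_C(\tens P)$ contains a nonzero pure direction in each block, so $\Ess_C(\tens P)=\Span\{\vc_1,\vc_2\}$ with $\vc_j\in\cV_j$. We can then write
\[
\tens P=\matr M_1\otimes\vc_1+\matr M_2\otimes\vc_2,\qquad \matr M_1,\matr M_2\in\cV\otimes\cV .
\]
The pure block at $j$ is $(\matr p_j\otimes\matr p_j)\matr M_j\otimes\vc_j\in\cA_j$. By the last assertion of \cref{lem:local}, $\vc_j^2\notin\cK_j$.

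\emph{Cross-block components.} We read off the mixed components block by block. A component such as $(\matr p_1\otimes\matr p_2)\matr M_2\otimes\vc_2$ lies in $\cV_1\otimes\cV_2\otimes\cV_2$. In $\J(\cK)$ this position is occupied only by $\cV_1\otimes\cK_2$, with the $\cK_2$ factor in modes $B,C$. Since the $C$-factor is $\vc_2$, this forces either the slice to vanish or $\vc_2^2\in\cK_2$. The latter is excluded, so the slice vanishes. The same argument kills every component of $\matr M_j$ whose $A$- or $B$-block equals $j$ but which is not the $jj$ block. The only surviving mixed part is the $(3-j,3-j)$ block of $\matr M_j$, which must lie in $\cK_{3-j}$, since $\cK_{3-j}\otimes\cV_j$ is part of $\J(\cK)$. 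Thus
\[
\matr M_j=\matr N_j+\matr K_{3-j},\qquad \matr N_j\in\cV_j\otimes\cV_j,\ \matr K_{3-j}\in\cK_{3-j},
\]
and $\matr N_j\otimes\vc_j\in\cA_j$.

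\emph{Rank count and a rank-one pure block.} Next I would use $r(\tens P)=3$. Write a three-term decomposition $\sum_{i=1}^3\va_i\otimes\vb_i\otimes(\gamma_{i1}\vc_1+\gamma_{i2}\vc_2)$. Then each $\matr M_j$ lies in $\Span\{\va_i\otimes\vb_i\}$. By \cref{lem:modes}, each block projection of each mode space has dimension at most two, and since the $A$- and $B$-mode spaces are three-dimensional, each contains a pure direction in each block. Counting supports then shows that some $\matr N_j$ has rank one. The reason is that $\rank\matr N_1+\rank\matr N_2\le 3$ once the $\cK$ parts are absorbed, and \cref{lem:factor-calculus} controls how these ranks can split between blocks. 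So for some $j$ we get $\matr N_j=\vu\otimes\vv$, and $\vu\otimes\vv\otimes\vc_j\in\cA_j$ is a rank-one representative. Its three two-mode quadratic quotients are the local quotients of $\cE_{AB},\cE_{AC},\cE_{BC}$ at block $j$. For the $AC$ and $BC$ pairs this requires a further check: the mixed parts $\matr K_{3-j}\otimes\vc_j$ and the compensation must not contribute to the local $jj$ quotient, which holds because their local $jj$ parts lie in $\cK_j$.

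\emph{Conclusion.} By \cref{prop:first-lower}, $N=111$, so $\ell=3$. The $A$- and $B$-mode spaces have dimension $3=\ell$, so \cref{lem:triple-saturation}, applied with $C$ replaced by $B$ and then by $A$, gives saturation of the pairs $AC$ and $BC$; with the mode-$C$ dimension equal to $2$, the pair $AB$ is only guaranteed the bound $\dim(\cW_{AB}/\cE_{AB})\le1$. Two saturated pairs suffice for \cref{lem:triple-saturated-pairs}. That lemma rules out a pure block having the quadratic quotients of a rank-one representative, which is the required contradiction. If instead the good pairs of the representative are $AB$ and one other pair, \cref{lem:triple-good} applies directly on a saturated good pair.

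The step I expect to be the main obstacle is the rank count producing a block with $\rank\matr N_j=1$. The $\cK_{3-j}$ parts of the $\matr M_j$ may absorb rank-one terms across blocks. If the clean count fails, I would first try \cref{lem:triple-source-counting}(ii) on the pair $AB$. Its image $\cE_{AB}=\cK\oplus\Span\{\matr M_1,\matr M_2\}$ is block diagonal of dimension $38$, and $\dim(\cW_{AB}/\cE_{AB})\le 3-2=1$. That lemma would then force $39\ge|\set S|\ge 41$, provided both blocks are $AB$-good.
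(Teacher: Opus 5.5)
You follow the paper's route: you reduce to $\tens P=\matr M_1\otimes\vc_1+\matr M_2\otimes\vc_2$, show $\matr M_j=\matr N_j+\matr K_{3-j}$ (this is the paper's statement that both $\matr M_j$ are block diagonal), and finish by saturating $AC,BC$ via $d_A=d_B=3=\ell$ and applying \cref{lem:triple-saturated-pairs}. All of that matches the paper. The gap is the step you flagged yourself: nothing you wrote produces a block with $\rank\matr N_j=1$. The facts you cite do not force it: pure directions in every block, plus $\dim\matr p_j\Ess_A\le2$ from \cref{lem:modes}, are compatible with $\dim\matr p_1\Ess_A=\dim\matr p_2\Ess_A=2$, i.e.\ a nonsplit $\Ess_A$ containing a mixed direction. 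Then $\operatorname{col}\matr N_j\subseteq\matr p_j\Ess_A$ still allows both $\matr N_j$ to have rank two. The inequality $\rank\matr N_1+\rank\matr N_2\le3$ is asserted, not proved, and ``absorbing the $\cK$ parts'' does not give it. The slices $\matr M_j=\sum_i\gamma_{ij}\va_i\otimes\vb_i$ themselves have ranks summing to at least three, which points the wrong way.

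The missing idea is that $\Ess_A$ and $\Ess_B$ split.
\begin{itemize}
\item Take a functional $\phi$ on the $C$ mode that is nonzero on all three factors $\gamma_{i1}\vc_1+\gamma_{i2}\vc_2$. The slice $\phi(\vc_1)\matr M_1+\phi(\vc_2)\matr M_2$ is block diagonal by your cross-block analysis.
\item The $A$ and $B$ factor lists are independent, so \cref{lem:factor-calculus} says this slice has rank three, with column space $\Ess_A(\tens P)$ and row space $\Ess_B(\tens P)$.
\item A block-diagonal matrix has column space equal to the direct sum of the column spaces of its diagonal blocks, and likewise for rows. Hence $\Ess_A$ and $\Ess_B$ split, with equal local dimensions in each block.
\item By \cref{lem:modes} those local dimensions are $1+2$. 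On the block $j$ where $\matr p_j\Ess_A=\Ess_A\cap\cV_j$ is a line, $\matr N_j$ has rank at most one. It is nonzero because $0\notin\cA_j$, so $\tens P_{jjj}=\matr N_j\otimes\vc_j$ has CP rank one, and your conclusion then goes through.
\end{itemize}
Your fallback through \cref{lem:triple-source-counting}(ii) does not repair the gap. It needs both local $AB$ quotients to be lines generated by classes of rank-one affine representatives, with both blocks $AB$-good. That presupposes the same rank-one information on both blocks, whereas the splitting argument guarantees it on only one.
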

\begin{proof}
Take $d_C=2$ and put $\set U_C\eqdef\Ess_C(\tens P)$. By
\cref{lem:modes}, we have
$\set U_C=\FF\vc_1\oplus\FF\vc_2$ with
$0\ne\vc_j\in\cV_j$, so
$\tens P=\matr M_1\otimes\vc_1+\matr M_2\otimes\vc_2$.
Each pure affine block can be written, in the relevant mode ordering,
as $\vc_j\otimes\matr C_j$. Thus
\cref{lem:local} gives $\vc_j^2\notin\cK_j$, and symmetry implies
$\vx\otimes\vc_j\in\cK_j\Rightarrow\vx=0$.
Applied coefficientwise to the mixed equations, this makes both
$\matr M_j$ block diagonal.

Choose a functional nonzero on the three $C$ factors of a minimal
CP decomposition. The $A,B$ factor lists are independent, so
\cref{lem:factor-calculus} gives a rank-three block-diagonal slice.
Its column and row spaces are the mode spaces. Both therefore
split with the same local dimensions $1+2$, by \cref{lem:modes}.
The common one-dimensional block consequently has a nonzero rank-one
pure component. But $d_A=d_B=3$ saturates $BC,AC$ by
\cref{lem:triple-saturation}, contradicting
\cref{lem:triple-saturated-pairs}.
\end{proof}

Every remaining compatible residual has multilinear rank $(3,3,3)$.
The following affine classification applies to any tensor of CP rank three
and this multilinear rank. A mode space $\set U$ is called
\emph{split} when $\set U=\bigoplus_j(\set U\cap\cV_j)$.
For two blocks, a mode whose block projection dimensions are $1,2$
or $2,1$ is split. If the dimensions are $2,2$, the three-dimensional
mode space has one genuinely mixed direction in addition to
its two pure directions. The next subsection classifies this
nonsplit case using only the affine equations.

\subsection{Residuals with no split mode}\label{sec:bridged}
In this section every mode projects with dimension two on both
blocks. Fix a minimal CP decomposition
$\tens{P}=\sum_{i=1}^3\va_i\otimes\vb_i\otimes\vc_i$.
Let $\matr{A}_j,\matr{B}_j,\matr{C}_j$ be the three-column
matrices of the projections on block $j$.
All six matrices have rank two. Choose nonzero kernel vectors
\begin{equation}\label{eq:kernels}
\vect{\kappa}_A^j\in\ker\matr{A}_{3-j},\ \ 
\vect{\kappa}_B^j\in\ker\matr{B}_{3-j},\ \ 
\vect{\kappa}_C^j\in\ker\matr{C}_{3-j},
\end{equation}
and let $\set{S}_m^j$ be their supports. Superscript $j$ records
the block containing the resulting pure vector, not the block
whose projection is annihilated.

\subsubsection{Quotient relations and full supports}
For example, the $(2,1,1)$ equation gives
$\sum_i\va_i^{[3-j]}\otimes
[\vb_i^{[j]}\otimes\vc_i^{[j]}]_j=0$.
For a linear map $\matr R$ and a vector space $\set H$, exactness over a field gives
$\ker(\matr R\otimes\id_{\set H})=(\ker\matr R)\otimes\set H$.
Taking $\matr R=\matr{A}_{3-j}$, whose kernel is one-dimensional, proves
\begin{equation}\label{eq:quotient}
[\vb_i^{[j]}\otimes\vc_i^{[j]}]_j
=(\kappa_A^j)_i\theta_A^{[j]}
\ (1\le i\le3),
\end{equation}
and its cyclic counterparts, where $\theta_A^{[j]}$ is a nonzero
class modulo $\cK_j$. The class is nonzero, since otherwise the pure
block would belong to $\J(\cK_j)$.
The vector
$\widehat{\va}_j\eqdef\sum_i(\kappa_A^j)_i\va_i^{[j]}$ is a nonzero
pure-block vector: the original three first-mode factors are
independent. At every nonzero kernel coordinate, we have
\begin{equation}\label{eq:quotient-rep}
\frac{\widehat{\va}_j}{(\kappa_A^j)_i}
\otimes\vb_i^{[j]}\otimes\vc_i^{[j]}\in\cA_j.
\end{equation}
Indeed, replacing every pair by its common quotient class in
$\tens{P}_{jjj}$ changes the tensor by an element of
$\cV_j\otimes\cK_j$ and gives exactly
\eqref{eq:quotient-rep}. Its first factor is nonzero because
$\sum_i(\kappa_A^j)_i\va_i$ is nonzero and its opposite
projection vanishes. These are single-block affine representatives.

\begin{lemma}[Common factor planes and full supports]\label{lem:full-support}
Let $(\vb_i,\vc_i)_{i=1}^m$ be pairs in $\cV_j$, with both
factor spans of dimension two. Suppose their classes modulo
$\cK_j$ are nonzero and proportional, and one pair extends to a
rank-one tensor in $\cA_j$. Then both factor spans equal a plane
$\cW$ for which $\cK_j\cap\Sym^2\cW$ contains a rank-two matrix.
In particular, in the present kernel-support setting, at most one
of the three supports in a fixed block is full.
\end{lemma}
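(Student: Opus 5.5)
We work in a fixed block $j$. Let $\tens{R}=\vu\otimes\vb_{i_0}\otimes\vc_{i_0}\in\cA_j$ be the given rank-one extension of the pair $(\vb_{i_0},\vc_{i_0})$. By \cref{lem:local}, the span $\Span\{\vu,\vb_{i_0},\vc_{i_0}\}$ is a plane $\cW_0$ with $\cK_j\cap\Sym^2\cW_0=0$. Each pair $(\vb_{i_0},\vc_{i_0})$ is independent or not, but no factor square lies in $\cK_j$.

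The argument runs through the classes. The classes $[\vb_i\otimes\vc_i]_j$ are nonzero and mutually proportional, so every pair satisfies
\[
\vb_i\otimes\vc_i-\lambda_i\vb_{i_0}\otimes\vc_{i_0}\in\cK_j,\qquad \lambda_i\ne0.
\]
Hence $\vb_i\otimes\vc_i$ is a rank-one matrix in $\cK_j+\FF(\vb_{i_0}\otimes\vc_{i_0})$ outside $\cK_j$.

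First, suppose the pair $(\vb_{i_0},\vc_{i_0})$ is good, meaning its two factors are independent. Then \cref{lem:matrix-rigidity} gives $\vb_i\otimes\vc_i=\lambda_i\vb_{i_0}\otimes\vc_{i_0}$ for every $i$. So every $\vb_i$ is parallel to $\vb_{i_0}$, and the $\vb$-span is one-dimensional. This contradicts the hypothesis that both factor spans have dimension two.

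Second, suppose the pair is bad, $\vc_{i_0}\parallel\vb_{i_0}$. Then $\vb_{i_0}^2$ is a symmetric rank-one matrix and $\vu\otimes\vb_{i_0}^2\in\cA_j$. Here I would adapt the rigidity argument of \cite[proof of Theorem~3.2, Case~1]{Lovitz26:arxiv} in the form of \cref{lem:matrix-rigidity}. The goal is to show that any rank-one $\vb\otimes\vc$ with $\vb\otimes\vc-\lambda\vb_{i_0}^2=\matr Q\in\cK_j$ and $\vb\otimes\vc\notin\FF\vb_{i_0}^2$ forces the following:
\begin{itemize}
\item the plane $\cW\eqdef\Span\{\vb,\vb_{i_0}\}$ also contains $\vc$;
\item $\matr Q$ is a nonzero element of $\Sym^2\cW$.
\end{itemize}
To see this, note that $\matr Q$ is symmetric, and the difference of a rank-one matrix and $\lambda\vb_{i_0}^2$ is symmetric only if both factors lie in a common plane with $\vb_{i_0}$.

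Since the $\vb$-span has dimension two, some $i$ gives $\vb_i\not\parallel\vb_{i_0}$. This identifies $\cW$ with the $\vb$-span, and by the same reasoning with the $\vc$-span. The matrix $\matr Q$ is then a nonzero element of $\cK_j\cap\Sym^2\cW$. If $\matr Q$ had rank one, say $\vx^2$, then by \cite[Lemma~3.7]{Lovitz26:arxiv} and \cref{lem:local} (no factor square lies in $\cK_j$), $\vb_{i_0}$ would have to be parallel to $\vx$. That would give $\vb_i\otimes\vc_i\in\FF\vb_{i_0}^2$, which is excluded. So $\matr Q$ has rank two, proving the first assertion. I expect this symmetric-difference step, ruling out a rank-one $\matr Q$, to be the main obstacle, and I would handle it by a direct $2\times2$ computation in a basis of $\cW$.

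For the ``in particular'', I would argue by contradiction. Suppose two supports in block $j$ are full, say $\set S_B^j$ and $\set S_C^j$ (the cases involving $A$ are symmetric). By \eqref{eq:quotient}, the classes of all three pairs $(\vc_i^{[j]},\va_i^{[j]})$ are nonzero multiples of one class, and likewise for $(\va_i^{[j]},\vb_i^{[j]})$. The representatives \eqref{eq:quotient-rep} supply the rank-one extensions. Each $\matr A_j,\matr B_j,\matr C_j$ has rank two, so the first assertion puts $\va_i^{[j]},\vb_i^{[j]},\vc_i^{[j]}$ in one plane $\cW$ with a rank-two matrix in $\cK_j\cap\Sym^2\cW$.

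Then $\tens P_{jjj}\in\cA_j\cap\cW^{\otimes3}$. By \cref{lem:binary} and \cref{rem:symmetrization}, this is impossible.
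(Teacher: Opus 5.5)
Your proof is correct and follows essentially the paper's route. \cref{lem:matrix-rigidity} rules out a pair with independent factors, so all factors are proportional and both spans become one plane $\cW$. The normalized squares of two independent directions then differ by a rank-two element of $\cK_j\cap\Sym^2\cW$, and two full supports force all three projected mode planes onto $\cW$, contradicting \cref{lem:binary,rem:symmetrization}.

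The one flaw is the appeal to \cite[Lemma~3.7]{Lovitz26:arxiv} for the rank of $\matr Q$: that lemma concerns the factor plane of a representative, and your $\cW$ need not contain $\vu$, so it does not apply. It is also unnecessary, and your fallback $2\times2$ computation already closes the step. Since $\matr Q$ is symmetric, $\vc_i\parallel\vb_i$, so $\matr Q=\mu_i\vb_i^2-\lambda_i\vb_{i_0}^2$ with $\vb_i,\vb_{i_0}$ independent and $\mu_i\lambda_i\ne0$, which visibly has rank two.
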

\begin{proof}
Proportionality modulo $\cK_j$ extends every pair to a rank-one
affine representative, after rescaling its remaining factor.
If some $\vb_i,\vc_i$ were independent, \cref{lem:matrix-rigidity}
would make every pair-product proportional to
$\vb_i\otimes\vc_i$, contrary to the two-dimensional factor spans.
Hence $\vb_i\parallel\vc_i$ for all $i$, and the two spans coincide.
Choose two independent directions. Their normalized pair-products
have equal quotient classes, so their nonzero difference belongs to
$\cK_j\cap\Sym^2\cW$ and has rank two.

For a full support, \eqref{eq:quotient}--\eqref{eq:quotient-rep}
supply the hypotheses. Two full supports would therefore identify
all three projected mode planes with the same $\cW$. Then
$\tens P_{jjj}\in\cA_j\cap\cW^{\otimes3}$, contradicting
\cref{lem:binary,rem:symmetrization}.
\end{proof}

\subsubsection{The finite support classification}
Set $[3]\eqdef\{1,2,3\}$ and
\[
\epsilon_j(i)\eqdef\#\{m\in\{A,B,C\}:i\in\set{S}_m^j\},
\ \  \bar j\eqdef 3-j.
\]
Write $\vx_{A,i}^{[j]}\eqdef\va_i^{[j]}$; write
$\vx_{B,i}^{[j]}\eqdef\vb_i^{[j]}$; and write
$\vx_{C,i}^{[j]}\eqdef\vc_i^{[j]}$.
\begin{proposition}[Support classification]\label{prop:support-patterns}
Up to simultaneous permutation of the three terms, permutation
of the modes, and exchange of the blocks, the supports are
\begin{equation}\label{eq:patterns}
\begin{array}{c|c|c}
&(\set{S}_A^1,\set{S}_B^1,\set{S}_C^1)&
(\set{S}_A^2,\set{S}_B^2,\set{S}_C^2)\\ \hline
\mathrm{I}&(\{1\},\{1\},\{1\})&(\{2\},\{2\},\{2\})\\
\mathrm{II}&(\{1\},\{1\},\{1\})&
(\{2,3\},\{2,3\},\{2,3\}).
\end{array}
\end{equation}
\end{proposition}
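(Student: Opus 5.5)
The plan is to reduce the statement to a finite combinatorial problem about the six kernel supports $\set S_m^j$, and then eliminate all but the two listed patterns.

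\emph{Basic constraints.} I would first record the constraints coming directly from linear algebra. Since each $\matr A_j$ has rank two on three columns, each $\vect\kappa_A^j$ is unique up to scale and $\set S_A^j\ne\varnothing$. The vectors $\vect\kappa_A^1,\vect\kappa_A^2$ are independent: a common kernel vector of $\matr A_1$ and $\matr A_2$ would be a relation among the independent $\va_i$. The same holds in modes $B,C$. Two interpretations follow.
\begin{itemize}
\item $\set S_A^j=\{i\}$ means $\va_i^{[\bar j]}=0$, that is, $\va_i\in\cV_j$.
\item $|\set S_A^j|=2$ means the two corresponding $\bar j$-projections are proportional.
\end{itemize}
Next I would read off from \eqref{eq:quotient} the dichotomy it imposes. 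If $i\in\set S_A^j$, then $[\vb_i^{[j]}\otimes\vc_i^{[j]}]_j\ne0$, so both $\vb_i^{[j]}$ and $\vc_i^{[j]}$ are nonzero. If $i\notin\set S_A^j$, then $\vb_i^{[j]}\otimes\vc_i^{[j]}\in\cK_j$. The cyclic versions hold for $B$ and $C$.

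\emph{Key step: the supports agree across modes.} The central step is to show that, in each block, $\set S_A^j=\set S_B^j=\set S_C^j$, equivalently $\epsilon_j(i)\in\{0,3\}$ for every $i$. Suppose $i\in\set S_B^j\cap\set S_C^j$ but $i\notin\set S_A^j$. Then $\va_i^{[j]},\vb_i^{[j]},\vc_i^{[j]}$ are all nonzero and $\vb_i^{[j]}\otimes\vc_i^{[j]}\in\cK_j$. However, the representative \eqref{eq:quotient-rep} built from the $B$-kernel (or the $C$-kernel) has $\vc_i^{[j]}$ (respectively $\vb_i^{[j]}$) as a factor.
\begin{itemize}
\item If $\vb_i^{[j]}\parallel\vc_i^{[j]}$, this gives a factor square in $\cK_j$, which \cref{lem:local} forbids.
\item Otherwise a rank-one product of two independent vectors lies in $\cK_j$. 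I would exclude this by applying \cref{lem:matrix-rigidity} to the $B$-representative. That lemma shows the rank-one matrices of $\cK_j+\FF(\va\otimes\vc)$ outside $\cK_j$ are multiples of one product, and I would combine this with the zero-intersection clause of \eqref{eq:zero-intersection}.
\end{itemize}
The cases $\epsilon_j(i)=1$ would be handled similarly. If $i$ lies only in $\set S_A^j$, then $\vb_i^{[j]},\vc_i^{[j]}\ne0$, while $\va_i^{[j]}\otimes\vc_i^{[j]}$ and $\va_i^{[j]}\otimes\vb_i^{[j]}$ lie in $\cK_j$. Then either $\va_i^{[j]}=0$, or again a product with a factor of an affine representative lies in $\cK_j$, which \cref{lem:local} and its symmetric consequence $\vx\otimes\vc_j\in\cK_j\Rightarrow\vx=0$ exclude. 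If $\va_i^{[j]}=0$, I would use the independence of the $\va$'s together with $\dim\matr p_j$-span $=2$ to reach a contradiction with the kernel shape. I expect this step, showing $\epsilon_j(i)\in\{0,3\}$, to be the main obstacle. Some care is needed because \cref{lem:matrix-rigidity} only controls products that pair factors of one fixed representative.

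\emph{Enumeration.} With common supports $\set S^1,\set S^2$, \cref{lem:full-support} gives $|\set S^j|\le2$, since three full supports in a block would violate the ``at most one full'' conclusion. The supports cannot coincide, because $\vect\kappa^1,\vect\kappa^2$ are independent. They must also be disjoint. Suppose $i\in\set S^1\cap\set S^2$ and, say, $\set S^1=\{i\}$. Then all three factors of term $i$ lie in $\cV_1$, while $i\in\set S^2$ requires nonzero block-$2$ projections, a contradiction. Supports of sizes $(2,2)$ overlap in $[3]$. The overlapping index would then have all three factors with nonzero projections on both blocks, with paired $\bar j$-proportionalities. I would exclude this by counting: three columns of rank two in each block force one singleton kernel, or else make the blockwise spans two-dimensional but collapse a mixed equation as in \cref{lem:modes}. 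What remains, up to relabelling terms, modes and blocks, is sizes $(1,1)$ disjoint, giving pattern I, and sizes $(1,2)$ disjoint, giving pattern II.
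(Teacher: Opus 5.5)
\textbf{There is a genuine gap.} Your plan rests on proving the support-equality step $\epsilon_j(i)\in\{0,3\}$ block by block, and that step cannot be proved locally.

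\textbf{Activity one.} Your activity-two argument is fine. Its second bullet is even vacuous: $\cK_j\subseteq\Sym^2\cV_j$, so a nonzero rank-one element of $\cK_j$ has proportional factors. In the activity-one case you also correctly obtain $\va_i^{[j]}=0$. But nothing local contradicts $\va_i^{[j]}=0$. A rank-two matrix $\matr A_j$ with three columns may have a zero column. The only consequence is $\ker\matr A_j=\FF\vect e_i$, that is, $\set S_A^{\bar j}=\{i\}$, which is a constraint on the \emph{other} block. The paper therefore keeps activity one as a rule: $\epsilon_j(i)=1$ and $i\in\set S_m^j$ imply $\vx_{m,i}^{[j]}=0$ and $\set S_m^{\bar j}=\{i\}$. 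It pairs this with the singleton rule: $\set S_m^j=\{i\}$ forces $\epsilon_j(i)=3$, $i\notin\set S_n^{\bar j}$ for $n\ne m$, and $\set S_m^{\bar j}\ne\{i\}$. Activity one is excluded only by a case analysis that starts from a normalized singleton and alternates between the two blocks. Equality of supports across modes is an outcome of that global solve, not an input to it.

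\textbf{Existence of a singleton support.} This is the second missing ingredient, and your enumeration needs it. Your claim that the supports cannot coincide because $\vect\kappa^1,\vect\kappa^2$ are independent is false for two-element supports; for example, $(1,1,0)$ and $(1,-1,0)$ are independent with the same support. It is also false that a rank-two three-column matrix must have a singleton kernel support. Overlapping two-element supports do reduce to coincident ones, because proportional columns $p,q$ in the opposite block have the same activity status. The case $\set S^1=\set S^2=\{p,q\}$ with no vanishing projected column then needs the paper's argument:
\begin{itemize}
\item The remaining term $\ell$ is inactive on both blocks, so in each block its three factors lie on a line $\FF\vw_j$ with $\vw_j^2\in\cK_j$.
\item Hence the whole term lies in $\J_{\cV}(\cK)$.
\item Deleting it leaves a two-term tensor in $\cA$, of CP rank two by \cref{prop:ordinary}.
\item By \cref{lem:rank-two} its terms must be pure, contradicting the nonvanishing columns.
\end{itemize}
Neither your rank counting nor the mixed-equation argument of \cref{lem:modes} supplies this step.
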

\begin{proof}
We first establish five rules for the six nonempty supports:
\begin{enumerate}
\item $\#\{m:\set{S}_m^j=[3]\}\le1$;
\item $\epsilon_j(i)\in\{0,1,3\}$;
\item If $\epsilon_j(i)=1$ and $i\in\set{S}_m^j$, then
$\vx_{m,i}^{[j]}=0$ and $\set{S}_m^{\bar j}=\{i\}$;
\item If $\set{S}_m^j=\{i\}$, then $\epsilon_j(i)=3$, while
$i\notin\bigcup_{n\ne m}\set{S}_n^{\bar j}$ and
$\set{S}_m^{\bar j}\ne\{i\}$;
\item $\min_{j,m}|\set{S}_m^j|=1$.
\end{enumerate}
Throughout, activity in mode $A$ means that the $B,C$ pair
has nonzero quotient class; use the cyclic convention for the
other modes. Rule (i) is \cref{lem:full-support}.

For (ii), suppose two modes are active. Their two rank-one
representatives in \eqref{eq:quotient-rep} show that all three
projected factors are nonzero and selector-dense. If the third
mode were inactive, the associated pair would belong to $\cK_j$.
A nonzero symmetric rank-one pair has proportional factors;
its square would belong to $\cK_j$, contradicting the nonzero
off-diagonal entries of the selector image. Exactly two active modes
are therefore impossible.

For (iii), take $A$ to be the only active mode. Then
\[
\va_i^{[j]}\otimes\vb_i^{[j]},\ \va_i^{[j]}\otimes\vc_i^{[j]}\in\cK_j,
\ \  \vb_i^{[j]},\vc_i^{[j]}\text{ are selector-dense}.
\]
If $\va_i^{[j]}\ne0$, symmetry of either matrix makes its two
factors proportional and puts the square of a dense factor
in $\cK_j$, a contradiction. Thus $\va_i^{[j]}=0$.
The kernel of a rank-two three-column matrix with column $i$
zero is exactly $\FF\vect{e}_i$, proving the singleton assertion.

For (iv), suppose $\set{S}_A^j=\{i\}$. Then
$\va_i^{[\bar j]}=0$ while $\va_i^{[j]}\ne0$, since a full CP factor
is nonzero. Activity in $A$ makes $\vb_i^{[j]},\vc_i^{[j]}$ dense.
If $B$ or $C$ were inactive, its pair in $\cK_j$ would again
force the square of one of these dense factors into $\cK_j$.
Thus $i$ is in all three supports on side $j$.
On the opposite side the $A$ factor is zero, so both pairs
containing it have zero quotient class. Hence
$i\notin\set{S}_B^{\bar j}\cup\set{S}_C^{\bar j}$.
Finally $\set{S}_A^{\bar j}\ne\{i\}$, since that singleton
would also force $\va_i^{[j]}=0$.

For (v), suppose no projected column vanishes. No support
can be a singleton, since a singleton kernel is a zero column.
Rule (iii) rules out activity one, and (ii) rules out activity
two. Hence all three supports on a given side coincide; denote their
common support by $\set S^j$. They have size two by (i) and
nonemptiness. Write $\set{S}^1=\{p,q\}$. The opposite factor matrices have
proportional nonzero columns $p,q$ in every mode, so their
pair-products have the same zero or nonzero status modulo
$\cK_2$. Thus $p,q$ have equal opposite activity. The common
opposite support has size two, so it must also be $\{p,q\}$.
For the remaining term index $\ell$ and each block $j\in\{1,2\}$,
the three factors in block $j$ are proportional to a nonzero $\vw_j$ with
$\vw_j^2\in\cK_j$. In every block component of its full CP
term at least two positions belong to the same block; their
pair is a multiple of $\vw_j^2$. Therefore the entire term is
in $\J_{\cV}(\cK)$. Deleting it leaves a two-term tensor in
$\cA$, of CP rank exactly two by the affine minimum in
\cref{prop:ordinary}. By \cref{lem:rank-two} its two terms must
be pure, contradicting the assumption of no zero projected
column. A zero column exists and yields a singleton support.

We now solve these rules.
By rule (v), normalize $\set{S}_A^1=\{1\}$; rule (iv) gives
$1\in\set{S}_B^1\cap\set{S}_C^1$.
Suppose $2\in\set{S}_B^1$. Rules (ii)--(iv) successively imply
\begin{align*}
&2\notin\set{S}_C^1,\ \  \set{S}_B^2=\{2\},
\ \  2\in\set{S}_A^2\cap\set{S}_C^2,\\
&3\in\set{S}_C^1
\ \Longrightarrow\ \set{S}_C^2=\{3\},\ \text{impossible},\\
&\set{S}_C^1=\{1\},\ \ 
1\notin\set{S}_A^2\cup\set{S}_C^2,\ \
\set{S}_C^2\ne\{2\},\ \  \set{S}_C^2=\{2,3\},\ \
3\notin\set{S}_A^2,\ \  \set{S}_A^2=\{2\}.
\end{align*}
Here the exclusions involving index $1$ are obtained by applying
rule~(iv) to both singleton supports $\set S_A^1=\{1\}$ and
$\set S_C^1=\{1\}$.
The final singleton contradicts $2\in\set{S}_B^1$ by rule (iv).
The same argument under permutations gives
$\set{S}_A^1=\set{S}_B^1=\set{S}_C^1=\{1\}$. 
All opposite supports lie in $\{2,3\}$. If one is $\{i\}$,
rule (iv) puts $i$ in all three. The other index cannot have activity
one, two, or three: these contradict (iii), (ii), or that singleton,
respectively. This is pattern I. Without a singleton, all three
opposite supports are $\{2,3\}$, giving pattern II.
\end{proof}

\subsection{A shared two-block affine form}
The preceding support calculation yields a statement independent of
the original source count. It will be used for both direct sums.

\begin{lemma}[Affine form of CP rank three and multilinear rank \((3,3,3)\)]
\label{lem:H345-R333}
Let $\tens Q$ belong to the affine space of two physical blocks
$q,r$, with
$r(\tens Q)=3$ and $\mrank(\tens Q)=(3,3,3).$ 
Put $d^{\mathrm{proj}}_{X,j}\eqdef\dim\matr p_j\Ess_X(\tens Q)$.
Then at least one side block has the form
\begin{equation}\label{eq:H345-side-general}
\tens Q_{jjj}=\tens R_j+\lambda_j\vw_j^3,\ \ 
r(\tens R_j)=1,\ \tens R_j\in\cA_j,\ 
\vw_j^2\in\cK_j,
\end{equation}
where $\lambda_j=0$ is allowed. If, in addition, we have
$d^{\mathrm{proj}}_{X,q}=d^{\mathrm{proj}}_{X,r}=2$ for all three modes, then, after interchanging
$q,r$ and the three terms if necessary, we have
\begin{equation}\label{eq:H345-Type-I}
\tens Q=\tens R_q+\tens R_r+
(\alpha_q\vw_q+\alpha_r\vw_r)\otimes
(\beta_q\vw_q+\beta_r\vw_r)\otimes
(\gamma_q\vw_q+\gamma_r\vw_r),
\end{equation}
where all six scalars are nonzero, the inclusions
$\vw_j^2\in\cK_j$ hold, and each $\tens R_j\in\cA_j$ has rank one. In
particular
\begin{equation}\label{eq:H345-side-quotient}
\tens Q_{jjj}=\tens R_j+\lambda_j\vw_j^3
\ (j=q,r),\ \ \lambda_j\ne0.
\end{equation}
\end{lemma}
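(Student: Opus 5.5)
We split according to whether some mode space of $\tens Q$ is split. By \cref{lem:modes}, every block projection has dimension at most two and every mode space contains a nonzero pure direction in each block. Since $\mrank(\tens Q)=(3,3,3)$, the projection dimensions $(d^{\mathrm{proj}}_{X,q},d^{\mathrm{proj}}_{X,r})$ of each mode are $(1,2)$, $(2,1)$ or $(2,2)$.

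\emph{Case of a split mode.} Take, say, $X=C$ split with local dimensions $1,2$, so $\Ess_C(\tens Q)=\FF\vc_q\oplus\set U_r$.

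First we show that the slices are block diagonal. Exactly as in the proof of \cref{lem:332}, \cref{lem:local} together with symmetry gives $\vx\otimes\vc_q\in\cK_q\Rightarrow\vx=0$. Applying this coefficientwise to the mixed equations makes the $\vc_q$-slice block diagonal.

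Next we locate a rank-one pure block. We apply \cref{lem:factor-calculus} to the $C$-factors of a minimal CP decomposition: a functional nonzero on the three $C$ factors gives a rank-three slice whose column and row spaces are $\Ess_A$ and $\Ess_B$. Comparing this slice with the block-diagonal structure forces the one-dimensional block in $C$ to pair with one-dimensional pure pieces in $A$ and $B$ on the same block. The pure block $\tens Q_{qqq}$ then has mode spaces of dimension one. It is nonzero and lies in $\cA_q$, so it is itself a rank-one representative. This gives \eqref{eq:H345-side-general} with $\lambda_q=0$.

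If this slice comparison fails to align the blocks, the fallback is the following. Use the kernel vectors \eqref{eq:kernels} on the block where $C$ has a one-dimensional projection. Because $\ker\matr C_{q}$ is two-dimensional there, the quotient relation \eqref{eq:quotient} forces two of the terms to have $C$-pairs in $\cK$. Then argue as in rule (iii) of \cref{prop:support-patterns}.

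\emph{Case of no split mode.} Here all projections are $(2,2)$, and we are in the setting of \cref{sec:bridged}. \Cref{prop:support-patterns} leaves patterns I and II.

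Pattern II should be excluded. In block $2$ all three supports equal $\{2,3\}$, so terms $2,3$ have proportional block-$2$ projections in every mode. By \eqref{eq:quotient} their pairs have proportional nonzero classes modulo $\cK_2$, and \eqref{eq:quotient-rep} supplies a rank-one affine representative. \Cref{lem:full-support}, in its first part, then gives a common plane $\cW$ for all three projected mode planes. This produces $\tens Q_{222}\in\cW^{\otimes3}$ modulo $\J(\cK_2)$, contradicting \cref{lem:binary,rem:symmetrization}. I expect this exclusion to be the delicate point. The first part of \cref{lem:full-support} only addresses two pairs, so I would apply it to each of the three mode pairs and then use rule (iv) to make the planes coincide.

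\emph{Pattern I.} Rule (iii) gives $\va_1^{[2]}=\vb_1^{[2]}=\vc_1^{[2]}=0$ and symmetrically term $2$ vanishes in block $1$. So terms $1,2$ are pure, namely $\tens R_q$ and $\tens R_r$, and they are rank-one affine representatives by \eqref{eq:quotient-rep}.

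Term $3$ has activity zero in both blocks. All three of its pair products in block $j$ therefore lie in $\cK_j$. Symmetry forces its three block-$j$ factors to be proportional to some $\vw_j$ with $\vw_j^2\in\cK_j$, which yields \eqref{eq:H345-Type-I}.

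It remains to check that all six scalars are nonzero. A zero scalar in block $j$ lowers $d^{\mathrm{proj}}_{X,j}$ to $1$, contradicting the $(2,2)$ assumption. Finally, expanding the pure block of \eqref{eq:H345-Type-I} gives \eqref{eq:H345-side-quotient} with $\lambda_j=\alpha_j\beta_j\gamma_j\ne0$.

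The first assertion of the lemma, for general $\tens Q$, then follows by combining the two cases.
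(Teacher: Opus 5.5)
Your non-split case essentially follows the paper's argument, but your split case has a genuine gap. The step ``comparing this slice with the block-diagonal structure forces the one-dimensional block in $C$ to pair with one-dimensional pure pieces in $A$ and $B$ on the same block'' is unjustified, and it is false in general.

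Only the $\vc_q$-coefficient slice is known to be block diagonal. A functional nonzero on all three $C$ factors also contracts the $\set U_r$ directions. Moreover, even a block-diagonal rank-three slice matches the local dimensions of $A$ only with those of $B$. That is all \cref{lem:332} uses, and it suffices there because $C$ is a line in both blocks.

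Here is a concrete failure. Let $\va_1$ be the square generator $\vw_{1,1,1}$ of copy $q$. Let $\va_2\otimes\vc_q\otimes\vc_q$ and $\tens R_r$ be the copies of \eqref{eq:single-representatives} on blocks $q$ and $r$. Let $0\ne\vu\in\cV_r$ be non-parallel to the $C$ factor of $\tens R_r$. Then
\[
\tens Q\eqdef\va_1\otimes\va_1\otimes(\vc_q+\vu)+\va_2\otimes\vc_q\otimes\vc_q+\tens R_r
\]
has the following properties.
\begin{enumerate}
\item It lies in $\cA$, since its only mixed component $\va_1^2\otimes\vu$ lies in $\cK_q\otimes\cV_r$.
\item It has $r(\tens Q)=3$ and $\mrank(\tens Q)=(3,3,3)$, because $\va_2^2,\vc_q^2\notin\cK_q$.
\item Its mode space $\Ess_C(\tens Q)$ is split, with line $\FF\vc_q$ in block $q$.
\end{enumerate}
Yet $\tens Q_{qqq}=(\va_1\otimes\va_1+\va_2\otimes\vc_q)\otimes\vc_q$ has rank two, and $\matr p_q\Ess_A$, $\matr p_q\Ess_B$ are planes. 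The lemma holds here only through block $r$.

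Your fallback does not rescue this. Write $\vc_i^{[q]}=x_i\vc_q$. With $\rank\matr C_q=1$, the $(r,r,q)$ equation yields only the single relation $\sum_i x_i[\va_i^{[r]}\otimes\vb_i^{[r]}]_r=0$. It puts no individual pair into $\cK_r$.

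The missing idea is the paper's case split on the block-diagonal $\vc_q$-slice $\matr M_q\oplus\matr N_r$, where $\matr N_r\in\cK_r$ and $1\le\rank\matr M_q\le2$.
\begin{enumerate}
\item If $\rank\matr M_q=1$, then $\tens Q_{qqq}$ itself has rank one.
\item If $\rank\matr M_q=2$ and $\matr N_r\ne0$, the slice has rank three. \Cref{lem:factor-calculus} then makes the $r$ parts of $\Ess_A$ and $\Ess_B$ equal to the factor line of $\matr N_r$. Hence $\tens Q_{rrr}\in\J(\cK_r)$, which is impossible in $\cA_r$.
\item If $\rank\matr M_q=2$ and $\matr N_r=0$, exactly two terms have a nonzero $\vc_q$-coefficient, and their $A,B$ factors lie in $\cV_q$. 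The third term alone then produces $\tens Q_{rrr}$, which therefore has rank one.
\end{enumerate}
The last subcase is exactly the example above, and it is the one your argument misses: there the rank-one side block sits on the thick block of the split mode.

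In the non-split case, fix the bookkeeping for pattern~II. The side-$2$ supports $\{2,3\}$ are supports of kernel vectors of the block-$1$ projections. So terms $2,3$ are proportional in block $1$ and \emph{independent} in block $2$, where term $1$ vanishes. That independence is exactly the two-dimensional-span hypothesis of \cref{lem:full-support}; with proportional block-$2$ projections, as you wrote, the hypothesis would fail. Applying \cref{lem:full-support} to the $BC$ pairs and to the $AC$ pairs already identifies all three planes, so rule~(iv) is not needed. Likewise, in pattern~I the vanishing $\va_1^{[2]}=\vb_1^{[2]}=\vc_1^{[2]}=0$ comes from the singleton kernel supports, not from rule~(iii).
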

\begin{proof}
Suppose first that one mode splits, with a line $\FF\va$ in
block $q$. By \cref{lem:modes}, every projected mode has dimension
at most two. The mixed equations and $\va^2\notin\cK_q$ give a
block-diagonal contraction $\matr M_q\oplus\matr N_r$, where
$\matr N_r\in\cK_r$ and $1\le\rank\matr M_q\le2$.
If $\rank\matr M_q=1$, the $q$ pure block itself is a rank-one
representative.

If $\rank\matr M_q=2$ and $\matr N_r\ne0$, the contraction has
rank three and $\rank\matr N_r=1$. By \cref{lem:factor-calculus},
its supports are the full remaining mode spaces; their $r$ parts
are the factor lines of $\matr N_r$. Hence
$\tens Q_{rrr}\in\cV_r\otimes\cK_r$, impossible in $\cA_r$.
Thus $\matr N_r=0$. Exactly two terms are active, and their two
contracted factors lie in block $q$. Only the remaining term
contributes to the nonzero $r$ pure block, which is therefore a
rank-one representative. This proves \eqref{eq:H345-side-general}
with $\lambda_j=0$ whenever a mode splits.

Otherwise every mode projects two-dimensionally onto both blocks,
by the projection bound of \cref{lem:modes}. The proofs of
\cref{lem:full-support,prop:support-patterns} use only the affine
block equations, so the supports have pattern I or II.
In pattern II, apply \cref{lem:full-support} to the two $A,B$
pairs and then the two $A,C$ pairs on the pattern-II full-support side.
Their common first-factor plane identifies all three mode planes
with a plane containing a rank-two element of $\cK_j$.
This contradicts \cref{lem:binary,rem:symmetrization}. 
In pattern I, the first two terms are pure on the two sides. The
third has, on each side, three proportional nonzero projections,
say along $\vw_j$, and the inactive quotient equations give
$\vw_j^2\in\cK_j$. Its pure-block components lie in
$\J_{\cV_j}(\cK_j)$; subtracting them leaves the first two terms in
$\cA_q,\cA_r$. This proves
\eqref{eq:H345-Type-I}--\eqref{eq:H345-side-quotient}.
\end{proof}

\subsection{Fixed-source exclusion and conclusion}\label{sec:split}
The affine classification now meets the fixed-source obstruction.

\begin{proposition}[Exclusion of compatible CP-rank-three residuals]
\label{prop:double-333}
No CP-rank-three residual is compatible with a minimal
$111$-term symmetric decomposition of $\tens{V}_2$.
\end{proposition}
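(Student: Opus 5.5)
By \cref{prop:first-lower}, \cref{lem:modes} and \cref{lem:332}, a compatible residual has $N=111$, $\ell=3$, CP rank three and multilinear rank $(3,3,3)$. Then $d_X=3=\ell$ in every mode, so \cref{lem:triple-saturation} makes all three mode pairs saturated: $\cW_{XY}=\cE_{XY}$ for $XY\in\{AB,BC,AC\}$. I plan to apply \cref{lem:H345-R333} with $q,r=1,2$. I then show that both of its alternatives contradict the saturation obstructions of \cref{lem:triple-saturated-pairs} and \cref{lem:triple-source-counting}.

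\emph{Case 1: some mode splits.} The first part of \cref{lem:H345-R333} gives a block $j$ with $\lambda_j=0$, so $\tens P_{jjj}=\tens R_j$ has CP rank one. Since at least two pairs are saturated, \cref{lem:triple-saturated-pairs} excludes this directly.

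\emph{Case 2: no mode splits.} Here \eqref{eq:H345-Type-I} holds with all six scalars nonzero and $\vw_j^2\in\cK_j$. The key observation is that $\lambda_j\vw_j^3$, and every mixed block of the third term, has two same-block factors proportional to $\vw_j$. For any selected pair these factors contribute elements of $\cK$ to the two-mode contraction, except when the two contracted modes fall in different blocks. Now pass to the local quadratic quotient of a pair $XY$ at block $j$, meaning the $jj$ block modulo $\cK_j$. Every contribution of the third term is then killed, because its $jj$ part is a multiple of $\vw_j^2\in\cK_j$. Hence the three two-mode quadratic quotients of $\tens P_{jjj}$ coincide with those of $\tens R_j$.

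I then argue, as in the proof of \cref{lem:triple-saturated-pairs}, that the complete quotient identity at block $j$ reduces to the pure-block one. Mixed affine components and the compensation lie in $\cK$ or in off-diagonal blocks, and the $\vw$-term vanishes modulo $\cK_j$. Consequently $(\overline{\matr p}_j\otimes\id)\tens G=[\vu\otimes\vv]_j\otimes\vw$ for the factors of $\tens R_j$. Choose a pair that is good for $\tens R_j$; at least two such pairs exist by \cref{lem:local}. That pair is saturated, so \cref{lem:triple-good} applies and forces $\vu\parallel\vv$, which is a contradiction.

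One detail needs checking: the $\vect{w}$-coefficient in \cref{lem:triple-good}. The term $(\alpha_q\vw_q+\alpha_r\vw_r)\otimes\cdots$ has third factor $\gamma_q\vw_q+\gamma_r\vw_r$, which is not pure. If its quotient in the first two modes at block $j$ vanished identically, this would be harmless. That quotient is $\alpha_j\beta_j[\vw_j^2]_j=0$, so it does vanish. The identity is therefore exactly $[\vu\otimes\vv]_j\otimes\vw$ with $\vw\in\cV_j$, which is what \cref{lem:triple-good} requires.

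The main obstacle I expect is this bookkeeping. I must verify that, in Case 2, no cross-block or compensation term leaks a nonzero class into $(\cV_j\otimes\cV_j)/\cK_j$ with a coefficient outside $\FF\vw$. I also need the original local source matrices to lie in $\cK_j+\FF(\vu\otimes\vv)$, which follows from saturation: the sources span exactly $\cE$, and $\cE$ has the stated local quotient. If this leakage check fails, the fallback is \cref{lem:triple-source-counting}(ii): block-diagonality of $\cE$ plus $\dim(\cW/\cE)=0$ would yield $m\ge e+3$. By \eqref{eq:image-criterion} this exceeds the source count $|\set S|\le39$ against $e=36+3$, which is again a contradiction.
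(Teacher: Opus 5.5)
Your proposal is correct and is essentially the paper's proof: $d_X=\ell=3$ saturates all three pairs via \cref{lem:triple-saturation}, and \cref{lem:H345-R333} yields a pure block $\tens R_j+\lambda_j\vw_j^3$ with the same two-mode quadratic quotients as a rank-one representative, contradicting \cref{lem:triple-saturated-pairs}; in Case~2 you re-derive that lemma's proof through \cref{lem:triple-good} inline, and your case split is unnecessary because \eqref{eq:H345-side-general} already allows $\lambda_j=0$. The closing fallback via \cref{lem:triple-source-counting}(ii) is not needed, and it would not apply in Case~2 anyway: the bridging term contributes the nonzero off-diagonal block $\alpha_q\beta_r\vw_q\otimes\vw_r$ to $\cE$, so $\cE$ is not block diagonal.
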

\begin{proof}
By \cref{lem:modes,lem:332}, such a residual has multilinear rank
$(3,3,3)$. All three original pairs are saturated by
\cref{lem:triple-saturation}. On the other hand,
\cref{lem:H345-R333} gives a pure block
\[
\tens P_{jjj}=\tens R_j+\lambda\vw_j^3,
\ \  r(\tens R_j)=1,\ \tens R_j\in\cA_j,
\ \  \vw_j^2\in\cK_j.
\]
The term $\lambda\vw_j^3$ vanishes in each two-mode quadratic quotient. Thus
this block has the same three quotients as $\tens R_j$, contrary
to \cref{lem:triple-saturated-pairs}.
\end{proof}

\begin{remark}[Relation to symmetric additivity results]
\label{rem:symmetric-additivity-literature}
The available symmetric-rank additivity criteria
\cite{carlini2015symmetricstrassen,carlini2018ecomputability,woo2014cases}
do not apply to $\tens T_0$; the fixed-source argument proves the
required lower bound directly over both fields.
\end{remark}

\begin{proof}[Proof of \cref{thm:main} for $k=2$]
\Cref{prop:ordinary} gives CP rank $110$ and concision on a
$54$-dimensional space over either field. The single-copy symmetric rank gives
$s(\tens{V}_2)\le2s(\tens{T}_0)=112$.
Suppose a minimal symmetric decomposition has $N\le111$ terms.
By \cref{prop:first-lower}, we have $N=111$, and every associated residual
has CP rank three, contrary to \cref{prop:double-333}. Hence $s(\tens{V}_2)=112$ and
$\DeltaT_{\FF}(\tens{V}_2)=2$. The construction has rational coordinates.
\end{proof}
\section{The triple copy below \texorpdfstring{$167$}{167} terms}
\label{sec:triple}
In this section the block index ranges over $\{1,2,3\}$. Whenever
$p,q,r$ occur together, they denote the three distinct blocks; whenever
$X,Y,Z$ occur together, they denote the three distinct modes. Set
\[
\cV\eqdef\bigoplus_{j=1}^3\cV_j,\ \ 
\cZ\eqdef\bigoplus_{j=1}^3\cZ_j,\ \ 
\cK\eqdef\bigoplus_{j=1}^3\cK_j,\ \ 
\cA^{[3]}\eqdef\sum_{j=1}^3\tens{F}_j+\J_{\cV}(\cK).
\]
Thus $\dim\cV=27$ and $\dim\cK=\dim\cZ=54$.
By \cref{prop:ordinary,prop:single}, we have
\begin{equation}\label{eq:triple-initial}
r(\tens{V}_3)=165,\ \  s(\tens{V}_3)\le168,
\ \ \dim\Ess_A(\tens{V}_3)=81.
\end{equation}
It remains to prove the symmetric lower bound. In addition to its
pure-block and repeated-block conditions, every
$\tens{P}\in\cA^{[3]}$ satisfies
\begin{equation}\label{eq:triple-slots}
\tens{P}_{ijk}=0\ (i,j,k\text{ pairwise distinct}),
\ \ 
\tens{P}_{ijj}\in\cV_i\otimes\cK_j\ (i\ne j),
\end{equation}
with the analogous conditions in the other two orders.
These are conditions on the actual tensor components, not on each
term of an arbitrarily chosen affine correction.

\subsection{Original sources and low-dimensional block tools}
For a minimal symmetric decomposition $\tens{V}_3=\sum_{i=1}^N\vh_i^3$,
the original substitution maps give disjoint sets
$\set{I}_A,\set{I}_B,\set{I}_C$, each of size $54$.
Write $\set{J}$ for the remaining indices and put
\begin{equation}\label{eq:triple-source}
\va_i\eqdef\matr{\Pi}_A\vh_i,\ 
\vb_i\eqdef\matr{\Pi}_B\vh_i,\ 
\vc_i\eqdef\matr{\Pi}_C\vh_i,\ \ 
\tens{P}\eqdef\sum_{i\in\set{J}}\va_i\otimes\vb_i\otimes\vc_i
\in\cA^{[3]}.
\end{equation}
Set $\set{U}_X\eqdef\Ess_X(\tens{P})$ and
$d_X\eqdef\dim\set{U}_X$.
As in \cref{lem:modes}, contracting two modes by distinct selector
rows supported on block $j$ gives a nonzero vector in
$\set{U}_X\cap\cV_j$. Consequently
\begin{equation}\label{eq:triple-pure-intersections}
\dim(\set{U}_X\cap\cV_j)\ge1\ (X=A,B,C;\ j=1,2,3),
\ \  3\le d_X\le N-162.
\end{equation}

For pair $B,C$, denote the residual matrix slice space by
$\set{L}_{BC}(\tens{P})$ and define
\begin{equation}\label{eq:triple-EW}
\begin{split}
\kappa_{BC}\eqdef\dim\bigl(\set{L}_{BC}(\tens{P})\cap\cK\bigr),\ \
\cW_{BC}\eqdef\Span\{\vb_i\otimes\vc_i:
i\in\set{I}_A\sqcup\set{J}\}.
\end{split}
\end{equation}
The same definitions apply to the other pairs. Keeping the complete
first-mode coefficients, rather than their projections, gives
\begin{equation}\label{eq:triple-complete}
\begin{split}
\tens{G}_{BC}
\eqdef(\id\otimes\matr{\Pi}_B\otimes\matr{\Pi}_C)\tens{V}_3
=\sum_{i\in\set{I}_A\sqcup\set{J}}
\vh_i\otimes\vb_i\otimes\vc_i,\
\tens{G}_{BC}-\tens{P}
=\sum_{\nu=1}^{54}
(\vect{e}_\nu-\matr{\Pi}_A\vect{e}_\nu)\otimes\matr{M}_\nu.
\end{split}
\end{equation}
Thus, by \eqref{eq:image-exact}, its complete contraction image is
$\cE_{BC}=\cK+\set L_{BC}(\tens P)$, and
$\dim\cE_{BC}=54+d_A-\kappa_{BC}$. 
Write
\[
\set U_{X,j}\eqdef\set U_X\cap\cV_j,\ \
d^{\mathrm{proj}}_{X,j}\eqdef\dim\matr p_j\set U_X
\ (X=A,B,C;\ j=1,2,3).
\]
Mode $X$ is \emph{thin} on block $j$ when
$d^{\mathrm{proj}}_{X,j}=1$, and \emph{thick} there otherwise. A
block is \emph{thin} if at least one mode is thin on it and
\emph{fully thin} if all three modes are thin on it. As above,
a mode is \emph{split} when
$\set U_X=\set U_{X,1}\oplus\set U_{X,2}\oplus\set U_{X,3}$.
Write $n_0\eqdef|\set J|=N-162$ and
$\rho_Z\eqdef\dim\Span\{\matr\Pi_Z\vh_i:i\in\set J\}$.
The source-space identity \eqref{eq:H345-source-span} applies with
$\ell=n_0$. In particular, the equality $d_Z=n_0$ saturates the complementary pair,
without any assumption on its quadratic intersection.
The next statement uses only the affine block conditions.

\begin{lemma}[Thin contractions]\label{lem:H345-thin}
Let $\tens P\in\cA^{[3]}$ and suppose
$d^{\mathrm{proj}}_{A,p}=1$, say $\matr p_p\set U_A=\FF\va$.
If $f$ is supported on $\cV_p$ and $f(\va)=1$, then
\begin{equation}\label{eq:H345-thin-slice}
f\lrcorner\tens P=\matr M_p\oplus\matr N_q\oplus\matr N_r,
\ \  \matr N_q\in\cK_q,\ \matr N_r\in\cK_r,
\end{equation}
where $\va\otimes\matr M_p=\tens P_{ppp}$. We call $\matr N_q$ and
$\matr N_r$ the quadratic tails of this contraction. Moreover, we have
\begin{equation}\label{eq:H345-thin-support}
\operatorname{col}\matr M_p\subseteq\set U_{B,p},\ \
\operatorname{row}\matr M_p\subseteq\set U_{C,p},
\end{equation}
and the analogous inclusions hold for the two tails.
\end{lemma}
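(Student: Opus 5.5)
Since $f$ is supported on $\cV_p$, contracting the first mode of $\tens P$ by $f$ only sees the components $\tens P_{pjk}$ for $j,k\in\{p,q,r\}$. I will read off the block structure of $f\lrcorner\tens P$ from the affine conditions \eqref{eq:triple-slots}, one block position $(j,k)$ at a time. The components $\tens P_{pjk}$ with $p,j,k$ pairwise distinct vanish, so the $(q,r)$ and $(r,q)$ blocks are zero. For $j=k=q$, \eqref{eq:triple-slots} gives $\tens P_{pqq}\in\cV_p\otimes\cK_q$, so its contraction $\matr N_q$ lies in $\cK_q$; the same argument gives $\matr N_r\in\cK_r$.

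The remaining off-diagonal positions are $(p,q)$, $(q,p)$, $(p,r)$ and $(r,p)$, coming from $\tens P_{ppq}$, $\tens P_{pqp}$, and so on. By the analogous conditions in the other orders, $\tens P_{ppq}\in\iota(\cK_p\otimes\cV_q)$, i.e.\ its first two modes form a $\cK_p$ matrix. Here I use thinness. Because $\matr p_p\set U_A=\FF\va$, every component with first block $p$ factors as $\va\otimes(\cdot)$. In particular $\tens P_{ppq}=\va\otimes\matr X$ with $\matr X\in\cV_p\otimes\cV_q$, and the condition says $\va\otimes\vx\in\cK_p$ for every vector $\vx$ in the column space of $\matr X$ (after contracting the third mode against $\cV_q^*$).

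The key step is to show that $\vx\otimes\va\in\cK_p$ (or $\va\otimes\vx\in\cK_p$) forces $\vx=0$. It requires first that $\va^2\notin\cK_p$ and $\va$ be selector-dense. For this I write $\tens P_{ppp}=\va\otimes\matr M_p$ in the relevant ordering; it belongs to $\cA_p$, so the general statement of \cref{lem:local} applies: $\va$ has three nonzero $\matr L$-coordinates and $\va^2\notin\cK_p$. The rank-one matrix $\va\otimes\vx$ lies in $\cK_p\subseteq\Sym^2$, so it is symmetric, hence $\vx\parallel\va$ unless $\vx=0$. The first alternative would put $\va^2$ in $\cK_p$, so $\vx=0$. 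Thus $\tens P_{ppq}=\tens P_{ppr}=0$, and the same argument with the third slot gives $\tens P_{pqp}=\tens P_{prp}=0$. This yields the block-diagonal form \eqref{eq:H345-thin-slice}, and $f(\va)=1$ gives $\va\otimes\matr M_p=\tens P_{ppp}$. This is the step I expect to need care: one must justify the factorization $\tens P_{p\ast\ast}=\va\otimes(\cdot)$ from $\matr p_p\set U_A=\FF\va$, which holds because applying $\matr p_p$ to the first mode projects $\Ess_A$ onto $\matr p_p\set U_A$.

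For the support inclusions \eqref{eq:H345-thin-support}, the column space of $f\lrcorner\tens P$ lies in $\Ess_B(\tens P)=\set U_B$ and its row space in $\set U_C$. Since the slice is block diagonal, its column space is the direct sum of the column spaces of $\matr M_p,\matr N_q,\matr N_r$, each contained in the respective $\cV_j$. Hence $\operatorname{col}\matr M_p\subseteq\set U_B\cap\cV_p=\set U_{B,p}$, and likewise for rows and for the tails.
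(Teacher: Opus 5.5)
Your proof is correct and is the paper's argument written out in more detail. The three-distinct-block slots vanish, and the repeated-block slots away from $p$ give the tails in $\cK_q,\cK_r$. Thinness writes each off-diagonal slot through $p$ as a symmetric rank-one matrix $\va\otimes\vx\in\cK_p$, which symmetry and $\va^2\notin\cK_p$ (from \cref{lem:local} applied to $\tens P_{ppp}=\va\otimes\matr M_p\in\cA_p$) force to be zero. The support inclusions then follow because the block-diagonal slice lies in $\set U_B\otimes\set U_C$. Selector-density of $\va$ is not actually needed here; only $\va^2\notin\cK_p$ is used.
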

\begin{proof}
A surviving off-diagonal $B,C$ block in \eqref{eq:triple-slots}
would pair $\va$ with another vector in block $p$ to form a
matrix in $\cK_p$. Symmetry and $\va^2\notin\cK_p$, from
\cref{lem:local}, make that matrix zero. Three-distinct-block slots
also vanish. The remaining diagonal blocks outside $p$ lie in the
respective quadratic spaces. Their column and row spaces, as well as
those of $\matr M_p$, lie in the pure intersections of the
mode spaces, giving \eqref{eq:H345-thin-support}.
\end{proof}

\begin{lemma}[Quadratic support in small mode spaces]
\label{lem:H345-four-support}
Let $\tens P\in\cA^{[3]}$, set
$\set U_Z\eqdef\Ess_Z(\tens P)$ for $Z\in\{A,B,C\}$, fix distinct
modes $X,Y$, and suppose either $\dim\set U_X=4$, or $\dim\set U_X=5$
and $\set U_X$ is nonsplit. If
$0\ne\matr N\in\cK\cap(\set U_X\otimes\set U_Y)$, then
$\matr N$ is supported on a unique block $p$, with
$\dim\set U_{X,p}=2$. Moreover, we have
\begin{equation}\label{eq:H345-local-K-one}
\dim\bigl(\cK\cap(\set U_X\otimes\set U_Y)\bigr)\le1,
\ \ \rank\matr N\le2.
\end{equation}
When $\dim\set U_X=4$, necessarily
\begin{equation}\label{eq:H345-four-split}
\set U_X=\set U_{X,p}\oplus\set U_{X,q}\oplus\set U_{X,r},\ \
(\dim\set U_{X,p},\dim\set U_{X,q},\dim\set U_{X,r})=(2,1,1).
\end{equation}
Thus a nonsplit four-dimensional mode has zero quadratic intersection
with every other mode.
\end{lemma}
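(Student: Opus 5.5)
The plan is to exploit that $\cK=\cK_1\oplus\cK_2\oplus\cK_3$ is block diagonal, so $\matr N=\matr N_p\oplus\matr N_q\oplus\matr N_r$ with $\matr N_j\in\cK_j\subseteq\Sym^2(\cV_j)$. The argument has three steps: locate the support of $\matr N$, bound $\rank\matr N$ and the intersection dimension, and deduce the four-dimensional splitting.

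\emph{Support and uniqueness of $p$.} Since $\matr N\in\set U_X\otimes\set U_Y$, its column space lies in $\set U_X$. Because $\matr N$ is block diagonal, this column space is the direct sum of the pure column spaces of the $\matr N_j$. Hence $\operatorname{col}\matr N_j\subseteq\set U_{X,j}$, and by symmetry $\operatorname{col}\matr N_j\subseteq\set U_{X,j}\cap\set U_{Y,j}$.

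Next I show that $\matr N_j\ne0$ forces $\dim\set U_{X,j}\ge2$. As in \cref{lem:modes} and \eqref{eq:triple-pure-intersections}, contracting the other two modes by two distinct selector rows supported on block $j$ gives, for each $t\in\{1,2,3\}$, a vector $\vx^{(t)}\in\set U_{X,j}$ with $\varphi_t(\vx^{(t)})=-2$. If $\set U_{X,j}=\FF\va$ is a line, all three vectors are multiples of $\va$. Then every coordinate of $\matr L\va$ is nonzero, so $\va$ is selector-dense. Since $\matr L\cK_j\matr L^\top$ is diagonal, $\va^2\notin\cK_j$. But a nonzero symmetric $\matr N_j$ with column space $\FF\va$ is a multiple of $\va^2$, which is a contradiction.

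Now count dimensions. The pure intersections form a direct sum inside $\set U_X$, so $\sum_j\dim\set U_{X,j}\le\dim\set U_X$, and each summand is at least $1$. If $\dim\set U_X=4$, at most one block can have pure dimension $\ge2$, and that dimension is exactly $2$. If $\dim\set U_X=5$ and $\set U_X$ is nonsplit, the pure sum is at most $4$, giving the same conclusion. Thus the support block $p$ is unique and $\dim\set U_{X,p}=2$. This also gives $\rank\matr N\le2$.

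\emph{The bound $\dim(\cK\cap(\set U_X\otimes\set U_Y))\le1$.} Every element of this intersection lies in $\cK_p\cap\Sym^2(\cW)$, where $\cW\eqdef\set U_{X,p}$ is a plane. So it suffices to show $\dim(\cK_p\cap\Sym^2\cW)\le1$. This is the step I expect to be the main obstacle.

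My first attempt uses the selector. The plane $\cW$ contains the three vectors $\vx^{(t)}$, which forces $\matr L\cW$ to be a plane $\Pi\subseteq\FF^3$. Indeed, if $\matr L\cW$ were a line, it would contain vectors with each coordinate nonzero and would be spanned by one selector-dense vector, which needs a separate check. For $\matr Q\in\cK_p\cap\Sym^2\cW$:
\begin{itemize}
\item the matrix $\matr L\matr Q\matr L^\top$ is diagonal with column space inside $\Pi$;
\item by \cref{lem:faithful}, $\matr Q\mapsto\matr Q\matr L^\top$ is injective on $\cK_p$;
\item since $\matr L|_{\cW}$ is injective, $\matr Q$ is recovered from $\matr L\matr Q\matr L^\top$.
\end{itemize}
So $\cK_p\cap\Sym^2\cW$ embeds into the diagonal matrices whose nonzero columns $d_i\vect e_i$ lie in $\Pi$. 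The $\vx^{(t)}$ data should prevent $\Pi$ from containing two coordinate axes. For example, if $\Pi=\Span\{\vect e_1,\vect e_2\}$ then $\varphi_3$ vanishes on $\cW$, contradicting $\varphi_3(\vx^{(3)})=-2$. So the space of such diagonal matrices has dimension at most $1$.

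If this selector argument leaves a gap, the fallback is the following. A two-dimensional intersection would contain a rank-two matrix of $\cK_p\cap\Sym^2\cW$. I would then combine \cref{lem:binary} with the rank-one representative produced by the thin contraction of \cref{lem:H345-thin}, to place a pure affine representative in $\cW^{\otimes3}$, contradicting \cref{rem:symmetrization}.

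\emph{Splitting when $\dim\set U_X=4$.} If $\matr N\ne0$ exists, the pure dimensions are $(2,1,1)$, with the $2$ at $p$. Their direct sum has dimension $4=\dim\set U_X$, so it equals $\set U_X$, which is \eqref{eq:H345-four-split}. Contrapositively, a nonsplit four-dimensional mode admits no nonzero $\matr N$, which gives the final assertion.
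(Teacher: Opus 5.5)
You have a genuine gap in the bound $\dim\bigl(\cK\cap(\set U_X\otimes\set U_Y)\bigr)\le1$, in the case $\rank(\matr L|_{\cW})=1$. Your localization step and your four-dimensional splitting step match the paper's proof. When $\matr L|_{\cW}$ is injective, your diagonal-matrix argument is also the paper's; using $\varphi_k(\vx^{(k)})=-2$ to exclude a coordinate plane is a fine substitute for the paper's selector-dense vector in $\cW$.

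You claim that the vectors $\vx^{(t)}$ force $\matr L\cW$ to be a plane. They do not. They only show that no coordinate functional vanishes on $\matr L\cW$, and that is consistent with $\matr L\cW$ being a line spanned by a selector-dense vector. Since $\matr L$ has a six-dimensional kernel on the nine-dimensional $\cV_p$, nothing in your data excludes a plane $\cW$ meeting $\ker\matr L$. In that case $\matr Q\mapsto\matr L\matr Q\matr L^\top$ is not injective on $\Sym^2\cW$, so your third bullet fails, and you flag this but never supply the ``separate check.''

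Your fallback does not close the case. \cref{lem:binary} and \cref{rem:symmetrization} only forbid an element of $\cA_p$ supported on $\cW^{\otimes3}$, and nothing produces one. \cref{lem:H345-thin} needs a one-dimensional projection, whereas here $\cW=\set U_{X,p}$ is a two-dimensional pure intersection and mode $X$ is thick on $p$. For an arbitrary $\tens P\in\cA^{[3]}$, the other modes of $\tens P_{ppp}$ are unconstrained.

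The missing step is where \cref{lem:faithful} is actually needed; in your bullet list it is redundant, because injectivity of $\matr L|_{\cW}$ already suffices there. In the rank-one case, choose a basis $\vb,\vw$ of $\cW$ with $\matr L\vw=0$; then $\matr L\vb$ spans $\matr L\cW$ and is selector-dense. Write $\matr Q=u\vb^2+v(\vb\otimes\vw+\vw\otimes\vb)+t\vw^2$. Then $\matr L\matr Q\matr L^\top=u(\matr L\vb)(\matr L\vb)^\top$ has nonzero off-diagonal entries unless $u=0$, so diagonality forces $u=0$. Every $\matr Q$ in the intersection then satisfies $\matr Q\matr L^\top=v\,\vw\otimes\matr L\vb$, which ranges over a line. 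Injectivity of $\matr Q\mapsto\matr Q\matr L^\top$ on $\cK_p$ therefore gives dimension at most one. Equivalently, as the paper puts it, a two-dimensional intersection would contain $\vw^2$, which $\matr L^\top$ annihilates.
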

\begin{proof}
Every $\set U_{X,j}$ is nonzero by
\eqref{eq:triple-pure-intersections}.  A diagonal block
$\matr N_j\ne0$ has column space in $\set U_{X,j}$.  If this space were a
line, symmetry would make $\matr N_j$ a scalar multiple of the
square of its selector-dense generator, contradicting
\cref{lem:local}.  Hence $\dim\set U_{X,j}\ge2$.  The sum of the three pure-intersection dimensions is at most four:
this is immediate in dimension four and follows from nonsplitting
in dimension five. Thus only one block can have dimension two;
in dimension four this gives \eqref{eq:H345-four-split}. 
The two-dimensional plane $\set B\eqdef\set U_{X,p}$ contains a selector-dense
vector: each selector coordinate is nonzero on $\set B$ by the same
two-coordinate contractions used in
\eqref{eq:triple-pure-intersections}, and over the infinite fields
$\RR,\CC$ three proper kernels cannot cover $\set B$. A symmetric
matrix whose column space lies in $\set B$ belongs to $\Sym^2\set B$.

We claim that
\begin{equation}\label{eq:triple-two-plane-K}
\dim(\cK_p\cap\Sym^2\set B)\le1.
\end{equation}
Let
$\matr S\eqdef\matr L|_{\set B}$. If $\rank\matr S=2$, the map
$\matr Q\mapsto\matr S\matr Q\matr S^\top$ is injective on
$\Sym^2\set B$. Its image for $\matr Q\in\cK_p$ is diagonal. A
two-dimensional space of diagonal matrices whose images lie in the
plane $\matr S(\set B)$ would force $\matr S(\set B)$ to be a coordinate plane, which
cannot contain a vector with three nonzero coordinates. 
If $\rank\matr S=1$, choose a basis $\vb,\vw$ of $\set B$ with $\matr S\vb$
selector-dense and $\matr S\vw=0$. Write
\[
\matr Q=u\vb^2+v(\vb\otimes\vw+\vw\otimes\vb)+t\vw^2.
\]
Diagonality of $\matr S\matr Q\matr S^\top=u(\matr S\vb)^2$ gives $u=0$.
If the intersection had dimension two, it would contain a nonzero
multiple of $\vw^2$, contradicting the injectivity of
$\matr Q\mapsto\matr Q\matr L^\top$ on $\cK_p$ from
\cref{lem:faithful}.  This proves
\eqref{eq:H345-local-K-one}; the rank bound follows from the
two-dimensional column space. The proof of
\eqref{eq:triple-two-plane-K} applies to any plane $\set B$ containing
a selector-dense vector.
\end{proof}

\begin{lemma}[A common functional produces a quadratic slice]
\label{lem:triple-functional}
For $\tens{P}\in\cA^{[3]}$, put
$\Lambda_j\eqdef\im\bigl((\matr{p}_j|_{\set{U}_A})^*\bigr)
\subseteq\set{U}_A^*.$ 
Contraction induces an injection
\begin{equation}\label{eq:triple-functional}
\Lambda_1\cap\Lambda_2\cap\Lambda_3
\hookrightarrow\set{L}_{BC}(\tens{P})\cap\cK.
\end{equation}
In particular, if $d_A=4$ and $d^{\mathrm{proj}}_{A,j}=2$ for all $j$, then
$\set L_{BC}(\tens P)\cap\cK\ne0$.
\end{lemma}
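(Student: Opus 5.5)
Fix $\lambda\in\Lambda_1\cap\Lambda_2\cap\Lambda_3$ and write $\lambda=\xi_j\circ\matr p_j|_{\set U_A}$ with $\xi_j\in\cV_j^*$ for $j=1,2,3$. The plan is to map $\lambda$ to the matrix slice $\lambda\lrcorner\tens P\in\set L_{BC}(\tens P)$, where we view $\tens P\in\set U_A\otimes\cV\otimes\cV$ so that $\set U_A^*$ contracts the first mode. This map is injective on all of $\set U_A^*$: $\set U_A=\Ess_A(\tens P)$, so the first flattening $\set U_A^*\to\cV\otimes\cV$ is injective. It therefore remains to show that $\lambda\lrcorner\tens P\in\cK$. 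After that, the injection \eqref{eq:triple-functional} is immediate.

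\textbf{Membership in $\cK$.} Decompose the slice into block components:
\[
(\lambda\lrcorner\tens P)_{kl}=\sum_i \lambda\lrcorner\tens P_{ikl}.
\]
For each pair $(k,l)$, choose the representation of $\lambda$ adapted to it.
\begin{itemize}
\item For $k\ne l$, pick $i$ distinct from both $k$ and $l$ and use $\lambda=\xi_i\circ\matr p_i$. This kills every first-mode block other than $i$, leaving $\xi_i\lrcorner\tens P_{ikl}$. That component vanishes by \eqref{eq:triple-slots}, because $i,k,l$ are pairwise distinct.
\item For $k=l$, pick $i\ne k$ and use $\xi_i$. This leaves $\xi_i\lrcorner\tens P_{ikk}$, and $\tens P_{ikk}\in\cV_i\otimes\cK_k$ gives a value in $\cK_k$.
\end{itemize}
Hence the slice is block diagonal with diagonal blocks in $\cK_k$, so it lies in $\cK$.

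\textbf{The consequence for $d_A=4$.} Assume $d_A=4$ and $d^{\mathrm{proj}}_{A,j}=2$ for every $j$. Then each restricted projection $\matr p_j|_{\set U_A}$ has rank two, so each $\Lambda_j$ has dimension $2$ inside the $4$-dimensional space $\set U_A^*$.

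A dimension count for three $2$-planes in a $4$-space only gives $\dim(\Lambda_1\cap\Lambda_2\cap\Lambda_3)\ge 2+2+2-2\cdot4<0$, so it is inconclusive. This is the step I expect to be the main obstacle. Instead, the plan is to use the structure of $\set U_A$:
\begin{itemize}
\item By \eqref{eq:triple-pure-intersections}, each $\set U_{A,j}$ is nonzero.
\item Since $\sum_j\dim\set U_{A,j}\le4$, and a split $\set U_A$ would have projection dimensions $\dim\set U_{A,j}$ summing to $4$ rather than $6$, the space $\set U_A$ is nonsplit. So it contains pure lines $\FF\vx_1,\FF\vx_2,\FF\vx_3$ plus one more vector $\vy$ with $\set U_A=\Span\{\vx_1,\vx_2,\vx_3,\vy\}$.
\item From $d^{\mathrm{proj}}_{A,j}=2$, each $\matr p_j\vy\notin\FF\vx_j$.
\end{itemize}

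Then $\Lambda_j^{\perp}=\ker(\matr p_j|_{\set U_A})$ is spanned by the two pure lines $\vx_k$ with $k\ne j$. Hence
\[
\Lambda_1\cap\Lambda_2\cap\Lambda_3=\bigl(\Span\{\vx_1,\vx_2,\vx_3\}\bigr)^{\perp},
\]
which has dimension $4-3=1$. The intersection is therefore nonzero, and the injection gives $\set L_{BC}(\tens P)\cap\cK\ne0$.
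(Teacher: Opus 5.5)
Your proposal is correct and follows the paper's proof. You use the same block-by-block choice of extensions to place the slice in $\cK$, and the same injectivity of contraction from $\set U_A^*$. Your annihilator computation $\Lambda_1\cap\Lambda_2\cap\Lambda_3=(\Span\{\vx_1,\vx_2,\vx_3\})^{\perp}$ is the dual form of the paper's explicit common functional, which vanishes on the three pure lines and equals one on the extra vector. You also justify why the pure intersections are lines, a point the paper leaves implicit.
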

\begin{proof}
A common functional $\phi$ has three extensions to $\cV$, each
supported on a different block. All give the same slice
$\matr{M}\eqdef\phi\lrcorner\tens{P}$. For an off-diagonal matrix
block $(i,j)$, use the extension supported on the third block;
\eqref{eq:triple-slots} gives $\matr{M}_{ij}=0$.
For a diagonal block $(j,j)$, use an extension supported on a
different block; the repeated-block condition gives
$\matr{M}_{jj}\in\cK_j$. Thus $\matr{M}\in\cK$.
Contraction from the dual of the mode space is injective,
which proves \eqref{eq:triple-functional}.
For the last assertion, write
$\set U_A=\Span\{\va_1,\va_2,\va_3,\vx\}$ with $\va_j\in\cV_j$.
For each $j$, the vectors $\va_j,\matr p_j\vx$ are independent.
The functional taking values $0,1$ on these two vectors is supported
on block $j$ and restricts to the same nonzero functional on
$\set U_A$. Hence the intersection on the left is nonzero.
\end{proof}

\begin{lemma}[Affine residuals of CP rank four]
\label{lem:triple-affine-four}
Let $\tens Q\in\cA^{[3]}$ satisfy
$r(\tens Q)=4$ and $\mrank(\tens Q)=(4,4,4)$.
\begin{enumerate}
\item Some pure block has CP rank one. If a thin block has pure-block CP rank
two, its thin contraction has no quadratic tails and the other two
pure blocks have CP rank one.
\item If the pure blocks $q,r$ both have rank at least two, then
\begin{equation}\label{eq:H345-four-peeling}
\tens Q=\tens Q_{ppp}\oplus\tens Q_{qr},\ \ 
r(\tens Q_{ppp})=1,\ 
r(\tens Q_{qr})=3,\ \mrank(\tens Q_{qr})=(3,3,3).
\end{equation}
Every mode of $\tens Q_{qr}$ projects two-dimensionally onto both
$q$ and $r$.
\end{enumerate}
\end{lemma}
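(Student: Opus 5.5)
The plan is to mirror the two-block analysis of \cref{lem:modes,lem:H345-R333}, now with three blocks. The new ingredients are the three-distinct-block vanishing in \eqref{eq:triple-slots} and the thin-contraction structure of \cref{lem:H345-thin}. Fix a minimal CP decomposition $\tens Q=\sum_{i=1}^4\va_i\otimes\vb_i\otimes\vc_i$. Since $\mrank(\tens Q)=(4,4,4)$, all three factor lists are independent, so \cref{lem:factor-calculus} applies.

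\textbf{Step 1: projection dimensions.} First I will bound the projection dimensions $d^{\mathrm{proj}}_{X,j}$. By \eqref{eq:triple-pure-intersections}, each $\set U_{X,j}$ is nonzero. If some projected factor matrix $\matr X_j$ had rank four, the $(j,k,k)$ mixed equations would force every opposite pair into $\cK_k$ for each $k\ne j$. That would give $\tens Q_{kkk}\in\J(\cK_k)$, which is impossible. The same argument with rank three, using $\ker(\matr R\otimes\id)=\ker\matr R\otimes\set H$ as in \eqref{eq:quotient}, gives a one-dimensional kernel and hence a single quotient class. The four pure lines must fit inside a four-dimensional space, so the projection dimensions satisfy $\sum_j d^{\mathrm{proj}}_{X,j}\ge4$, with each $d^{\mathrm{proj}}_{X,j}\le3$.

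I then split into cases. Either some mode is thin on some block, or every $d^{\mathrm{proj}}_{X,j}\ge2$. In the second case, \cref{lem:triple-functional} gives a nonzero $\matr N\in\set L\cap\cK$. By \cref{lem:H345-four-support} this $\matr N$ lives on one block $p$ with $\dim\set U_{X,p}=2$, and that mode splits as $(2,1,1)$. So every case reduces to the situation where some mode is thin on a block.

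\textbf{Step 2: thin blocks, part (i).} Suppose $\matr p_p\set U_A=\FF\va$. Apply \cref{lem:H345-thin} with $f$ supported on $\cV_p$ to get $f\lrcorner\tens Q=\matr M_p\oplus\matr N_q\oplus\matr N_r$. Because the $B,C$ factor lists are independent, \cref{lem:factor-calculus} says the rank of this slice equals the number of terms with $f(\va_i)\ne0$. Its column and row spaces are the spans of the active factors.

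If $\tens Q_{ppp}$ has rank two, then $\rank\matr M_p=2$. A nonzero tail $\matr N_q\in\cK_q$ would have factor lines forming the full $q$-parts of active modes. As in the proof of \cref{lem:H345-R333}, this would put $\tens Q_{qqq}$ in $\cV_q\otimes\cK_q$, a contradiction. So the tails vanish, and exactly two terms are active; both have their $B,C$ factors pure in block $p$. The remaining two terms then carry blocks $q$ and $r$. Rerunning the rank-two separation argument of \cref{lem:rank-two} on them should force each of those pure blocks to have rank one. Part (i)'s first claim, that some pure block has rank one, follows by the same counting: the four terms are distributed so that some block receives only one active term.

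\textbf{Step 3: peeling, part (ii).} Assume blocks $q,r$ both have pure rank at least two. By part (i), $\tens Q_{ppp}$ has rank one and no thin block of rank two lies among $q,r$. I will show that exactly one term is pure in $p$ and that the other three terms have no $p$-component. The tool is the thin contraction on block $p$, combined with the mixed equations and $\va^2\notin\cK_p$ from \cref{lem:local}, in the style of \cref{lem:332}. Removing that pure term leaves $\tens Q_{qr}$ in the two-block affine space with CP rank three and multilinear rank $(3,3,3)$.

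Finally, if some mode of $\tens Q_{qr}$ were thin on $q$ or on $r$, the split-mode branch of the proof of \cref{lem:H345-R333} would make that pure block rank one, contrary to the hypothesis. Hence every mode of $\tens Q_{qr}$ projects two-dimensionally onto both $q$ and $r$.

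\textbf{Expected main obstacle.} The hardest step is the case analysis in Step 2. One must show that mixed terms cannot straddle all three blocks in a way that keeps every pure block at rank at least two. That requires carefully excluding configurations in which a single CP term has nonzero components in all three blocks. I would first try to settle this with the vanishing of three-distinct-block slots together with support arguments from \cref{lem:factor-calculus}.
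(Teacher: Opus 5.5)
Your overall route matches the paper's: a thin block exists via \cref{lem:triple-functional} and \cref{lem:H345-four-support}, a rank-two thin block has exactly two active terms pure in $p$, and projection to $q\oplus r$ plus \cref{lem:rank-two} separates the rest. However, the tail exclusion in Step~2 has a genuine hole. You argue, ``as in \cref{lem:H345-R333}'', that a nonzero tail has factor lines equal to the full $q$-parts of the mode spaces. In the two-block lemma this held because $\rank\matr M_q=2$ plus a nonzero tail already made all three terms active. With four terms it fails. For $(\rank\matr N_q,\rank\matr N_r)=(1,0)$ only three terms are active, and the inactive fourth term is unconstrained on block $q$. So $\tens Q_{qqq}$ is merely congruent to a rank-one tensor modulo $\cV_q\otimes\cK_q$, and no contradiction arises.

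The paper closes this case with a different tool. Since $\vw^2\in\cK_q$, $\matr L\vw$ is a nonzero coordinate vector, so one selector row annihilates it. Put that row in a contracted mode and the other two rows in the remaining modes. Then only the inactive term survives on $q\oplus r$, whereas \eqref{eq:all-copy-selector} restricted to $q\oplus r$ has rank two. The $(q,r,r)$ slot equation applied to the surviving term would also work.

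Your reason is also wrong for a rank-two tail. Knowing only that the $B,C$ factors lie in the plane $\set W=\operatorname{col}\matr N_q$ does not place $\tens Q_{qqq}$ in $\cV_q\otimes\cK_q$. The correct contradiction there is different: all four $B$ factors are active and have zero $r$-projection, contrary to $\set U_{B,r}\ne0$.

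Step~1 also misses the elementary bound that drives the rest of the proof. Each $\set U_X$ is four-dimensional and meets every block, so it is three pure lines plus one vector, whence $d^{\mathrm{proj}}_{X,j}\le2$. There are three pure lines, not four, and your bound $\le3$ is too weak. This bound does three jobs:
\begin{enumerate}
\item Combined with $d^{\mathrm{proj}}_{X,j}\ge2$, it gives the equality required by the ``in particular'' clause of \cref{lem:triple-functional}.
\item It gives $\rank\matr M_p\le2$, so a thin block has pure-block CP rank one or two. This is how the first claim of (i) is actually proved; your ``counting'' sentence does not prove it.
\item In (ii), it turns ``pure rank at least two on $q,r$'' into ``thick on $q,r$ in every mode''.
\end{enumerate}

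Step~3 then still omits a step. Thickness on $q,r$ makes every mode nonsplit, and the same common-functional contradiction forces $p$ to be thin in all three modes. Only then are $\set U_{X,q},\set U_{X,r}$ selector-dense lines, so the tails of the $p$-contractions vanish and the slot equations kill every mixed component containing $p$. Your closing argument for the two-dimensional projections of $\tens Q_{qr}$, via the split branch of \cref{lem:H345-R333}, is fine once the peeling is established.
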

\begin{proof}
Each mode is spanned by three pure lines and one extra vector, so
$d^{\mathrm{proj}}_{X,j}\le2$. In a minimal four-term CP decomposition every factor list
is a basis, so \cref{lem:factor-calculus} identifies the number of
active terms with the contraction rank.
Consider a thin block $p$ with pure-block CP rank two. By
\cref{lem:H345-thin}, its slice is
$\matr M_p\oplus\matr N_q\oplus\matr N_r$, where
$\rank\matr M_p=2$ and $\matr N_j\in\cK_j$.
Up to exchanging $q,r$, the nonzero-tail possibilities are
$(\rank\matr N_q,\rank\matr N_r)=(1,1),\ (2,0),\ (1,0).$ 
In the first case all four terms are active. A rank-one tail
$\lambda\vw_j^2\in\cK_j$, for $j\in\{q,r\}$, forces all local factors
onto $\FF\vw_j$, putting the $j$th pure-block component in
$\J_{\cV_j}(\cK_j)$, a contradiction. In the second case
all four terms have zero $r$ projection in the two contracted modes,
contrary to their nonzero pure intersections. In the third case the
three active terms vanish on $r$ and project onto $\FF\vw$ on $q$.
Since $\vw^2\in\cK_q$, it follows that $\matr L\vw$ is a nonzero coordinate
vector. On $q\oplus r$, choose one selector row annihilating it and
the other two rows in the other modes. Only one term survives, but
\eqref{eq:all-copy-selector} has rank two. Thus
\begin{equation}\label{eq:H345-rank2-tail-zero}
\rank\matr M_p=2\ \Longrightarrow\ 
\matr N_q=\matr N_r=0.
\end{equation}
Exactly two terms are active and have both matrix factors pure in
$p$. Projection onto $q\oplus r$ leaves an affine tensor of CP rank two;
\cref{lem:rank-two} separates it into two rank-one pure blocks.

If no block were thin, all modes would be nonsplit and all their
projection dimensions would be two. Then
\cref{lem:triple-functional} would give a nonzero quadratic slice,
contrary to \cref{lem:H345-four-support}. Hence a thin block exists.
Its pure-block CP rank is one or two, so the preceding argument proves (i).

For (ii), the rank-one block must be $p$. A thin projection on $q$
or $r$ would have pure-block CP rank two and, by (i), force the other side
block to have rank one. Thus every mode is thick on both $q,r$
and is nonsplit. None can also be thick on $p$, by the same
common-functional contradiction. Hence $p$ is thin in all modes.
Its quadratic tails vanish on the selector-dense pure side lines;
the repeated-block and three-distinct-block equations then kill
every mixed component containing $p$. Removing this independent
rank-one block subtracts one from the rank and each mode dimension,
giving \eqref{eq:H345-four-peeling}.
\end{proof}

\subsection{The \texorpdfstring{$167$}{167}-term lower bound}

\begin{proposition}[The \(167\)-term lower bound]
\label{prop:triple-H0}
For each $\FF\in\{\RR,\CC\}$, we have
$s_{\FF}(\tens{V}_3)\ge167$.
\end{proposition}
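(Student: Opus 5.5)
We argue by contradiction. Suppose $\tens V_3=\sum_{i=1}^N\vh_i^3$ is a minimal symmetric decomposition with $N\le166$; by \cref{prop:ordinary}, $N\ge165$. Form the original residual $\tens P\in\cA^{[3]}$ of \eqref{eq:triple-source}. Then $n_0=N-162\le4$ and $3\le r(\tens P)\le n_0$. By \eqref{eq:triple-pure-intersections}, $3\le d_X\le n_0$ for every mode $X$. We split into the cases $r(\tens P)=3$ and $r(\tens P)=4$, mirroring the double-copy argument.

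\emph{Case $r(\tens P)=3$.} Run the selector argument \eqref{eq:all-copy-selector} blockwise. Each mode meets every block in a pure line, and $d_X=3$ forces every mode to be split with one pure line per block. Every factor list is then a basis of three pure lines, one per block. As in \cref{lem:rank-two}, the conditions \eqref{eq:triple-slots} together with $\vx^2\notin\cK_j$ (\cref{lem:local}) kill all mixed coefficients. Hence $\tens P=\tens R_1+\tens R_2+\tens R_3$ with pure rank-one $\tens R_j\in\cA_j$. Choose a mode pair that is good for two of the three representatives; this is possible since each has at least two good pairs among three. The contraction image of that pair is block diagonal of dimension $54+3$ by \cref{lem:image-criterion}. \Cref{lem:triple-source-counting}(i) then demands at least $59$ sources, whereas $|\set S|\le N-108\le58$. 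This is a contradiction.

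\emph{Case $r(\tens P)=4$.} Then $n_0=4$ and $N=166$. Each $d_X\in\{3,4\}$. If some $d_X=3$, the residual has multilinear rank with a $3$ entry, and we run an analogue of \cref{lem:332}. That mode is split with one pure line per block, so by \cref{lem:H345-thin} each contraction is block diagonal. The two other modes then split compatibly, which produces a pure block of CP rank one. Whenever $d_Y=n_0=4$, the complementary pair is saturated by \cref{lem:triple-saturation}. Once two pairs are saturated, \cref{lem:triple-saturated-pairs} forbids any rank-one pure block. If only one $d_Y=4$, we instead use \cref{lem:triple-source-counting}(ii). In the remaining subcase $\mrank(\tens P)=(4,4,4)$, all three pairs are saturated. \Cref{lem:triple-affine-four}(i) supplies a pure block of CP rank one, which again contradicts \cref{lem:triple-saturated-pairs}.

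The main obstacle is the mixed multilinear-rank subcases with $r(\tens P)=4$, such as $(4,4,3)$ or $(4,3,3)$. There only one or two pairs are automatically saturated, and the affine structure is less rigid. My first attempt would be the following. Use \cref{lem:H345-four-support} to control the quadratic intersections $\kappa_{XY}$, which gives $\dim\cE_{XY}=54+d_Z-\kappa_{XY}$. Then combine the source bound $|\set S|\le N-108=58$ with \cref{lem:triple-source-counting}(i)--(ii) on two $XY$-good blocks. If this counting falls one short, the fallback is to peel off a split rank-one block via \cref{lem:H345-thin}. That reduces the residual to a two-block tensor of CP rank three, to which \cref{lem:H345-R333} and the double-copy exclusion in \cref{prop:double-333} apply.
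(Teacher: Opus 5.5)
Your proposal follows the paper's proof. The three-block direct sum is excluded by source counting. Type $(4,4,3)$ is excluded because the block-diagonal slice forces a rank-one pure block while two pairs are saturated. Type $(4,4,4)$ is excluded by \cref{lem:triple-affine-four}(i) together with \cref{lem:triple-saturated-pairs}. In the direct-sum case you use part (i) of \cref{lem:triple-source-counting} instead of the paper's part (ii); this is valid, since part (i) needs $59$ sources and at most $58$ exist.

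Two loose ends need tidying.

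First, type $(4,3,3)$ needs no separate treatment and no appeal to part (ii). Your mixed-coefficient argument for $r(\tens P)=3$ only needs two three-dimensional modes. These split into selector-dense pure lines, which kill every repeated-block slot; the justification is selector-density rather than \cref{lem:local} directly. So two such modes already force $\tens P=\bigoplus_j\tens P_j$ with rank-one pure blocks, as in \eqref{eq:triple-two-three-modes}, and this contradicts $r(\tens P)=4$.

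Second, your closing ``obstacle'' paragraph should be dropped. Your preceding paragraph already closes $(4,4,3)$ completely, and $(4,3,3)$ is vacuous. Its fallback would also fail as stated. \cref{prop:double-333} concerns residuals compatible with a $111$-term symmetric decomposition of $\tens V_2$. A two-block affine tensor obtained by peeling carries no such source structure, so that proposition cannot be invoked for it.
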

\begin{proof}
Suppose a minimal symmetric decomposition has $N\le166$ terms. Then
\[
N\in\{165,166\},\ \  n_0=N-162\le4,\ \ 
3\le d_X\le r(\tens P)\le n_0.
\]
If two modes have dimension three, they split into their three pure
lines. Every mixed coefficient vanishes: a three-distinct-block slot
is zero, and a repeated-block slot contains a selector-dense line
and hence has zero quadratic factor. Thus
\begin{equation}\label{eq:triple-two-three-modes}
d_X=d_Y=3\ \Longrightarrow\ 
\tens P=\bigoplus_{j=1}^3\tens P_j,\ \ 
r(\tens P_j)=1,\ \tens P_j\in\cA_j.
\end{equation}
Each $\tens P_j$ has at least two good pairs, so for some pair $XY$
two blocks are $XY$-good. Its image is block diagonal and satisfies
\[
\dim\cE_{XY}=57,\ \  \dim\cW_{XY}\le N-108\le58,
\ \  \dim(\cW_{XY}/\cE_{XY})\le1.
\]
This contradicts \cref{lem:triple-source-counting}(ii).
In particular $r(\tens P)=3$ is excluded.

It remains that $N=166$ and $r(\tens P)=4$, and that the mode type is
$(4,4,3)$ or $(4,4,4)$. In either case at least two pairs omit a
four-dimensional mode, so \eqref{eq:H345-source-span} gives
$\cW_{XY}=\cE_{XY}$ for each such pair $XY$. By
\cref{lem:triple-saturated-pairs}, no pure block has CP rank one.
For type $(4,4,3)$, take $d_C=3$. Its pure-line contractions are block
diagonal by \cref{lem:H345-thin}. In a minimal four-term CP decomposition
the $A,B$ factor lists are bases. A functional nonzero on all four
$C$ factors therefore gives a rank-four block-diagonal slice.
Its row and column spaces are the mode spaces, so both have
pure-intersection dimensions $(2,1,1)$. A block where $A$ is a line has pure-block CP rank one because
$C$ is also a line, a contradiction. 
For type $(4,4,4)$, \cref{lem:triple-affine-four}(i) supplies a rank-one
pure block, giving the same contradiction. Hence $N\le166$ is
impossible.
\end{proof}

\section{The \texorpdfstring{$167$}{167}-term case and completion of the triple-copy argument}
\label{sec:triple-167}

By \cref{prop:triple-H0}, it remains to rule out a minimal
$167$-term symmetric decomposition. Its original residual has five terms; we now
classify its possible mode types and eliminate them.

\subsection{Finite core and fully thin reductions}
\label{sec:triple-reductions}
Throughout these reductions the substitution maps and original decomposition
vectors stay fixed. The following core calculation depends only on
the affine block equations. 
For a four-dimensional mode space $\set U_X$, choose selector-dense
$\vu_{X,j}\in\set U_X\cap\cV_j$. Such choices exist by the
selector contractions and avoidance of three proper kernels.
Whenever a pure block has CP rank one, choose its three factor lines.
Write
\begin{equation}\label{eq:four-mode-mask}
\set U_X
=\Span\{\vu_{X,1},\vu_{X,2},\vu_{X,3},\vx_X\},
\ \ 
\vx_X\eqdef\sum_{j\in\set S_X}\vx_{X,j},
\ \ 
\vx_{X,j}\notin\FF\vu_{X,j}.
\end{equation}
Here $\set S_X$ is a nonempty subset of $\{1,2,3\}$. We encode it by the
mask
\[
\sigma_X\eqdef\sum_{j\in\set S_X}2^{j-1}\in\{1,\ldots,7\}.
\]
Thus, for example, the masks $3$ and $7$ encode $\{1,2\}$ and
$\{1,2,3\}$, respectively. We identify triples of masks under
simultaneous permutations of the three modes and of the three physical
blocks.

\begin{lemma}[The finite core closure]
\label{lem:triple-core-closure}
Use the core indices $1,2,3,4$, where $j$ denotes the pure vector
$\vu_{X,j}$ and $4$ denotes $\vx_X$. For a support set $\set S_X$, put
\[
\cB_X(j)\eqdef
\begin{cases}
\{j,4\},&j\in\set S_X,\\
\{j\},&j\notin\set S_X.
\end{cases}
\]
Starting with the $64$ core coefficients, repeatedly apply the
following rules.
\begin{enumerate}
\item If $i,j,k$ are pairwise distinct physical blocks, all coefficients
in
$\cB_A(i)\times\cB_B(j)\times\cB_C(k)$ 
vanish.
\item If the pure block $j$ has been proved rank one and at least one
mode is thin there, all coefficients in
$\cB_A(j)\times\cB_B(j)\times\cB_C(j)$ 
except $(j,j,j)$ vanish.
\item Consider a mixed slot with two occurrences of a block $j$.
If, in one of the repeated modes, every still-allowed coefficient uses
the pure index $j$, then the corresponding quadratic factor contains
a selector-dense line.  Since its product with another vector cannot
belong to $\cK_j$, the entire mixed slice vanishes.
\end{enumerate}
The third rule is iterated until no further coefficient is deleted.
The process terminates after at most $64$ deletions and depends only on
the three masks.
\end{lemma}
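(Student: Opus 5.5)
The lemma bundles two assertions. The rules are sound, meaning every deletion really forces the corresponding core coefficient of $\tens P$ to vanish. The procedure is finite and depends only on the masks. I will prove them in that order.

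\textbf{Setup.} Because $\set U_X$ has the basis \eqref{eq:four-mode-mask} in every mode, the residual can be written uniquely as $\tens P=\sum t_{abc}\,\vy_{A,a}\otimes\vy_{B,b}\otimes\vy_{C,c}$. Here $\vy_{X,j}=\vu_{X,j}$ for $j\le3$ and $\vy_{X,4}=\vx_X$, which gives the $64$ core coefficients. The key observation is that the block-$j$ projection of $\set U_X$ is spanned by the basis vectors with indices in $\cB_X(j)$, because $\vx_X$ has a nonzero block-$j$ part exactly when $j\in\set S_X$. Hence the component $\tens P_{ijk}$ is a linear image of the coefficients in $\cB_A(i)\times\cB_B(j)\times\cB_C(k)$.

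\textbf{Rules (i) and (ii).} For rule (i), with $i,j,k$ distinct, the vanishing $\tens P_{ijk}=0$ from \eqref{eq:triple-slots} gives a linear condition on that coefficient box. I need the box coefficients themselves to vanish, so I will check that $\vu_{X,i}$ and $\matr p_i\vx_X$ are independent when $i\in\set S_X$. This holds because $\vx_{X,i}\notin\FF\vu_{X,i}$. Independence makes the map from box coefficients to $\tens P_{ijk}$ injective, so the coefficients vanish. Rule (ii) is similar. If the pure block is rank one with a thin mode, the factor lines are fixed, and the thin mode's line must be $\FF\vu_{X,j}$, since the projection is one-dimensional and contains $\vu_{X,j}$. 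I will then use \cref{lem:H345-thin} together with \cref{lem:local} to identify the other two factor lines with the selector-dense lines $\vu_{Y,j}$ and $\vu_{Z,j}$. Only the $(j,j,j)$ coefficient survives. This identification is the step I expect to be delicate: a rank-one pure block with factors in the mode projections need not have its factors on the chosen $\vu$ lines. The fallback is to choose the $\vu_{X,j}$ to be the factor lines, which the statement permits ("choose its three factor lines"), provided they are selector-dense; \cref{lem:local} guarantees this.

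\textbf{Rule (iii).} Consider a slot such as $\tens P_{ijj}\in\cV_i\otimes\cK_j$. Suppose that in mode $B$ every still-allowed coefficient uses the pure index $j$. Then for each first-mode functional, the $\cV_j\otimes\cV_j$ slice is $\vu_{B,j}\otimes\vy$ for some $\vy$. By \cref{lem:local}, or the density argument in \cref{lem:H345-four-support}, a symmetric member of $\cK_j$ of this form must be zero, because $\vu_{B,j}^2\notin\cK_j$ and $\vu_{B,j}$ is selector-dense. Every slice therefore vanishes, and independence of the basis forces the remaining coefficients in the box to zero.

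\textbf{Termination.} Each application deletes at least one of the $64$ coefficients, so the process stops after at most $64$ deletions. The hypotheses of every rule depend only on the sets $\cB_X$, hence only on the masks, plus the rank-one/thin data. Thinness is itself determined by the masks, since mode $X$ is thin on $j$ iff $j\notin\set S_X$. The rank-one data is an input fixed beforehand, so the process depends only on the masks.
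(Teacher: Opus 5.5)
Your proposal is correct and follows the paper's own argument. Rule (i) is the vanishing three-distinct-block component. Rule (ii) is the normalization in which the $\vu_{X,j}$ are taken to be the factor lines of the rank-one block; this is your fallback, with \cref{lem:H345-thin} placing those lines in the pure intersections and \cref{lem:local} making them selector-dense. Rule (iii) is the exclusion of the square of a selector-dense line from $\cK_j$. Independence of the block projections of the core vectors makes each vanishing coefficientwise, and the process terminates because coefficients are only deleted. Your explicit injectivity check on each box $\cB_A(i)\times\cB_B(j)\times\cB_C(k)$ simply spells out the paper's phrase ``local independence of the core vectors.''
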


\begin{proof}
The first rule is the vanishing of the three-distinct-block component
of $\J_{\cV}(\cK)$. The second is the normalization of a
rank-one pure block.  For the third, the repeated-block component is a
matrix in $\cK_j$.  A nonzero rank-one symmetric matrix whose image
contains a selector-dense line would force the square of that line to
belong to $\cK_j$, contrary to the selector equations.  Local
independence of the core vectors makes the vanishing coefficientwise.
Since coefficients are only deleted, the iteration is finite.
\end{proof}

Assume that every thin pure block has CP rank one. Under simultaneous
mode and block permutations, the $7^3=343$ mask triples form $23$
orbits. Iterating the three closure rules gives the following orbit
classification:
\begin{equation}\label{eq:triple-mask-partition}
\mathcal O\eqdef\mathcal O_P\sqcup\mathcal O_D\sqcup
\mathcal O_S\sqcup\mathcal O_T,\ \ 
(|\mathcal O_P|,|\mathcal O_D|,|\mathcal O_S|,|\mathcal O_T|)
=(12,3,5,3).
\end{equation}
\begin{center}
\small
\begin{tabular}{@{}p{.08\textwidth}p{.57\textwidth}p{.28\textwidth}@{}}
\toprule
Class & Representatives & Surviving core / condition\\
\midrule
$\mathcal O_P$ & $(1,2,4)$, $(1,2,5)$, $(1,2,7)$, $(1,3,6)$,
$(1,6,6)$, $(1,6,7)$, $(3,5,6)$
& $(1,1,1)$, $(2,2,2)$, $(3,3,3)$\\
& $(3,3,5)$, $(3,3,7)$, $(3,5,7)$, $(3,7,7)$, $(7,7,7)$
& $(1,1,1)$, $(2,2,2)$, $(3,3,3)$\\
$\mathcal O_D$ & $(1,3,5)$, $(1,3,7)$, $(1,7,7)$
& $(1,1,1)$, $(2,2,2)$, $(3,3,3)$, $(4,1,1)$\\
$\mathcal O_S$ & $(1,1,1)$, $(1,1,2)$, $(1,1,3)$, $(1,1,6)$, $(1,1,7)$
& Two split modes, same thick block\\
$\mathcal O_T$ & $(1,2,3)$, $(1,3,3)$, $(3,3,3)$
& A fully thin block\\
\bottomrule
\end{tabular}
\end{center}
For reproducibility, apply rules (i)--(ii) first and then apply all
instances of rule (iii) simultaneously until stable. The successive
deletion counts in the first three rows, in the displayed order, are
\[
\begin{aligned}
&(42),\ (35,2),\ (32),\ (30,2),\ (30,2),\ (28),\ (27);\\
&(29,2),\ (30,2),\ (27),\ (27),\ (27);\\
&(32,4),\ (30,2),\ (28).
\end{aligned}
\]
These are deletions of symbolic positions, independent of coefficient
values. In $\mathcal O_P$ all mode dimensions are at most three;
in $\mathcal O_D$ the $B,C$ dimensions are at most three.
Thus all fifteen orbits contradict mode type $(4,4,4)$.
The last two classes follow directly from the masks and account for
the remaining eight orbits.

\begin{lemma}[A fully thin block with two four-dimensional modes]
\label{lem:triple-full-thin}
Let $\tens P$ be the original five-term residual of a minimal
$167$-term symmetric decomposition. Suppose
\[
(d_A,d_B,d_C)=(4,4,d),\  d\in\{4,5\},\ \ 
d^{\mathrm{proj}}_{A,p}=d^{\mathrm{proj}}_{B,p}=d^{\mathrm{proj}}_{C,p}=1.
\]
Then $d=4$ and $r(\tens P)=4$. After permuting the modes, we have
\begin{equation}\label{eq:triple-all-thin-rho}
(\rho_A,\rho_B,\rho_C)=(4,4,5),\ \ 
\cW_{AB}=\cE_{AB}.
\end{equation}
The five matrices $\va_i\otimes\vb_i$ are nonzero, span a
four-dimensional space, and have a unique relation, supported on
two proportional matrices. Block $p$ is $AC$-good and $BC$-good.
\end{lemma}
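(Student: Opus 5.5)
The plan is to peel off the fully thin block $p$ first, then read off the source counts from the peeled structure. The key step is to show that block $p$ splits off as an independent rank-one summand.

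\emph{Step 1: the thin slice and the vanishing of its tails.} Write $\matr p_p\set U_X=\FF\va$, $\FF\vb$, $\FF\vc$ for $X=A,B,C$. Then $\tens P_{ppp}=\lambda\,\va\otimes\vb\otimes\vc$ is a rank-one representative in $\cA_p$, and $\va,\vb,\vc$ are selector-dense by \cref{lem:local}. Choose $f$ supported on $\cV_p$ with $f(\va)=1$. \Cref{lem:H345-thin} gives
\[
f\lrcorner\tens P=\lambda\,\vb\otimes\vc\oplus\matr N_q\oplus\matr N_r ,
\]
with tails $\matr N_j\in\cK_j\cap(\set U_{B,j}\otimes\set U_{C,j})$. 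Since $d_B=4$, \cref{lem:H345-four-support} applies. A nonzero tail would therefore sit on a unique block $j$ with $\dim\set U_{B,j}=2$, and the intersection would be at most one-dimensional. I would try to kill this tail with the selector argument from the proof of \cref{lem:triple-affine-four}(i). Case 1: a rank-one tail $\mu\vw^2$ forces its local factors onto $\FF\vw$, where $\matr L\vw$ is a coordinate vector. Case 2: a rank-two tail forces too many active terms for $r(\tens P)\le5$ combined with $\dim\set U_A=4$. In both cases I then apply \eqref{eq:all-copy-selector} restricted to $q\oplus r$, which has flattening rank two and yields a contradiction. With the tails gone, the three-distinct-block and repeated-block conditions \eqref{eq:triple-slots} kill every mixed component containing $p$, exactly as in the last paragraph of the proof of \cref{lem:triple-affine-four}. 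This gives $\tens P=\tens P_{ppp}\oplus\tens Q_{qr}$, with $\tens Q_{qr}$ in the two-block affine space, multilinear rank $(3,3,d-1)$, and $r(\tens Q_{qr})=r(\tens P)-1$.

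\emph{Step 2: $d=4$ and $r(\tens P)=4$.} For $d=5$ the factor $\tens Q_{qr}$ would have a four-dimensional $C$ mode, while its $A$ and $B$ modes are three-dimensional. Its rank would then be at most four, yet \cref{lem:modes} (in the two-block setting) allows only $(3,3,2)$ or $(3,3,3)$ for CP rank three. A rank-four two-block tensor with two three-dimensional modes splits into pure lines, by the argument of \eqref{eq:triple-two-three-modes}. That forces its $C$ mode dimension to be at most three, a contradiction. Hence $d=4$. Then $\tens Q_{qr}$ has multilinear rank $(3,3,3)$, and $r(\tens Q_{qr})\ge3$. If $r(\tens P)=5$, then $r(\tens Q_{qr})=4$; I expect this to be excluded by the same splitting argument, because two modes of dimension three in two blocks force a direct sum of two rank-one pure blocks. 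So $r(\tens P)=4$.

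\emph{Step 3: the source counts and goodness.} The residual has five original terms but CP rank four. Since $\rho_X\ge d_X$, the five vectors $\matr\Pi_X\vh_i$ span at least four dimensions in each mode. I would show that two terms have proportional $A$- and $B$-factors, which is the only way five terms can realise a four-term CP tensor whose factor lists in a minimal expression are bases; merging those two terms is what gives the CP expression. Their $C$-factors must then be independent, so $\rho_C=5$. The two proportional matrices $\va_i\otimes\vb_i$ then give the unique relation, and nonvanishing follows from minimality. After permuting modes, $\rho_C=5=n_0$ gives $\cW_{AB}=\cE_{AB}$ by \eqref{eq:H345-source-span}.

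Finally, because pair $AB$ is saturated, \cref{lem:triple-good} (as in \cref{lem:triple-saturated-pairs}) forces the $AB$ pair of $\tens P_{ppp}$ to be bad, that is, $\va\parallel\vb$. Since $\cA_p$ contains no cube, $\vc$ is independent of $\va$, so block $p$ is both $AC$-good and $BC$-good.

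The main obstacle is Step 1: the tail exclusion must work with five terms rather than four, and \cref{lem:triple-affine-four} does not apply verbatim. That is also where I am least sure of the details in Step 2.
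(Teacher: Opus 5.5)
\textbf{There is a genuine gap: the affine peeling route never actually derives the decisive conclusions.}

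\emph{Step~2.} You invoke \eqref{eq:triple-two-three-modes} for the two-block tensor $\tens Q_{qr}$. That splitting works only because, with three physical blocks, a three-dimensional mode contains a pure line in each of the three blocks and therefore equals their sum. In two blocks a three-dimensional mode is guaranteed only two pure lines and can be nonsplit. This is exactly the situation of \cref{lem:H345-R333}, where nonsplit CP-rank-three forms survive the affine equations. So your argument excludes neither a CP-rank-four $\tens Q_{qr}$ of mode type $(3,3,4)$ (the case $d=5$) nor one of mode type $(3,3,3)$ (the case $r(\tens P)=5$).

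\emph{Step~1.} This is also unproved. The tail-killing in \cref{lem:triple-affine-four} uses that the factor lists of a minimal four-term decomposition are bases, so the contraction rank counts active terms. With five terms in four-dimensional $A,B$ modes this fails, and you give no substitute.

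\emph{Step~3.} Knowing $r(\tens P)=4$ alone does not force two of the five terms to have proportional $A$ and $B$ factors. The support-two relation follows from \eqref{eq:triple-five-source-rank} only once $\rho_C=5$ is known, and $\rho_C=5$ is what you are trying to derive. Your final paragraph (saturation of $AB$ forces $\va\parallel\vb$, hence $AC$- and $BC$-goodness) is correct, but it rests on that unproved $\rho_C=5$.

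\textbf{The missing idea is to use the fixed sources from the outset, not affine structure.}
\begin{enumerate}
\item By \cref{lem:triple-good} and \eqref{eq:H345-source-span}, if $p$ is $XY$-good then $\cW_{XY}\ne\cE_{XY}$, so $\rho_Z=4$ for the complementary mode $Z$.
\item Suppose all three pairs at $p$ were good. Then every $\rho_X$ equals $4$, which is impossible at once if $d=5$. So every residual factor lies in its mode space. Full thinness puts every local residual $XY$ source on the line of the pure representative. The source identity then puts all original local sources in the local image, contradicting \cref{lem:triple-good}.
\item Since $\cA_p$ contains no cube, exactly one pair is bad, say $AB$; when $d=5$ this choice is forced by saturation. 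Hence $\rho_A=\rho_B=4$, and $\rho_C=5$ by the same contradiction.
\item $d=4$, because $d=5$ would again put all factors in their mode spaces, against $AC$-goodness.
\item Independence of the $\vc_i$ gives $\dim\Span\{\va_i\otimes\vb_i\}=4$. A zero matrix would make $\va_i$ or $\vb_i$ zero and destroy $AC$- or $BC$-goodness.
\item \eqref{eq:triple-five-source-rank} limits the unique relation to a proportional pair. Merging that pair gives four terms, so $r(\tens P)=4$.
\end{enumerate}
No separation of block $p$ is ever needed.
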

\begin{proof}
Write $\tens P_{ppp}=\va\otimes\vb\otimes\vc$.
For each complementary pair $XY,Z$, the source identity gives
\[
p\text{ is }XY\text{-good}
\ \Longrightarrow\ \cW_{XY}\ne\cE_{XY}
\ \Longrightarrow\ \rho_Z=4.
\]
If $p$ were $AB$-good, $AC$-good, and $BC$-good, every residual factor would lie in its
mode space. Since the block is fully thin, we have
\[
(\va_i^{[p]}\otimes\vb_i^{[p]})\in\FF(\va\otimes\vb),
\ \ 
\cW_{AB}=\cE_{AB}+\Span\{\va_i\otimes\vb_i:i\in\set J\}
\]
would put every local source in the local contraction image on a block
that is $AB$-good, contrary to
\cref{lem:triple-good}; the same holds for the other pairs.
The factors cannot all be proportional either. Thus exactly one pair
is proportional, say $\va\parallel\vb$. When $d=5$, saturation
of $AB$ already forces this choice. Consequently
$\rho_A=\rho_B=4$, while $\rho_C=4$ would give the same
all-in-support contradiction. Hence $\rho_C=5$.
If $d=5$, all factors would again lie in their mode spaces,
contradicting that $p$ is $AC$-good. Therefore $d=4$.

Put $\matr M_i\eqdef\va_i\otimes\vb_i$. The $\vc_i$ are independent, so
$\dim\Span\{\matr M_i:1\le i\le5\}=4.$ 
If $\matr M_i=0$, it is the unique zero matrix and the other four
are independent. Their $C$ factors then belong to $\set U_C$.
When $\va_i=0$ (respectively $\vb_i=0$), all five local $AC$
(respectively $BC$) matrices stay in the local contraction image.
Since $p$ is $AC$-good and $BC$-good, the source-generation identity
and \cref{lem:triple-good} exclude
both possibilities. Thus all $\matr M_i$ are nonzero.

For any support $\set I$ of size $m$ and nonzero coefficients $\lambda_i$, put
$\matr A_{\set I}\eqdef[\va_i]_{i\in\set I}$ and
$\matr B_{\set I}\eqdef[\vb_i]_{i\in\set I}$. Then
\cref{lem:factor-calculus}, with $n=5$ and $b=c=4$, gives
\begin{equation}\label{eq:triple-five-source-rank}
\rank\!\left(\sum_{i\in\set I}\lambda_i\va_i\otimes\vb_i\right)
\ge \rank\matr A_{\set I}+\rank\matr B_{\set I}-m\ge m-2.
\end{equation}
Applied to the unique relation, this yields $m\le2$; nonzero
matrices give $m=2$. Combining this proportional pair produces four
terms. Since $d_A=4$, their number is minimal.
\end{proof}

\subsection{Exclusion of three-dimensional modes and residual CP rank at most four}

\begin{lemma}[Three-coordinate rank-one completions]
\label{lem:rank-one-completions}
Let $\matr\Omega\eqdef(\omega_{jk})_{j\ne k}$ be a nonzero off-diagonal
array on $\{1,2,3\}$ admitting a rank-one completion. If its
nonzero entries involve all three indices, all rank-one completions
have a common left-factor line or a common right-factor line.
If they involve only $p,q$, every completion is supported on
$\{p,q\}\times\{p,q\}$.
\end{lemma}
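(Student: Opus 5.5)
The plan is to work directly with a rank-one completion written as $\vx\otimes\vy$, so that $x_jy_k=\omega_{jk}$ for all $j\ne k$ while the diagonal entries $x_jy_j$ are unconstrained. The only fact used throughout is elementary: if $\omega_{jk}\ne0$, then $x_j\ne0$ and $y_k\ne0$ in \emph{every} completion. I would first treat the case where the nonzero entries involve only $p,q$, and then reduce the three-index case to a configuration with two nonzero entries in a common row or a common column.

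\emph{Two-index case.} Suppose only $\omega_{pq}$, and possibly $\omega_{qp}$, is nonzero. Take $\omega_{pq}\ne0$, so $x_p\ne0$ and $y_q\ne0$.
\begin{itemize}
\item From $\omega_{rq}=x_ry_q=0$ we get $x_r=0$.
\item From $\omega_{pr}=x_py_r=0$ we get $y_r=0$.
\end{itemize}
Hence $\vx\otimes\vy$ is supported on $\{p,q\}\times\{p,q\}$. This is immediate and uses no choice.

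\emph{Three-index case, common row.} Suppose first that two nonzero entries share a row, say $\omega_{jk},\omega_{jl}\ne0$ with $\{j,k,l\}=\{1,2,3\}$. Then $y_k,y_l\ne0$, and $y_k/y_l=\omega_{jk}/\omega_{jl}$ is fixed. Column $k$ determines $x_j=\omega_{jk}/y_k$ and $x_l=\omega_{lk}/y_k$. Column $l$ determines $x_k=\omega_{kl}/y_l=\omega_{kl}\omega_{jl}^{-1}\omega_{jk}/y_k$. Consequently
\[
\vx\in\FF\bigl(\omega_{jk},\ \omega_{kl}\omega_{jk}/\omega_{jl},\ \omega_{lk}\bigr),
\]
written in the coordinate order $(j,k,l)$. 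This vector is nonzero because $\omega_{jk}\ne0$, and it does not depend on the completion. So all completions share a common left-factor line. Two nonzero entries in a common column give a common right-factor line by the transposed argument.

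\emph{Three-index case, reduction.} It remains to show that when the nonzero entries involve all three indices, two of them always share a row or a column. If not, the nonzero entries have pairwise distinct rows and pairwise distinct columns. Covering three indices then requires at least two such entries, and any two of them form a directed path $j\to k\to l$ with $\{j,k,l\}=\{1,2,3\}$. This is where I expect the only real care to be needed: checking that every configuration of two entries with disjoint rows and columns on three indices is such a path up to relabelling, which is a short enumeration. For a path, $\omega_{jk}\ne0$ gives $x_j\ne0$ and $\omega_{kl}\ne0$ gives $y_l\ne0$. Hence $\omega_{jl}=x_jy_l\ne0$ is forced by the existence of a completion, and $\omega_{jl}$ shares row $j$ with $\omega_{jk}$. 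This returns us to the common-row case and finishes the proof.
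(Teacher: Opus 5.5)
Your proof is correct, and your two-index case is the same as the paper's. For the three-index case you take a different route.

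The paper splits according to the supports $\set{I}_x,\set{I}_y$ of a completion $\vx\vy^\top$. If one support is a singleton $\{p\}$, it gets the common line $\FF\vect{e}_p$. Otherwise it argues that every completion has these same supports and that the bipartite graph of visible edges $j\ne k$ is connected. The equations $x_jy_k=\omega_{jk}$ then fix the completion uniquely.

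You instead argue from the pattern of nonzero entries of $\matr\Omega$. The closure property $\omega_{jk}\omega_{kl}\ne0\Rightarrow\omega_{jl}\ne0$ forces two nonzero entries into a common row or column, and you then solve for the factor line explicitly.

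The enumeration step you flagged is sound, and it works precisely because all three indices occur. Two entries with distinct rows and columns that cover $\{1,2,3\}$ must form a path. Three such entries form a $3$-cycle, and any two of its edges form a path. A $2$-cycle $(p,q),(q,p)$ cannot arise, since it involves only two indices.

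Your route is shorter and gives the common line in closed form. The paper's route proves more: outside the singleton-support case, the rank-one completion is unique. The lemma does not assert this, and its use for the $(3,3,3)$ residual does not need it.
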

\begin{proof}
Write a completion as $\vx\vy^\top$ and put
$\set I_x\eqdef\operatorname{supp}\vx$ and
$\set I_y\eqdef\operatorname{supp}\vy$.
Its visible edges are precisely
$\{(j,k)\in\set I_x\times\set I_y:j\ne k\}$.
If $\set I_x=\{p\}$ and all three indices occur, both edges from $p$
to the other indices are nonzero. These two columns force every
other row of any completion to vanish; hence the common left line
is $\FF\vect e_p$. The case $|\set I_y|=1$ is transposed.

Suppose $|\set I_x|,|\set I_y|\ge2$. Every row and column in
$\set I_x,\set I_y$ is visible,
and any additional nonzero row or column would create a new edge.
Thus every completion has these same supports. The bipartite graph
with edges $j\ne k$ is connected unless $\set I_x=\set I_y$ is a two-element
set: for unequal two-element sets it is a three-edge path, and it is
also connected for sizes $2\times3$ and $3\times2$ as well as for
size $3\times3$. The exceptional disconnected case involves only two
indices. On a connected graph, the equations
$(\vx)_j(\vy)_k=\omega_{jk}\ne0$ determine the factors up to reciprocal
scaling, so the completed matrix is unique.

Finally, if the visible indices are only $p,q$, choose a nonzero
edge $p\to q$, reversing the names if needed. For the remaining
index $r$, the zero entries $\omega_{rq}=\omega_{pr}=0$ force
$(\vx)_r=(\vy)_r=0$. This proves the last assertion, including the
one-way-edge case.
\end{proof}

\begin{lemma}[Local source test]\label{lem:local-source-test}
For a selected pair $XY$, if block $j$ is $XY$-good, then some residual $XY$ source
matrix has its $jj$ block outside the local contraction image.
\end{lemma}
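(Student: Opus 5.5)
We argue by contraposition, reducing to \cref{lem:triple-good}. Assume block $j$ is $XY$-good, so the local quotient of $\cE_{XY}$ at $j$ is the nonzero line generated by $[\vu\otimes\vv]_j$ for a rank-one representative $\vu\otimes\vv\otimes\vw\in\cA_j$, with $\vu,\vv$ independent. Suppose, for contradiction, that every residual source matrix $\matr Q_i$ with $i\in\set J$ has its $jj$ block inside the local contraction image $(\matr p_j\otimes\matr p_j)\cE_{XY}$.

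First step: extend this hypothesis from residual sources to all sources. By the source-span identity \eqref{eq:H345-source-span}, $\cW_{XY}=\cE_{XY}+\Span\{\matr Q_i:i\in\set J\}$. Applying $\matr p_j\otimes\matr p_j$ and the assumption, every original local source matrix $(\matr p_j\otimes\matr p_j)\matr Q_i$, $i\in\set I_Z\sqcup\set J$, lies in $(\matr p_j\otimes\matr p_j)\cE_{XY}$. Sources indexed by $\set I_X\cup\set I_Y$ vanish, so they need no treatment.

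Second step: identify the local image. By \eqref{eq:image-exact}, $\cE_{XY}=\cK+\set L_{XY}(\tens P)$. Its $jj$ projection is therefore contained in $\cK_j$ plus the local slices. Being $XY$-good means the quotient of that projection modulo $\cK_j$ is $\FF[\vu\otimes\vv]_j$, so $(\matr p_j\otimes\matr p_j)\cE_{XY}\subseteq\cK_j+\FF(\vu\otimes\vv)$. Consequently every original local matrix lies in $\cK_j+\FF(\vu\otimes\vv)$, which is exactly the local-matrix hypothesis of \cref{lem:triple-good}.

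Third step: verify the remaining hypothesis of \cref{lem:triple-good}, namely that the complete quotient identity has the form $(\overline{\matr p}_j\otimes\id)\tens G=[\vu\otimes\vv]_j\otimes\vw$. I expect this to be the main obstacle. The definition of $XY$-good only fixes the local quotient line, not the coefficient attached to it. I would handle it by writing $(\overline{\matr p}_j\otimes\id)\tens G=[\vu\otimes\vv]_j\otimes\vw'$ for some complete vector $\vw'$, which is possible because the image is one-dimensional. The compensation in \eqref{eq:G-compensation} dies in the quotient, since its matrices lie in $\cK$. Mixed affine components have their $jj$ quadratic parts in $\cK_j$ when the third slot lies in another block. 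Hence $\vw'$ is the coefficient of the pure-block identity, and it lies in $\cV_j$. Rescaling, the pure block $\tens P_{jjj}$ then has the same $XY$ quotient as $\vu\otimes\vv\otimes\vw'$. Moreover, $\vw'$ can be taken as the representative's third factor, since $\cA_j$ membership pins it modulo the relevant correction. With these hypotheses in place, \cref{lem:triple-good} forces $\vu\parallel\vv$, contradicting that the pair is good. Hence some residual source must have its $jj$ block outside the local image, which proves the statement.
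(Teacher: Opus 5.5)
Your argument is correct and follows the paper's proof. The source-span identity \eqref{eq:H345-source-span} carries the assumed containment from the residual sources to every original local source matrix, and \cref{lem:triple-good} then forces $\vu\parallel\vv$, contradicting goodness. Your third step only makes explicit what the paper leaves implicit (compare \cref{lem:triple-saturated-pairs}). It can be stated more cleanly: $\vu\otimes\vv\otimes\vw'\in\cA_j$ holds directly, because it differs from $\tens P_{jjj}\in\cA_j$ by an element of $\cK_j\otimes\cV_j\subseteq\J(\cK_j)$ in the $XY\mid Z$ ordering. So you never need to identify $\vw'$ with the third factor of the original representative.
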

\begin{proof}
If all residual $jj$ matrices lay in that image, the source-generation
identity \eqref{eq:H345-source-span} would put every original $jj$
matrix there, contradicting \cref{lem:triple-good}.
\end{proof}

\begin{proposition}[Low-dimensional five-term exclusion]
\label{prop:triple-H1}
Suppose that a minimal $167$-term symmetric decomposition exists and let
$\tens P$ be its original five-term residual. Then
\[
r(\tens P)=5,\ \  d_A,d_B,d_C\in\{4,5\}.
\]
\end{proposition}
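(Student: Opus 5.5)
The plan is a case analysis on $r(\tens P)$ and the mode type, reusing the \cref{prop:triple-H0} argument but now with $n_0=N-162=5$. Then $3\le d_X\le r(\tens P)\le5$ and the source identity \eqref{eq:H345-source-span} gives
\[
\dim(\cW_{XY}/\cE_{XY})\le 5-\rho_Z\le 5-d_Z .
\]
So a pair is saturated whenever the omitted mode has dimension five, and has defect at most one when that mode has dimension four. The first goal is to show that no mode has dimension three; after that, $r(\tens P)\le4$ is excluded.

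\textbf{Step 1 (two three-dimensional modes).} By \eqref{eq:triple-two-three-modes}, $\tens P=\bigoplus_j\tens P_j$ is a sum of three pure rank-one representatives. Choose a pair $XY$ that is good for two blocks. Its image is block diagonal of dimension $57$, and no off-diagonal output occurs. Then \cref{lem:triple-source-counting}(i) gives $m\ge59$ nonzero sources, while $|\set S|\le N-108=59$, so the count is tight. I then need one more unit. I would get it from the other good pair of one of these blocks, or from the relation $\sum_{t_i\ne0}t_i\vh_i=0$ in the proof of \cref{lem:triple-source-counting}. The aim is to force two exceptional sources with proportional $\vh_i$, as in part (ii), and so reach $\vh_1^3+\vh_2^3=0$.

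\textbf{Step 2 (exactly one three-dimensional mode, say $d_C=3$).} The mode $\set U_C$ splits into three selector-dense pure lines. By \cref{lem:H345-thin}, contracting with each pure line gives a block-diagonal slice. A functional that is nonzero on all $C$ factors of a minimal decomposition gives a block-diagonal slice, and \cref{lem:factor-calculus} identifies its row and column spaces with $\set U_A,\set U_B$. Hence both of these modes split. Any block where $A$ or $B$ is a line, together with the line in $C$, then has a rank-one pure block. If $d_A=d_B=5$, the pairs $AC$ and $BC$ are saturated, and \cref{lem:triple-saturated-pairs} gives a contradiction. In the remaining types $(4,4,3)$ and $(4,5,3)$, I would compare the split pure dimensions: $(2,1,1)$ when the dimension is $4$, and $(2,2,1)$ or $(3,1,1)$ when it is $5$. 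This locates a block thin in two modes. At that block, \cref{lem:local-source-test} applies together with the fully thin statement \cref{lem:triple-full-thin} (with $d=3$ handled by the same source identity). The contradiction should come from the defect bound $\dim(\cW/\cE)\le1$ on the pair omitting a four-dimensional mode, via \cref{lem:triple-source-counting}(ii).

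\textbf{Step 3 ($r(\tens P)\le4$ with all $d_X\ge4$).} Here the mode type is $(4,4,4)$. Then \cref{lem:triple-affine-four}(i) gives a rank-one pure block, and each pair has defect at most $5-\rho_Z\le1$. The five residual sources then span a space of dimension at most four. I would argue as in \cref{lem:triple-full-thin} using \eqref{eq:triple-five-source-rank}: the relation is supported on two proportional matrices, and the resulting four-term residual is rigid. I would then split by the core closure of \cref{lem:triple-core-closure}. Classes $\mathcal O_P$ and $\mathcal O_D$ contradict the type $(4,4,4)$. Class $\mathcal O_T$ falls under \cref{lem:triple-full-thin}, whose uniquely determined defect direction I would test against \cref{lem:local-source-test} on the two good pairs. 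Class $\mathcal O_S$ has two split modes with a common thick block, and I would treat it like Step 2. Where an off-diagonal pattern must be completed to rank one, \cref{lem:rank-one-completions} pins down the completion.

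I expect Step 1, and the defect-one bookkeeping in Step 2, to be the main obstacle. With $n_0=5$ the slack $\dim(\cW/\cE)$ can reach $2$, which is beyond what \cref{lem:triple-source-counting} handles directly. What I would try first is to show that $\rho_Z=d_Z$ fails, so that the defect actually drops to at most one. This should follow by using that $\tens P$ has rank at most four while five sources contribute, so that one residual term is a combination of the others in mode $Z$.
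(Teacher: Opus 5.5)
Your outline matches the paper's: exclude three-dimensional modes first, then handle the rank-four type $(4,4,4)$ through \cref{lem:triple-core-closure}. Your $(3,5,5)$ argument is exactly the paper's. The hard cases, however, are left open, and the mechanism you propose for them cannot work.

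In type $(3,3,3)$ you hope to prove $\rho_Z>d_Z$, so that every defect $\dim(\cW_{XY}/\cE_{XY})$ is at most one. This fails precisely in the case that survives the counting. The three rank-one blocks give $g_{AB}+g_{AC}+g_{BC}\ge6$, and \cref{lem:triple-source-counting}(ii) gives $g_{XY}\le1$ whenever $\dim\cW_{XY}\le58$. Hence at least two source spaces, say $\cW_{AB}$ and $\cW_{AC}$, have full dimension $59$, and \eqref{eq:H345-source-span} forces $\rho_B=\rho_C=3=d_B=d_C$, i.e.\ defect two. Your heuristic also points the wrong way: a dependency among the five $Z$-factors gives $\rho_Z\le4$, not $\rho_Z\ge4$.

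The defect-two case has to be killed structurally, as the paper does. Every block is then $BC$-bad, $\tens P=\sum_j\va_j\otimes\vd_j\otimes\vd_j$, the $AB$ and $AC$ sources are bases, and the off-diagonal part of $\cW_{BC}$ is at most one-dimensional.
\begin{itemize}
\item If that part vanishes, grouping the five terms by diagonal block and applying \cref{lem:local-source-test} demands two terms per block, which is impossible with five terms.
\item If it is spanned by an $\matr\Omega$ meeting all three blocks, \cref{lem:rank-one-completions} yields a common factor line, hence a relation among the $AB$ or $AC$ sources.
\item The two-block case is a span count plus \cref{lem:local-source-test}.
\end{itemize}

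In types $(3,4,4)$ and $(3,4,5)$ your tools do not apply as stated. First, \cref{lem:triple-full-thin} needs two four-dimensional modes. Its proof uses $d_Z=4$ to turn $\cW_{XY}\ne\cE_{XY}$ into $\rho_Z=d_Z$, and this fails for $d_C=3$. Second, part (ii) of \cref{lem:triple-source-counting} is not available on $AC$ or $BC$. Mixed components in $\cK_p\otimes(\cV_q\oplus\cV_r)$ at the thick block give off-diagonal output in row $p$, so those images need not be block diagonal.

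The paper uses part (i) there, since only rows $q,r$ are clean. For $(3,4,5)$ this works directly, with the $BC$ sources a basis. For $(3,4,4)$ it first makes $q$ $AC$-good and $r$ $BC$-good. Then (ii) on the block-diagonal $57$-dimensional $\cE_{AB}$ forces $\dim\cW_{AB}=59$. This gives $(\rho_A,\rho_B,\rho_C)=(4,4,3)$, and \cref{lem:local-source-test} finishes.

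In the rank-four case two things are missing. First, the orbit classification \eqref{eq:triple-mask-partition} presupposes that every thin pure block has CP rank one. \cref{lem:triple-affine-four}(i) allows rank two, and the paper excludes that by a separate $58/59$ source count.

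Second, and decisively, the orbits $(1,3,3)$ and $(3,3,3)$ of $\mathcal O_T$ cannot be excluded by local tests at the fully thin block. There $\tens P=\tens P_{12}\oplus(\va\otimes\vb\otimes\vc)$, and block $3$ is carried by the repeated proportional pair of \cref{lem:triple-full-thin}. Testing $\matr D$ against \cref{lem:local-source-test} only shows that its factors reach block $3$. Block-diagonality of $\cW_{AB}=\cE_{AB}$ then makes $\matr D$ pure there, which is perfectly consistent. This is the shape one expects from concatenating a hypothetical $111$-term decomposition of $\tens V_2$ with a $56$-term decomposition of the third copy, so the contradiction must come from the double-copy bound.

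The missing idea is the paper's global splitting. Using the block-diagonal $\cW_{AB}$ and the basis property of the $AC$ and $BC$ sources, every one of the $167$ original vectors is placed in $\set H_{12}$ or $\set H_3$ ($111+56$). Projection to $\set H_{12}$ then gives a $111$-term symmetric decomposition of $\tens V_2$, contradicting $s(\tens V_2)=112$. Your plan never invokes the double-copy theorem, so it cannot close these orbits.

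Two smaller points:
\begin{itemize}
\item Your claim that the five residual sources span at most four dimensions needs $\rho_Z=5$ for the omitted mode. You only have that inside \cref{lem:triple-full-thin}.
\item The other orbits of $\mathcal O_S\cup\mathcal O_T$ need the paper's explicit $\kappa$, image-dimension and arrow bookkeeping, since there all modes are four-dimensional. For $(1,1,1)$ they need \cref{lem:triple-full-thin} together with a rotated good pair. A reference to Step~2 does not cover them.
\end{itemize}
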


\begin{proof}
Each mode dimension lies in $\{3,4,5\}$. We first exclude
three-dimensional modes and then CP rank at most four. 
By \eqref{eq:triple-two-three-modes}, two three-dimensional modes
force type $(3,3,3)$. Thus it suffices to consider the four types in
$\{(3,3,3),(3,4,5),(3,4,4),(3,5,5)\}$ and, finally, CP-rank-four
type $(4,4,4)$.

\emph{Type $(3,3,3)$.} Write
\[
\tens P=\sum_{j=1}^3
\va_j\otimes\vb_j\otimes\vc_j,
\ \  \va_j,\vb_j,\vc_j\in\cV_j .
\]
For every pair $XY$, its contraction image is the direct sum of three
nineteen-dimensional local images, and hence has dimension $57$.
The source-generation identity \eqref{eq:H345-source-span} applies.

Let $g_{XY}$ count the $XY$-good blocks. Then
\[
g_{AB}+g_{AC}+g_{BC}\ge6,\ \ 
\dim\cW_{XY}\le58\ \Longrightarrow\ g_{XY}\le1,
\]
where the implication is \cref{lem:triple-source-counting}(ii),
with $e=57$ and at most $59$ sources. Hence at least two source
spaces have dimension $59$. If all three did,
\eqref{eq:H345-source-span} would give $\rho_A=\rho_B=\rho_C=3$;
all residual factors would lie in their mode spaces, making
every block $XY$-bad for every pair $XY$, a contradiction.
Thus exactly two source spaces have dimension $59$; after relabelling,
they are $\cW_{AB}$ and $\cW_{AC}$.
Their sources are bases, and $\rho_B=\rho_C=3$ forces every block
to be $BC$-bad. After rescaling, we have
\[
\tens P=\sum_{j=1}^3
\va_j\otimes\vd_j\otimes\vd_j,
\ \  \va_j\not\parallel\vd_j .
\]
Write the five original residual terms indexed by $i\in\set J$ as
$\vx_i\otimes\vy_i\otimes\vz_i$.
Put $\set D\eqdef\Span\{\vd_1,\vd_2,\vd_3\}$. Since
$\rho_B=\rho_C=3$, we have $\vy_i,\vz_i\in\set D$.
The $AB$ and $AC$ source bases make all these factors nonzero
and make both lists
$\{\vx_i\otimes\vy_i\}_{i\in\set J}$ and
$\{\vx_i\otimes\vz_i\}_{i\in\set J}$ independent.
All three blocks are $AB$-good and $AC$-good.

Use the basis $\vd_1,\vd_2,\vd_3$ to identify
$\vy_i\otimes\vz_i$ with a $3\times3$ matrix.
The off-diagonal image of $\cW_{BC}$ has dimension at most one.
If it is zero, each residual matrix is supported on one diagonal
block. Partition $\set J$ accordingly into $\set J_1,\set J_2,\set J_3$.
Each group is nonempty. If $\set J_j=\{i\}$, comparison of the
complete $A$ coefficient gives $\vx_i\parallel\va_j$.
Every residual $AB$ matrix then has its $jj$ block in
$\cK_j+\FF(\va_j\otimes\vd_j)$: all other groups have zero
$B$ projection there. This contradicts \cref{lem:local-source-test}.
Thus $|\set J_j|\ge2$ for every $j$, impossible with five terms. 
Otherwise choose its nonzero off-diagonal generator $\matr\Omega$ and,
writing $\operatorname{off}$ for deletion of the three diagonal entries, set
\[
\operatorname{off}(\vy_i\otimes\vz_i)=t_i\matr\Omega,
\ \  \set I\eqdef\{i\in\set J:t_i\ne0\}.
\]
The source-generation identity makes $\set I$ nonempty. Since $\tens P$
has no off-diagonal $BC$ component, coefficient comparison gives
$\sum_{i\in\set I}t_i\vx_i=0.$
If $\matr\Omega$ involves all three blocks, \cref{lem:rank-one-completions},
applied to $(\vy_i\otimes\vz_i)/t_i$, gives a common $B$-factor
line or a common $C$-factor line. In the first case write
$\vy_i=\beta_i\vy$ with $\beta_i\ne0$; then
$\sum_{i\in\set I}\frac{t_i}{\beta_i}\vx_i\otimes\vy_i=0,$
contrary to the $AB$ source basis. The second case contradicts
the $AC$ source basis in the same way.

If $\matr\Omega$ involves only $p,q$, \cref{lem:rank-one-completions} puts every
exceptional matrix in those two blocks; a normal residual matrix is
pure diagonal. Thus $\set J=\set J_{pq}\sqcup\set J_r$, where
the $BC$ matrices of the second group are pure in the remaining block.
The preceding single-term argument gives $|\set J_r|\ge2$.
The displayed nontrivial relation is supported on $\set J_{pq}$, so
\[
\dim\Span\{\vx_i:i\in\set J_{pq}\}
\le |\set J_{pq}|-1\le2.
\]
The corresponding terms sum to
$\va_p\otimes\vd_p^2+\va_q\otimes\vd_q^2$, whose
$A$ support is $\Span\{\va_p,\va_q\}$.
Consequently the displayed span equals this two-plane.
Every residual $pp$ matrix for $AB$ therefore lies on
$\FF(\va_p\otimes\vd_p)$, including the zero matrices from
$\set J_r$. This again contradicts \cref{lem:local-source-test}.
This excludes type $(3,3,3)$.

\emph{Type $(3,4,5)$.} Take $d_C=3$ and $d_B=4$. Block-diagonal slices
split the $A,B$ supports. On each thin $B$ line, a quadratic tail
vanishes and the $A$ support is also a line. Thus, up to permutation, we have
\[
(\dim\set U_{A,j},\dim\set U_{B,j})_{j=p,q,r}
=((3,2),(1,1),(1,1)).
\]
The dense $C$ lines give $\kappa_{BC}=\kappa_{AC}=0$, so
$\dim\cE_{BC}=59$ and $\dim\cE_{AC}=58$.
The $BC$ sources form a basis. On $q,r$, \cref{lem:triple-good}
makes the $B,C$ factors proportional; the $A,C$ factors are
therefore independent. Thus $q,r$ are $AC$-good, and their rows have no off-diagonal
output. By \cref{lem:triple-source-counting}(i), they need at least
$58+2=60$ sources, but only $59$ are available.

\emph{Type $(3,4,4)$.} Take $d_C=3$ and write its three pure lines as
$\FF\vc_j$. By \cref{lem:H345-thin}, all $AB$ slices are
block diagonal, so both $\set U_A$ and $\set U_B$ have
pure-intersection dimensions $(2,1,1)$.
Their thick block is the same. Indeed, on a thin $B$ block every
quadratic tail has row space on a selector-dense line and hence
vanishes. The pure block then has rank one, and its $A$ support
is also a line. Interchanging $A,B$ proves the assertion.
Denote the common thick block by $p$ and the thin blocks by $q,r$.

Modulo $\cK$, the $AB$ slices have exactly three independent
directions, one on each physical block. The quadratic tails vanish
in this quotient, and each pure-block direction is nonzero because
$0\notin\cA_j$. Thus $\dim\cE_{AB}=57$.
For a rotated pair, a slice in $\cK$ has $jj$ block
$\vu_j\otimes\vc_j$; symmetry and $\vc_j^2\notin\cK_j$
make it zero. Hence both rotated quadratic intersections vanish and
\begin{equation}\label{eq:H1-344-images}
(\dim\cE_{AB},\dim\cE_{AC},\dim\cE_{BC})=(57,58,58).
\end{equation}
The $q,r$ rows in $AC$ and $BC$ have no off-diagonal output:
only the common thick block can contribute a quadratic tail.

Each thin rank-one pure block is $AC$-good or $BC$-good.
By \cref{lem:triple-source-counting}(i), they cannot both be $AC$-good or both
be $BC$-good. After interchanging $q,r$ if necessary, block $q$ is
$AC$-good and $BC$-bad, while block $r$ is $BC$-good and $AC$-bad.
In particular, $q,r$ are $AB$-good.

If $\dim\cW_{AB}\le58$, then $\cE_{AB}$ is block diagonal and has
codimension at most one in $\cW_{AB}$. The two $AB$-good blocks then require
at least $57+3=60$ sources by \cref{lem:triple-source-counting}(ii),
but only $59$ are available. Hence $\dim\cW_{AB}=59$.
Since $q$ is $AC$-good and $r$ is $BC$-good,
\cref{lem:triple-good} also gives
$\dim\cW_{AC}=\dim\cW_{BC}=59$.
Applying \eqref{eq:H345-source-span} to all three pairs yields
\begin{equation}\label{eq:H1-344-source-dimensions}
\begin{aligned}
2&=\dim(\cW_{AB}/\cE_{AB})\le5-\rho_C,\\
1&=\dim(\cW_{AC}/\cE_{AC})\le5-\rho_B,\\
1&=\dim(\cW_{BC}/\cE_{BC})\le5-\rho_A,
\end{aligned}
\ \  (\rho_A,\rho_B,\rho_C)=(4,4,3).
\end{equation}
Thus every original residual factor lies in its mode space.
Choose the $AC$-good thin block. Both block projections there
are lines, so all five residual $AC$ matrices have their local
block on the pure representative's matrix line. This contradicts
\cref{lem:local-source-test} and excludes type $(3,4,4)$.

\emph{Type $(3,5,5)$.} Take $(d_A,d_B,d_C)=(5,5,3)$.
By \cref{lem:triple-saturation}, the pairs $AC$ and $BC$ are saturated, so
\cref{lem:triple-saturated-pairs} excludes every rank-one pure block.
The three pure $C$ lines give block-diagonal $AB$ slices by
\cref{lem:H345-thin}. Choose a functional nonzero on all five
$C$ factors. Since the $A,B$ factor lists are bases,
\cref{lem:factor-calculus} gives a rank-five block-diagonal slice
whose column space is $\set U_A$. Hence $\set U_A$ splits into
three nonzero pure intersections of total dimension five; at least
one is a line. On that block both the $A$ and $C$ projections are
lines, so its nonzero pure tensor has rank one, a contradiction.

\emph{Rank-four type $(4,4,4)$.} Once three-dimensional modes are excluded,
it follows that $r(\tens P)\le4$ implies $r(\tens P)=4$ and
$\mrank(\tens P)=(4,4,4)$.

By \cref{lem:triple-affine-four}(i), a thin pure block has CP rank
one or two; in the latter case its quadratic tails vanish and the
other two pure blocks have CP rank one.

For a thin $A$ block $q$, write $\matr M$ for its pure contraction
and $j,\ell$ for the other blocks. If $\rank\matr M=2$, there are
no tails. The two active terms have their $B,C$ factors in block $q$; the other two
separate into rank-one representatives on blocks $j,\ell$.  For the
two mode pairs through $A$, the contraction images have dimension $58$ and
the two side rows have no off-diagonal output.  For the opposite pair
the contraction image is block diagonal of dimension $57$, while
$\dim(\cW_{BC}/\cE_{BC})\le1$.  The two side representatives have a common good
pair $XY$, so the two side blocks are $XY$-good.  If $XY$ contains $A$, the $58/59$ two-row argument used in the
$(3,4,4)$ case gives a contradiction; on the opposite pair,
\cref{lem:triple-source-counting}(ii) applies with at most $59=57+2$
sources. Hence every thin pure block has CP rank one, and the three rules
of \cref{lem:triple-core-closure} apply.

In \eqref{eq:triple-mask-partition}, the fifteen orbits of
$\mathcal O_P\cup\mathcal O_D$ have a mode dimension at most three.
It remains to exclude $\mathcal O_S\cup\mathcal O_T$.

For the five remaining orbits in
$\{(1,1,2),(1,1,6),(1,1,3),(1,1,7),(1,2,3)\}$, write
$e_{XY}\eqdef\dim\cE_{XY}$ and
$\kappa\eqdef(\kappa_{AB},\kappa_{AC},\kappa_{BC})$.
An arrow $j\to k$ denotes a possibly nonzero off-diagonal block
with row block $j$ and column block $k$ in the displayed ordering.
Every diagonal quadratic correction disappears modulo $\cK_j$,
so on a rank-one pure block its local image is exactly
$\cK_j+\FF(\vu_{X,j}\otimes\vu_{Y,j})$.

\begin{center}
\small
\begin{tabular}{@{}ccccc@{}}
\toprule
Masks & $\kappa$ & $(e_{AB},e_{AC},e_{BC})$
& Possible arrows $(AB;AC;BC)$ & Side blocks\\
\midrule
$(1,1,2)$ & $(1,0,0)$ & $(57,58,58)$
& $(-;\ 1\to2,3;\ 1\to2,3)$ & $2,3$\\
$(1,1,6)$ & $(1,0,0)$ & $(57,58,58)$
& $(-;\ 1\to2,3;\ 1\to2,3)$ & $2,3$\\
$(1,1,3)$ & $(1,0,0)$ & $(57,58,58)$
& $(-;\ 1\to2,3;\ 1\to2,3)$ & $2,3$\\
$(1,1,7)$ & $(1,0,0)$ & $(57,58,58)$
& $(-;\ 1\to2,3;\ 1\to2,3)$ & $2,3$\\
$(1,2,3)$ & $(0,0,0)$ & $(58,58,58)$
& $(1\to2;\ 1\to2;\ 2\to1)$ & $1,3$\\
\bottomrule
\end{tabular}
\end{center}
Here $-$ means that all off-diagonal blocks vanish; an allowed
arrow need not occur.

For the first four rows, both $A$ and $B$ split with the same thick block
$p=1$, while $\set U_{C,p}$ is a selector-dense line. Consider a cross
slot $A_pB_q$ with $q\ne p$, keeping its full $C$ coefficient.
Its $C_q$ component vanishes because the repeated $B_q$ line is
selector-dense; its third-block component vanishes by the affine
block equations. The coefficient therefore belongs to $\set U_{C,p}$.
The remaining $A_pC_p$ quadratic factor contains this dense line,
so it vanishes as well. Transposition treats $A_qB_p$, and a
cross slot between the two thin blocks vanishes by the same
repeated-line argument. Thus every $AB$ slice is block diagonal,
and all mixed components have the form
\[
\tens H\in\cK_p\otimes(\cV_q\oplus\cV_r)
\quad\text{in the $AB\mid C$ ordering}.
\]
The two side blocks have rank one. After reducing $AB$ modulo
$\cK$, the $p$ coefficient lies on the line $\set U_{C,p}$, while
the $q,r$ coefficients lie on their rank-one pure blocks' $C$-factor
lines. Each local class is nonzero, so the quotient slice image
has dimension three.
Since $d_C=4$, this gives $\kappa_{AB}=1$ and $e_{AB}=57$.
For $(1,1,2)$ the thick blocks of $A,C$ and of $B,C$ differ;
for the other three rows $C$ is nonsplit. In either case
\cref{lem:H345-four-support} gives
$\kappa_{AC}=\kappa_{BC}=0$. The displayed form of $\tens H$
gives exactly the permitted arrows in the table.

For some $XY$, blocks $q,r$ are $XY$-good. If $XY=AB$,
\eqref{eq:H345-source-span} gives
$\dim(\cW_{AB}/\cE_{AB})\le5-d_C=1$;
\cref{lem:triple-source-counting}(ii) would require $60$ sources.
If $XY\ne AB$, the two $XY$-good side rows have no
off-diagonal output and \cref{lem:triple-source-counting}(i) again requires
$58+2=60$ sources. Thus all four rows are excluded.

For $(1,2,3)$, the mode $A$ splits with thick block $1$ while
$B$ splits with thick block $2$, and $C$ is nonsplit. Thus
\[
\cK\cap(\set U_X\otimes\set U_Y)=0
\ \ (XY=AB,AC,BC)
\]
by \cref{lem:H345-four-support}. In particular all three
$\kappa_{XY}$ vanish, proving the last row's dimensions.
Block $3$ is thin in every mode. By \cref{lem:H345-thin}, each
tail in a contraction on that block lies in one of the displayed
zero intersections. These tails vanish; the repeated dense-line
and three-distinct-block equations kill the other mixed slots
containing $3$. Hence block $3$ is isolated.

Among blocks $1,2$, the thin projections $A_2,B_1$ give
\[
\tens P_{21k}=\tens P_{2k1}=\tens P_{k12}=0
\ \ (k=1,2,3).
\]
For $k=1,2$, every indicated mixed slot repeats a block in a pair
containing one of these dense thin lines; $k=3$ is already excluded.
Thus the only possible arrows are $1\to2$ for $AB,AC$ and
$2\to1$ for $BC$, as claimed. All three pure blocks have CP rank
one, since each has a thin mode.

Choose a pair $XY$ for which blocks $1,3$ are $XY$-good. For $XY=AB$ or $AC$,
transpose the matrix ordering; their only possible arrow becomes
$2\to1$, so rows $1,3$ have no off-diagonal output.
For $XY=BC$ this is already the displayed ordering. In every case
the contraction image has dimension $58$, and
\cref{lem:triple-source-counting}(i) contradicts the $59$-source budget.
This completes the five orbit exclusions.
For $(1,1,1)$, two blocks are fully thin. The source identity and
\cref{lem:triple-full-thin} give a unique relation supported on two
proportional nonzero residual matrices. Applying the good rotated
mode pair of each rank-one representative forces their repeated matrix to have two
nonzero diagonal blocks and a zero cross block, impossible in rank
one. Thus only $(1,3,3)$ and $(3,3,3)$ remain.

In either remaining orbit block $3$ is thin in all three modes. Every
mode pair contains a nonsplit four-dimensional mode space, so its residual
slice space has zero intersection with $\cK$. Contracting a thin
line, any possible tail would have its row or column space on a
selector-dense pure line and is therefore zero.  The third block
really separates:
\[
\tens P=\tens P_{12}\oplus
(\va\otimes\vb\otimes\vc),
\ \  r(\tens P_{12})=3,\ 
\mrank(\tens P_{12})=(3,3,3).
\]

Apply \cref{lem:triple-full-thin} to the five original residual
terms. Their span dimensions are $(4,4,5)$ after permuting the modes.
The unique relation among the five nonzero $AB$ matrices is supported
on a proportional pair.
Write $\matr D$ for its common direction.
The isolated block $3$ is $AC$-good and
$BC$-good.  The matrix $\matr D$ together with the other three residual
$AB$ matrices is a basis of the $AB$ slice space.  Hence the three
nonrepeated $C$ coefficients already belong to the mode space
$\set U_C$.  After combining the pair, the resulting four-term
CP decomposition is minimal, so its $A$ and $B$ factor lists are bases.
Contraction by their dual bases isolates each nonrepeated
$\va_i\otimes\vc_i$ and $\vb_i\otimes\vc_i$ as a slice
direction.
Thus only the two individual members of the repeated pair can supply
a local source matrix outside the corresponding local contraction image
in $AC$ or $BC$. The $AC$-good
third block therefore forces the $A$ factor of $\matr D$ to
have a nonzero third-block component, and the $BC$ escape forces the
same for its $B$ factor.  On the other hand, the equality $\rho_C=5$ gives
$\cW_{AB}=\cE_{AB}$, which is block
diagonal for the decomposition into the first two copies and the
third copy.  A rank-one matrix in this space with nonzero $33$ block
is pure in that block.  Thus
\[
\matr D\in\FF(\va\otimes\vb)
\ \text{and is pure in block }3.
\] 
For this decomposition, the $AB$ slice space is the span of four
coordinate rank-one matrices and has no other rank-one directions.
Since the isolated matrix
$\va\otimes\vb$ belongs to this slice space, it is exactly the
coordinate matrix obtained by combining the repeated pair.  The other
three original terms therefore sum to $\tens P_{12}$, while the
repeated pair gives the isolated rank-one block.

Put
$\set H_{12}\eqdef(\cV_1\oplus\cZ_1)
\oplus(\cV_2\oplus\cZ_2), 
\set H_3\eqdef\cV_3\oplus\cZ_3$ and 
$\set H\eqdef\set H_{12}\oplus\set H_3.$ 
We first note a complete-coefficient fact. Put
$\cV_{12}\eqdef\cV_1\oplus\cV_2$ and let $\matr p_g$
project onto $\cV_g$ for $g\in\{12,3\}$. For every selected pair $X,Y$, we have
\[
(\matr p_g\otimes\matr p_g\otimes\id)\tens G_{XY}
\in(\cV_g\otimes\cV_g)\otimes\set H_g,
\ \  g\in\{12,3\}.
\]
Indeed, a term from another complete group has two base factors
in that group. At least one is in a retained position, is fixed by
the corresponding substitution, and is killed by $\matr p_g$.
Thus the assertion does not require block-preserving substitution maps.

The $AB$ contraction image is block diagonal of dimensions
$39+19=58$.  Its $59$ original matrices have exactly the one
repeated-pair relation described above.  Since a rank-one matrix in a
block-diagonal two-group space is pure in one group, there are $39$
independent matrix sources in the first group and $20$ in the third;
the sole relation is the repeated pair in the third group.  Comparing
the complete coefficients puts each nonrepeated coefficient vector in the
corresponding $\set H$-group and puts the related combination of the
repeated pair in $\set H_3$.

The third block is $AC$-good.  Hence $\cW_{AC}\ne\cE_{AC}$;
the quotient bound makes $\dim\cW_{AC}=59$, so its $59$ original
matrices form a basis and
$\cW_{AC}=\cE_{AC}\oplus\FF\matr\Omega.$ 
The source-generation identity allows $\matr\Omega$ to be chosen as
the $AC$ matrix of one member of the repeated residual pair.  Its
$A$ factor is pure in the third group, so its only possible
cross-group block is $3\to12$; moreover its $33$ block lies outside
the third-block local contraction image, since block $3$ is $AC$-good and
\cref{lem:local-source-test} applies. If the
$3\to12$ block were nonzero, every normalized exceptional matrix
would have that fixed block and a nonzero escaping $33$ block.  The
vanishing reverse block then makes all its complete $A$ factors
collinear, say
$\matr\Pi_A\vh_i=\alpha_i\va$.  The quotient coefficient relation
$\sum_{t_i\ne0}t_i\vh_i=0$
would give
\[
\sum_{t_i\ne0}\frac{t_i}{\alpha_i}
(\matr\Pi_A\vh_i\otimes\matr\Pi_C\vh_i)=0,
\]
contradicting the basis property.  Thus $\matr\Omega$, and therefore
all of $\cW_{AC}$, is block diagonal.  Its two group dimensions are
$39$ and $20$, so complete-coefficient comparison places every one of
its coefficient vectors individually in the corresponding complete group.
In particular, the two repeated residual vectors are separately in
$\set H_3$.  The same argument for $BC$ treats the remaining original
source-matrix set.

Consequently all $167$ original decomposition vectors split into
\[
36+36+36+3=111
\ \text{vectors in }\set H_{12},
\ \ 
18+18+18+2=56
\ \text{vectors in }\set H_3.
\]
Projection to $\set H_{12}$ gives a $111$-term symmetric decomposition of
$\tens{V}_2$, contradicting the already proved equality
$s(\tens{V}_2)=112$.  This excludes CP rank at most four and completes
the proof.
\end{proof}

\subsection{Exclusion of CP-rank-five residuals of multilinear rank \texorpdfstring{$(4,4,4)$}{(4,4,4)}}

\begin{lemma}[The preliminary \((4,4,4)\) source exclusions]
\label{lem:triple-444-preliminary}
Let $\tens P$ be the original five-term residual of a fixed minimal
$167$-term symmetric decomposition, and suppose
$r(\tens P)=5$ and $\mrank(\tens P)=(4,4,4)$.
\begin{enumerate}
\item Two mode spaces cannot both have pure-intersection dimensions
$(2,1,1)$ with the same
two-dimensional physical block.
\item There is no physical block on which all three mode projections
are one-dimensional.
\end{enumerate}
\end{lemma}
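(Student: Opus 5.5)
For (i) I will imitate the type-$(3,4,4)$ and orbit-$(1,1,*)$ exclusions in the proof of \cref{prop:triple-H1}, now with a five-term residual. For (ii) I will imitate \cref{lem:triple-full-thin} together with the source counting of \cref{lem:triple-source-counting}.

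\emph{Part (i).} Suppose $\set U_A$ and $\set U_B$ both have pure-intersection dimensions $(2,1,1)$ with common thick block $p$, and let $q,r$ be the thin blocks.

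First I control the $AB$ slices. On a thin block, \cref{lem:H345-thin} puts every quadratic tail on a selector-dense line, so the tail vanishes. The repeated dense-line argument of \cref{lem:triple-core-closure}(iii) then kills the cross slots $A_pB_q$, $A_qB_p$ and $A_qB_r$, exactly as in the rows $(1,1,2)$–$(1,1,7)$. Hence every $AB$ slice is block diagonal. The pure blocks $q,r$ are thin, and I expect them to have CP rank one: the argument of \cref{lem:triple-affine-four}(i), with rank-two thin blocks excluded as in \cref{prop:triple-H1}, should carry over.

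Second I compute the dimensions. Modulo $\cK$, the $AB$ slice space has one direction per block, so $\kappa_{AB}=1$ and $e_{AB}=57$. Since $\set U_C$ is nonsplit or has a different thick block, \cref{lem:H345-four-support} gives $\kappa_{AC}=\kappa_{BC}=0$, hence $e_{AC}=e_{BC}=58$. The side rows $q,r$ then have no off-diagonal output in $AC$ and $BC$.

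The contradiction comes from a common good pair. The two rank-one side representatives share a good pair $XY$, by \cref{def:good}.
\begin{itemize}
\item If $XY\ne AB$, \cref{lem:triple-source-counting}(i) needs $60$ sources, but at most $59$ are available.
\item If $XY=AB$, the block-diagonal image has $\dim(\cW_{AB}/\cE_{AB})\le 5-\rho_C\le1$, because $\rho_C\ge d_C=4$. Then \cref{lem:triple-source-counting}(ii) again needs $60$ sources.
\end{itemize}

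\emph{Part (ii).} Suppose block $p$ is fully thin. Then \cref{lem:triple-full-thin} applies with $d=4$ and forces $r(\tens P)=4$. This contradicts $r(\tens P)=5$, so no such block exists. I must check that the lemma's hypotheses are met: it assumes $(d_A,d_B,d_C)=(4,4,d)$ with $d\in\{4,5\}$, and $(4,4,4)$ is admissible.

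\emph{Main obstacle.} The delicate step is in (i): showing that a thin side block with pure-block CP rank two cannot occur when the CP rank is five. \cref{lem:triple-affine-four} was proved only for CP rank four. I would first redo its tail analysis using \cref{lem:factor-calculus} with five-term bases in modes of dimension four, where a single dependency is allowed. Failing that, I would replace the rank-one hypothesis with the quotient-line form of the side blocks, which is all that \cref{lem:triple-source-counting} actually uses.
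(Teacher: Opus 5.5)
Your plan follows the paper's proof. Part (ii) is exactly \cref{lem:triple-full-thin}. In part (i) the paper also makes the $AB$ slices block diagonal, so that only a $\cK_p\otimes(\cV_q\oplus\cV_r)$ part survives. It then gets $\kappa_{AC}=\kappa_{BC}=0$ from \cref{lem:H345-four-support}, and feeds a common good pair of the two side blocks into \cref{lem:triple-source-counting}(ii) for $AB$ and \cref{lem:triple-source-counting}(i) for $AC$, $BC$. Two points need fixing, both easy.

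\textbf{The main obstacle you flag does not exist.} A four-dimensional mode with pure-intersection dimensions $(2,1,1)$ is split. Hence $\set U_A$ and $\set U_B$ are \emph{both} thin at $q$ and at $r$. For $j=q,r$ this gives
\[
\tens P_{jjj}\in\set U_{A,j}\otimes\set U_{B,j}\otimes\cV_j,
\]
which is a line tensored with a line tensored with $\cV_j$. So $\tens P_{jjj}$ has CP rank at most one, and it is nonzero because $0\notin\cA_j$. You do not need any CP-rank-five version of \cref{lem:triple-affine-four}.

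\textbf{What is actually missing is a justification that $\set U_C$ is nonsplit or has a different thick block.} The omitted case, $\set U_C$ split with thick block $p$, breaks your cross-slot argument. That argument needs $\set U_C\cap\cV_p$ to be a selector-dense line. The paper closes the gap by proving (ii) first and using it. If $\set U_C$ were split, then whichever block it is thick on, one of $q,r$ would be thin in all three modes, which (ii) forbids. Hence $\set U_C$ is nonsplit and every $\set U_C\cap\cV_j$ is a selector-dense line. In particular the ``different thick block'' alternative never occurs.

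With that ordering, the rest of your argument goes through as written:
\begin{enumerate}
\item $e_{AB}=57$ is correct, because the $p$-coefficients reduced modulo $\cK$ lie in the line $\set U_C\cap\cV_p$.
\item $e_{AC}=e_{BC}=58$.
\item The two source-counting contradictions hold.
\end{enumerate}
One small correction: the existence of a common good pair comes from \cref{lem:local}, which gives each rank-one representative at least two good pairs, not from \cref{def:good} itself.
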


\begin{proof}
The second assertion follows from \cref{lem:triple-full-thin}:
a fully thin block would give $r(\tens P)=4$.

For the first assertion, let $p$ be the common thick block and let
$q,r$ be the two thin blocks.  The mixed-slot equations eliminate all
cross blocks except possibly a $\cK_p$-matrix with free coefficient
in $\cV_q\oplus\cV_r$.  The pure blocks at $q,r$ are rank-one affine
representatives.  More precisely, let $C$ be the third mode.  It
cannot be thin at $q$ or $r$, by the second assertion just proved;
hence it is nonsplit.  Indeed, a four-dimensional mode is generated
by one pure line in each physical block and one additional vector;
if it split, that additional vector would lie in one block and the
other two block projections would be one-dimensional.  Thus each
$\set U_C\cap\cV_j$ is a selector-dense line.  Consider an
$A_pB_q$ coefficient and retain its complete $C$ vector.  Its
$C_q$ component is killed by the dense $B_q$ line and the
$BC$--$\cK_q$ condition, its $C_r$ component is a
three-distinct-block coefficient, and its remaining $C_p$ component
is killed by the dense line $\set U_C\cap\cV_p$ and the
$AC$--$\cK_p$ condition.  The slots $A_qB_p$ and $A_qB_r$ are
identical after permuting the roles.  Thus every $AB$ cross block
vanishes and the only nonpure part has the form
$\tens H\in
\cK_p\otimes(\cV_q\oplus\cV_r)$ in the $AB\mid C$ ordering.
Since $\set U_C$ is nonsplit, \cref{lem:H345-four-support} gives
zero intersection with $\cK$ for both the $AC$ and $BC$ slice
spaces. Their contraction images therefore have dimension $58$, and the
thin rows $q,r$ have no off-diagonal output.
The $AB$ image is block diagonal.  By
\cref{lem:H345-four-support}, its residual slice space meets
$\cK$ in dimension at most one; hence the image has dimension $57$
when that intersection is nonzero and dimension $58$ otherwise.

The two rank-one blocks $q,r$ have a common good mode pair $XY$, so
both blocks are $XY$-good and the corresponding source span satisfies
$\dim(\cW_{XY}/\cE_{XY})\le1$.  If $XY=AB$,
then $\cE_{AB}$ is block diagonal, has dimension $57$ or $58$, and is
supplied by at most $59$ nonzero original sources. Item~(ii) of
\cref{lem:triple-source-counting} gives an immediate contradiction. 
If $XY=AC$ or $BC$, the
contraction image has dimension $58$ and the two corresponding block
rows have no off-diagonal output.
\Cref{lem:triple-source-counting}(i) gives the remaining contradiction, proving
the first assertion.
\end{proof}

\begin{proposition}[Exclusion of a CP-rank-five \((4,4,4)\) residual]
\label{prop:triple-H2}
No residual compatible with a minimal $167$-term symmetric decomposition can
satisfy
$r(\tens P)=5$ and 
$\mrank(\tens P)=(4,4,4).$ 
\end{proposition}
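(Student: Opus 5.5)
We will argue by contradiction: assume the original five-term residual $\tens P$ of a minimal $167$-term decomposition satisfies $r(\tens P)=5$ and $\mrank(\tens P)=(4,4,4)$. The plan is to run the mask classification of \cref{lem:triple-core-closure} and \eqref{eq:triple-mask-partition} once more, now at CP rank five, and then treat the surviving orbits with the source-counting tools. That classification uses only the affine block equations. The CP rank enters only through the hypothesis that thin pure blocks have CP rank one.

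The first step is to secure that hypothesis. Take a thin block $p$ in some mode, say $\matr p_p\set U_A=\FF\va$. Then \cref{lem:H345-thin} writes the contraction as $\matr M_p\oplus\matr N_q\oplus\matr N_r$, and I would show $\rank\matr M_p=1$. If $\rank\matr M_p=2$, I would repeat the tail analysis in the proof of \cref{lem:triple-affine-four}(i), replacing four-term factor bases by the factor calculus of \cref{lem:factor-calculus}. The reason this should still work is that every mode has dimension four, so each factor list has rank four with $n=5$, and \eqref{eq:factor-support-rank} loses only two. This would either kill the tails or isolate two rank-one side blocks. In the latter case the $58/59$ two-row argument and \cref{lem:triple-source-counting} apply as in the rank-four paragraph of \cref{prop:triple-H1}. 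Here the source budget is $N-108=59$.

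With the hypothesis in hand, I would use the orbit table to reduce to eight orbits. In $\mathcal O_P\cup\mathcal O_D$ some mode dimension is at most three, which contradicts $(4,4,4)$. In $\mathcal O_S$, two split modes share a thick block, and \cref{lem:triple-444-preliminary}(i) excludes this. In $\mathcal O_T$ there is a fully thin block, and \cref{lem:triple-444-preliminary}(ii) excludes it. Those eight orbits are exactly $(1,1,1)$, $(1,1,2)$, $(1,1,3)$, $(1,1,6)$, $(1,1,7)$ and $(1,2,3)$, $(1,3,3)$, $(3,3,3)$. For every orbit I must check that the listed surviving condition ("two split modes, same thick block" or "a fully thin block") really follows from the masks alone. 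Only then do the preliminary exclusions apply verbatim.

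The main obstacle is the case where no block is thin at all, because the mask encoding \eqref{eq:four-mode-mask} silently assumed thin blocks. In that case every mode is nonsplit with all projections two-dimensional. \cref{lem:triple-functional} then gives a nonzero element of $\set L_{BC}(\tens P)\cap\cK$, while \cref{lem:H345-four-support} says a nonsplit four-dimensional mode has zero quadratic intersection. This contradiction closes the case, and I would try it first. The remaining delicate point is whether the rank-two thin-block argument survives at rank five, where active terms no longer equal contraction rank exactly. If it fails, I would fall back on the saturation route. When some $\rho_Z=4$ or $5$, \eqref{eq:H345-source-span} bounds $\dim(\cW_{XY}/\cE_{XY})\le1$, and a good pair of the rank-one side blocks then contradicts \cref{lem:triple-source-counting}(ii).
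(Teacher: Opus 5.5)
Your skeleton matches the paper's: first make every thin pure block CP rank one, then run the orbit classification \eqref{eq:triple-mask-partition}. You then discard $\mathcal O_P\cup\mathcal O_D$ by mode dimension and $\mathcal O_S\cup\mathcal O_T$ by \cref{lem:triple-444-preliminary}(i)--(ii).

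\textbf{The gap.} The first step is the only place where CP rank five enters, and you do not establish it; the mechanism you propose for it does not transfer. The tail analysis in \cref{lem:triple-affine-four}(i) relies on every factor list of a four-term $(4,4,4)$ decomposition being a basis. That is what makes the number of active terms equal the contraction rank and lets the active factors span the row and column spaces. With five terms and factor lists of rank four, \eqref{eq:factor-support-rank} only bounds the number of active terms by $\rank+2$. So for $\rank\matr M_p=2$ up to four terms may be active, and their $B,C$ factors need not be pure in block $p$. The step ``projection onto $q\oplus r$ is an affine tensor of CP rank two, separated by \cref{lem:rank-two}'' then has no analogue. The $(1,0)$-tail case likewise loses its ``active terms vanish on $r$'' conclusion.

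Your saturation fallback presupposes two rank-one side blocks, which is what was to be proved. The bound $\dim(\cW_{XY}/\cE_{XY})\le1$ is automatic here, but \cref{lem:triple-source-counting}(ii) also needs a block-diagonal image and two $XY$-good blocks, and you have neither.

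\textbf{The missing idea.} It takes one line and uses a lemma you already invoke later. By \eqref{eq:H345-thin-support}, $\operatorname{col}\matr M_p\subseteq\set U_{B,p}$ and $\operatorname{row}\matr M_p\subseteq\set U_{C,p}$. If $\rank\matr M_p\ge2$, both intersections are planes. Each four-dimensional mode has three nonzero pure intersections, so $\set U_B$ and $\set U_C$ are then both split of type $(2,1,1)$ with the same thick block $p$. \Cref{lem:triple-444-preliminary}(i) forbids exactly this. Hence $\rank\matr M_p=1$, and $\tens P_{ppp}=\va\otimes\matr M_p$ has CP rank one, with no new source counting. The paper first derives $\cK\cap(\set U_X\otimes\set U_Y)=0$ from \cref{lem:H345-four-support} and \cref{lem:triple-444-preliminary}(i) to remove the tails, and then concludes in this way.

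\textbf{The no-thin-block case.} This is not the obstacle you describe. Mask $7$ encodes a mode projecting two-dimensionally onto all three blocks, and $(7,7,7)\in\mathcal O_P$. It reduces to the diagonal core by rules (i) and (iii) alone. Your separate argument via \cref{lem:triple-functional,lem:H345-four-support} is correct but redundant.
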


\begin{proof}
We first prove the stronger quadratic intersections
\begin{equation}\label{eq:triple-environment-zero}
\cK\cap(\set U_X\otimes\set U_Y)=0
\ \ (XY=AB,AC,BC).
\end{equation}
A nonzero matrix in such an intersection would, by
\cref{lem:H345-four-support} applied to both modes, force them to
have pure-intersection dimensions $(2,1,1)$ with the same thick block.
This contradicts
\cref{lem:triple-444-preliminary}, proving
\eqref{eq:triple-environment-zero}.

Next suppose that block $p$ has one-dimensional projection in the
$A$ mode.  Let $\FF\va$ be that line and choose a functional supported
on $\cV_p$ and equal to one on $\va$.  The contracted $BC$ matrix is
block diagonal:
\[
\matr M_p\oplus\bigoplus_{j\ne p}\matr N_j,
\ \  \matr N_j\in\cK_j.
\]
Its block row and column spaces lie in the corresponding pure
intersections.  Hence every $\matr N_j$ belongs to
$\cK\cap(\set U_B\otimes\set U_C)$ and vanishes by
\eqref{eq:triple-environment-zero}.  If
$\rank\matr M_p\ge2$, both $\set U_B\cap\cV_p$ and
$\set U_C\cap\cV_p$ have dimension two, contradicting the first part
of \cref{lem:triple-444-preliminary}.  Thus
$\rank\matr M_p=1$.  We have proved:
\begin{equation}\label{eq:triple-thin-purification}
\dim\matr p_p\set U_X=1
\ \Longrightarrow\ 
r(\tens P_{ppp})=1.
\end{equation} 
By the orbit classification \eqref{eq:triple-mask-partition}, the union
$\mathcal O_P\cup\mathcal O_D$ contradicts mode type $(4,4,4)$,
while $\mathcal O_S\cup\mathcal O_T$ contradicts
\cref{lem:triple-444-preliminary}. These exhaust all $23$ orbits.
\end{proof}

\subsection{The remaining fixed-source classification}
\label{sec:triple-H345}

Throughout this subsection suppose that a minimal $167$-term symmetric
decomposition has been fixed, together with its original three
substitution maps.  Thus the residual
\begin{equation}\label{eq:H345-residual}
\tens P\eqdef\sum_{i=1}^{5}\va_i\otimes\vb_i\otimes\vc_i
\in\cA^{[3]}
\end{equation}
is compatible with those fixed sources, has CP rank five, and, by
\cref{prop:triple-H1,prop:triple-H2}, has multilinear rank in
$\{(4,4,5),(4,5,5),(5,5,5)\}$.
We retain the block intersections, projection dimensions, and original
source spaces defined above. The following arguments use the preceding
source-space and normalization lemmas.

\begin{lemma}[Single-block peeling with fixed sources]
\label{lem:H345-fixed-peeling}
Suppose the original residual $\tens P$ has at least two saturated
mode pairs.  There do not exist a block $p$, a nonzero
$\matr N\in\cK_p$, and a vector $\vx$ such that
\begin{equation}\label{eq:H345-fixed-peeling}
\tens Q\eqdef\tens P-\vx\otimes\matr N\in\cA^{[3]},\ \ 
r(\tens Q)=4,\ \ \mrank(\tens Q)=(4,4,4),
\end{equation}
after a possible permutation of the modes.
\end{lemma}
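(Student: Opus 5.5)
Argue by contradiction: suppose $p$, $\matr N\ne0$ in $\cK_p$ and $\vx$ satisfy \eqref{eq:H345-fixed-peeling}. The tensor $\tens Q$ lies in $\cA^{[3]}$ and has CP rank four and multilinear rank $(4,4,4)$, so \cref{lem:triple-affine-four}(i) applies. It gives a pure block $j$ with $r(\tens Q_{jjj})=1$. The aim is to transport this rank-one pure block back to $\tens P$, in the sense of two-mode quadratic quotients. The contradiction will then come from \cref{lem:triple-saturated-pairs}, whose hypothesis of at least two saturated pairs is exactly our assumption.

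\textbf{Transport to $\tens P$.} The correction $\vx\otimes\matr N$ has its two $\matr N$ positions in $\cK_p$. Hence, after reducing those two modes modulo $\cK$, it vanishes from that particular two-mode quotient of every pure block. It does not vanish from the two other two-mode quotients, which pair the $\vx$ mode with one factor of $\matr N$. First consider the case $j\ne p$. Then $\tens Q_{jjj}=\tens P_{jjj}-\matr p_j\vx\otimes(\matr N)_{jj}$, and $(\matr N)_{jj}=0$ because $\matr N\in\cK_p$ is supported on the $pp$ block. Thus $\tens P_{jjj}=\tens Q_{jjj}$ has CP rank one. This contradicts \cref{lem:triple-saturated-pairs}, which forbids pure blocks of CP rank one.

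\textbf{The case $j=p$ (main obstacle).} It remains to treat $j=p$ for every rank-one block that \cref{lem:triple-affine-four} provides. I would first use part (ii) of that lemma. If the two other pure blocks $q,r$ of $\tens Q$ both have rank at least two, then $\tens Q=\tens Q_{ppp}\oplus\tens Q_{qr}$. Here $\tens Q_{qr}$ is of type $(3,3,3)$ and projects two-dimensionally on $q,r$. \Cref{lem:H345-R333} then gives $\tens Q_{jjj}=\tens R_j+\lambda_j\vw_j^3$ for $j=q,r$. Since $\vw_j^2\in\cK_j$, each such block has the same three quadratic quotients as a rank-one representative $\tens R_j\in\cA_j$. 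The same holds for $\tens P_{jjj}=\tens Q_{jjj}$ with $j\neq p$, so \cref{lem:triple-saturated-pairs} applies again.

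Otherwise, one of $q,r$ already has rank one, which is the case handled above. The only genuinely delicate point is therefore the block $p$ itself. Where $p$ is needed, I would write $\matr N=\sum t_\nu\vw_\nu'^{2}$ in its own rank-one decomposition in $\cK_p$. I would then check that $\tens P_{ppp}$ and $\tens Q_{ppp}$ share the quotient in the two $\matr N$ modes. For the remaining two quotients, I would argue from the saturation of the pair contained in $\{$mode of $\matr N\}$ that the $\vx$-contribution lies in $\cK_p+\Span$ of the existing image. I expect this to be the main obstacle. If it fails, I would avoid $p$ entirely by exhibiting, from part (ii) of \cref{lem:triple-affine-four}, a rank-one or $\tens R_j+\lambda_j\vw_j^3$ block at some $j\ne p$.
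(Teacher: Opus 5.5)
Your argument is correct and is essentially the paper's proof. The pure blocks of $\tens Q$ outside $p$ coincide with those of $\tens P$, so \cref{lem:triple-saturated-pairs} rules out rank one there; then \cref{lem:triple-affine-four}(ii) and \cref{lem:H345-R333} give an unchanged side block of $\tens P$ with the quadratic quotients of a rank-one representative, contradicting \cref{lem:triple-saturated-pairs}. Your dichotomy (some side block of rank one, or both side blocks of rank at least two) is already exhaustive, so block $p$ never needs to be examined and your final paragraph about the ``main obstacle'' can be dropped.
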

\begin{proof}
Suppose such a residual exists. By
\cref{lem:triple-affine-four}, it has a rank-one pure block. 
The two pure blocks outside $p$ are unchanged and have rank at
least two by \cref{lem:triple-saturated-pairs}. Apply
\cref{lem:triple-affine-four}(ii) and then
\cref{lem:H345-R333}.  For either side block, its quotient modulo the
local quadratic space equals the quotient of the rank-one tensor
$\tens R_j$ in \eqref{eq:H345-side-quotient}; the term
$\lambda_j\vw_j^3$ vanishes
in every two-mode quadratic quotient. Thus the unchanged side block
of the original $\tens P$ contradicts
\cref{lem:triple-saturated-pairs}. The tensor $\tens Q$ is used only
as an affine CP-rank-four reduction; the obstruction is applied to the
unchanged residual $\tens P$.
\end{proof}

\begin{corollary}[Single-block quadratic slices in $(5,5,5)$]
\label{cor:single-block-slice}
For an original five-term residual of mode type $(5,5,5)$, we have
\begin{equation}\label{eq:single-block-slice}
\set L_{XY}(\tens P)\cap\cK_p=0
\ \ (XY=AB,AC,BC;\ p=1,2,3).
\end{equation}
\end{corollary}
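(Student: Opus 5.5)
Suppose, for contradiction, that some pair, say $XY=BC$ after permuting modes, and some block $p$ admit a nonzero $\matr N\in\set L_{BC}(\tens P)\cap\cK_p$. By definition $\matr N=\xi\lrcorner\tens P$ for a nonzero functional $\xi$ on the first mode; since $d_A=5=r(\tens P)$, the $A$-factor list of a minimal five-term CP decomposition $\tens P=\sum_{i=1}^5\va_i\otimes\vb_i\otimes\vc_i$ is a basis of $\set U_A$. Writing $t_i\eqdef\xi(\va_i)$, we have $\matr N=\sum_i t_i\vb_i\otimes\vc_i$. I will relabel so that $t_1\ne0$ and apply the elimination identity \eqref{eq:factor-elimination} of \cref{lem:factor-calculus}, which is available because all three factor lists are bases (mode type $(5,5,5)$). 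Put $\vx\eqdef\va_1/t_1$ and $\tens Q\eqdef\tens P-\vx\otimes\matr N$. By \cref{lem:factor-calculus}, $\tens Q$ has CP rank four and all three mode dimensions equal to four, so $\mrank(\tens Q)=(4,4,4)$.

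Next I need $\tens Q\in\cA^{[3]}$. Since $\matr N\in\cK_p\subseteq\cK$, the correction $\vx\otimes\matr N$ lies in $\cV\otimes\cK\subseteq\J_{\cV}(\cK)$, and therefore $\tens Q$ stays in the affine space. For other pairs the correction belongs to $\iota_2(\cV\otimes\cK)$ or $\cK\otimes\cV$, again inside $\J_{\cV}(\cK)$. This puts us exactly in the situation \eqref{eq:H345-fixed-peeling}, up to a permutation of modes.

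Finally, I will verify the hypothesis of \cref{lem:H345-fixed-peeling}, namely that at least two mode pairs are saturated. Here $n_0=|\set J|=5$, and in type $(5,5,5)$ every $d_Z=5=n_0$. The source-span identity \eqref{eq:H345-source-span} of \cref{lem:triple-saturation}, with $\ell=n_0$, then gives $\cW_{XY}=\cE_{XY}$ for all three pairs. \Cref{lem:H345-fixed-peeling} then forbids the existence of such $p,\matr N,\vx$, which gives the contradiction and proves \eqref{eq:single-block-slice}.

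The main obstacle is essentially bookkeeping. I have to make sure the elimination lemma's hypotheses hold: all three factor lists must be independent, which follows from minimality and mode type $(5,5,5)$, and $\matr N\ne0$ must force some $t_i\ne0$. I also have to check that peeling in the appropriate mode ordering matches the form $\vx\otimes\matr N$ required in \cref{lem:H345-fixed-peeling} after permuting modes. Neither point should need new input.
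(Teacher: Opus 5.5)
Your proposal is correct and matches the paper's proof step for step: every pair is saturated because $d_Z=n_0=5$ (\cref{lem:triple-saturation}), one term is eliminated by \cref{lem:factor-calculus} to give $\tens Q=\tens P-(\va_1/t_1)\otimes\matr N\in\cA^{[3]}$ of CP rank four and mode type $(4,4,4)$, and this contradicts \cref{lem:H345-fixed-peeling}. The bookkeeping points you flag are the ones the paper leaves implicit: a nonzero $\matr N$ forces some $t_i\ne0$, and the factor lists are bases because $5=d_X\le r(\tens P)\le5$.
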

\begin{proof}
All three factor lists are bases, and every original pair is
saturated by \cref{lem:triple-saturation}. If
$0\ne\matr N\in\set L_{BC}(\tens P)\cap\cK_p$, write
$\matr N=\sum_i n_i\vb_i\otimes\vc_i$ and take $n_1\ne0$.
By \cref{lem:factor-calculus}, the tensor
$\tens Q\eqdef\tens P-(\va_1/n_1)\otimes\matr N$ has rank four
and mode type $(4,4,4)$. Since $\matr N\in\cK_p$, it also belongs to
$\cA^{[3]}$, contrary to \cref{lem:H345-fixed-peeling}.
Permuting the modes proves the other cases.
\end{proof}

\begin{remark*}[Affine reduction versus source-compatible peeling]
The subtraction in \cref{cor:single-block-slice} is the familiar
substitution operation \cite[Proposition~3.11]{buczynski2020additivity}.
Direct-sum-preserving rank-one peeling is also available in
\cite[Proposition~5.2.5]{rupniewski2024additivity}.
Here the quadratic slice need not have rank one, and the obstruction
must still be applied to the unchanged original tensor.
\Cref{lem:H345-fixed-peeling} supplies that additional step; it assigns
no new sources to the four-term tensor.
\end{remark*}

\begin{lemma}[Source-compatible splitting obstruction]
\label{lem:H345-splitting}
Under the standing fixed-source hypotheses, no mode is split in
mode type $(4,5,5)$ or $(5,5,5)$.
\end{lemma}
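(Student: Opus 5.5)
We argue by contradiction and use the saturation available in both mode types. By \cref{lem:triple-saturation} with $\ell=n_0=5$, every mode of dimension five saturates its complementary pair. In type $(5,5,5)$ all three pairs are therefore saturated. In type $(4,5,5)$, with $d_A=4$, the pairs $AB$ and $AC$ are saturated. In either case \cref{lem:triple-saturated-pairs} applies, so no pure block of $\tens P$ has CP rank one, nor even the same two-mode quotients as a rank-one representative. This is the only fixed-source input we plan to use; everything else will be affine.

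Now suppose a mode $X$ splits, $\set U_X=\set U_{X,1}\oplus\set U_{X,2}\oplus\set U_{X,3}$. By \eqref{eq:triple-pure-intersections} each summand is nonzero and the dimensions sum to $d_X\in\{4,5\}$, so some block $p$ has $\set U_{X,p}=\FF\va$. Because the mode splits, the functional $f$ supported on $\cV_p$ with $f(\va)=1$ can be chosen to vanish on the other pure summands, so it restricts to a dual-basis element of $\set U_X$. \Cref{lem:H345-thin} then gives $f\lrcorner\tens P=\matr M_p\oplus\matr N_q\oplus\matr N_r$ with $\tens P_{ppp}=\va\otimes\matr M_p$. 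Since $\tens P_{ppp}$ cannot have rank one, we get $\rank\matr M_p\ge2$.

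The first main step is to kill the quadratic tails.
\begin{enumerate}
\item In type $(5,5,5)$ this is immediate from \cref{cor:single-block-slice}. Each tail is a $YZ$ slice, since all factor lists are bases and $f\lrcorner\tens P\in\set L_{YZ}(\tens P)$, and its block components lie in $\cK_q$ or $\cK_r$. To see that each component lies separately in $\set L_{YZ}$, we would contract with functionals supported on the other pure summands of the split mode.
\item In type $(4,5,5)$ with $X$ one of the five-dimensional modes, the $Y,Z$ lists include one basis. We would mimic \cref{cor:single-block-slice} by subtracting $(\va_1/n_1)\otimes\matr N_q$ and invoking \cref{lem:H345-fixed-peeling}. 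The case where the subtracted factor list is not a basis is handled instead by \cref{lem:H345-four-support}, using the $(2,1,1)$ shape \eqref{eq:H345-four-split}.
\item The case $X=A$ with $d_A=4$ is the one we expect to be the main obstacle. Here the split forces dimensions $(2,1,1)$. A tail $\matr N_q\ne0$ lies in $\cK\cap(\set U_B\otimes\set U_C)$, with both $\set U_B,\set U_C$ five-dimensional. Our first attempt is to use the bases $\{\vb_i\},\{\vc_i\}$ and \cref{lem:factor-calculus} to peel $\matr N_q$ off, producing a $(4,4,4)$ rank-four affine tensor and contradicting \cref{lem:H345-fixed-peeling}. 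The difficulty is that the $A$ list is dependent, so the elimination formula \eqref{eq:factor-elimination} needs a separate check of the mode dimensions. The fallback is \cref{lem:triple-source-counting} applied to the saturated pairs.
\end{enumerate}

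With the tails gone, we separate block $p$ and reduce to the two-copy classification. Since the relevant factor lists are bases, \cref{lem:factor-calculus} shows that exactly $\rank\matr M_p\in\{2,3\}$ terms are active, all with $Y,Z$ factors pure in $p$. We then show that every mixed component containing $p$ vanishes. The three-distinct-block slots vanish by \eqref{eq:triple-slots}. The repeated-block slots vanish because they contain either the selector-dense line $\FF\va$ or a zero tail, as in \cref{lem:triple-core-closure}(iii). This gives $\tens P=\tens P_{ppp}\oplus\tens P'$, where $\tens P'$ lies in the two-block affine space of $q,r$ and $r(\tens P')=5-r(\tens P_{ppp})$.
\begin{enumerate}
\item If $r(\tens P')=2$, then \cref{lem:rank-two} produces rank-one pure blocks.
\item If $r(\tens P')=3$, then $\mrank(\tens P')=(3,3,3)$ by \cref{lem:modes} together with the dimension count of $\tens P$. \Cref{lem:H345-R333} then yields a side block $\tens R_j+\lambda_j\vw_j^3$, which has the same quadratic quotients as a rank-one representative.
\end{enumerate}
Both cases contradict \cref{lem:triple-saturated-pairs} applied to the unchanged residual $\tens P$, so no mode is split.
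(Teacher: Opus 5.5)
Your opening step is sound. Saturation of the complementary pairs rules out pure blocks with rank-one quotients, so a thin block $p$ of a split mode has $\rank\matr M_p\ge2$. The two steps that carry the rest of your argument are not established.

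\textbf{Tail elimination.} In type $(5,5,5)$ only the whole slice $\matr M_p\oplus\matr N_q\oplus\matr N_r$ is known to lie in $\set L_{YZ}(\tens P)$. Nothing puts $\matr N_q$ there by itself. Contracting with functionals supported on $\cV_q$ or $\cV_r$ does not isolate it: those slices carry their own $qq$ block from $\tens P_{qqq}$, $\cK_p$ blocks from $\tens P_{qpp}$, and cross blocks from the repeated-block components. In general, a block component of an element of the five-dimensional space $\set L_{YZ}(\tens P)$ does not lie in that space. Your item (ii) has the same problem, and there one remaining factor list is not a basis, so \eqref{eq:factor-elimination} is unavailable. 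Item (iii) you leave open yourself.

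\textbf{Separation $\tens P=\tens P_{ppp}\oplus\tens P'$.} Your justification covers only slots whose split-mode factor lies in block $p$. Write components with the split mode first and take $j\ne p$. The slots $\tens P_{jpp}\in\cV_j\otimes\cK_p$, $\tens P_{jjp}$ and $\tens P_{jpj}$ involve neither $\FF\va$ nor a tail of $f\lrcorner\tens P$, and nothing in your plan makes them vanish. Even granting separation, the bookkeeping is off. In type $(5,5,5)$ it would give $\dim\Ess_X(\tens P')=4$ with at most three surviving terms, so the case $\mrank(\tens P')=(3,3,3)$ does not arise as you state it.

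\textbf{The missing idea.} The paper needs none of this structure; it counts dimensions. By \eqref{eq:H345-thin-support}, $\rank\matr M_p\ge2$ forces $\dim\set U_{Y,p},\dim\set U_{Z,p}\ge2$. Every pure intersection is nonzero and every mode dimension is at most five. Hence a thin block of a split mode forces further modes to split in every configuration except a single split mode of type $(2,2,1)$ in type $(5,5,5)$. Once two modes are thin on a common block, that pure block has rank one, contrary to saturation.

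This disposes of type $(4,5,5)$ entirely. In type $(5,5,5)$ it leaves two cases:
\begin{itemize}
\item The cyclic all-split table. This is excluded because the three thin contractions would need pairwise disjoint two-element active sets among five indices.
\item The single $(2,2,1)$ split. Only here is a fixed-source input needed. The nonsplit modes have pure-intersection dimensions $(2,1,1)$ with the plane at $p$. The tails therefore vanish because the side intersections are selector-dense lines. The two active $BC$ slices (taking $X=A$) span a plane inside $\cK_p+\FF\matr M_p$. That plane meets $\cK_p$, contradicting \cref{cor:single-block-slice}.
\end{itemize}
No separation of block $p$ is required anywhere.
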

\begin{proof}
Two modes have dimension five, so their complementary pairs are
saturated by \cref{lem:triple-saturation}. Hence every pure block
has rank at least two by \cref{lem:triple-saturated-pairs}.
If all three modes split, the residual is impossible. In type $(4,5,5)$,
the four-dimensional mode has local
dimensions $(2,1,1)$, so among the three modes at least two are thin on
one common block, making that pure block rank one.  In type $(5,5,5)$,
the only distribution without two thin modes in one block is,
after relabelling, given by
\[
\begin{array}{c|ccc}
&A&B&C\\ \hline
1&1&2&2\\2&2&1&2\\3&2&2&1.
\end{array}
\]
For the thin $A$ line in block $1$, each $\cK$ tail has a
selector-dense thin line on one of its two sides (the $B$ line in
block $2$ and the $C$ line in block $3$), so both tails vanish; the
same holds cyclically.  The pure matrices have rank two, rather than
one, by saturation.  In a five-term CP decomposition, independence of the
five factors in each mode shows that each contraction uses exactly
two indices and that both factors of each active term are pure in
the relevant block.  The active sets for the thin $A,B,C$ lines are
then pairwise disjoint: an index shared by the first two, for
example, would have its common $C$ factor pure in both blocks $1$
and $2$; the other two pairs are identical after cycling the modes.
This is impossible for three two-element subsets of a five-element
set.

Consider $(4,5,5)$, with $d_A=4$.  If $A$ splits, its two thin blocks
and \cref{lem:H345-thin} force both $\set U_B$ and $\set U_C$ to have pure
intersection dimensions at least $2,2,1$.  Hence they split, which
was just excluded.  If a five-dimensional mode splits, it has a
thin block; the same contraction forces the four-dimensional mode
to have local dimensions at least $2,1,1$, so it splits and we are
again in the preceding case.

Now consider $(5,5,5)$.  A split mode of type $(3,1,1)$ has two thin
blocks; thin contraction forces each other mode to have local
dimensions at least $(2,2,1)$, so all three split.  If two modes of
type $(2,2,1)$ split, equal thin blocks give a rank-one pure block,
whereas distinct thin blocks force the third mode to have local
dimensions at least $(2,2,1)$; both alternatives are impossible.
Thus a remaining split possibility consists of exactly one split
mode, say $A$, of type $(2,2,1)$, with thin block $p$. 
Write $\tens P_{ppp}=\va\otimes\matr M$.  By
\cref{lem:H345-thin}, the thin contraction is
$\matr M\oplus\matr N_q\oplus\matr N_r$.  The pure block is not
rank one, so $\rank\matr M\ge2$.  The nonsplit five-dimensional
modes $B,C$ have at most a two-dimensional pure intersection in any
block.  Hence $\rank\matr M=2$, both $p$ intersections are planes,
the other four intersections are selector-dense lines, and the two
$\cK$ tails vanish.  Because the five $B$ factors and five $C$
factors in \eqref{eq:H345-residual} are bases, exactly two indices,
namely $1$ and $2$, occur in this contraction, and we have
\[
\matr Q_i\eqdef\vb_i\otimes\vc_i\in\cV_p\otimes\cV_p,\ \ 
\set D\eqdef\Span\{\matr Q_1,\matr Q_2\}
\subseteq\set L_{BC}(\tens P)\cap(\cK_p+\FF\matr M).
\]
Thus $\set D\cap\cK_p$ is nonzero, contrary to
\cref{cor:single-block-slice}, since $\set D$ consists of slices of
$\tens P$ for the pair $BC$. This excludes the last single-split $(5,5,5)$ case.
\end{proof}

We can now prove the three remaining classification statements.

\begin{lemma}[A common thin-block slot]
\label{lem:H345-common-thin-slot}
Let $\tens P\in\cA^{[3]}$ and suppose that the same block $j$ is
thin in modes $A$ and $B$. Choose nonzero vectors $\va_j,\vb_j$
spanning the corresponding projection lines. Then, for suitable
$\vz_j\in\cV_j$ and $\tens H_j\in\cK\otimes\cV_j$, we have
\begin{equation}\label{eq:H345-common-thin}
((\id\otimes\id)\otimes\matr p_j)\tens P
=\va_j\otimes\vb_j\otimes\vz_j+\tens H_j.
\end{equation}
\end{lemma}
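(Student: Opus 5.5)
We decompose the $C$-block-$j$ part of $\tens P$ according to the block positions of its first two modes, i.e. write
\[
((\id\otimes\id)\otimes\matr p_j)\tens P=\sum_{k,l}\tens P_{klj},
\qquad \tens P_{klj}\in\cV_k\otimes\cV_l\otimes\cV_j ,
\]
and treat the nine components by type using \eqref{eq:triple-slots}.

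\textbf{The components other than $(j,j,j)$.}
\begin{itemize}
\item If $k,l,j$ are pairwise distinct, the component vanishes.
\item If $k=l\ne j$, the component lies in $\cK_k\otimes\cV_j$, hence in $\cK\otimes\cV_j$. Such components go into $\tens H_j$.
\item If exactly one of $k,l$ equals $j$, take $k=j\ne l$. The slot $(j,l,j)$ lies in the image of $\iota$-type corrections, namely $\cV_l\otimes\cK_j$ read in the $B\mid AC$ ordering, so its $A,C$ matrix lies in $\cK_j$. Since the $A$-mode projection onto $\cV_j$ is the line $\FF\va_j$, the $A$-factor of every coefficient in this slot is a multiple of $\va_j$. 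So for each fixed $B$-coordinate the $AC$ matrix has the form $\va_j\otimes\vy\in\cK_j$.
\end{itemize}
Symmetry of matrices in $\cK_j$ forces $\vy\parallel\va_j$ or $\vy=0$. By \cref{lem:local} (applied to the pure block, which is a rank-one-in-mode-$A$ representative $\va_j\otimes\matr M$ as in \cref{lem:H345-thin}), $\va_j$ is selector-dense, so $\va_j^2\notin\cK_j$. Hence $\vy=0$ and the slot $(j,l,j)$ vanishes. The transposed slot $(l,j,j)$ vanishes in the same way, using the thin $B$ line $\FF\vb_j$ and $\vb_j^2\notin\cK_j$.

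\textbf{The pure component.} The remaining component is $\tens P_{jjj}$. Its first two modes lie in $\matr p_j\set U_A=\FF\va_j$ and $\matr p_j\set U_B=\FF\vb_j$. Therefore $\tens P_{jjj}=\va_j\otimes\vb_j\otimes\vz_j$ for a unique $\vz_j\in\cV_j$, and collecting the $k=l\ne j$ components into $\tens H_j$ gives \eqref{eq:H345-common-thin}.

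\textbf{Main obstacle.} The one point to check carefully is that the block projections of the mode spaces really constrain the coefficients of each block component. This holds because $(\matr p_k\otimes\matr p_l\otimes\matr p_j)\tens P$ has mode-$A$ essential space contained in $\matr p_k\set U_A$, and similarly for the other modes.

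The other point is selector-density of the thin lines, which the exclusion of the mixed slots depends on. I would obtain it from the contraction used in \eqref{eq:triple-pure-intersections}: contracting the other two modes by distinct selector rows supported on block $j$ produces a vector in $\matr p_j\set U_A=\FF\va_j$ whose remaining selector coordinate is $-2$. Running this for each of the three rows shows that all three coordinates of $\matr L\va_j$ are nonzero, and likewise for $\vb_j$. This makes $\va_j^2,\vb_j^2\notin\cK_j$, since $\matr L\cK_j\matr L^\top$ is diagonal.
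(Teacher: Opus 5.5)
Your proof is correct and is essentially the paper's argument. The paper only cites the three-distinct-block and repeated-block slot equations of \eqref{eq:triple-slots}. Your slot-by-slot bookkeeping supplies the details that citation leaves implicit: the $(k,k,j)$ components go into $\tens H_j$, and the $(j,l,j)$ and $(l,j,j)$ components vanish because the thin lines $\FF\va_j,\FF\vb_j$ are selector-dense, so $\va_j^2,\vb_j^2\notin\cK_j$, exactly as in \cref{lem:H345-thin}.
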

\begin{proof}
This follows directly from the repeated-block and
three-distinct-block slot equations in \eqref{eq:triple-slots}; the
argument does not require either mode to be split.
\end{proof}

\begin{proposition}[The $(4,4,5)$ residual]
\label{prop:triple-445}
For the fixed residual, suppose $d_A=d_B=4$ and $d_C=5$. Then
$\kappa_{AB}=\kappa_{AC}=\kappa_{BC}=0$. Moreover, at least one of
$\set U_A,\set U_B$ projects two-dimensionally onto every physical block.
\end{proposition}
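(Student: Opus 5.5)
The plan has two parts: first show that neither four-dimensional mode is split, which gives the three vanishing quadratic intersections; then assume, for contradiction, that both $\set U_A$ and $\set U_B$ are thin somewhere and run the source count of \cref{lem:triple-source-counting}.

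\emph{Step 1: the vanishing of $\kappa$.} By \cref{lem:H345-splitting}, no mode of type $(4,4,5)$ is split. A four-dimensional mode has a nonzero pure intersection in each block by \eqref{eq:triple-pure-intersections}. If these three intersections had total dimension four, the mode would split. Hence $\set U_A$ and $\set U_B$ each consist of three selector-dense pure lines together with one genuinely mixed vector, as in the mask description \eqref{eq:four-mode-mask}. The final sentence of \cref{lem:H345-four-support} says that a nonsplit four-dimensional mode has zero quadratic intersection with every other mode. Since $\set L_{XY}(\tens P)\subseteq\set U_X\otimes\set U_Y$, applying this with $X=A$ gives $\kappa_{AB}=\kappa_{AC}=0$, and applying it with $X=B$ gives $\kappa_{BC}=0$. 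By the dimension formula after \eqref{eq:triple-complete}, this yields
\[
(\dim\cE_{AB},\dim\cE_{AC},\dim\cE_{BC})=(59,58,58).
\]
Moreover $d_C=5=n_0$, so \eqref{eq:H345-source-span} saturates $AB$. By \cref{lem:triple-saturation}, the $59$ original $AB$ matrices then form a basis of $\cE_{AB}$.

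\emph{Step 2: the projection statement, by contradiction.} Suppose $\set U_A$ is thin on a block $p$ and $\set U_B$ is thin on a block $q$, meaning $p\notin\set S_A$ and $q\notin\set S_B$.

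If $p=q$, then \cref{lem:H345-common-thin-slot} shows that the pure block $\tens P_{ppp}$ has rank one. Because $AB$ is saturated, \cref{lem:triple-good} forces block $p$ to be $AB$-bad, so its $AC$ and $BC$ pairs are good. Next I would check that the thin contraction of \cref{lem:H345-thin} at $p$ has no quadratic tails. Each tail lies in $\cK\cap(\set U_B\otimes\set U_C)$, which is zero by Step 1. The dense-line argument of \cref{lem:triple-core-closure}(iii) should then remove the remaining mixed slots through $p$, so block $p$ has no off-diagonal output in either $AC$ or $BC$.

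If $p\ne q$, I would argue as in the $(1,2,3)$ row of the orbit table in \cref{prop:triple-H1}. The thin lines $A_p$ and $B_q$ kill all mixed slots through them. This leaves only one possible arrow direction per pair, so there is at least one further block, and a pair $XY\in\{AC,BC\}$, for which two blocks are $XY$-good and have no off-diagonal output.

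In both cases I want to reach two $XY$-good rows with no off-diagonal output for some $XY\in\{AC,BC\}$. \Cref{lem:triple-source-counting}(i) then requires at least $58+2=60$ sources, while only $|\set I_Z\sqcup\set J|=59$ are available. This is the contradiction.

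\emph{Main obstacle.} The delicate point is producing a second good row with no off-diagonal output, especially when $p=q$. There the other two blocks are thick in both $A$ and $B$, and their pure blocks need not be rank one. I would first try to show that these side blocks also give rank-one representatives. The idea is to peel block $p$ off as in \cref{lem:triple-affine-four}(ii) and apply \cref{lem:H345-R333} to the remaining two-block part. The aim is a side block whose two-mode quotients match those of a rank-one $\tens R_j$, exactly as in the proof of \cref{lem:H345-fixed-peeling}. If that fails, the fallback is to apply \cref{lem:local-source-test} directly to block $p$. Using the basis property of the $AB$ sources, I would try to show that every residual $AC$ (or $BC$) source has its $pp$ block on the representative's matrix line, mimicking the exclusion of type $(3,4,4)$ in \cref{prop:triple-H1}.
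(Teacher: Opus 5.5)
\textbf{Gap in Step~1.} Step~1 cites \cref{lem:H345-splitting}, but that lemma is stated and proved only for mode types $(4,5,5)$ and $(5,5,5)$. Its proof starts from two five-dimensional modes. These saturate two pairs, so \cref{lem:triple-saturated-pairs} excludes every rank-one pure block. In type $(4,4,5)$ only $AB$ is saturated, because $d_C=5=n_0$ while $d_A=d_B=4$. So at this stage neither rank-one pure blocks nor split four-dimensional modes are excluded.

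Splitting is in fact exactly what occurs when $\kappa_{AB}\ne0$. Applying \cref{lem:H345-four-support} with $X=A$ and with $X=B$ forces both $\set U_A,\set U_B$ to split as $(2,1,1)$ with the same thick block $p$. Saturation of $AB$ then makes the two side blocks rank-one representatives $\vd_j^2\otimes\vz_j$, and these are $AC$-good and $BC$-good. Your logical order therefore has to be reversed: $\kappa_{AB}=0$ must be proved first and directly, and nonsplitting (hence $\kappa_{AC}=\kappa_{BC}=0$) derived from it. That direct exclusion is the missing idea.
\begin{itemize}
\item If $\matr N=\vw^2$, expand $\vw^2$ in the basis of the five residual $AB$ matrices. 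Bound its support by three using \eqref{eq:triple-five-source-rank}. Then combine the escape conditions forced by the goodness of $q,r$ with the vanishing $qr$ block. This produces two disjoint index sets of size at least two inside a three-element set.
\item If $\rank\matr N=2$, peel $\matr N\otimes\vz$ and use \cref{lem:binary} to get $\im\matr N\not\subseteq\set U_{C,p}$. This kills the tails through $p$, makes both rotated images $58$-dimensional with tail-free rows $q,r$, and contradicts \cref{lem:triple-source-counting}(i).
\item Only after this can nonsplitting be shown. A common thin block is excluded using $\kappa_{AB}=0$. A single split mode is excluded by counting $B$ factors over the disjoint active sets of its two thin blocks.
\end{itemize}

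\textbf{Step~2 is also incomplete, as you acknowledge.} For $p=q$ no peeling is needed once $\kappa_{AB}=0$ is known. In \cref{lem:H345-common-thin-slot}, contract $C$ by a block-$p$ functional annihilating $\vz_p$. This gives an $AB$ slice lying in $\cK$, hence zero. So $\matr p_p\set U_C=\FF\vz_p$ and block $p$ is fully thin, contradicting \cref{lem:triple-full-thin}, which would force $d_C=4$.

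For $p\ne q$, the analogy with the $(1,2,3)$ row does not carry over. There is no isolated fully thin block, and the third block is thick in both $A$ and $B$. Moreover, the slot equations alone do not kill $\tens P_{qpp}\in\cV_q\otimes\cK_p$, whose $BC$ factor uses the thick $B$ projection at $p$. So you have not produced the two tail-free good rows that \cref{lem:triple-source-counting}(i) needs.

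The paper instead uses that the five $B$ factors have a one-dimensional relation space. Consequently the $A$-thin block $p$ has exactly two active indices, with collinear $B$ factors and $C_p$ projections spanning a plane. The same holds symmetrically for the $B$-thin block $q$. Finally, $\cW_{AC}=\cE_{AC}\oplus\FF\matr\Omega$ with $\matr\Omega_{pp}$ escaping forces the $C_q$ projections of the $q$-pair onto $\FF\vz_q$, which is a contradiction.
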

\begin{proof}
Since $d_C=5$, we have $\cW_{AB}=\cE_{AB}$. Moreover, no block is fully
thin by \cref{lem:triple-full-thin}. We first prove $\kappa_{AB}=0$.
Otherwise \cref{lem:H345-four-support} gives
\[
\set L_{AB}(\tens P)\cap\cK=\FF\matr N,\ \ 
\supp\matr N=\{p\},\ \ \rank\matr N\in\{1,2\}.
\]
Both four-dimensional modes split with common thick block $p$.
On the other blocks, saturation forces
$\tens P_{jjj}=\vd_j^2\otimes\vz_j,\ \ 
\vd_j\not\parallel\vz_j\ (j=q,r).$ 
Thus $q,r$ are both $AC$-good and $BC$-good.

\emph{Rank-one intersection.}
After rescaling the chosen generator of the one-dimensional
intersection, write $\matr N=\vw^2$. Since the
five $C$ factors form a basis, the five matrices
$\matr Q_i^{AB}\eqdef\va_i\otimes\vb_i$ form a basis of
$\set L_{AB}(\tens P)$. Let $\set I$ be the support of the unique expansion
\begin{equation}\label{eq:H345-square-expansion}
\vw^2=\sum_{i\in\set I}\lambda_i\matr Q_i^{AB},\ \ \lambda_i\ne0\ (i\in\set I).
\end{equation}
The thin-line and three-distinct-block equations give
$(\matr Q_i^{AB})_{qr}=(\matr Q_i^{AB})_{rq}=0$. In the complete $C_j$
projection, the only nonpure coefficient is a multiple of $\vw^2$.
Write $[\vu]_{\vz_j}\eqdef\vu+\FF\vz_j$; then
\begin{align}
((\id\otimes\id)\otimes\matr p_j)\tens P
&=\vd_j^2\otimes\vz_j+\vw^2\otimes\vx_j,
\ \ j=q,r,\label{eq:H345-square-Cj}\\
[\matr p_j\vc_i]_{\vz_j}&=\lambda_i[\vx_j]_{\vz_j}\ (i\in\set I),\ \
[\matr p_j\vc_i]_{\vz_j}=0\ (i\notin\set I).
\label{eq:H345-square-coefficients}
\end{align}
Since $q,r$ are $AC$-good and $BC$-good, the source-generation identity requires
\begin{equation}\label{eq:H345-square-escape}
\matr p_j\Span\{\va_i:i\in\set I\}\ne0,\ \
\matr p_j\Span\{\vb_i:i\in\set I\}\ne0\ (j=q,r).
\end{equation}
Indeed, failure of the first or second condition puts every residual $AC$
or $BC$ source matrix, respectively, and hence every corresponding original
source matrix, in the local contraction image of an $AC$-good or $BC$-good block.

By \eqref{eq:triple-five-source-rank}, we have
$1\ge |\set I|-2$, so $|\set I|\le3$.
If $|\set I|\le2$, one restricted factor span is the line $\FF\vw$,
contradicting \eqref{eq:H345-square-escape}. For $|\set I|=3$, both
restricted spans are planes. Choose $\vx,\vy\in\cV_q\oplus\cV_r$ and write
\[
\va_i=\alpha_i\vx+\gamma_i\vw,\ \
\vb_i=\beta_i\vy+\delta_i\vw,
\ \ \matr p_q\vx,\matr p_r\vx,\matr p_q\vy,\matr p_r\vy\ne0.
\]
The zero $qr$ block gives $\alpha_i\beta_i=0$. Hence
$\set H_A\eqdef\{i:\alpha_i\ne0\}$ and
$\set H_B\eqdef\{i:\beta_i\ne0\}$ are
disjoint. The $\vx\otimes\vw$ and $\vw\otimes\vy$ coefficients give
$\sum_{i\in\set H_A}\lambda_i\alpha_i\delta_i=0$ and
$\sum_{i\in\set H_B}\lambda_i\gamma_i\beta_i=0.$
Every displayed term is nonzero, so $|\set H_A|,|\set H_B|\ge2$,
impossible inside the three-element set $\set I$.

\emph{Rank-two intersection.}
Put $\set W\eqdef\im\matr N$. Symmetry gives
$\set U_{A,p}=\set U_{B,p}=\set W$ and
\begin{equation}\label{eq:H345-WK-line}
\cK_p\cap\Sym^2\set W=\FF\matr N.
\end{equation}
The $C$ projections on $q,r$ are thick, since a fully thin block is
excluded. Thus $\dim\set U_{C,p}\le2$. The mixed $A_pB_pC_j$
components with $j=q,r$ are multiples of $\matr N$; all other
components with $C$ in $q,r$ are pure, by the thin-line slot equations.
Choose $\phi$ on $\set W\otimes\set W$ with $\phi(\matr N)=1$, extend it
by zero outside block $p$, and set
\[
\vz\eqdef(\phi\otimes\id)\tens P\in\set U_C,
\ \  \tens P'\eqdef\tens P-\matr N\otimes\vz\in\cA^{[3]}.
\]
Its side components use only the pure directions $\vz_q,\vz_r$.
Subtracting those components yields
\begin{equation}\label{eq:H345-Pprime-support}
\tens P'_{ppp}\in
\cA_p\cap\bigl(\set W\otimes\set W\otimes\set U_{C,p}\bigr).
\end{equation}
If $\set W\subseteq\set U_{C,p}$, then
$\set W=\set U_{C,p}$; symmetrization and
the rank-two quadratic $\matr N$ contradict
\cref{lem:binary,rem:symmetrization}. Hence
\begin{equation}\label{eq:H345-W-not-C}
\set W\not\subseteq\set U_{C,p}.
\end{equation}

A tail $A_jB_pC_p$ has a symmetric $BC$ coefficient with column
space in $\set W$, so it is a multiple of $\matr N$. If nonzero, the
thin $A_j$ contraction puts its row space $\set W$ inside $\set U_{C,p}$,
contradicting \eqref{eq:H345-W-not-C}. The transposed argument
kills $A_pB_jC_p$. Thus the $q,r$ rows have no off-diagonal output.
The same support argument gives
$\cK\cap(\set U_A\otimes\set U_C)
=\cK\cap(\set U_B\otimes\set U_C)=0:$ 
their $p$ blocks would be multiples of $\matr N$, while the side
blocks vanish on dense lines. Consequently both rotated images have
dimension $58$, and blocks $q,r$ are both $AC$-good and $BC$-good. By
\cref{lem:triple-source-counting}(i),
this is impossible. We have proved $\kappa_{AB}=0$.

\emph{Nonsplitting.}
No block is thin in both modes $A$ and $B$. Indeed, if $j$ were such
a block, \cref{lem:H345-common-thin-slot} would apply. A $C_j$
functional annihilating $\vz_j$ in
\eqref{eq:H345-common-thin} would give a quadratic $AB$ slice.
Since $\kappa_{AB}=0$, the $C_j$ projection would be a line,
contradicting the fully thin exclusion. If both $A,B$ split, they
would have a common thin block, which is therefore impossible.

Suppose only $A$ splits, with thin blocks $q,r$. By
\cref{lem:H345-thin}, their contractions have no tails, because
the nonsplit $B$ intersections are dense lines. Their pure blocks
have rank one and, by $AB$ saturation, are $AC$-good and $BC$-good.
For $\set S_j\eqdef\{i:\matr p_j\va_i\ne0\}$, independence of the $C$
factors gives $\vb_i\in\set U_{B,j}$ for $i\in\set S_j$.
The $AC$-goodness of block $j$ forces $|\set S_j|\ge2$; with one active index, every local
source matrix would remain in the local contraction image. The sets
$\set S_q,\set S_r$ are disjoint, so
\[
4\le\dim\Span\{\vb_i\}_{i=1}^5
\le 1+1+(5-|\set S_q|-|\set S_r|)\le3,
\]
a contradiction. Interchanging $A,B$ proves both are nonsplit.
Then \cref{lem:H345-four-support} gives
$\kappa_{AC}=\kappa_{BC}=0$.

\emph{Projection masks.}
Put $\set S_X\eqdef\{j:d^{\mathrm{proj}}_{X,j}=2\}$ for $X=A,B$.
Nonsplitting gives $|\set S_A|,|\set S_B|\ge2$, while
the common thin-block exclusion just proved gives
$\set S_A\cup\set S_B=\{1,2,3\}$.
Thus the only mask pairs $(\sigma_A,\sigma_B)$ are
$(3,5),(3,7),(7,7)$. Suppose $(\sigma_A,\sigma_B)=(3,5)$, and let $p$ be thin for $A$.
Its contraction is a rank-one matrix supported on one block
$\vb_p\otimes\vz_p$: the $B$ pure intersections are dense lines.
For $\set I_p\eqdef\{i:\matr p_p\va_i\ne0\}$, independence of the $C$
factors and the $AC$-goodness and $BC$-goodness of block $p$ give
\[
\vb_i\in\FF\vb_p\ (i\in\set I_p),\ \  |\set I_p|\ge2.
\]
The five $B$ factors have rank four, so their relation space has
dimension one. Hence $|\set I_p|=2$ and this pair supports that relation.
Writing $\set I_p=\{1,2\}$, choose $0\ne\vu_p\in\matr p_p\set U_A$
and $s_1,s_2\ne0$ with $\matr p_p\va_i=s_i\vu_p$ $(i=1,2)$, and, after rescaling,
write $\vb_1=\vb_p$ and $\vb_2=\mu\vb_p$. Then
\[
s_1\vc_1+\mu s_2\vc_2=\vz_p\in\cV_p,
\ \ \mu s_1s_2\ne0.
\]
If their $C_p$ projections spanned a line, every residual local
$AC$ source matrix would stay in the local contraction image at block $p$.
Thus the $AC$-goodness of $p$ gives
\begin{equation}\label{eq:H345-duplicate-plane}
\dim\matr p_p\Span\{\vc_i:i\in\set I_p\}=2.
\end{equation}
For the different thin block $q$ of $B$, the symmetric argument gives
a two-element set $\set T_q$, with $\va_i\in\FF\va_q$ for
$i\in\set T_q$ and $\dim\matr p_q\Span\{\vc_i:i\in\set T_q\}=2$.

The $AC$-goodness of $p$ and \eqref{eq:H345-source-span} give
$\cW_{AC}=\cE_{AC}\oplus\FF\matr\Omega$, where
$\matr\Omega_{pp}$ lies outside the local image. Therefore
\[
\matr Q\in\cW_{AC},\ \matr Q_{pp}=0
\ \Longrightarrow\ \matr Q\in\cE_{AC}.
\]
For each $i\in\set T_q$, apply this to $\va_i\otimes\vc_i$.
Its $qq$ block belongs to $\cK_q+\FF(\va_q\otimes\vz_q)$.
Since $\va_i\in\FF\va_q$ and
$\cK_q\cap(\FF\va_q\otimes\cV_q)=0$, it follows that
$\matr p_q\vc_i\in\FF\vz_q$. This contradicts the preceding
two-dimensional projection. Hence $(3,5)$ is excluded.
Thus, after interchanging $A,B$ if necessary,
$\set S_B=\{1,2,3\}$, which proves the projection assertion.
\end{proof}

\begin{proposition}[The $(4,5,5)$ and $(5,5,5)$ residuals]
\label{prop:triple-455}
For the fixed original five-term residual, suppose
$(d_A,d_B,d_C)=(d,5,5)$, where $d\in\{4,5\}$. Then
\begin{equation}\label{eq:H345-H4-conclusion}
\kappa_{AB}=\kappa_{AC}=\kappa_{BC}=0,\ \ 
d^{\mathrm{proj}}_{A,j}\ge2\ (j=1,2,3).
\end{equation}
When $d=5$, cyclic symmetry also gives
\begin{equation}\label{eq:H345-H5-conclusion}
d^{\mathrm{proj}}_{X,j}\ge2\ (X=A,B,C;\ j=1,2,3).
\end{equation}
\end{proposition}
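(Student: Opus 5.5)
Since $d_B=d_C=5=n_0$, \eqref{eq:H345-source-span} saturates the pairs $AC$ and $AB$, and when $d=5$ also $BC$. So \cref{lem:triple-saturated-pairs} excludes every rank-one pure block, and \cref{lem:H345-splitting} makes every mode nonsplit. The factor lists in modes $B,C$ are bases, and in mode $A$ they are a basis when $d=5$. The plan is to prove the vanishing of the $\kappa$'s first and the projection bounds second, reusing the peeling obstruction of \cref{lem:H345-fixed-peeling} throughout.

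\emph{Vanishing of the quadratic intersections.} For $d=5$, \cref{cor:single-block-slice} already gives $\set L_{XY}(\tens P)\cap\cK_p=0$ for every block. It remains to exclude a nonzero $\matr N\in\set L_{XY}(\tens P)\cap\cK$ that is supported on several blocks. I would use the common-functional mechanism of \cref{lem:triple-functional} in reverse. Such an $\matr N=\phi\lrcorner\tens P$ comes from a functional $\phi$ on the contracted mode space. Restricting $\phi$ block by block, each block-supported extension yields a slice in $\cK$ by the slot equations \eqref{eq:triple-slots}. Since contraction is injective on the dual of the mode space, one of these extensions gives a nonzero slice supported on a single block, contradicting the corollary.

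For $d=4$ the same argument handles $\kappa_{AB}$ and $\kappa_{AC}$, where the contracted mode ($C$ or $B$) is five-dimensional with basis factors. A nonzero single-block slice lets us eliminate one term as in \cref{lem:factor-calculus}. This produces a four-term $(4,4,4)$ or $(4,4,5)$-type affine reduction, and I would push it to a $(4,4,4)$ contradiction via \cref{lem:H345-fixed-peeling}. For $\kappa_{BC}$, the contraction is over the nonsplit four-dimensional $A$. Here \cref{lem:H345-four-support} forces any such $\matr N$ to lie in one block $p$ with $\dim\set U_{A,p}=2$, and with rank at most two. I would then exclude it by copying the rank-one and rank-two subcases in the proof of \cref{prop:triple-445}. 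The rank-one case should fall to the support count \eqref{eq:triple-five-source-rank} together with the escape conditions from goodness. The rank-two case should fall to peeling $\matr N\otimes\vz$ and applying \cref{lem:binary,rem:symmetrization}.

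\emph{Projection bounds.} Suppose $d^{\mathrm{proj}}_{A,p}=1$. By \cref{lem:H345-thin}, the thin contraction is $\matr M_p\oplus\matr N_q\oplus\matr N_r$. The tails lie in $\cK\cap(\set U_B\otimes\set U_C)$, so they vanish once $\kappa_{BC}=0$. Since the pure block has rank at least two, $\rank\matr M_p=2$. Because the $B,C$ factor lists are bases, exactly two terms are active, and their $B,C$ factors are pure in block $p$. Their $BC$ matrices then span a two-dimensional subspace of $\set L_{BC}(\tens P)\cap(\cK_p+\FF\matr M_p)$, which meets $\cK_p$ nontrivially. This contradicts $\kappa_{BC}=0$, exactly as at the end of \cref{lem:H345-splitting}. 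For $d=5$, cyclic relabelling of the modes gives \eqref{eq:H345-H5-conclusion}.

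The main obstacle I expect is $\kappa_{BC}=0$ when $d=4$. There the $A$ factors are not independent, so single-term peeling is unavailable, and the argument must reproduce the delicate rank-one and rank-two analysis from the $(4,4,5)$ case.
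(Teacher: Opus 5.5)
Your outline has the right overall shape: saturation of $AB$ and $AC$, nonsplitting, then the $\kappa$'s, then thin contractions. But three steps fail as written, and the case you flag as the obstacle is exactly where the missing idea lies.

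\emph{The multi-block reduction is invalid.} A general $\phi$ has no block-supported extension. For its block pieces $\phi_j=\phi\circ\matr p_j$, the slot equations \eqref{eq:triple-slots} only put the $kk$ blocks ($k\ne j$) into $\cK_k$ and kill the three-distinct-block slots. The $jj$ block $\phi_j\lrcorner\tens P_{jjj}$ and the $j\ell,\ell j$ blocks are unconstrained, so $\phi_j\lrcorner\tens P\notin\cK$ in general; \cref{lem:triple-functional} needs a single functional with block-supported extensions on all three blocks at once. No reduction is needed. Apply \cref{lem:H345-four-support} to the matrix mode $X=B$, which is five-dimensional and nonsplit; it puts every nonzero element of $\set L_{BC}(\tens P)\cap\cK$ on one block. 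For $d=4$, the same lemma with $X=A$ (four-dimensional, nonsplit) gives $\kappa_{AB}=\kappa_{AC}=0$ outright. Your peeling there can produce a $(3,4,4)$ tensor, which \cref{lem:H345-fixed-peeling} does not cover.

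\emph{Your plan for $\kappa_{BC}$ at $d=4$ does not work.} You apply \cref{lem:H345-four-support} to the wrong mode: the matrix lies in $\set U_B\otimes\set U_C$, so it yields $\dim\set U_{B,p}=2$, not $\dim\set U_{A,p}=2$. The $(4,4,5)$ subcases also do not transfer, since they rest on two split four-dimensional matrix modes with a common thick block and thin side blocks. The real error is your claim that single-term peeling is unavailable: peeling along $A$ needs only the $B$ and $C$ lists to be bases. With $n_i=\phi(\va_i)$, the $A$ factors of $\tens Q=\tens P-(\va_1/n_1)\otimes\matr N$ lie in $\ker\phi$, so $\dim\Ess_A(\tens Q)=3$. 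Its pure-line contractions are then block diagonal, which forces $\Ess_B(\tens Q)$ and $\Ess_C(\tens Q)$ to split, via a rank-four contraction or \eqref{eq:triple-two-three-modes}. Restoring the correction adds only $\vb_1,\vc_1\in\cV_p$, so $\set U_B,\set U_C$ split, contradicting \cref{lem:H345-splitting}.

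\emph{The projection bound has two further gaps.} First, the tails do not vanish because $\kappa_{BC}=0$: $\kappa_{BC}$ concerns actual slices, and the tails are only blocks of a slice. They vanish because a nonsplit five-dimensional mode has pure-intersection dimensions summing to at most four. Once $\dim\set U_{B,p}=2$, the side intersections are selector-dense lines and cannot carry a nonzero symmetric matrix of $\cK_q$ or $\cK_r$.

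Second, and more seriously, for $d=4$ the active matrices $\vb_1\otimes\vc_1$ and $\vb_2\otimes\vc_2$ are individually slices of $\tens P$ only when $\va_1,\va_2$ are independent modulo $\set H\eqdef\Span\{\va_3,\va_4,\va_5\}$, i.e.\ when $\dim\set H=2$. If $\dim\set H=3$, then $\set H=\ker(\matr p_p|_{\set U_A})$. The only slices in $\Span\{\vb_1\otimes\vc_1,\vb_2\otimes\vc_2\}$ are then multiples of $\matr M_p\notin\cK_p$, and your contradiction with $\kappa_{BC}=0$ disappears. The paper closes this case by projecting onto $q\oplus r$. Terms $1,2$ die, and the other three give an affine tensor of CP rank three and mode type $(3,3,3)$. \Cref{lem:H345-R333} then yields a side block with the quadratic quotients of a rank-one representative, contradicting \cref{lem:triple-saturated-pairs} for the unchanged $\tens P$. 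For $d=5$ your projection argument is correct once the tails are handled as above.
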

\begin{proof}
All modes are nonsplit by \cref{lem:H345-splitting}, and $AB,AC$
are saturated. Hence no pure block has CP rank one by
\cref{lem:triple-saturated-pairs}.
For $d=4$, \cref{lem:H345-four-support} already gives
$\kappa_{AB}=\kappa_{AC}=0$.

Suppose $0\ne\matr N\in\set L_{BC}(\tens P)\cap\cK$.
The small-mode support lemma puts $\matr N$ on one block $p$, with
$\rank\matr N\le2$. For $d=5$, this contradicts
\cref{cor:single-block-slice}. It remains to treat $d=4$.
Since both matrix-factor lists are bases, write
$\matr N=\sum n_i\vb_i\otimes\vc_i$, choose $n_1\ne0$, and set
\begin{equation}\label{eq:H345-555-peel}
\tens Q\eqdef\tens P-(\va_1/n_1)\otimes\matr N
=\sum_{i=2}^{5}(\va_i-n_i\va_1/n_1)\otimes\vb_i\otimes\vc_i.
\end{equation}
Write $n_i=\phi(\va_i)$ for the functional $\phi$ inducing this slice
of $\tens P$. The modified factors lie in $\ker\phi$, so
$\dim\Ess_A(\tens Q)=3$ by the affine lower bound.
Its three pure-line contractions are block diagonal.
If the other dimensions are both four, a contraction nonzero on all
four terms has rank four; its row and column spaces show both modes
split. If either dimension is three,
\eqref{eq:triple-two-three-modes} gives the same conclusion.
In the $B$- and $C$-modes, restoring the correction
$(\va_1/n_1)\otimes\matr N$ enlarges the mode spaces only inside
physical block $p$, since $\matr N$ is supported on that block.
Hence the splitting of these two modes is preserved, contrary to
\cref{lem:H345-splitting}.
This proves $\kappa_{BC}=0$; for $d=5$, permuting the modes gives
all three zero intersections.

Finally suppose $d^{\mathrm{proj}}_{A,p}=1$. Its thin contraction is
$\matr M_p\oplus\matr N_q\oplus\matr N_r$.
The pure-block CP rank is at least two, while nonsplitting bounds each
$B,C$ pure intersection by two. Therefore exactly two original indices
are active; after relabelling them as $1,2$, put
$\matr Q_i\eqdef\vb_i\otimes\vc_i$ for $i=1,2$. We then have
\[
\rank\matr M_p=2,\  \matr N_q=\matr N_r=0,\ 
\matr Q_1,\matr Q_2\in\cV_p\otimes\cV_p
\]
and the two matrices are independent and satisfy
\[
\set D\eqdef\Span\{\matr Q_1,\matr Q_2\}\subseteq\cK_p+\FF\matr M_p,
\ \  \dim(\set D\cap\cK_p)\ge1.
\]
For $d=5$, both are slices of $\tens P$, contradicting $\kappa_{BC}=0$.
For $d=4$, put $\set H\eqdef\Span\{\va_3,\va_4,\va_5\}$.
The inactive factors lie in $\cV_q\oplus\cV_r$, so
$2\le\dim\set H\le3$. If $\dim\set H=2$, the first two matrices are
again separate slices of $\tens P$, giving the same contradiction.
Thus $\dim\set H=3$.
Projection onto $q\oplus r$ leaves an affine tensor of CP rank three and
mode type $(3,3,3)$: the other three $B,C$ factors remain independent
after projection. By \cref{lem:H345-R333}, a side pure block has
the same quadratic quotients as a rank-one representative. The
unchanged original block then contradicts
\cref{lem:triple-saturated-pairs}. Hence $d^{\mathrm{proj}}_{A,j}\ge2$ for all $j$.
\end{proof}

\paragraph{Fixed-source reduction.}
Combining
\cref{prop:triple-H0,prop:triple-H1,prop:triple-H2,prop:triple-445,prop:triple-455},
we have $s(\tens V_3)\ge167$. If a minimal $167$-term symmetric
decomposition existed, its original residual would have CP rank five
and, up to mode permutation, mode type $(4,4,5)$, $(4,5,5)$, or
$(5,5,5)$. In every case,
$\kappa_{AB}=\kappa_{AC}=\kappa_{BC}=0$. In either type $(4,4,5)$
or $(4,5,5)$, some four-dimensional mode projects two-dimensionally
onto every block; in type $(5,5,5)$, every mode projects with
dimension at least two onto every block.

\subsection{Common local functionals}
The common-functional criterion now excludes the two four-dimensional cases.

\begin{corollary}[Exclusion of $(4,4,5)$ and $(4,5,5)$]
\label{cor:triple-four}
No residual compatible with a minimal $167$-term symmetric decomposition
has either mode type.
\end{corollary}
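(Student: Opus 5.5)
The plan is to apply the common-functional criterion of \cref{lem:triple-functional} to a four-dimensional mode that projects two-dimensionally onto every block. The fixed-source reduction stated just before the corollary is the input. In type $(4,4,5)$, \cref{prop:triple-445} gives, after interchanging $A,B$ if necessary, a four-dimensional mode $\set U_X$ with $d^{\mathrm{proj}}_{X,j}=2$ for $j=1,2,3$. In type $(4,5,5)$, \cref{prop:triple-455} gives $d^{\mathrm{proj}}_{A,j}\ge2$ for all $j$ when $d_A=4$. Since each $\set U_{X,j}$ is nonzero by \eqref{eq:triple-pure-intersections} and $\dim\set U_X=4$, each projection has dimension at most two: the three pure lines together with one extra vector span $\set U_X$. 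Hence in both types there is a four-dimensional mode, call it $X$, with $d^{\mathrm{proj}}_{X,j}=2$ on every block.

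Next apply the final assertion of \cref{lem:triple-functional}, after permuting modes so that $X$ plays the role of $A$. Write $\set U_X=\Span\{\vu_1,\vu_2,\vu_3,\vx\}$ with $\vu_j\in\cV_j$; each $\matr p_j\vx$ is then independent of $\vu_j$. The functional that vanishes on the pure lines and takes the value $1$ on $\vx$ admits three block-supported extensions. The lemma therefore produces a nonzero element of $\set L_{YZ}(\tens P)\cap\cK$ for the complementary pair $YZ$, so $\kappa_{YZ}\ge1$.

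This contradicts the conclusion $\kappa_{AB}=\kappa_{AC}=\kappa_{BC}=0$, which \cref{prop:triple-445,prop:triple-455} establish for these types. So neither mode type can occur.

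The only point needing care is bookkeeping. \Cref{lem:triple-functional} is stated for mode $A$ and the pair $BC$. Its proof uses only the slot equations \eqref{eq:triple-slots}, which are symmetric under mode permutation, so it transfers to the relevant $X$ and its complementary pair. One must also check that the mode supplied by \cref{prop:triple-445} is genuinely four-dimensional and not the five-dimensional $C$; that proposition indeed refers to $\set U_A$ or $\set U_B$. I expect no real obstacle beyond verifying that the vanishing of all three $\kappa$'s was proved unconditionally in both propositions, which their statements assert.
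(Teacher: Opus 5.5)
Your proposal is correct and follows the paper's proof: you use \cref{prop:triple-445,prop:triple-455} to pick a four-dimensional mode that projects two-dimensionally onto every block, apply \cref{lem:triple-functional} to obtain a nonzero quadratic slice for the complementary pair, and so contradict the vanishing of all three $\kappa_{XY}$. Your extra remarks, namely the bound $d^{\mathrm{proj}}_{X,j}\le2$ and the invariance of \cref{lem:triple-functional} under permuting the modes, are accurate details that the paper leaves implicit.
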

\begin{proof}
By \cref{prop:triple-445,prop:triple-455}, choose a four-dimensional
mode whose three projection dimensions are two. Then
\cref{lem:triple-functional} gives a nonzero quadratic slice, contrary
to the vanishing quadratic intersections in
\cref{prop:triple-445,prop:triple-455}.
\end{proof}
\subsection{Local square congruences}
We need two further linear maps on the single-copy base. Write
$\operatorname{pr}_1,\operatorname{pr}_2:\set{G}\to\FF^3$
for the two coordinate projections in \eqref{eq:base-data}.
Define $\matr{S}_1\eqdef\matr{L}$, and define $\matr{S}_2,\matr{S}_3$
by the table
\[
\begin{array}{c|cc}
&\set{G}^{(1)}&\set{G}^{(2)}\\\hline
\matr{S}_1&\operatorname{pr}_2&\operatorname{pr}_1\\
\matr{S}_2&\operatorname{pr}_1&\operatorname{pr}_2\\
\matr{S}_3&-\operatorname{pr}_2&-\operatorname{pr}_2.
\end{array}
\]
These are the three selectors from
\cite[proof of Theorem~4.2]{Lovitz26:arxiv}, denoted there by
$R_1,R_2,R_3$.
All restrictions agree on the common line $\FF\vc$.
Each map sends the square generators to coordinate squares, and
the argument for \eqref{eq:six} applies to each map. We call
$\va^{(1)},\va^{(2)},\vc$ the three standard directions; their values
under the selectors are
\begin{equation}\label{eq:triple-signs}
\begin{array}{c|ccc}
&\va^{(1)}&\va^{(2)}&\vc\\\hline
\matr{S}_1&-\vect{\alpha}&\vect{\alpha}&\vect{\alpha}\\
\matr{S}_2&\vect{\alpha}&-\vect{\alpha}&\vect{\alpha}\\
\matr{S}_3&\vect{\alpha}&\vect{\alpha}&-\vect{\alpha}.
\end{array}
\end{equation}
In particular, all three coordinates of each $\matr{S}_q\vd$
are nonzero when $\vd$ is a factor of a rank-one tensor in $\cA_0$.
Let $\cK^{(i)}$ be the nine-dimensional square space in internal
sector $\set{G}^{(i)}$. We use the following known sector
identities and their direct two-sector consequence:
\begin{equation}\label{eq:triple-sector}
\cK^{(i)}\cap\vc\set{G}^{(i)}=0,\ \ 
\cK_0\cap\vc\cV_0=0,\ \ 
(\va^{(1)})^2\equiv(\va^{(2)})^2\equiv\vc^2\pmod{\cK_0}.
\end{equation}
The first equality is \cite[Lemma~4.3]{Lovitz26:arxiv}.
It implies that $\cK^{(i)}\to\Sym^2((\set{G}')^{(i)})$ is injective;
projecting to the two internal sectors then gives the second equality.
The congruences are the identities
$\sum_{p,q}\matr{M}_{i,p,q}=(\va^{(i)})^2-\vc^2$
from \cite[Lemma~4.1]{Lovitz26:arxiv}.
The real statements follow by restriction from the complex ones.

\begin{lemma}[Rigidity of square congruences]\label{lem:triple-fiber}
Suppose $\vu\otimes\vd^2\in\cA_0$. If the vectors $\vx$ satisfying
\begin{equation}\label{eq:triple-fiber}
\vx^2-\lambda\vd^2\in\cK_0
\ \text{for some }\lambda\in\FF\setminus\{0\}
\end{equation}
span a space of dimension at least three, then $\vd$ is parallel
to one of $\va^{(1)},\va^{(2)},\vc$. For any of these three
directions, the solution set consists precisely of the nonzero vectors
on their three lines.
\end{lemma}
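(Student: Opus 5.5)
We plan to reduce the congruence condition \eqref{eq:triple-fiber} to coordinate conditions via the three selectors $\matr S_1,\matr S_2,\matr S_3$. Since each $\matr S_q$ sends every generator of $\cK_0$ to a coordinate square, $\matr S_q\matr Q\matr S_q^\top$ is diagonal for every $\matr Q\in\cK_0$. Applying $\matr S_q(\cdot)\matr S_q^\top$ to \eqref{eq:triple-fiber} therefore shows that the off-diagonal parts of $(\matr S_q\vx)(\matr S_q\vx)^\top$ and $\lambda(\matr S_q\vd)(\matr S_q\vd)^\top$ agree for $q=1,2,3$. Because $\vu\otimes\vd^2\in\cA_0$, the remark after \eqref{eq:triple-signs} makes $\matr S_q\vd$ selector-dense for every $q$. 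Hence each $\matr S_q\vd$ has three nonzero off-diagonal products. A vector $\vy\in\FF^3$ with $y_ry_s=\lambda d_rd_s$ for all $r\ne s$ and all $d_r\ne0$ satisfies $\vy=\pm\sqrt{\lambda}\,\vd$, with a common sign: the three equations give $y_r^2=\lambda d_r^2$ and fix the relative signs. So for each $q$ we get $\matr S_q\vx=\epsilon_q\mu\,\matr S_q\vd$, where $\mu^2=\lambda$ and $\epsilon_q\in\{\pm1\}$.

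Next we identify the joint kernel. Stacking $\matr S\eqdef(\matr S_1,\matr S_2,\matr S_3):\cV_0\to\FF^9$, we claim $\matr S$ is injective on $\cV_0$. On each sector the three selectors contain both coordinate projections $\operatorname{pr}_1,\operatorname{pr}_2$, which are jointly injective on $\set G$; the gluing along $\FF\vc$ is consistent. We verify this by dimension count from the table. Then $\vx$ is determined by the sign pattern $(\epsilon_1,\epsilon_2,\epsilon_3)$ up to the global scalar $\mu$. So the solution set lies in the union of at most four lines, one for each sign pattern modulo global sign. Each line is $\FF\vx_\epsilon$, where $\vx_\epsilon$ is the unique vector (if any) with $\matr S_q\vx_\epsilon=\epsilon_q\matr S_q\vd$. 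The pattern $(+,+,+)$ gives $\vd$ itself. A sign pattern $\epsilon\ne\pm(1,1,1)$ is realizable only if the vector $(\epsilon_q\matr S_q\vd)_q$ lies in the image of $\matr S$.

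The main step, which we expect to be the obstacle, is to decide when a nontrivial sign flip lies in $\im\matr S$ and to show that this forces $\vd$ onto a standard direction. We would decompose $\vd=\vd^{(1)}+\vd^{(2)}$ by sector via $\operatorname{pr}_1,\operatorname{pr}_2$ and write the compatibility conditions on $\FF\vc$. For example, flipping only $\matr S_3$ requires $-\operatorname{pr}_2$ to change sign on both sectors while $\operatorname{pr}_1,\operatorname{pr}_2$ of the other sector are kept; the gluing along $\vc$ then forces the relevant projections to be multiples of $\vect\alpha$. Table~\eqref{eq:triple-signs} then identifies the solutions with the standard directions: $\va^{(1)},\va^{(2)},\vc$ realize exactly the flips $(-,+,+)$, $(+,-,+)$ and $(+,+,-)$ relative to $\vc$. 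We would do this case analysis over the three single-flip patterns and show that a solution set spanning at least three dimensions needs two independent nontrivial flips, which puts $\vd$ on one of the three lines.

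For the converse, the congruences in \eqref{eq:triple-sector} show that when $\vd$ is standard, all three standard lines satisfy \eqref{eq:triple-fiber} with $\lambda=1$, up to scaling. The injectivity argument above shows that there are no other solutions, because the realized sign vectors are exactly the ones attained by the standard directions. A remaining point is that $\lambda$ is arbitrary nonzero and $\mu$ may require a square root. Over $\RR$ with $\lambda<0$, no real $\vx$ exists, because the off-diagonal equations with dense $\matr S_q\vd$ force $y_r^2=\lambda d_r^2$. This does not affect the statement, since the solution set is still contained in the three lines and each of their nonzero vectors is a solution with $\lambda=\mu^2>0$.
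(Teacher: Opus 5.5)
\textbf{There is a genuine gap.} Your argument breaks at the claim that the stacked map $\matr S\eqdef(\matr S_1,\matr S_2,\matr S_3):\cV_0\to\FF^9$ is injective. Joint injectivity of $\operatorname{pr}_1,\operatorname{pr}_2$ on each sector does not pass to $\cV_0=\set G^{(1)}+\set G^{(2)}$. Each $\matr S_q$ adds the contributions of the two sectors, and these can cancel. Put $\vect\beta\eqdef(1,-1,0)$ and $\vect g\eqdef(\vect\beta,\vect\beta)\in\set G$. Since $\vect g\notin\FF\vc$, the vector $\vect k\eqdef\vect g^{(1)}-\vect g^{(2)}$ is nonzero in $\cV_0$, but the selector table gives
\[
\matr S_1\vect k=\operatorname{pr}_2\vect g-\operatorname{pr}_1\vect g=0,\quad
\matr S_2\vect k=\operatorname{pr}_1\vect g-\operatorname{pr}_2\vect g=0,\quad
\matr S_3\vect k=-\operatorname{pr}_2\vect g+\operatorname{pr}_2\vect g=0.
\]
Thus $\ker\matr S=\FF\vect k$, and the off-diagonal selector equations determine $\vx$ only modulo $\vect k$. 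Already the two trivial patterns allow $\pm\mu\vd+\FF\vect k$, which spans a plane. So ``span of dimension at least three'' no longer yields ``two independent nontrivial flips'', and your converse (``no other solutions'') fails for the same reason.

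The planned main step is also too weak in itself. The image $\im\matr S$ is the hyperplane $2\sum_r(\vy_1)_r+2\sum_r(\vy_2)_r+\sum_r(\vy_3)_r=0$ in $(\vy_1,\vy_2,\vy_3)\in\FF^9$. Realizability of a flip is therefore a single linear condition on $\vd$. Write $\vd=\vect g_1^{(1)}+\vect g_2^{(2)}$ with $\vect g_i\in\set G$. All four patterns are realizable whenever the coordinate sums of $\operatorname{pr}_1\vect g_1$ and $\operatorname{pr}_1\vect g_2$ vanish, which is a seven-dimensional subspace, not three lines. Had $\matr S$ been injective, it would have been bijective, and the test would have been vacuous.

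The missing ingredient is the part of the congruence that the selectors do not see: every matrix in $\cK_0$ has zero $(\set G')^{(1)}\otimes(\set G')^{(2)}$ component. Since $\vect k$ has nonzero components in both $(\set G')^{(i)}$, this is exactly what controls the kernel direction, and it is the paper's starting point.

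If $\vd$ lies in one sector, the cross-sector component forces each solution into a single sector. Within one sector $\operatorname{pr}_1,\operatorname{pr}_2$ are jointly injective, so your sign argument is valid there and gives at most two solution lines. A solution in the other sector yields $\vd^2\in\cK^{(1)}+\FF\vc^2$. The single-sector classification \cite[Lemma~4.3]{Lovitz26:arxiv}, for which sign patterns offer no substitute, then gives $\vd\parallel\va^{(1)}$, $\vc$, or some $\vw_{pq}$; the last is excluded by $\vd^2\notin\cK_0$.

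If $\vd=\tau\vc+\vv+\vw$ meets both sectors, the cross-sector component forces $\vx=\tau'\vc+t\vv+t^{-1}\vw$. Uniqueness of the sector corrections $\matr Q_i^*$ then confines all solutions to a plane. The ``precisely the three lines'' assertion is likewise derived from the cross-sector component together with \cite[Lemma~4.3]{Lovitz26:arxiv} and \eqref{eq:triple-sector}, not from an injectivity statement.
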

\begin{proof}
Write $\cV_0=\FF\vc\oplus(\set{G}')^{(1)}\oplus(\set{G}')^{(2)}$.
Every matrix in $\cK_0$ has zero
$(\set{G}')^{(1)}\otimes(\set{G}')^{(2)}$ component.
First suppose $\vd\in\set{G}^{(1)}$; the other sector is identical.
The cross-sector component of \eqref{eq:triple-fiber} forces
$\vx$ to lie in one sector. If both lie in the first sector,
write $\vd=(\vv,\vw)$ and $\vx=(\vv',\vw')$ in six coordinates.
Both coordinate triples of $\vd$ are nonzero in every entry,
by the selector property. The off-diagonal entries of the two
diagonal blocks give
\[
\vv'=t\vv,\ \ \vw'=s\vw,\ \  t^2=s^2=\lambda.
\]
There are therefore at most two one-dimensional solution subspaces
in that sector.
If a direction also occurs in the other sector, projection onto
the sector quotients gives
$\vd^2\in\cK^{(1)}+\FF\vc^2$.
By \cite[Lemma~4.3]{Lovitz26:arxiv}, the vector $\vd$ is parallel to
$\va^{(1)}$, to $\vc$, or to one of the vectors $\vw_{pq}$.
The last possibility is excluded by $\vd^2\notin\cK_0$.

Now suppose $\vd=\tau\vc+\vv+\vw$, where
$0\ne\vv\in(\set{G}')^{(1)}$ and
$0\ne\vw\in(\set{G}')^{(2)}$.
We may normalize \eqref{eq:triple-fiber} to $\lambda=1$.
Over $\CC$ this uses a square root. Over $\RR$, for the three nonzero
coordinate ratios $r_1,r_2,r_3$, the three off-diagonal selector
equations give
\[
r_1r_2=r_1r_3=r_2r_3=\lambda.
\]
Hence $r_1=r_2=r_3$ and $\lambda=r_i^2>0$, so the normalization
is real.
The cross-sector component now gives
$\vx=\tau'\vc+t\vv+t^{-1}\vw,\ \  t\ne0.$ 
If $t^2=1$, \eqref{eq:triple-sector} gives $\vx=\pm\vd$.
If a solution with $t^2\ne1$ exists, injectivity of the two sector
projections uniquely determines $\vb_i\in(\set G')^{(i)}$ and
$\eta_i\in\FF$ for which
\[
\matr{Q}_1^*\eqdef\vv^2+2\vc\vb_1+\eta_1\vc^2\in\cK^{(1)},
\ \ 
\matr{Q}_2^*\eqdef\vw^2+2\vc\vb_2+\eta_2\vc^2\in\cK^{(2)}.
\]
Comparing the $\vc(\set{G}')^{(i)}$ components gives
$\vb_1=\beta_1\vv$ and $\vb_2=\beta_2\vw$, together with
\[
\tau't-\tau=(t^2-1)\beta_1,\ \ 
\tau'/t-\tau=(t^{-2}-1)\beta_2.
\]
Thus $\tau=\beta_1+\beta_2$ and
$\tau'=t\beta_1+t^{-1}\beta_2$.
Uniqueness of $\matr{Q}_i^*$ makes the $\beta_i$ independent
of the chosen solution. Every solution lies in
$\Span\{\vv+\beta_1\vc,\ \vw+\beta_2\vc\},$ 
contrary to the three-dimensional span hypothesis.
If no such solution exists, all solutions lie on one line, giving
the same contradiction. This proves the first assertion.
For a standard direction, the cross-sector component puts each solution
in one sector. Apply \cite[Lemma~4.3]{Lovitz26:arxiv} there;
the lines spanned by the $\vw_{pq}$ have zero class and are excluded
since $\lambda\ne0$.
The remaining possibilities are precisely the three stated lines. Their occurrence
follows from \eqref{eq:triple-sector}.
\end{proof}

\subsubsection{Fixed-source normalization}
Using the quotient projection $\overline{\matr p}_j$ from
\cref{lem:triple-good}, the complete coefficient identity gives
\begin{equation}\label{eq:triple-quotient-complete}
(\id\otimes\overline{\matr p}_j)\tens G_{BC}
=(\id\otimes\overline{\matr p}_j)\tens P.
\end{equation}
Put $\va_{0,j}\eqdef\va_j^{(1)}+\va_j^{(2)}+\vc_j$.

\begin{lemma}[Local normalization]
\label{lem:triple-normalization}
\begin{enumerate}
\item Suppose an original-source quotient is
$\vu\otimes[\vd^2]_j$, with $\vu\otimes\vd^2\in\cA_j$, and
every original local matrix belongs to $\cK_j+\FF\vd^2$.
After rescaling, its quotient is $\va_{0,j}\otimes[\vd_j^2]_j$,
where $\vd_j$ is one of the three standard vectors.
The nonzero local quotient matrices are scalar multiples of the three
standard squares, and all three directions occur.
\item For an original five-term residual of a minimal $167$-term symmetric
decomposition with $d_A=d_B=d_C=5$, we have
\begin{equation}\label{eq:triple-normalization}
\dim\set U_{A,j}=1\ \Longrightarrow\
\set U_{A,j}=\FF\va_{0,j},\ \
(\id\otimes\overline{\matr p}_j)\tens P
=\va_{0,j}\otimes[\vd_j^2]_j,
\end{equation}
where $\vd_j$ may be chosen in
$\matr p_j\set U_B\cap\matr p_j\set U_C$.
For every standard direction, we have
\begin{equation}\label{eq:triple-standard-plane}
\cK_j\cap\Sym^2\Span\{\va_{0,j},\vd_j\}=0.
\end{equation}
\end{enumerate}
\end{lemma}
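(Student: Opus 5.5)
For (i) I will run the argument of \cref{lem:triple-good} once more, but now with $\vu,\vv$ replaced by the proportional pair $\vd,\vd$. Every original local matrix $\matr M_i$ belongs to $\cK_j+\FF\vd^2$. Each one with nonzero quotient class is rank one and symmetric-like, hence of the form $\vx_i\otimes\vy_i$ with $\vx_i\otimes\vy_i-\lambda_i\vd^2\in\cK_j$.

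The first step is to show $\vx_i\parallel\vy_i$. Suppose instead that $\vx_i\otimes\vy_i\notin\FF\vd^2$ and the two factors are independent. Then \cref{lem:matrix-rigidity}, applied in the symmetrized form, would already be violated: the class of $\vd^2$ is represented by a good-pair representative only when the factors are independent. So the squares $\vx_i^2\equiv\lambda_i\vd^2$, and each $\vx_i$ is a solution of \eqref{eq:triple-fiber}.

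The second step uses the complete quotient identity, exactly as in the proof of \cref{lem:triple-good}. It gives $\vu=\sum_i\lambda_i'\vh_i$ for suitable scalars, the sum running over the sources with nonzero class. Applying $\matr p_j\matr\Pi_A$ then puts $\vu$ in the span of the solution vectors $\vx_i$.

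The third step shows that this span has dimension at least three. If the $\vx_i$ spanned at most a plane $\cW$, the pure quotient $\vu\otimes[\vd^2]$ with $\vu,\vd\in\cW$ would yield a representative in $\cA_j\cap\cW^{\otimes3}$ after subtracting $\cK_j$ corrections. The plane would contain $\vd$ and another square congruent to $\vd^2$, so $\cK_j\cap\Sym^2\cW$ would contain a rank-two element. That contradicts \cref{lem:binary,rem:symmetrization}. With at least three dimensions available, \cref{lem:triple-fiber} forces $\vd$ to be a standard direction and the $\vx_i$ to lie on the three standard lines.

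Finally, $\vu$ lies in the span of the three standard vectors. The affine condition $\vu\otimes\vd^2\in\cA_j$, tested by the selector values \eqref{eq:triple-signs} and the identity \eqref{eq:six}, fixes $\vu$ up to the rescaling $\va_{0,j}$. This is checked by pairing with the three selectors $\matr S_q$: each sign pattern must produce $-2$. Minimality of the decomposition, through the argument of the two-source case in \cref{lem:triple-source-counting}(ii), then forces all three lines to occur. If one were missing, $\vu$ could not have the required coefficients on all three standard vectors.

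For (ii), a thin $A$ line at block $j$ in type $(5,5,5)$ has no quadratic tails, by \cref{prop:triple-455} together with \cref{lem:H345-thin}. The quotient \eqref{eq:triple-quotient-complete} is then $\va\otimes[\matr M_j]$. By \eqref{eq:H345-H5-conclusion} and saturation, $\matr M_j$ must be congruent to a square $\vd_j^2$ with $\vd_j$ in both local projections; otherwise \cref{lem:triple-good} applies to a good pair. Saturation of all pairs (\cref{lem:triple-saturation}) supplies the hypothesis of (i), which gives \eqref{eq:triple-normalization}.

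The vanishing \eqref{eq:triple-standard-plane} is a direct computation. Apply $\matr S_q\,\cdot\,\matr S_q^\top$ to a symmetric matrix in the plane spanned by $\va_{0,j}$ and a standard vector. Diagonality for all three selectors, using \eqref{eq:triple-signs} and the fact that $\matr S_q\va_{0,j}=\vect\alpha$ up to sign, forces all coefficients to vanish; \cref{lem:faithful} handles any leftover square of a kernel vector.

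The main obstacle I expect is the third step of (i), showing that the solution span is at least three-dimensional. If the binary reduction does not apply directly, I would fall back on the source-counting dependence \eqref{eq:triple-exceptional-dependence} to rule out a two-dimensional solution span.
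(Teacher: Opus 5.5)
\paragraph{Part (i).} This follows the paper's route and works after small repairs.
\begin{itemize}
\item $\vx_i\parallel\vy_i$ holds simply because every matrix in $\cK_j+\FF\vd^2$ is symmetric. \Cref{lem:matrix-rigidity} plays no role here, since $\vd,\vd$ is a bad pair.
\item A one-dimensional span needs the cube exclusion of \cref{prop:single}; \cref{lem:binary} does not apply to a line.
\item $\vd$ itself need not lie in the span, so replace $\vd^2$ by a normalized $\vx_i^2$.
\item All three standard lines occur simply because the $\vx_i$ span at least three dimensions. No minimality argument is needed.
\end{itemize}
In the plane case, using \cref{lem:binary,rem:symmetrization} instead of the zero intersection of \cref{lem:local} is fine.

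\paragraph{Part (ii): genuine gaps.} You read $\dim\set U_{A,j}=1$ as thinness. It actually concerns the pure intersection $\set U_A\cap\cV_j$. In type $(5,5,5)$, \eqref{eq:H345-H5-conclusion} makes every projection $\matr p_j\set U_X$ at least two-dimensional. So there is no thin line, \cref{lem:H345-thin} never applies, and the case actually used in \cref{prop:triple-five} (thick projection, one-dimensional pure intersection) is not addressed.

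The quotient does have the form $\vu\otimes\overline{\matr N}$, but for a different reason. Each $\tens P_{ijj}$ with $i\ne j$ lies in $\cV_i\otimes\cK_j$ and vanishes modulo $\cK_j$. Hence the $A$-coefficients of $(\id\otimes\overline{\matr p}_j)\tens P$ lie in the line $\set U_{A,j}$, and the quotient is nonzero because $0\notin\cA_j$.

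More seriously, \cref{lem:triple-good} needs a rank-one tensor in $\cA_j$ whose quotient is $\vu\otimes\overline{\matr N}$, and your sketch never produces one. Appealing to \eqref{eq:H345-H5-conclusion} does not show that the class $\overline{\matr N}$ contains a rank-one matrix. The paper obtains it because the five residual $A$ factors form a basis. Dual coefficients then give $[\vb_i^{[j]}\otimes\vc_i^{[j]}]_j=\lambda_i\overline{\matr N}$, so $(\vu/\lambda_i)\otimes\vb_i^{[j]}\otimes\vc_i^{[j]}\in\cA_j$ whenever $\lambda_i\ne0$. Saturation and \cref{lem:triple-good} then force $\vb_i^{[j]}\parallel\vc_i^{[j]}$. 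This residual square is exactly what places $\vd_j$ in $\matr p_j\set U_B\cap\matr p_j\set U_C$.

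\paragraph{The selector computation for \eqref{eq:triple-standard-plane}.} This also does not close. Since $\matr S_q\va_{0,j}=\vect\alpha$ and $\matr S_q\vd_j=\pm\vect\alpha$, every selector has rank one on $\Span\{\va_{0,j},\vd_j\}$. For $u\va_{0,j}^2+2v\,\va_{0,j}\vd_j+t\vd_j^2$, diagonality gives only $u+2v+t=u-2v+t=0$, which leaves $\va_{0,j}^2-\vd_j^2$. This matrix is not the square of a kernel vector. Its image under $\matr Q\mapsto\matr Q\matr L^\top$ is $(\va_{0,j}\pm\vd_j)\otimes\vect\alpha\ne0$ for the appropriate sign, so \cref{lem:faithful} says nothing about it.

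The missing input is $\va_{0,j}\otimes\vd_j^2\in\cA_j$ for every standard $\vd_j$, which follows from \eqref{eq:triple-sector} and \eqref{eq:single-representatives}. \Cref{lem:local} then gives the zero intersection directly. Equivalently, $\va_{0,j}^2-\vd_j^2\in\cK_j$ would put the cube $\va_{0,j}^3$ in $\cA_j$.
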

\begin{proof}
(i) Every original local matrix lies in
$\cK_j+\FF\vd^2\subseteq\Sym^2\cV_j$.
A nonzero rank-one symmetric matrix is a nonzero scalar times a square.
Thus, for each matrix with nonzero class, write
\[
\matr Q_i=\mu_i\vx_i^2,\ \ 
[\matr Q_i]_j=\lambda_i[\vd^2]_j,\ \
\mu_i\lambda_i\ne0.
\]
Real scalars $\mu_i$ need not be positive. Put
$\set H\eqdef\Span\{\vx_i\}$. Complete coefficient comparison gives
$\vu=\sum_i\lambda_i\vh_i$; applying the selected retained-mode map
and then $\matr p_j$ shows that $\vu\in\set H$. If $\dim\set H=1$,
replacing $\vd^2$ by a normalized $\matr Q_i$
gives a cube in $\cA_j$. If $\dim\set H=2$, choose independent
$\vx_i,\vx_\ell$. Neither can be parallel to $\vu$, for the same
reason. Thus the representative
$\vu\otimes\matr Q_i/\lambda_i$ has factor plane $\set H$, while
\[
0\ne\matr Q_i/\lambda_i-\matr Q_\ell/\lambda_\ell
\in\cK_j\cap\Sym^2\set H
\]
has rank two, contrary to \cref{lem:local}. Hence $\dim\set H\ge3$. 
By \cref{lem:triple-fiber}, the $\vx_i$ lie on the three standard
lines. They span $\set H$, so all three occur. Normalize $\vd$ to a
standard vector and write
$\vu=x_1\va_j^{(1)}+x_2\va_j^{(2)}+x_3\vc_j$.
The selector equations become
\[
\begin{pmatrix}-1&1&1\\1&-1&1\\1&1&-1\end{pmatrix}
\begin{pmatrix}x_1\\x_2\\x_3\end{pmatrix}
=\begin{pmatrix}1\\1\\1\end{pmatrix}.
\]
The determinant is $4$, whence $x_1=x_2=x_3=1$.

(ii) By the block conditions, the tensor
$\tens\Psi\eqdef(\id\otimes\overline{\matr p}_j)\tens P$ belongs to
$\set U_{A,j}\otimes\im\overline{\matr p}_j$ and is nonzero. Write
$\tens\Psi=\vu\otimes\overline{\matr N}$. The five original $A$ factors form a basis;
taking dual coefficients gives
$[\vb_i^{[j]}\otimes\vc_i^{[j]}]_j=\lambda_i\overline{\matr N}$.
For $\lambda_i\ne0$, we have
$(\vu/\lambda_i)\otimes\vb_i^{[j]}\otimes\vc_i^{[j]}\in\cA_j$.
Saturation and \cref{lem:triple-good} make this matrix a square.
All original local classes lie on the same line, so (i) applies.
Choosing its standard direction from this residual matrix gives the
asserted membership in both projected mode spaces.
Finally $\va_{0,j}\otimes\vd_j^2\in\cA_j$ by
\cref{eq:triple-sector,eq:single-representatives};
\cref{lem:local} gives \eqref{eq:triple-standard-plane}.
\end{proof}

\subsection{The final five-dimensional obstruction}
\begin{proposition}[Exclusion of $(5,5,5)$]\label{prop:triple-five}
No residual compatible with a minimal $167$-term symmetric decomposition
has multilinear rank $(5,5,5)$.
\end{proposition}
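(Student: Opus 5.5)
Assume $\tens P$ is the original five-term residual with $d_A=d_B=d_C=5$. By \cref{prop:triple-455} all three quadratic intersections vanish, $\kappa_{AB}=\kappa_{AC}=\kappa_{BC}=0$, and every mode projects with dimension at least two onto every block. By \cref{lem:H345-splitting} no mode is split, and by \cref{lem:triple-saturation} all three pairs are saturated. Hence $\dim\cE_{XY}=54+5=59$ for every pair, and the $59$ original source matrices of each pair form a basis. By \cref{lem:triple-saturated-pairs} no pure block has CP rank one.

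First I count the pure intersections. Each mode has $\dim\set U_X=5$, is nonsplit, and meets every block nontrivially by \eqref{eq:triple-pure-intersections}. So $\sum_j\dim\set U_{X,j}\le4$, and the local dimensions are $(2,1,1)$ or $(1,1,1)$. Every mode therefore has at least two blocks $j$ with $\dim\set U_{X,j}=1$. By pigeonhole there is a block $j$ on which at least two modes, say $A$ and $B$, have one-dimensional pure intersection. Now apply \cref{lem:triple-normalization}(ii) to both modes. This gives $\set U_{A,j}=\set U_{B,j}=\FF\va_{0,j}$, together with $(\id\otimes\overline{\matr p}_j)\tens P=\va_{0,j}\otimes[\vd_j^2]_j$ and the analogous identity with $B$ in the first slot. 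Here $\vd_j$ is a standard direction lying in $\matr p_j\set U_B\cap\matr p_j\set U_C$ and also in $\matr p_j\set U_A\cap\matr p_j\set U_C$.

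Next I compare the two normalizations inside the pure block $\tens P_{jjj}$. The $BC$ quotient is $\va_{0,j}\otimes[\vd_j^2]$, and the $AC$ quotient is obtained from $[\va_{0,j}\otimes\cdot]$ by symmetry of the modes. Modulo $\J(\cK_j)$, the pure block is then congruent both to $\va_{0,j}\otimes\vd_j\otimes\vd_j$ and to $\vd'_j\otimes\va_{0,j}\otimes\vd'_j$. Feeding this back through the $AB$ contraction, I would show that $[\va_{0,j}\otimes\vd_j]_j$ and $[\vd'_j\otimes\va_{0,j}]_j$ span the local $AB$ quotient. The aim is to force the local $AB$ slice space modulo $\cK_j$ to lie in $\Sym^2\Span\{\va_{0,j},\vd_j\}+\cK_j$. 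If this quotient is a line, the pure block has the same three quadratic quotients as the rank-one representative $\va_{0,j}\otimes\vd_j^2$, which contradicts \cref{lem:triple-saturated-pairs}. If it has dimension two, then some nonzero element of $\cK_j\cap\Sym^2\Span\{\va_{0,j},\vd_j\}$ must arise, which contradicts \eqref{eq:triple-standard-plane}.

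The main obstacle is the plane case: the $AB$ quotient of block $j$ might be two-dimensional, with a genuine antisymmetric-looking part such as $\va_{0,j}\otimes\vd_j$ against $\vd_j\otimes\va_{0,j}$. To handle it, I would return to the five original terms. Because the $C$ factors form a basis, only the indices active in the thin contractions contribute to block $j$. Using nonsplitness and \eqref{eq:triple-five-source-rank}, I would bound those indices to two, and then apply \cref{lem:local-source-test} to the good pair of the normalized representative $\va_{0,j}\otimes\vd_j^2$; this pair is good because $\va_{0,j}\not\parallel\vd_j$. If this still leaves a configuration, the fallback is to use the second thin block of some mode, which must exist by the counting above, and run the same normalization there. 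This would produce two blocks that are good for a common pair with no off-diagonal output. Then \cref{lem:triple-source-counting}(i) requires $61$ sources against the $59$ available.
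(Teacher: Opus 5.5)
Your opening reductions agree with the paper, but there is a genuine gap at the decisive step. You choose the block $j$ using only $\dim\set U_{A,j}=\dim\set U_{B,j}=1$. In pure-intersection type $(1,1,1)$ the complement of the pure part has dimension two, so a priori $d^{\mathrm{proj}}_{A,j}$ and $d^{\mathrm{proj}}_{B,j}$ may both equal $3$. In that case \cref{lem:triple-normalization}(ii) determines $\tens P_{jjj}$ only modulo $\cV_j\otimes\cK_j$. You also have no control over $\cK_j\cap(\matr p_j\set U_B\otimes\matr p_j\set U_C)$, so you cannot conclude that the pure block has rank one.

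Your substitute plan does not fill this gap. The congruence $\tens P_{jjj}\equiv\va_{0,j}\otimes\vd_j^2$ modulo $\cV_j\otimes\cK_j$ says nothing about the local $AB$ quotient. Contracting the $C$ mode of $\vx\otimes\matr K$ with $\matr K\in\cK_j$ gives $\vx\otimes(\matr K\gamma)$, which need not lie in $\cK_j$. A one-dimensional $AB$ quotient would not, by itself, give the same three quadratic quotients as a rank-one representative. Your fallback through \cref{lem:triple-source-counting}(i) presupposes good local lines with no off-diagonal output, and that is exactly what remains unproved. The only local quotient you actually control is the $BC$ line generated by $[\vd_j^2]_j$, and that is a bad pair.

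The missing ingredient is the paper's dual-functional refinement of your count. For a mode $\set U$, put $t_j\eqdef\dim(\set U\cap\cV_j)$ and write $\set U=\bigoplus_j(\set U\cap\cV_j)\oplus\set H$. Decompose $\Lambda_j(\set U)\eqdef\im\bigl((\matr p_j|_{\set U})^*\bigr)=(\set U\cap\cV_j)^*\oplus\set D_j$, where $\dim\set D_j=d^{\mathrm{proj}}_j-t_j$. \Cref{lem:triple-functional}, combined with the vanishing quadratic intersections of \cref{prop:triple-455}, gives $\set D_1\cap\set D_2\cap\set D_3=0$. In type $(1,1,1)$ this allows at most one $\set D_j=\set H^*$. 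Hence in every mode at least two blocks have $t_j=1$ and $d^{\mathrm{proj}}_j=2$.

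Now take a common such block for $A$ and $B$. The two normalizations give $\set U_{B,j}=\FF\va_{0,j}$ and $\vd_j\in\matr p_j\set U_B\cap\matr p_j\set U_C$, so $\matr p_j\set U_B=\Span\{\va_{0,j},\vd_j\}$. Any matrix of $\cK_j$ lying in $\matr p_j\set U_B\otimes\matr p_j\set U_C$ is symmetric. It therefore lies in $\Sym^2\Span\{\va_{0,j},\vd_j\}$ and vanishes by \eqref{eq:triple-standard-plane}. The quotient identity then becomes the exact equality $\tens P_{jjj}=\va_{0,j}\otimes\vd_j\otimes\vd_j$. This is a rank-one pure block, contradicting \cref{lem:triple-saturated-pairs}. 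No comparison of two normalizations and no source counting are needed.
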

\begin{proof}
By \cref{prop:triple-455}, all three quadratic intersections vanish
and every block projection in every mode has dimension at least two.
We first show that each mode has at least two blocks $j$ with
$\dim(\set U_X\cap\cV_j)=1$ and $\dim\matr p_j\set U_X=2$.
Fix a mode $\set{U}$ and write
$t_j\eqdef\dim(\set{U}\cap\cV_j)$ and
$d_j^{\mathrm{proj}}\eqdef\dim\matr{p}_j\set{U}$.
Let $\set{S}\eqdef\bigoplus_j(\set{U}\cap\cV_j)$ and choose
$\set{U}=\set{S}\oplus\set{H}$. Subtracting the pure-block
components from the chosen complement makes
$\matr{p}_j\set{H}$ disjoint from $\set{U}\cap\cV_j$.
Put $\Lambda_j(\set U)\eqdef
\im\bigl((\matr p_j|_{\set U})^*\bigr)\subseteq\set U^*$.
In the dual decomposition, choose $\set{D}_j\subseteq\set{H}^*$ so that
\[
\Lambda_j(\set U)=(\set{U}\cap\cV_j)^*\oplus\set{D}_j,
\ \ 
\dim\set{D}_j=d_j^{\mathrm{proj}}-t_j.
\]
By \cref{lem:triple-functional} and the zero quadratic
intersection, we have $\set{D}_1\cap\set{D}_2\cap\set{D}_3=0$.
If $\dim\set{S}=5$, the mode splits completely; then
$d_j^{\mathrm{proj}}=t_j\ge2$ for all three blocks, which is impossible.
Thus the pure-intersection dimensions are $(1,1,1)$ or a permutation
of $(2,1,1)$.

In type $(1,1,1)$, we have $\dim\set{H}=2$, and every $\set{D}_j$ is
nonzero. At most one can equal $\set{H}^*$, since two such
spaces would have nonzero intersection with the third.
At least two have dimension one, giving $t_j=1$ and $d_j^{\mathrm{proj}}=2$.
In type $(2,1,1)$, we have $\dim\set{H}=1$, and the two blocks with
$t_j=1$ have $d_j^{\mathrm{proj}}=2$, because their projections are thick.
Two subsets of $\{1,2,3\}$, each of size at least two, intersect.

Choose such a common block $j$ for modes $A,B$, and apply
\cref{lem:triple-normalization}
to both modes, retaining the same original maps. Then
\[
\set{U}_A\cap\cV_j=\set{U}_B\cap\cV_j=\FF\va_{0,j},
\ \ 
\matr{p}_j\set{U}_B=\Span\{\va_{0,j},\vd_j\},
\]
where $\va_{0,j},\vd_j$ are independent and
$\vd_j\in\matr{p}_j\set{U}_C$.
Write $\set{B}_j\eqdef\matr{p}_j\set{U}_B$ and
$\set{C}_j\eqdef\matr{p}_j\set{U}_C$.
If $\matr N\in\cK_j\cap(\set{B}_j\otimes\set{C}_j)$, then $\matr N$
is symmetric, so
$\operatorname{row}\matr N=\operatorname{col}\matr N
\subseteq\set{B}_j\cap\set{C}_j$; hence
$\matr N\in\Sym^2(\set{B}_j)$. Since
$\set{B}_j=\Span\{\va_{0,j},\vd_j\}$,
\eqref{eq:triple-standard-plane} gives $\matr N=0$. Thus
$\cK_j\cap(\set{B}_j\otimes\set{C}_j)=0.$
The quotient identity in \eqref{eq:triple-normalization} and the
mode spaces consequently imply the exact equality
\begin{equation}\label{eq:triple-pure-exact}
\tens{P}_{jjj}=\va_{0,j}\otimes\vd_j\otimes\vd_j.
\end{equation}
Indeed, the difference lies both in $\cV_j\otimes\cK_j$ and
in $\cV_j\otimes\set{B}_j\otimes\set{C}_j$.

All original pairs are saturated by \cref{lem:triple-saturation}.
The rank-one pure block \eqref{eq:triple-pure-exact} therefore
contradicts \cref{lem:triple-saturated-pairs}.
\end{proof}

\begin{proof}[Proof of \cref{thm:main} for $k=3$]
The CP rank, upper bound, and concision are given by
\eqref{eq:triple-initial}. By \cref{prop:triple-H0}, a smaller
symmetric rank would have to equal $167$. For such a decomposition,
\cref{prop:triple-H1,prop:triple-H2} leave only the mode types
$(4,4,5)$, $(4,5,5)$, and $(5,5,5)$.
\Cref{cor:triple-four} excludes the first two, and
\cref{prop:triple-five} excludes the last.
Therefore $s(\tens{V}_3)=168$ and
$\DeltaT_{\FF}(\tens{V}_3)=168-165=3$.
Rationality follows directly from the construction of
$\tens{T}_0$ and the use of independent copies.
\end{proof}

\section{Conclusion}
\label{sec:conclusion}

By combining the direct-sum strategy proposed in
\cite[Section~6]{shitov2018counterexample} with Lovitz's rational
example \cite{Lovitz26:arxiv}, we have constructed third-order
symmetric tensors with Comon rank gaps two and three over both
$\RR$ and $\CC$. 
This naturally leads to
the following question.

\begin{problem}
Can one construct over $\RR$ or $\CC$ a family of third-order symmetric tensors with unbounded Comon rank gaps, either by direct sums or by another method?
\end{problem}

\noindent\textbf{Acknowledgments.} 
The author acknowledges the assistance of ChatGPT 5.6 Sol throughout the mathematical development and preparation of this manuscript. 
The author takes full responsibility for the content of this paper. 
This work was supported by the Guangdong Basic and Applied Basic
Research Foundation (2026A1515050001, 2026A1515010188).

\bibliographystyle{siamplain}
\bibliography{ReferenceTensor}

\begin{thebibliography}{10}

\bibitem{alexeev2011tensor}
{\sc B.~Alexeev, M.~A. Forbes, and J.~Tsimerman}, {\em Tensor rank: Some lower
  and upper bounds}, in 2011 IEEE 26th Annual Conference on Computational
  Complexity, IEEE, 2011, pp.~283--291.

\bibitem{ballico2011lowrank}
{\sc E.~Ballico}, {\em Tensor ranks and symmetric tensor ranks are the same for
  points with low symmetric tensor rank}, Archiv der Mathematik, 96 (2011),
  pp.~531--534.

\bibitem{ballico2013tensor}
{\sc E.~Ballico and A.~Bernardi}, {\em Tensor ranks on tangent developable of
  {Segre} varieties}, Linear and Multilinear Algebra, 61 (2013), pp.~881--894.

\bibitem{buczynski2020additivity}
{\sc J.~Buczy{\'n}ski, E.~Postinghel, and F.~Rupniewski}, {\em On {Strassen's}
  rank additivity for small three-way tensors}, SIAM Journal on Matrix Analysis
  and Applications, 41 (2020), pp.~106--133.

\bibitem{carlini2015symmetricstrassen}
{\sc E.~Carlini, M.~V. Catalisano, and L.~Chiantini}, {\em Progress on the
  symmetric {Strassen} conjecture}, Journal of Pure and Applied Algebra, 219
  (2015), pp.~3149--3157.

\bibitem{carlini2018ecomputability}
{\sc E.~Carlini, M.~V. Catalisano, L.~Chiantini, A.~V. Geramita, and Y.~Woo},
  {\em Symmetric tensors: rank, {Strassen's} conjecture and e-computability},
  arXiv preprint arXiv:1506.03176,  (2015).

\bibitem{casarotti2020comon}
{\sc A.~Casarotti, A.~Massarenti, and M.~Mella}, {\em On {Comon’s and
  Strassen’s} conjectures}, Mathematics, 6 (2018), p.~217.

\bibitem{comon2004canonical}
{\sc P.~Comon}, {\em Canonical tensor decompositions}, in Workshop on Tensor
  Decompositions, Palo Alto, CA, 2004.

\bibitem{comon2014tensors}
{\sc P.~Comon}, {\em Tensors: a brief introduction}, IEEE Signal Processing
  Magazine, 31 (2014), pp.~44--53.

\bibitem{comon2008symmetric}
{\sc P.~Comon, G.~Golub, L.-H. Lim, and B.~Mourrain}, {\em Symmetric tensors
  and symmetric tensor rank}, SIAM Journal on Matrix Analysis and Applications,
  30 (2008), pp.~1254--1279.

\bibitem{draisma2024erratum}
{\sc J.~Draisma}, {\em Erratum: A counterexample to {Comon’s} conjecture},
  SIAM Journal on Applied Algebra and Geometry, 8 (2024), pp.~225--225.

\bibitem{friedland2016remarks}
{\sc S.~Friedland}, {\em Remarks on the symmetric rank of symmetric tensors},
  SIAM Journal on Matrix Analysis and Applications, 37 (2016), pp.~320--337.

\bibitem{hopcroft1971minimizing}
{\sc J.~E. Hopcroft and L.~R. Kerr}, {\em On minimizing the number of
  multiplications necessary for matrix multiplication}, SIAM Journal on Applied
  Mathematics, 20 (1971), pp.~30--36.

\bibitem{kolda2009tensor}
{\sc T.~G. Kolda and B.~W. Bader}, {\em Tensor decompositions and
  applications}, SIAM Review, 51 (2009), pp.~455--500.

\bibitem{landsberg2012tensors}
{\sc J.~M. Landsberg}, {\em Tensors: geometry and applications}, vol.~128,
  American Mathematical Soc., 2012.

\bibitem{Lovitz26:arxiv}
{\sc B.~Lovitz}, {\em A 27×27×27 counterexample to {Comon’s} conjecture},
  tech. report, 2026.
\newblock {a}rXiv:2609.28292v1.

\bibitem{rhodes2010concise}
{\sc J.~A. Rhodes}, {\em A concise proof of kruskal’s theorem on tensor
  decomposition}, Linear Algebra and its Applications, 432 (2010),
  pp.~1818--1824.

\bibitem{rupniewski2024additivity}
{\sc F.~Rupniewski}, {\em {Strassen's} rank additivity for small tensors,
  including tensors of rank less or equal 7}, Linear Algebra and its
  Applications, 698 (2024), pp.~448--478.

\bibitem{seigal2020cubic}
{\sc A.~Seigal}, {\em Ranks and symmetric ranks of cubic surfaces}, Journal of
  Symbolic Computation, 101 (2020), pp.~304--317.

\bibitem{shitov2018counterexample}
{\sc Y.~Shitov}, {\em A counterexample to {Comon's} conjecture}, SIAM Journal
  on Applied Algebra and Geometry, 2 (2018), pp.~428--443.

\bibitem{shitov2019directsum}
{\sc Y.~Shitov}, {\em Counterexamples to {Strassen's} direct sum conjecture},
  Acta Mathematica,  (2019).

\bibitem{shitov2020comon}
{\sc Y.~Shitov}, {\em {Comon's conjecture} over the reals}, viXra:2009.0134,
  (2020).

\bibitem{shitovhigher}
{\sc Y.~Shitov}, {\em Higher rank substitutions for tensor decompositions {II.}
  {Comon's} conjecture},  (2024).

\bibitem{wang2023lower}
{\sc K.~Wang and A.~Seigal}, {\em Lower bounds on the rank and symmetric rank
  of real tensors}, Journal of Symbolic Computation, 118 (2023), pp.~69--92.

\bibitem{woo2014cases}
{\sc Y.~Woo}, {\em Some cases on {Strassen} additive conjecture}, arXiv
  preprint arXiv:1406.2213,  (2014).

\bibitem{zhang2016comon}
{\sc X.~Zhang, Z.-H. Huang, and L.~Qi}, {\em {Comon's conjecture}, rank
  decomposition, and symmetric rank decomposition of symmetric tensors}, SIAM
  Journal on Matrix Analysis and Applications, 37 (2016), pp.~1719--1728.

\end{thebibliography}

\end{document}